\definecolor{Bnavy}{RGB}{0, 66, 128}
\definecolor{Bdust}{RGB}{140,179,217}
\definecolor{Bsugarpaper}{RGB}{198, 217, 236}
\definecolor{Bgreen}{RGB}{142, 183, 114}
\definecolor{Blimegreen}{RGB}{202, 222, 189}
\definecolor{Bgreentheme}{RGB}{36, 87, 1}
\theoremstyle{plain}
\newtheorem{theorem}{Theorem}[section]
\newenvironment{ctheorem}[1]
  {\innercustomthm}
  {\endinnercustomthm}
\newtheorem{lemma}[theorem]{Lemma}
\newtheorem{corollary}[theorem]{Corollary}
\newtheorem{proposition}[theorem]{Proposition}
\newtheorem{construction}[theorem]{Construction}
\theoremstyle{definition}
\newtheorem{definition}[theorem]{Definition}
\newtheorem{example}[theorem]{Example}
\newtheorem{recollection}[theorem]{Recollection}
\newtheorem{remark}[theorem]{Remark}
\newtheorem{notation}[theorem]{Notation}
\newcounter{diagram}  
\crefname{diagram}{Diagram}{Diagrams}
\newenvironment{diagram}[1][]{
    \crefalias{equation}{diagram}
    \begin{equation}
    \begin{tikzcd}[#1]
}{
    \end{tikzcd}
    \end{equation}
}
\newenvironment{acknowledgements}{%
  \begin{abstract}
}{%
  \end{abstract}
}
\title{From Samples \\ to \\ Persistent Stratified Homotopy Types}
\author[1]{Tim Mäder\thanks{tmaeder@mathi.uni-heidelberg.de}}
\author[1]{Lukas Waas \thanks{lwaas@mathi.uni-heidelberg.de}}
\affil[1]{Institut für Mathematik, Universität Heidelberg}
\begin{document}

\maketitle

\begin{abstract}
    The natural occurrence of singular spaces in applications has led to recent investigations on performing topological data analysis (TDA) in a stratified framework. In many applications, there is no a priori information on what points should be regarded as singular or regular. For this purpose we describe a fully implementable process that provably approximates the stratification for a large class of two-strata Whitney stratified spaces from sufficiently close non-stratified samples. \\
    Additionally, in this work, we establish a notion of persistent stratified homotopy type obtained from a sample with two strata. In analogy to the non-stratified applications in TDA which rely on a series of convenient properties of (persistent) homotopy types of sufficiently regular spaces, we show that our persistent stratified homotopy type behaves much like its non-stratified counterpart and exhibits many properties (such as stability, and inference results) necessary for an application in TDA.\\
    In total, our results combine to a sampling theorem guaranteeing the (approximate) inference of (persistent) stratified homotopy types of sufficiently regular two-strata Whitney stratified spaces.  \\
\end{abstract}

\begin{acknowledgements}
The first author's work is supported by Deutsche Forschungsgemeinschaft (DFG, German Research Foundation) under Germany’s Excellence Strategy EXC-2181/1 - 390900948 (the Heidelberg STRUCTURES Excellence Cluster).
The second author is supported by a PhD-stipend of the Landesgraduiertenförderung Baden-Württemberg. The authors would like to thank an anonymous referee for their detailed remarks and suggestions.
\end{acknowledgements}

\section{Introduction}\label{sec1}

Topological data analysis has proven itself to be a source of qualitative and quantitative data features that were not readily accessible by other means. Arguably, the most important concept for the development of this field is persistent homology (\cite{edelsbrunner2002toppers,zomorodian2005computing,cohen2007stability,ghrist2008barcode,niyogi2008finding,carlsson2009topanddata,oudot2015quiver}). Both in practice, as well as abstractly speaking, persistent homology usually is divided up into a two-step process. 
First, one assigns to a data set $\sampX$ a filtration of topological or combinatorial objects $(\sampX_{\alpha})_{\alpha \geq 0}$. Most prominently, this is done for $\sampX \subset \supSp$, by taking $\sampX_{\alpha}$ to be an $\alpha$-thickening of $\sampX$, which is the case we will consider in the following.
Then, from this filtered object, a persistence module is computed, essentially given by computing homology in each filtration degree while keeping track of the functoriality of homology on the inclusions. As homology is a homotopy invariant what is relevant to this computation is only what one may call the \define{persistent homotopy type of $\sampX$}. More precisely, if we think of $(\sampX_{\alpha})_{\alpha \geq 0}$ as a functor from the non-negative reals $\mathbb R_+$ into the category of topological spaces $\Top$ then the persistent homotopy type is the isomorphism class of $(\sampX_{\alpha})_{\alpha \geq 0}$ in the homotopy category  $\ho \Top^{\mathbb R_+}$ obtained by inverting pointwise (weak) homotopy equivalences. From this perspective, persistent homology is the composition
\begin{equation}\label{equ:factorization_of_ph}
    \mathrm{PH}_{i}\pp \Sam \xrightarrow{\mathcal{P}}  \ho \Top^{\mathbb R_+} \xrightarrow{\mathrm{H}_i^{\mathbb R_+}} \Vect^{\mathbb R_+}.
\end{equation}
Here $\Sam$ is the category of subspaces of some fixed $\supSp$, $\mathcal{P}$ assigns an object in the persistent homotopy category (for example, through thickening spaces or possibly using combinatorial models thereof) and $\mathrm{H}_i^{\mathbb R_+}$ computes homology degree-wise. The composition produces an object in the category of persistence modules over some field $k$, denoted $\Vect^{\mathbb R_+}$. Many of the advantages of persistent homology turn out to not be properties of the right-hand side of this composition but of the left-hand side $\mathcal{P}$. That is, they are properties of the persistent homotopy type. 
Such properties include, for example:
\begin{enumerate}[label= {P(\arabic*):}, ref={P(\arabic*)}]
    \item \label{enum:properties_of_PH1} The fact that persistent homology defined through thickenings is computable at all; (This is a consequence of the nerve theorem (see e.g. \cite[Prop. 4G.3]{hatcher2002algtop} or \cite{borsuk1948nerve}), which states that for $\sampX \subset \supSp$ the persistent stratified homotopy type $\mathcal{P}(\sampX)$ may equivalently be represented by a filtered \v{C}ech complex.)
    \item \label{enum:properties_of_PH2} The stability of persistent homology with respect to Hausdorff and interleaving type distances (see \cite{cohen2007stability,chazal2008interleave,bauer2015isometry});
    \item \label{enum:properties_of_PH3} The possibility to infer information from the sampling source by using persistent homology. (This is usually justified by stability together with the result that $\mathcal{P}(\topSp)_{\alpha} \xleftarrow{\simeq} \mathcal{P}(\topSp)_{0} = \topSp$ for $\alpha$ sufficiently small and $\topSp$ a sufficiently regular space such as a compact smooth submanifold of Euclidean space (compare to \cite{niyogi2008finding}).)
\end{enumerate}
At the same time, many of the limitations of persistent homology also stem from the factorization in \labelcref{equ:factorization_of_ph}. Consider, for example, the two subspaces of $\mathbb R^2$ depicted in \cref{fig:lemniscate,fig:ball}. It is not hard to see that (up to a rescaling) they have the same persistent homotopy type and thus have the same persistent homology.
\begin{figure}[H]
\centering
    \begin{minipage}{0.49\textwidth}
        \centering
        \includegraphics[width=1.0\textwidth]{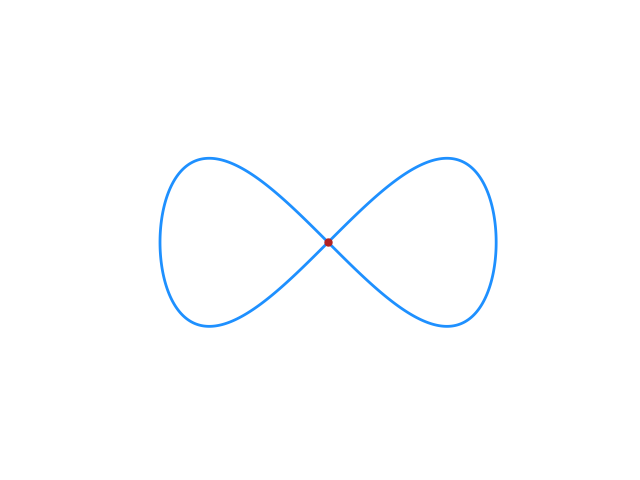}
        \caption{Lemniscate $V=\{x \in \mathbb R^2 \mid x_1^4 - x_1^2 + x_2^2 = 0\}$}
        \label{fig:lemniscate}
    \end{minipage}\hfill
    \begin{minipage}{0.49\textwidth}
        \centering
        \includegraphics[width=1\textwidth]{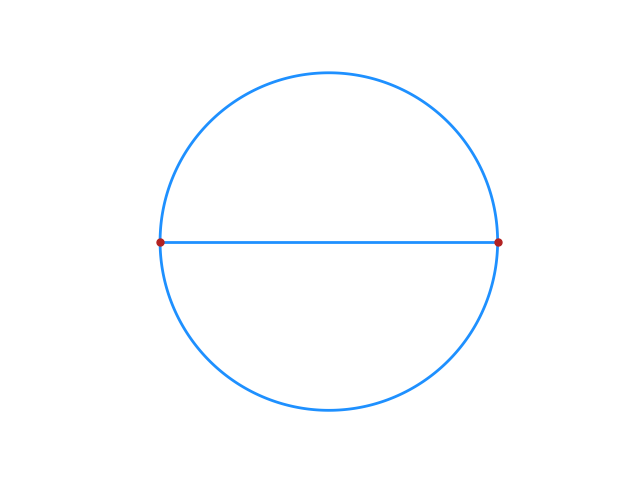}
        \caption{Circle with a diameter filament}
        \label{fig:ball}
    \end{minipage}
\end{figure}
Of course, the spaces themselves are topologically quite different, the space shown in \cref{fig:lemniscate} having one and the space shown in \cref{fig:ball} having two singularities. Depending on the application, one may be interested in an invariant capable of distinguishing the two.
For example, one may consider the two spaces in \cref{fig:lemniscate,fig:ball} as so-called stratified spaces, taking care to mark their singularities.\\
The topological data analysis of stratified objects has recently received increased interest (see, for example, \cite{mileyko2021another,stolz2020geometric,nanda2020local,skraba2014approximating,bendich2012local,fasy2016exploring}). However, as suggested by the properties of the non-stratified scenario described in \ref{enum:properties_of_PH1} to \ref{enum:properties_of_PH3}, to successfully establish persistent methods in a stratified framework a notion of persistent stratified homotopy type is needed. No such thing was available so far, at least not to our knowledge and not in a way that satisfies analogs to the properties \ref{enum:properties_of_PH1} to \ref{enum:properties_of_PH3}. This is because stratified homotopy theory has only recently received a wave of renewed attention from the theoretical perspective \cite{woolf2009fundamental,lurie2012higher, miller2013strongly, haine2018homotopy, douteau2021homotopy, douteau2021stratified,douteauwaas2021}. A series of new results in this field now lay the foundation for stratified investigations in topological data analysis.\\
Establishing such a notion of persistent stratified homotopy type and showing that it fulfills properties much like the non-stratified persistent homotopy type is precisely what this work is concerned with. Thus, the focus lies entirely on the left-hand side of the factorization in ~\eqref{equ:factorization_of_ph}, leaving investigating algebraic invariants of the latter (for example, intersection homology, as in \cite{bendich2011persistent}) for future work. Note, however, that whatever invariants they may be, they automatically inherit many of the convenient properties of persistent homology. 
\subsection{Persistent stratified homotopy types}
Let us illustrate our methods and results by following the example of the lemniscate $V$ shown in \cref{fig:lemniscate}. We may treat the lemniscate as a so-called Whitney stratified space (see \cref{recol:Whitney_strat}) $W$, with two strata given by $W_p = \{0\}$, the singularity, and $W_q = V \setminus W_p$ given by the regular part. It follows from results in \cite{douteau2019stratified,douteauwaas2021} (see \cref{thrm:fully_faithful_embedding,recol:diag_are_equ}) that, for a Whitney stratified space with two strata, the so-called \define{stratified homotopy type} (the analogue of the classical homotopy type, obtained by considering a stratum preserving notion of map and homotopy) may equivalently be thought of as (the homotopy type of) a diagram of spaces of the form
\begin{diagram}\label{diag:intro_abstract}
    D(W)_p & \arrow[l] D(W)_{\{p,q\}} \arrow[r] & D(W)_q.
\end{diagram}
Here, the spaces $D(W)_p$ and $D(W)_q$ correspond respectively to the strata of the stratified space $W$, while $ D(W)_{\{p,q\}}$ corresponds to the homotopy type of the space connecting the two strata, the so-called \define{(homotopy)link} (see \cref{def:holink}).
More explicitly, it follows from \cref{prop:ALT_holinks_are_links} that for sufficiently small positive real numbers $\diagParamLow < \diagParamUp$ the stratified homotopy type of $W$ is encoded in the diagram
\begin{diagram}\label{diag:intro_sublevel}
      \textnormal{d}_{W_p}^{-1}[0, \diagParamUp] & \arrow[l, hook'] \textnormal{d}_{W_p}^{-1}[\diagParamLow, \diagParamUp] \arrow[r,hook] &  \textnormal{d}_{W_p}^{-1}[\diagParamLow, \infty],
\end{diagram}
where $\textnormal{d}_{W_p}$ is the function assigning to a point its distance to the singular stratum $W_p$. 
\begin{example}\label{ex:intro_sublvl}
For the lemniscate, as shown in \cref{fig:lemniscate}, the \cref{diag:intro_sublevel} with parameter $\diagParam = (0.2,0.3)$ can be visualized as follows:
\begin{center}
     \begin{tikzpicture}[scale = 0.815]
        \node(sing) at (-5,0){\includegraphics[width=3.5cm]{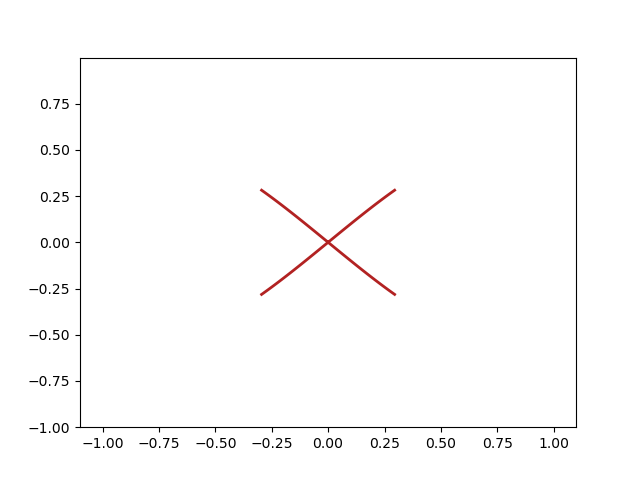}}; 
          \node(link) at (0,0){\includegraphics[width=3.5cm]{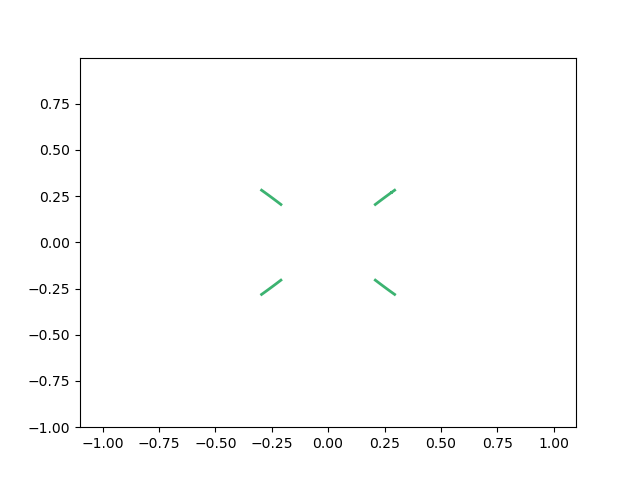}};
                  \draw[ left hook->] (link) -- (sing);
         \node(reg) at (5,0){\includegraphics[width=3.5cm]{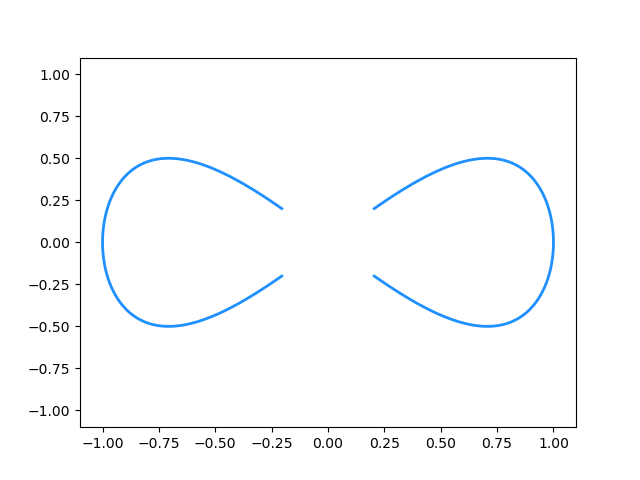}};
          \draw[ right hook->] (link) -- (reg);
    \end{tikzpicture}
\end{center}
    This diagram is (weakly) equivalent to the simpler diagram of discrete spaces
\begin{diagram}\label{intro:diag_disc_lem}
    \{c\} & \arrow[l] \{a,b\} \times \{x,y\}  \arrow[r, "\pi_2"] & \{x,y\}.
\end{diagram}
In the case of the space shown in \cref{fig:ball}, with the singular stratum given by the singularities, \cref{diag:intro_sublevel} is given by:
\begin{center}
     \begin{tikzpicture}[scale = 0.815]
        \node(sing) at (-5,0){\includegraphics[width=3.5cm]{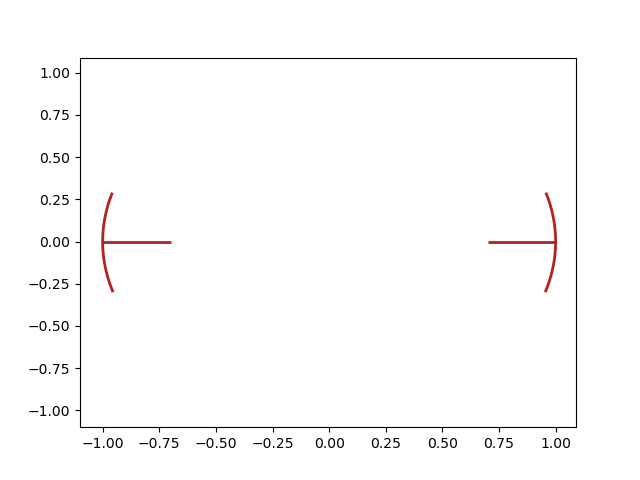}}; 
          \node(link) at (0,0){\includegraphics[width=3.5cm]{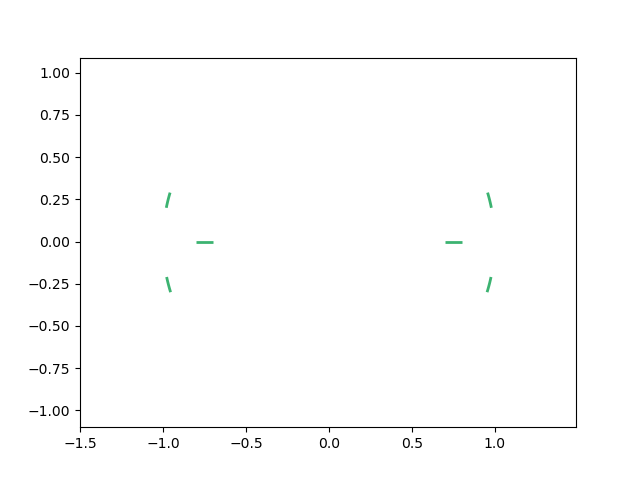}};
                  \draw[ left hook->] (link) -- (sing);
         \node(reg) at (5,0){\includegraphics[width=3.5cm]{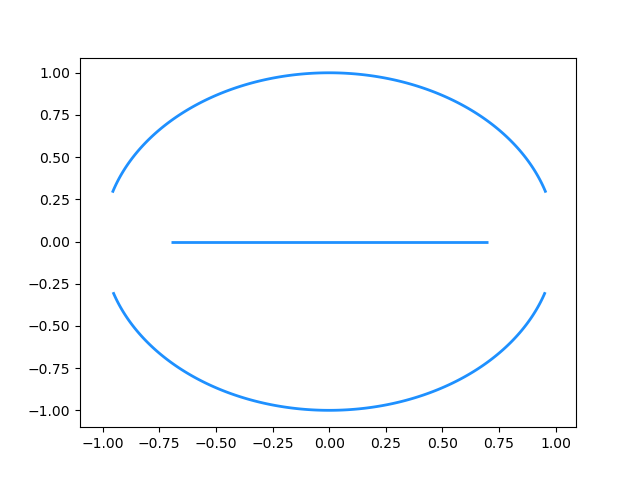}};
          \draw[ right hook->] (link) -- (reg);
    \end{tikzpicture}
\end{center}
From the perspective of homotopy theory, this diagram simplifies to the diagram of discrete spaces
\begin{diagram}\label{intro:diag_disc_fil}
    \{a,b\} & \arrow[l, "\pi_1"'] \{a,b\} \times  \{x,y,z\} \arrow[r, "\pi_2"]  & \{x,y,z\}.
\end{diagram}
A simple point count already shows that \cref{intro:diag_disc_lem,intro:diag_disc_fil} distinguish the lemniscate from the space shown in \cref{fig:ball}.
\end{example}
This example illustrates how the stratified homotopy type provides a significantly finer invariant than the classical homotopy type. \\
In order to perform topological data analysis with such diagram representations of stratified homotopy types, we need a persistent analogue, i.e. a \define{persistent stratified homotopy type}. (In this paper, we focus on the two strata case, for reasons elaborated in more detail in the end of the introduction.)\\
In \cref{sec:pers_strat}, we construct such an object by separately thickening the pieces of \cref{diag:intro_sublevel} in a surrounding Euclidean space.
In this fashion, after having chosen parameters $\diagParam = (\diagParamLow, \diagParamUp)$, we may associate to a stratified subset $S \subset \supSp$ a persistent stratified homotopy type, $\epersParam (S)$.\\ By construction, for each $S$ the persistent stratified homotopy type $\epersParam (S)$ is represented by a space valued functor 
\[
 \{ p \leftarrow \{p,q\} \rightarrow q \} \times \mathbb R_+ \to \Top,
\]
 where $\{ p \leftarrow \{p,q\} \rightarrow q \}$ is the indexing category for diagrams such as \cref{diag:intro_abstract}. For each fixed $\varepsilon \in \mathbb R_+$ we recover a diagram indexed over $\{ p \leftarrow \{p,q\} \rightarrow q \}$ (see \cref{ex:thicken_intro} for a visualization), which under \cref{recol:diag_are_equ} corresponds to a (weak) stratified homotopy type. 
Our main results pertaining to this construction may be summarized as follows:
\begin{ctheorem}{A}\label{mainResA}
      The assignment 
    \[
    S \mapsto \epersParam (S),
    \]
    sending a stratified subset $S$ of $\supSp$ with two strata to its persistent stratified homotopy type (depending on a choice of parameters $\diagParam$) fulfills stratified analogues of \ref{enum:properties_of_PH1} to \ref{enum:properties_of_PH3}.
\end{ctheorem}
More specifically, the analogue of \ref{enum:properties_of_PH1} is guaranteed by the fact, that for finite stratified point clouds, we may encode diagrams of the form \cref{diag:intro_sublevel} in terms of diagrams of \v{C}ech complexes (see \cref{rem:computability_of_strat_pers}). The invariance under thickenings part of \ref{enum:properties_of_PH3} is guaranteed by \cref{prop:thicken_stab_lok,prop:thicken_stab_glob}. 
 \cref{prop:thicken_stab_lok} roughly states that for sufficiently constructible two strata subspaces $S \subset \supSp$, and sufficiently small choices of $\diagParam$, the (weak) stratified homotopy type given by $\epersParam (S)_{\varepsilon}$ agrees with the one of  $S$ for sufficiently small $\varepsilon>0$. In other words, the stratified homotopy type does not change under small thickenings. 
 \begin{example}\label{ex:thicken_intro}
     Again, consider the example of the lemniscate, using parameter values $\diagParam = (0.2,0.3)$. For $\varepsilon = 0.12$, the homotopy type of the thickened diagram
     \begin{center}
     \begin{tikzpicture}[scale = 0.815]
        \node(sing) at (-5,0){\includegraphics[width=3.5cm]{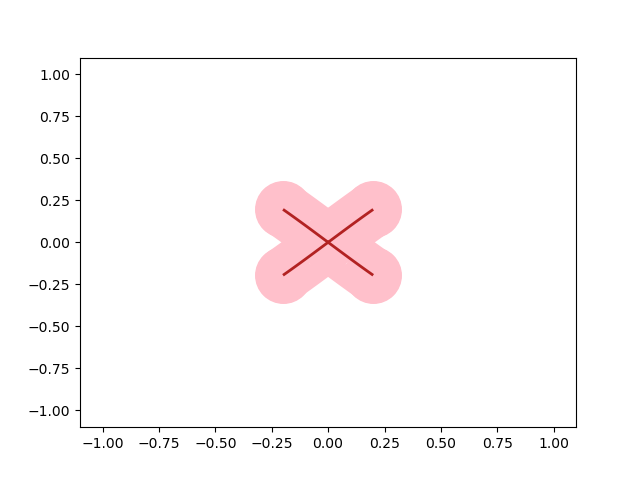}}; 
          \node(link) at (0,0){\includegraphics[width=3.5cm]{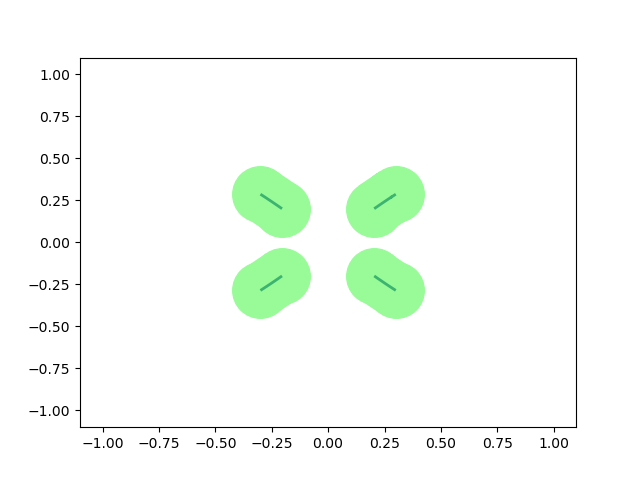}};
                  \draw[ left hook->] (link) -- (sing);
         \node(reg) at (5,0){\includegraphics[width=3.5cm]{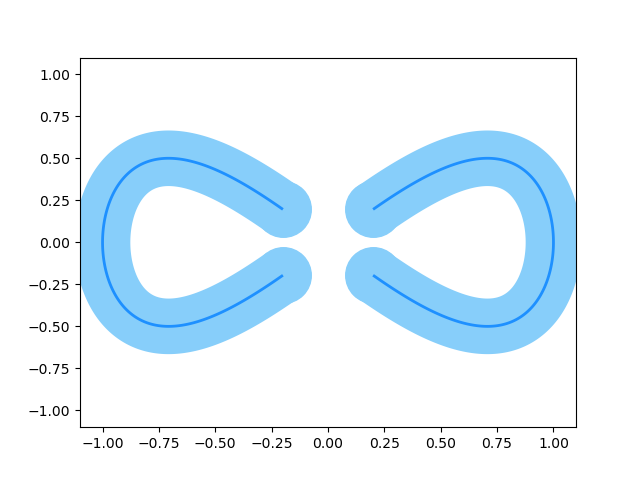}};
          \draw[ right hook->] (link) -- (reg);
    \end{tikzpicture}
\end{center}
     agrees with the unthickened diagram in \cref{ex:intro_sublvl}.
     If instead we take $\varepsilon = 0.24$, then we obtain: \\
     \begin{center}
         \begin{tikzpicture}[scale = 0.815]
        \node(sing) at (-5,0){\includegraphics[width=3.5cm]{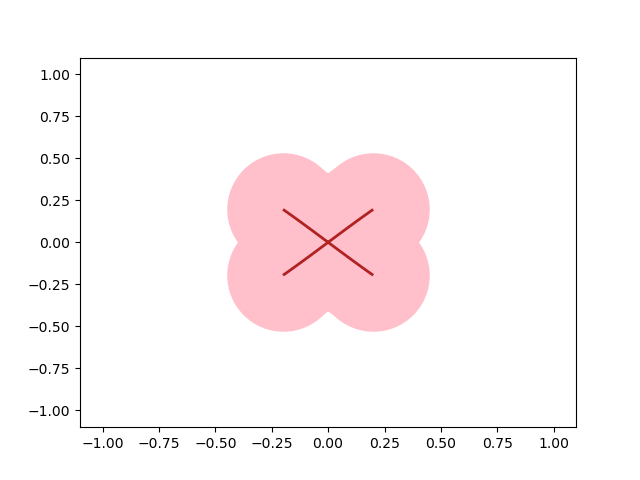}}; 
          \node(link) at (0,0){\includegraphics[width=3.5cm]{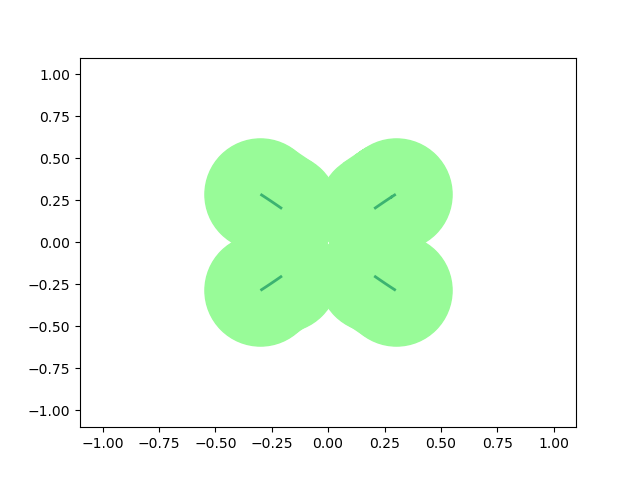}};
                  \draw[ left hook->] (link) -- (sing);
         \node(reg) at (5,0){\includegraphics[width=3.5cm]{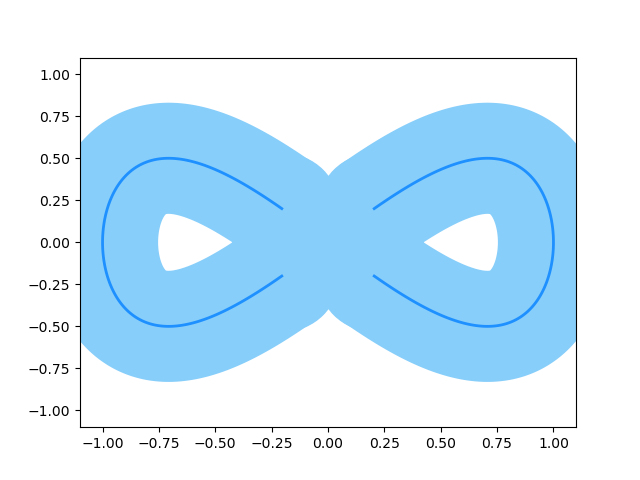}};
          \draw[ right hook->] (link) -- (reg);
    \end{tikzpicture}
    \end{center}
     The latter diagram is (weakly) equivalent to the diagram
     \begin{diagram}
         \star & \arrow[l] \star \arrow[r] & S^1 \cup_{\star} S^1 ,
     \end{diagram}
     which is not (weakly) equivalent to \cref{intro:diag_disc_lem}.
 \end{example}
 Finally, a version of stability - and hence an analogue of \ref{enum:properties_of_PH2} - is guaranteed by \cref{prop:pers_strat_htpy_type_C-lipschitz,cor:continuity_of_pers_tame}. It follows from these results that for a two strata Whitney stratified $W \subset \supSp$ and a sequence of stratified space $\stratSamp^i \subset \supSp$ converging to $W$ (in a stratified version of the Hausdorff distance, see \cref{def:samP}) the associated persistent stratified homotopy types $\epersParam (\stratSamp^i)$ converge to $\epersParam (W)$, and for small $\diagParam$ this convergence is even of Lipschitz type.
\begin{example}
    Convergence in the stratified version of the Hausdorff distance that we used is equivalent to convergence in both the underlying spaces, as well as in the singular strata.
     For example, consider the family of real algebraic varieties 
     \[
    V^s:=\{x \in \mathbb R^2 \mid f_s(x) = x_1^4 -x_1^2 +x_2^2 = s \} \textnormal{, for }s \in \mathbb R,
    \] 
     equipped with the stratification given by singular loci. In other words, the singular stratum of $V^s$ is given by the intersection of $V^s$ with the vanishing set of the Jacobian of $f_s$.
     \begin{figure}[H]
         \centering
         \includegraphics[width=0.3\textwidth]{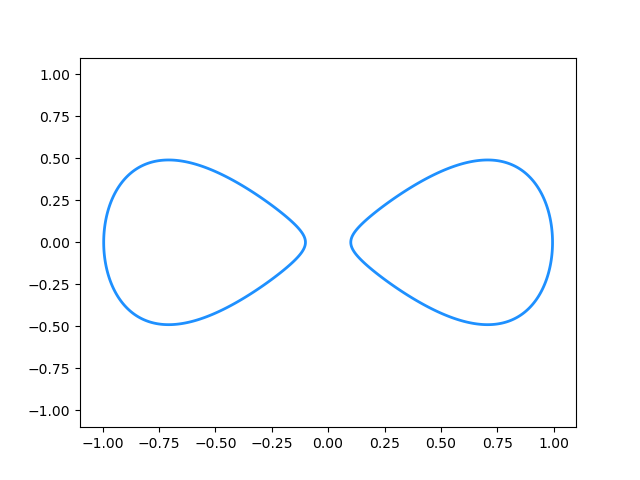}
         \includegraphics[width=0.3\textwidth]{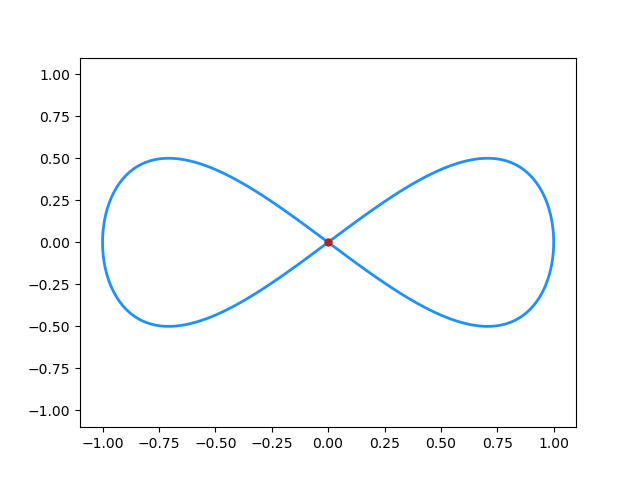}
         \includegraphics[width=0.3\textwidth]{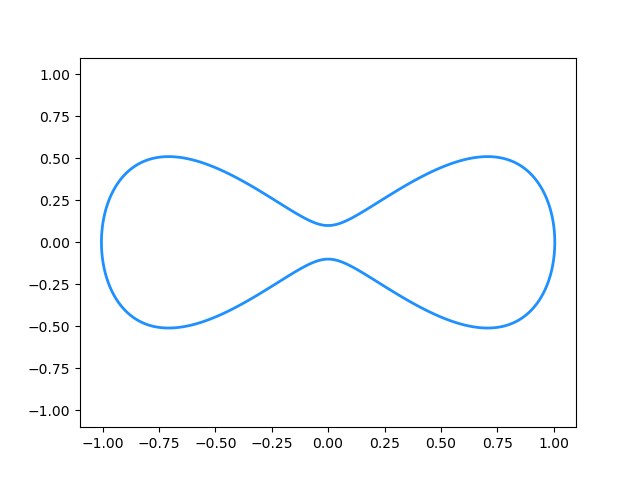}
         \caption{The real algebraic variety $V^s$, for $s\in \{ -0.1,0,0.1$ \}, with singularities marked in red}
         \label{fig:enter-label}
     \end{figure}
     Note that $V^0$ is the lemniscate as in \cref{fig:lemniscate}.
     We may not expect convergence \[
     \epersParam(V^s) \xrightarrow{s \to 0} \epersParam(V^0)\] in stratified Hausdorff distance, since for $s \neq 0$ the variety has no singular points, and hence the singular stratum is empty. In particular, the stratified Hausdorff distance between $V^s$ and $V^0$ is in fact infinite for $s \neq 0$.
     \\
     However, instead of classifying points as singular by the vanishing of the Jacobian of $f_s$, we could, for example, use a more quantitative measure of singularity and let $V^s_0 = \{ x \in V^s \mid \norm{Jf_x} \leq 3\sqrt{s} \}$ (see \cref{fig:strat_conv} for an illustration). 
     \begin{figure}[htp]
     \center
     
          \begin{tikzpicture}[scale = 0.9]
        \node(a) at (-3,0){\includegraphics[width=2.5cm, clip, trim = {1.5cm 0 1.5cm 0}]{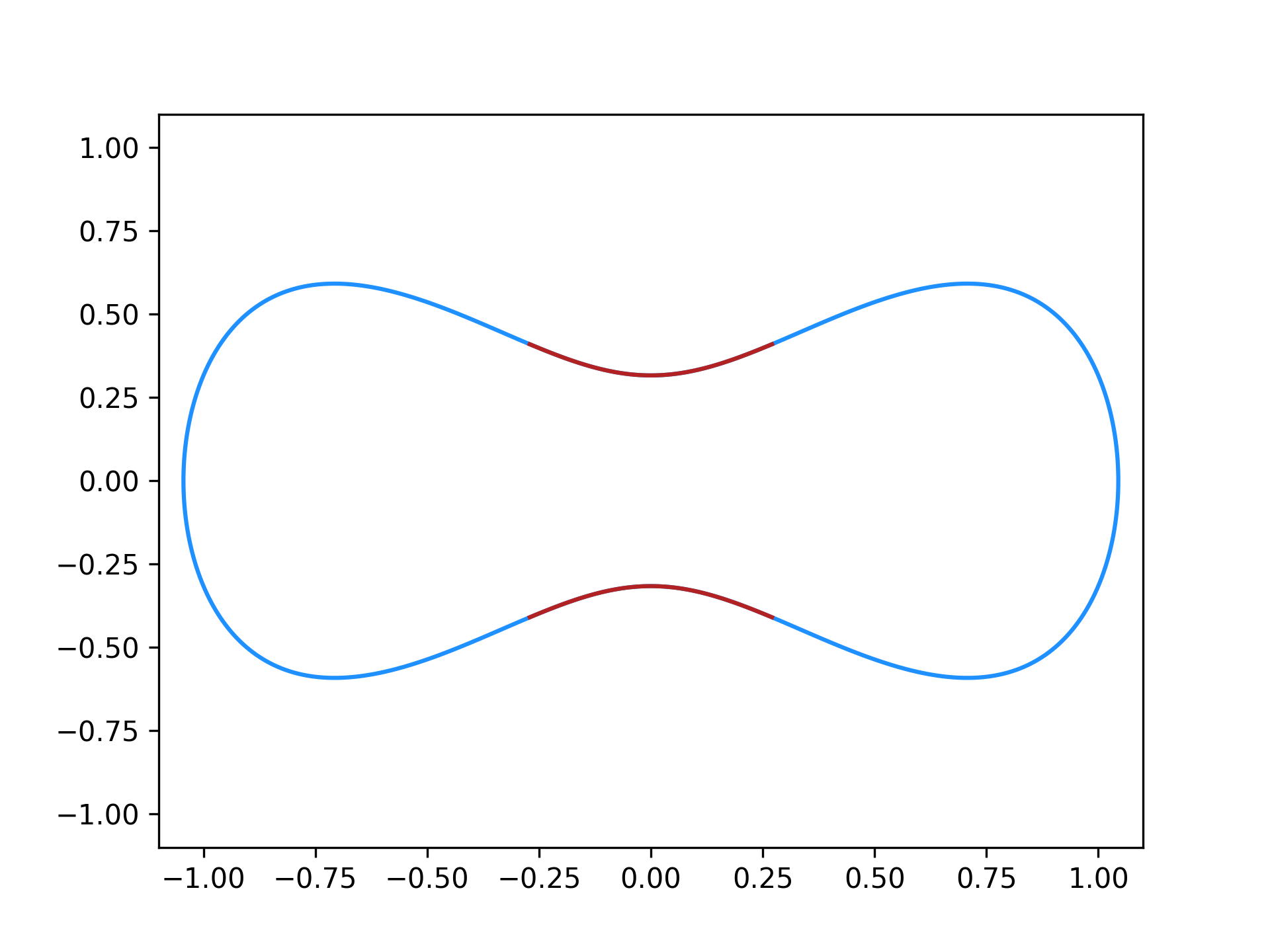}}; 
          \node(b) at (0,0){\includegraphics[width=2.5cm,  clip, trim = {1.5cm 0 1.5cm 0}]{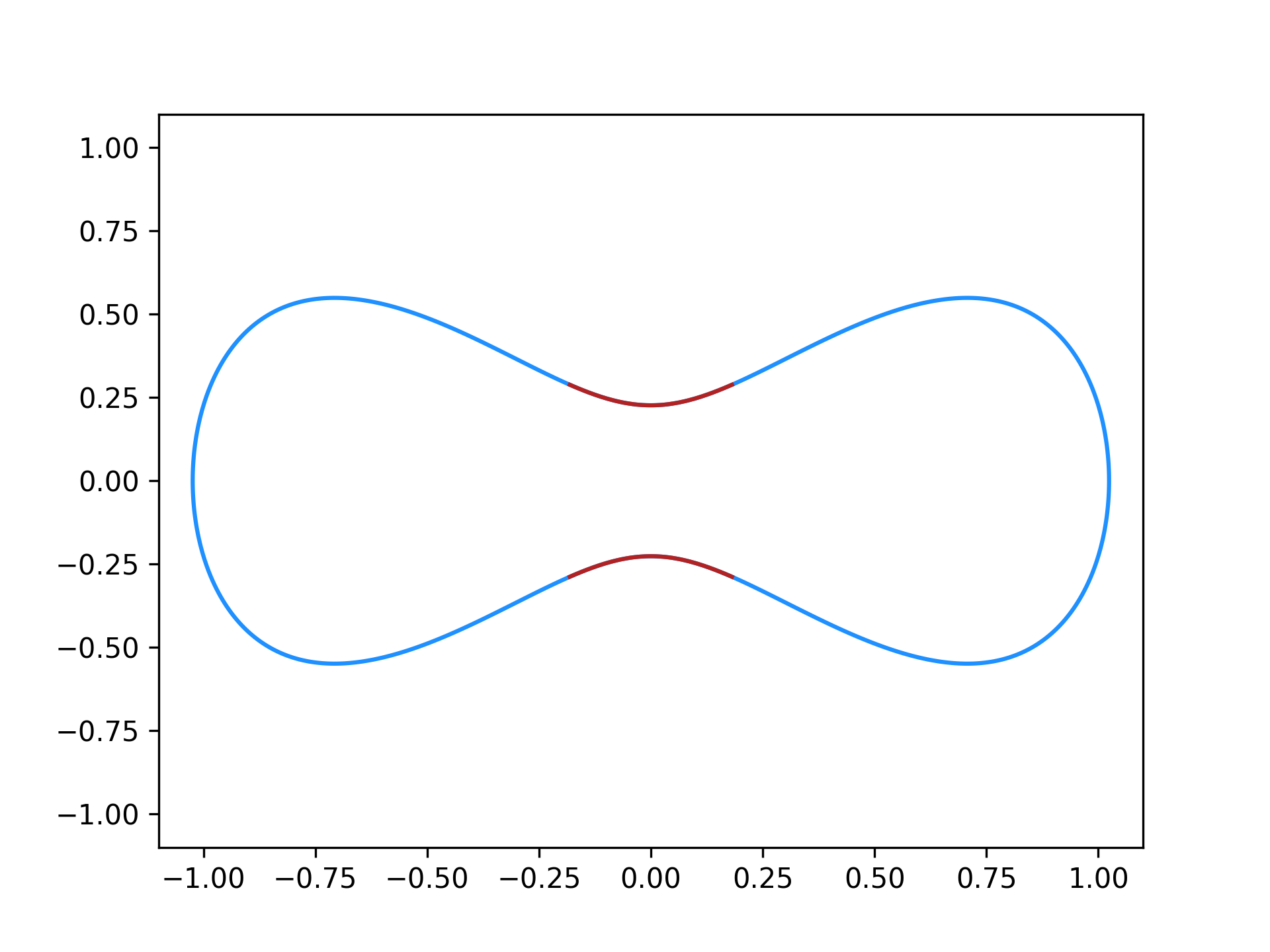}};
         \node(c) at (3,0){\includegraphics[width=2.5cm,  clip, trim = {1.5cm 0 1.5cm 0}]{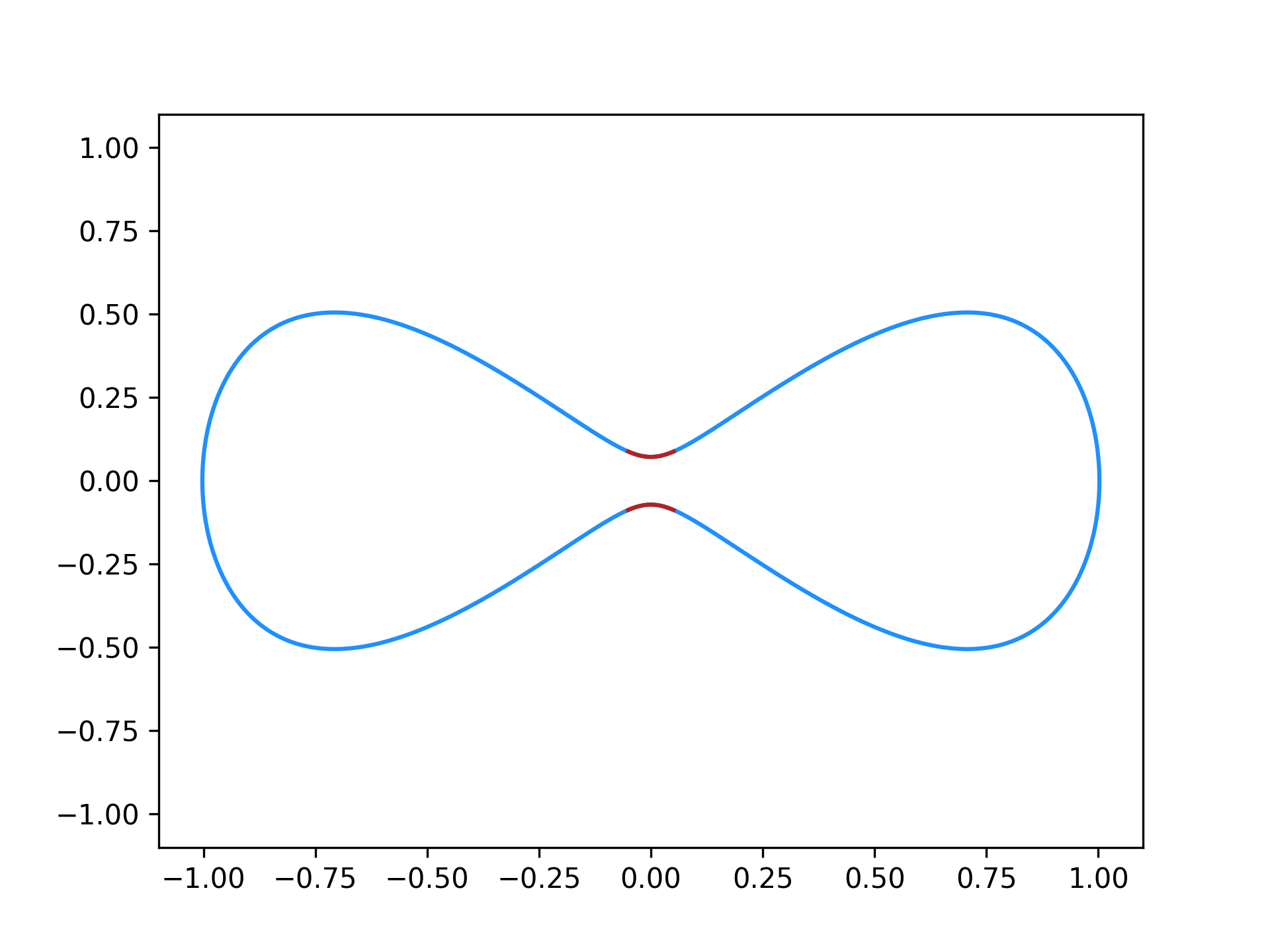}};
         \node(d) at (7,0){\includegraphics[width=2.5cm,  clip, trim = {1.5cm 0 1.5cm 0}]{Figures//lem_s0_sing.png}};
         \draw[->]  (c) -- (d) node[midway, above]{$s \to 0$}; 
    \end{tikzpicture}
    \caption{$V^s$ for $s = 0.1, 0.5, 0.05$ with bottom stratum in red given by $\{ x \in V^s \mid \norm{Jf_x} \leq 3\sqrt{s} \}$.}
    \label{fig:strat_conv}
    \end{figure}
    With these alternative stratifications $V^s$ converges to $V^0$ in stratified Hausdorff distance, as indicated in \cref{fig:strat_conv}. Hence, convergence of persistent stratified homotopy types also holds.
     The general approach of using a more quantitative measure of singularity is also central in \cref{sec:recover_strat}, which deals with stratifying point clouds.
\end{example}
Together, the statements of \cref{mainResA} allow for the computational inference of stratified  homotopy theoretic information about a two strata Whitney stratified space $W$ from a sufficiently close (potentially noisy), finite, stratified sample.

\subsection{Stratification learning through tangent cones}

In an applied scenario we would generally not expect a point cloud to already be equipped with an appropriate stratification. Much attention has been paid to how such stratifications can be obtained \cite{mileyko2021another,stolz2020geometric,nanda2020local,skraba2014approximating,bendich2012local,fasy2016exploring}. In \cref{sec:recover_strat}, we provide a method as well as theoretical guarantees (\cref{thrm:recovery_thrm}) for the scenario of approximating a stratified space from a non-stratified sample, in the stratified Hausdorff distance. 
Our approach is inspired by the approaches to persistent local homology in \cite{bendich2012local,skraba2014approximating,nanda2020local,mileyko2021another}, which detects singularities using local data of the form $\ballEps[\frac{1}{\zeta}]{x} \cap W$, for large $\zeta>0$ and $x \in W \subset \supSp$. From a scale independent point of view, one may equivalently consider the so-called 
\define{magnifications}
\[
\magnification{W} := \zoomParam (W - x) \cap \ballEps[1]{0}.
\]

If $W$ is a sufficiently constructible Whitney stratified space, then it is a classical result (see \cite{hironaka1969normal,bernig2007tangent}) that as $\zeta$ converges to $\infty$ these magnifications converge to the (unit ball in the) so-called \define{extrinsic tangent cone} at $x\in W$ (see \cref{def:tangent_cone}) - a generalization of tangent spaces in the singular setting.
Thus, the class of stratified spaces we investigate are the \define{tangentially stratified} (see \cref{def:tangentially_stratified}) spaces, for which singularities may be detected by their extrinsic tangent cones. 
\begin{figure}
    \centering
    \includegraphics[width = 0.4\textwidth]{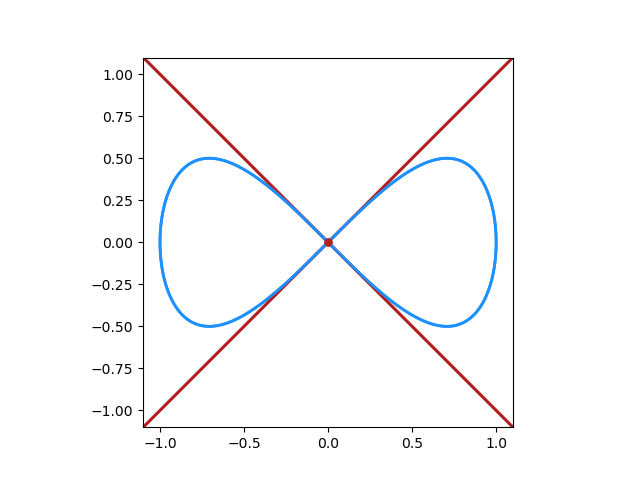}
        \includegraphics[width = 0.4\textwidth]{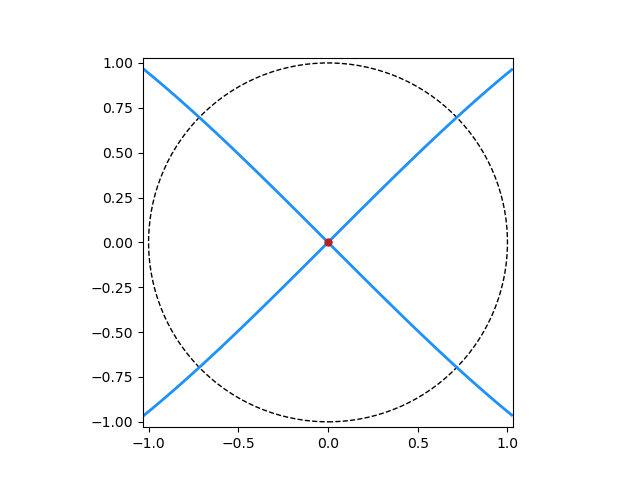}
    \caption{Tangent cone (red) at origin of the lemniscate and magnification at the origin for $\zeta = 3$}
    \label{fig:tangent_cone_lem}
\end{figure}
\begin{example}
     In the case of the lemniscate, for example, the tangent cone at the origin is given by the algebraic variety \[
\{ x \in \mathbb R^2 \mid (x_1 - x_2) (x_1 + x_2) = 0 \}
\]
which identifies the origin as singular (see \cref{fig:tangent_cone_lem}). Hence, with respect to the standard stratification, the lemniscate is tangentially stratified.
\end{example}
For the purpose of topological data analysis, we needed a global sampling version of the convergence results of \cite{hironaka1969normal}. More specifically, we needed a version which at the same time allows for the replacement of $W$ by a sufficiently close sample, and furthermore holds with respect a global notion of convergence, where a single tangent cone is replaced by a bundle of tangent cones.
We prove such convergence results in \cref{prop:loc_sampl_convergence_of_magnifications,prop:tangent_co_conv_compacta}. \\
\\
In \cref{subsec:restrat}, we leverage the convergence guaranteed by \cref{prop:tangent_co_conv_compacta} to provide theoretical guarantees for learning stratifications with two strata from a non-stratified sample $\sampX \subset \supSp$ close to a tangentially Whitney stratified space $W$. To decide which points of $\sampX$ should be considered singular, we consider specific functions  (see \cref{def:Phi_strat})
\[
 \Phi \colon \{ \textnormal{Local Data} \} \to [0,1],
\] 
which quantifiy the degree of degeneracy of $\magnification{\sampX}$ (with $1$ being regular and $0$ highly singular). 
Examples of such functions $\Phi$ come from minimizing truncated Hausdorff distances to $q$-dimensional linear subspaces (where $q$ is the dimension of $W$, see \cref{ex:phi-strat_hausdorff}) and local persistent homology (see \cref{ex:phi-strat_pers_cohom}). \\
A Whitney stratified space $W \subset \supSp$, with two strata, for which the singular stratum $W_p$ is precisely given by such points $x \in W$, for which $\Phi(\tangentCo W) < 1$, is called \define{(tangentially) $\Phi$-stratified}. In other words, $\Phi$-stratifications are precisely the stratifications which we may hope to learn through the lens of $\Phi$.
Tangentially stratified spaces, for example, are precisely the $\Phi$-stratified spaces, with respect to the function of \cref{ex:phi-strat_hausdorff}. \\
After a choice of cutoff parameter $u \in (0,1)$, one may then turn a non-stratified point cloud $\sampX \subset \supSp$ into a stratified point cloud with two strata, denoted $\ePhiStr(\sampX)$, by taking the singular stratum to be
\[
\ePhiStr(\sampX)_p = \{x \in \sampX \mid \Phi( \magnification \sampX ) \leq u\}.
\]
This construction depends, of course, on the magnification parameter $\zeta >0$. 
Our main result on stratification learning through tangent cones then describes the convergence behavior of $\ePhiStr(\sampX)$ in $\zeta$ and $\sampX$. In the following $\dmet[\HD]{-}{-}$ denotes non-stratified Hausdorff distance:
\begin{ctheorem}{B}[\cref{thrm:recovery_thrm}]
        Let $W \subset \supSp$ be a compact (sufficiently constructible, see \cref{def:Loj_Whitney}) Whitney stratified space, which is $\Phi$-stratified with respect to a function $\Phi$ as in \cref{def:Phi_strat}. Denote by $X$ the underlying space of $W$.
    Then there exists $u_0 \in (0,1)$ such that for all $u\in [u_0,1)$, the convergence in stratified Hausdorff distance
    \[
    \ePhiStr[\zeta](\sampX) \to W,
    \]
    holds, for $\zeta \to \infty$ and $\sampX \to X$ such that $\zeta \dmet[\HD]{\sampX}{X} \to 0$. 
 \end{ctheorem}
In other words, we may approximate the singular stratum $W_p$ of $W$ by $\ePhiStr[\zeta](\sampX)_p$ if $\zeta$ is sufficiently large and given that $\sampX$ is a sufficiently fine approximation of $X$, where the sufficient degree of fineness depends on $\zeta$.
This dependence on $\zeta$ is not surprising at all, in fact, it is simply a rigorous restatement of the following principle:
To recover local information from a sample, the quality of the sample needs to be finer by some magnitude than the locality scale we work at.
\begin{example}\label{ex:3x3_strat_sam}
Below, we depict a family of point clouds converging to the lemniscate $V^0$ in Hausdorff distance. These samples have been stratified using $\Phi$ as in \cref{ex:phi-strat_hausdorff} with $u = 0.6$, and we marked points in $\ePhiStr[\zeta](\sampX)_p$ with thickened red dots. 
In horizontal direction from left to right the non-stratified Hausdorff distance, denoted $d$, to $V^0$ decreases. In vertical direction going downwards we increase the magnification parameter $\zeta$.\\
\\
\begin{centering}
       \begin{tikzpicture}
        \newdimen\x
        \x = 4.49cm     
        
        \node (lu) at (-0.8\x,0.8\x){\includegraphics[trim={7cm 1cm 4cm 2cm},clip,width=\x]{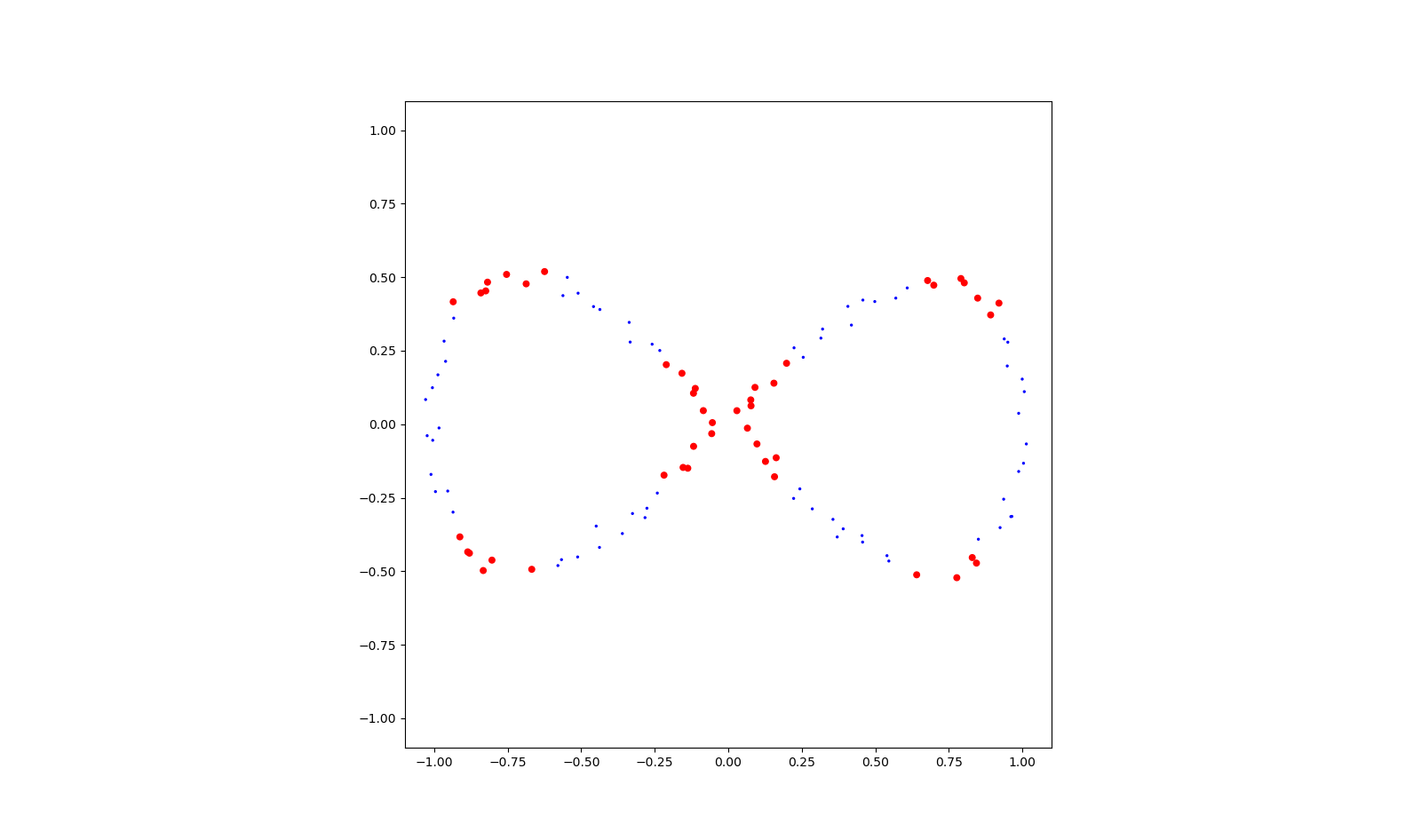}}; 
         \node (lm) at (-0.8\x,0){\includegraphics[trim={7cm 1cm 4cm 2cm},clip,width=\x]{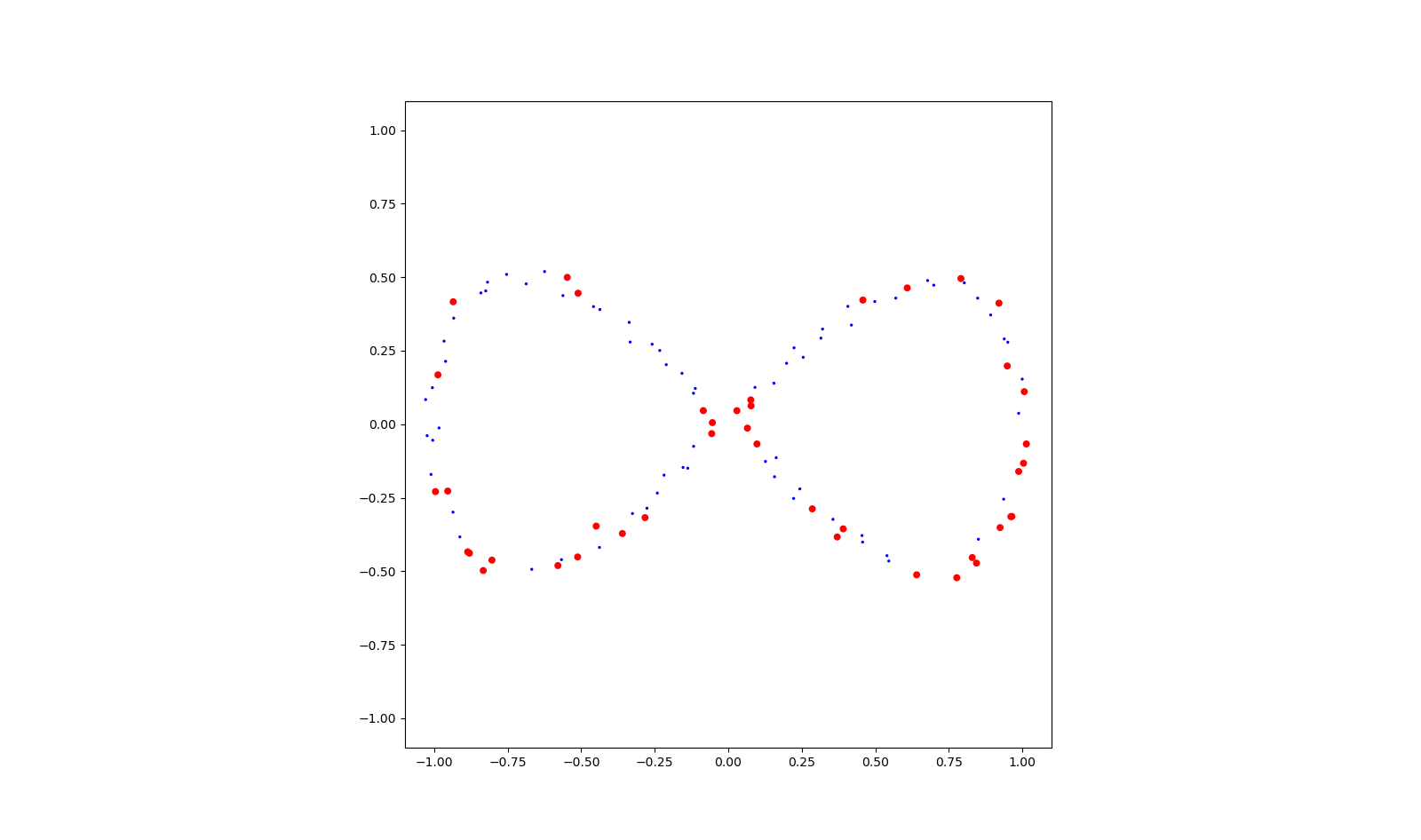}}; 
         \node (ld) at (-0.8\x,-0.8\x){\includegraphics[trim={7cm 1cm 4cm 2cm},clip,width=\x]{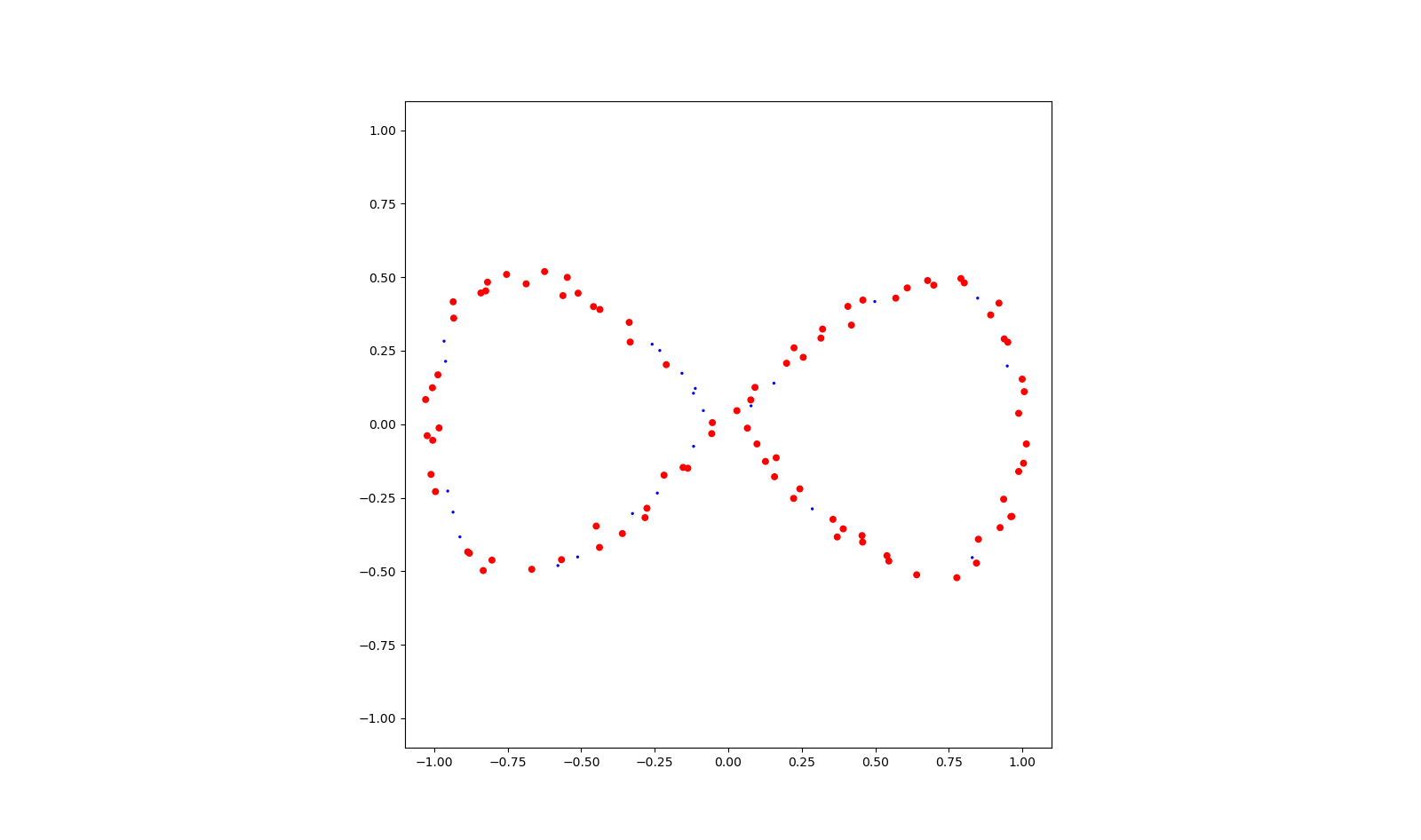}}; 
         \node (mu) at (0,0.8\x){\includegraphics[trim={7cm 1cm 4cm 2cm},clip,width=\x]{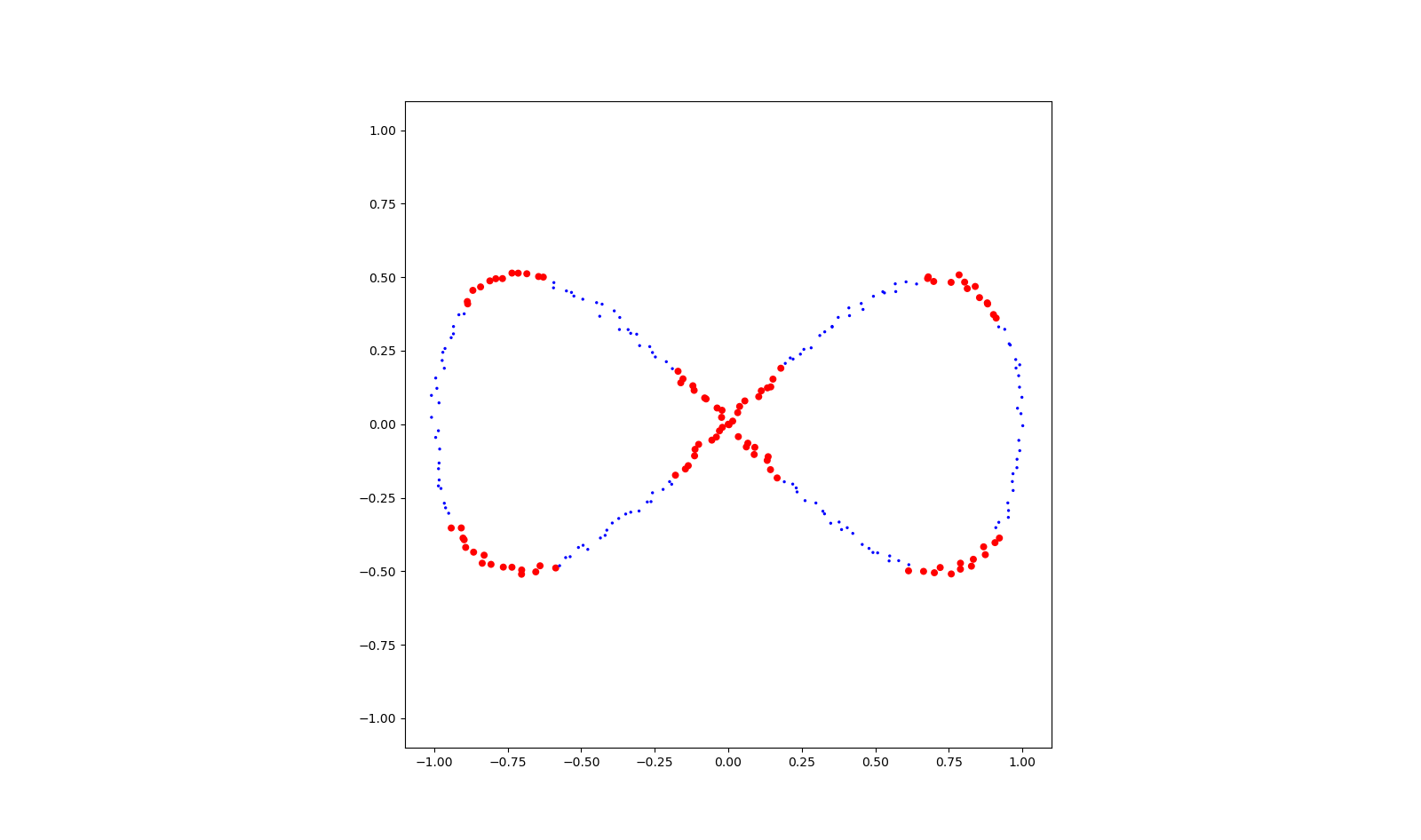}}; 
         \node (mm) at (0,0){\includegraphics[trim={7cm 1cm 4cm 2cm},clip,width=\x]{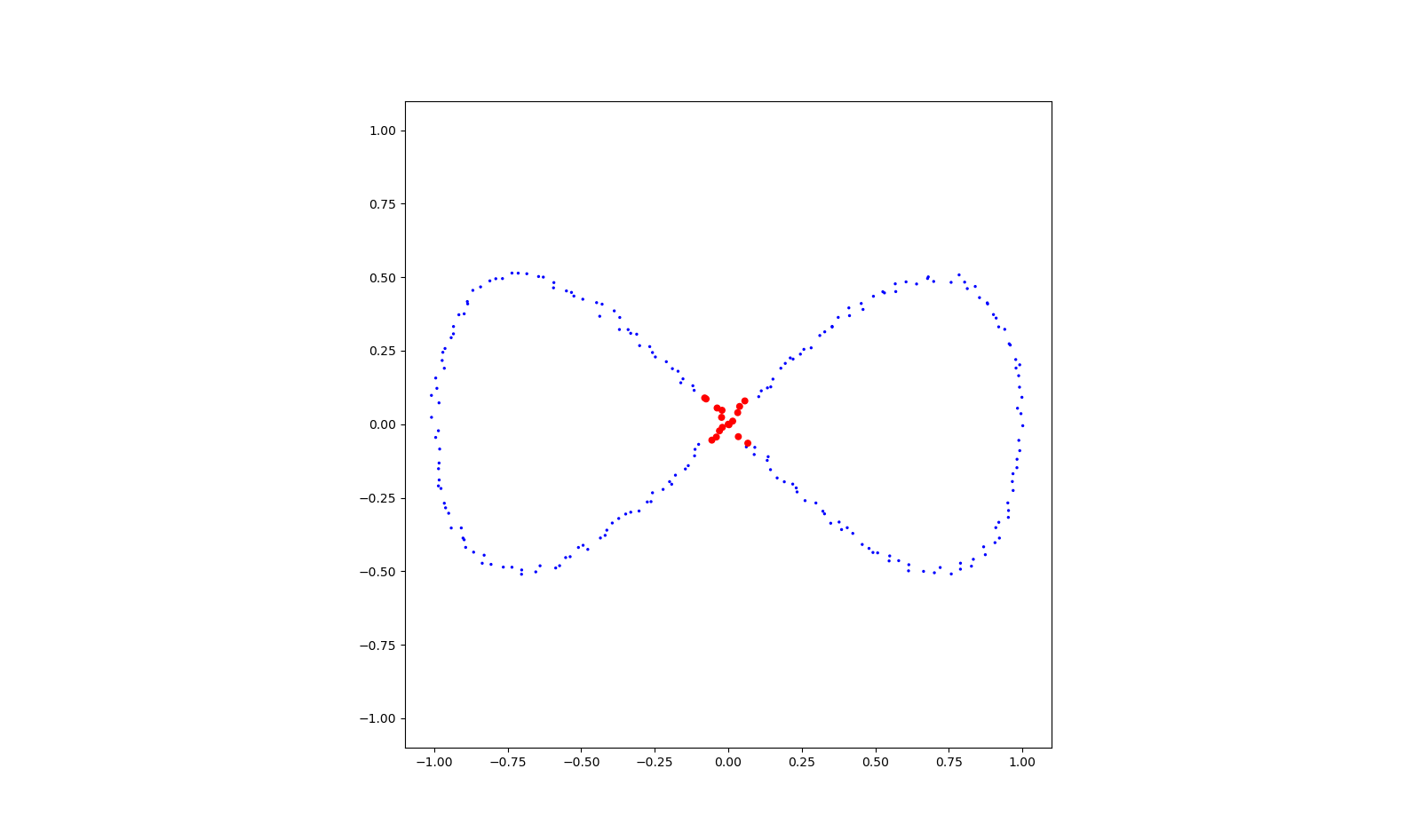}}; 
         \node (md) at (0,-0.8\x){\includegraphics[trim={7cm 1cm 4cm 2cm},clip,width=\x]{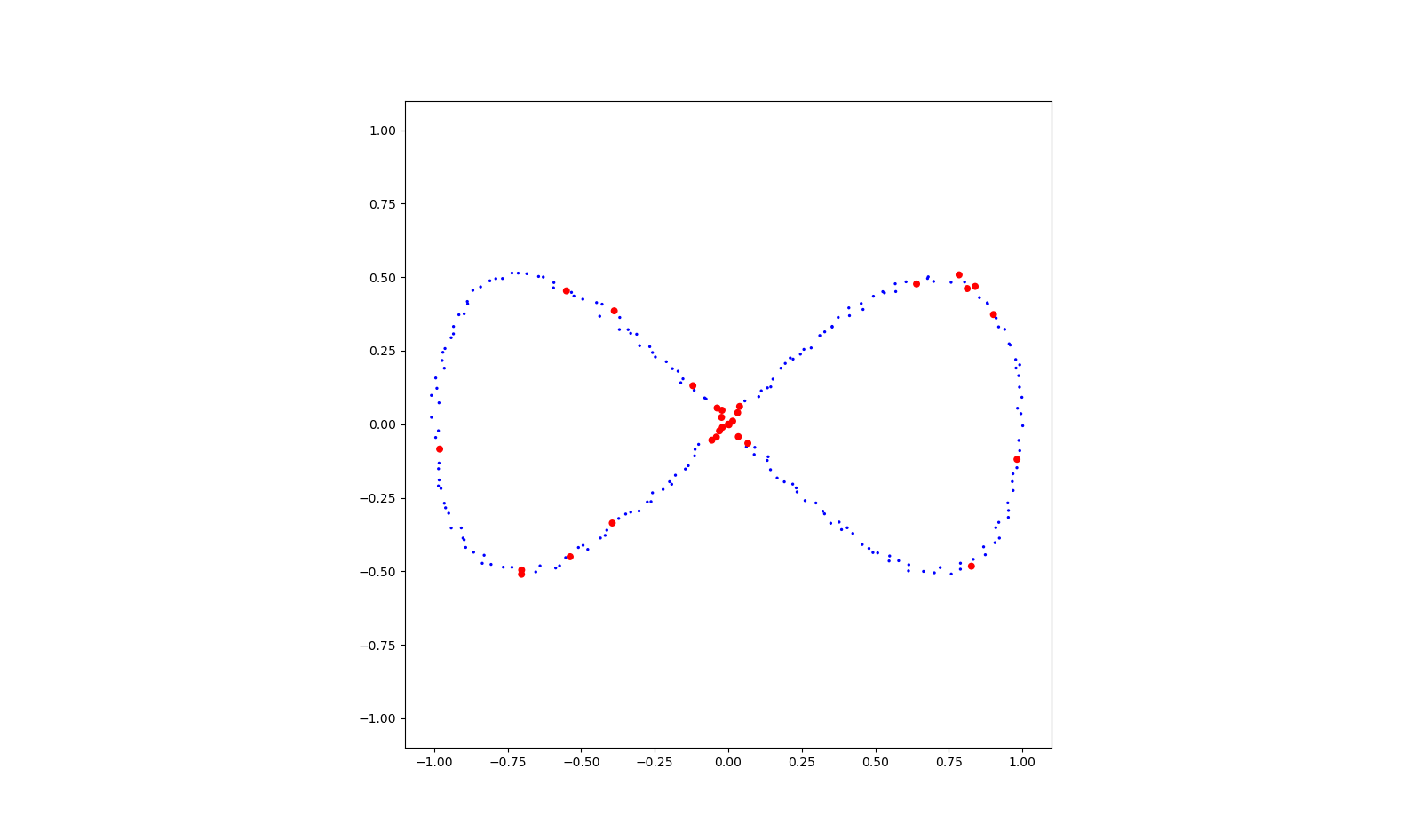}}; 
         \node (ru) at (0.8\x,0.8\x){\includegraphics[trim={7cm 1cm 4cm 2cm},clip,width=\x]{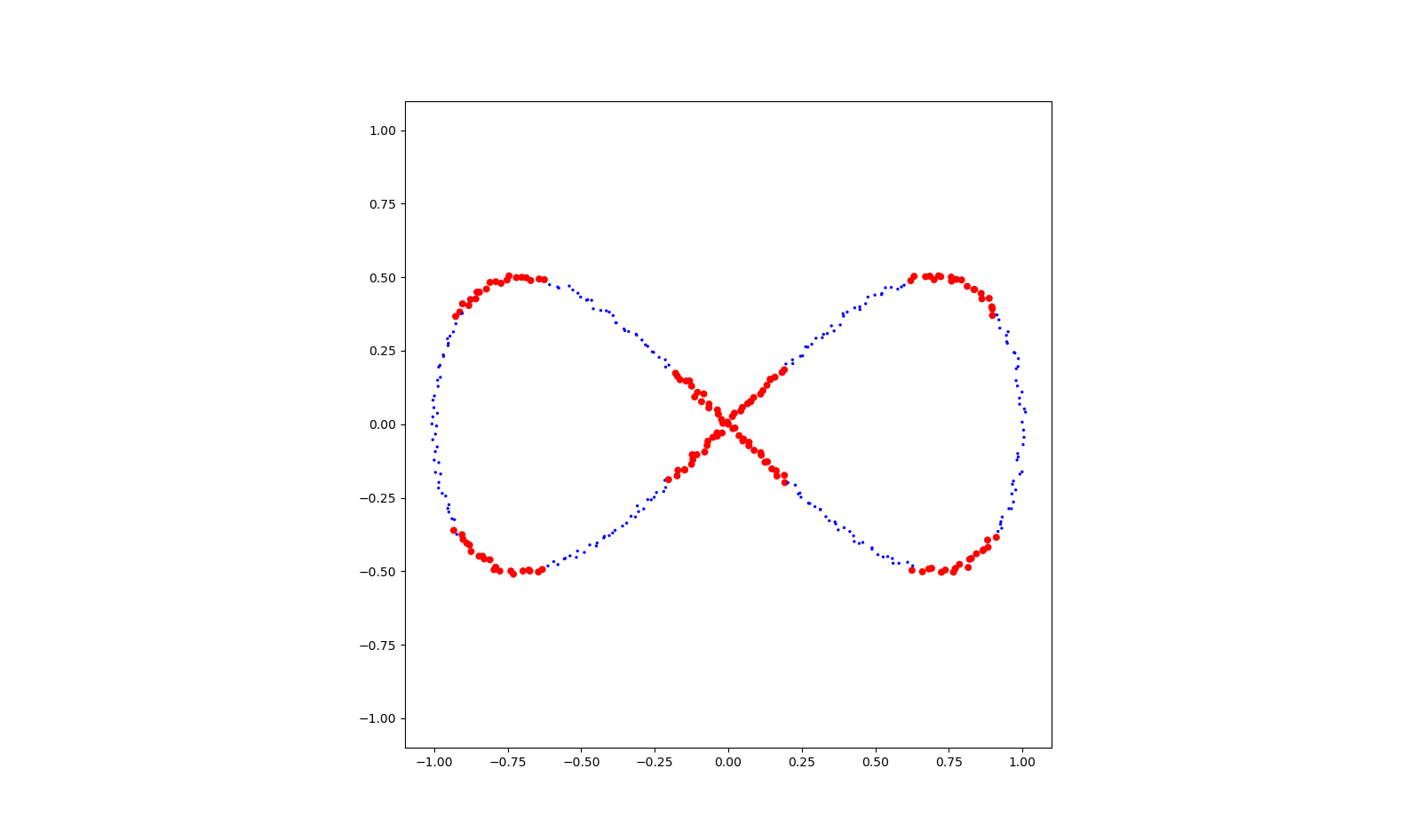}}; 
         \node (rm) at (0.8\x,0){\includegraphics[trim={7cm 1cm 4cm 2cm},clip,width=\x]{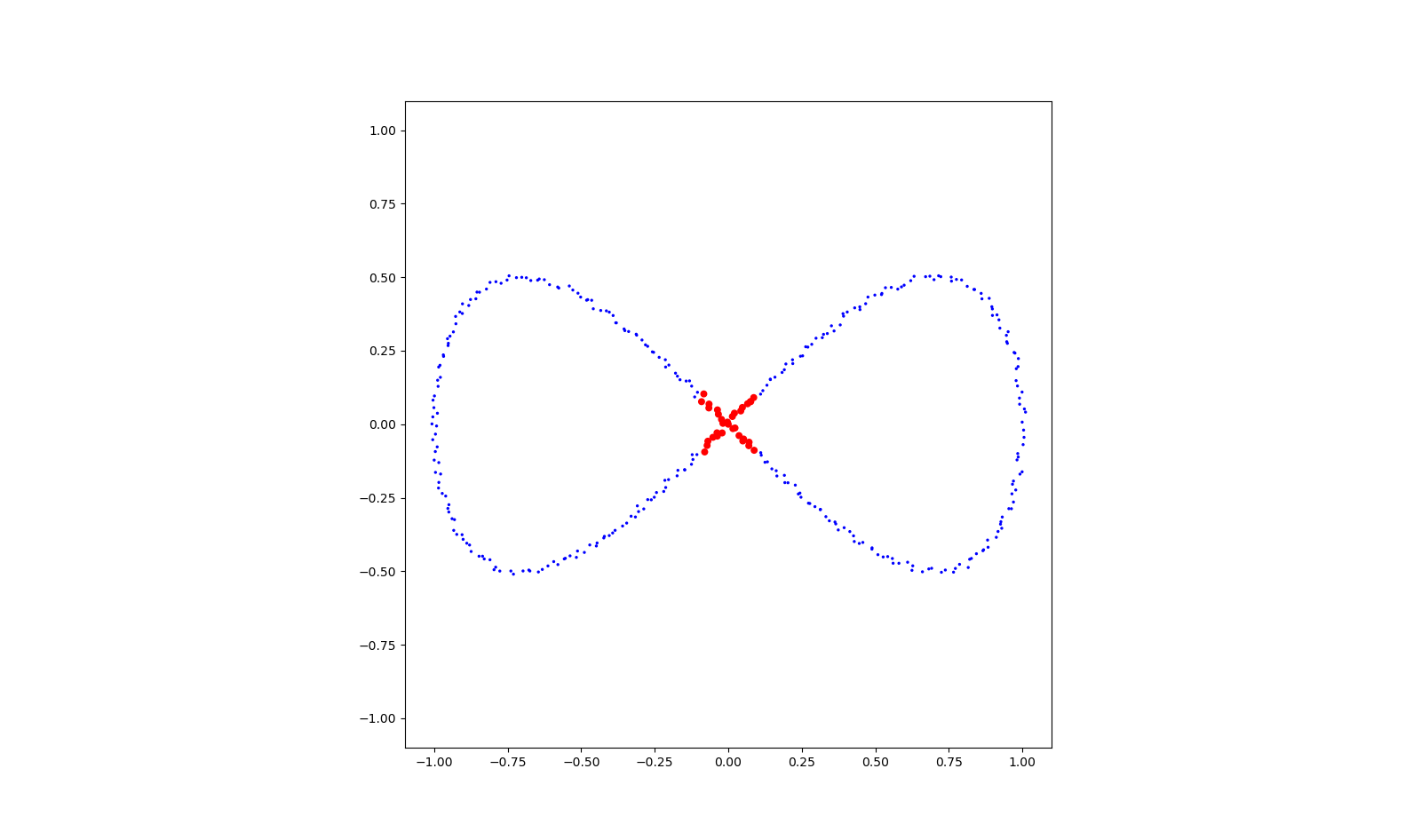}}; 
         \node (rd) at (0.8\x,-0.8\x){\includegraphics[trim={7cm 1cm 4cm 2cm},clip,width=\x]{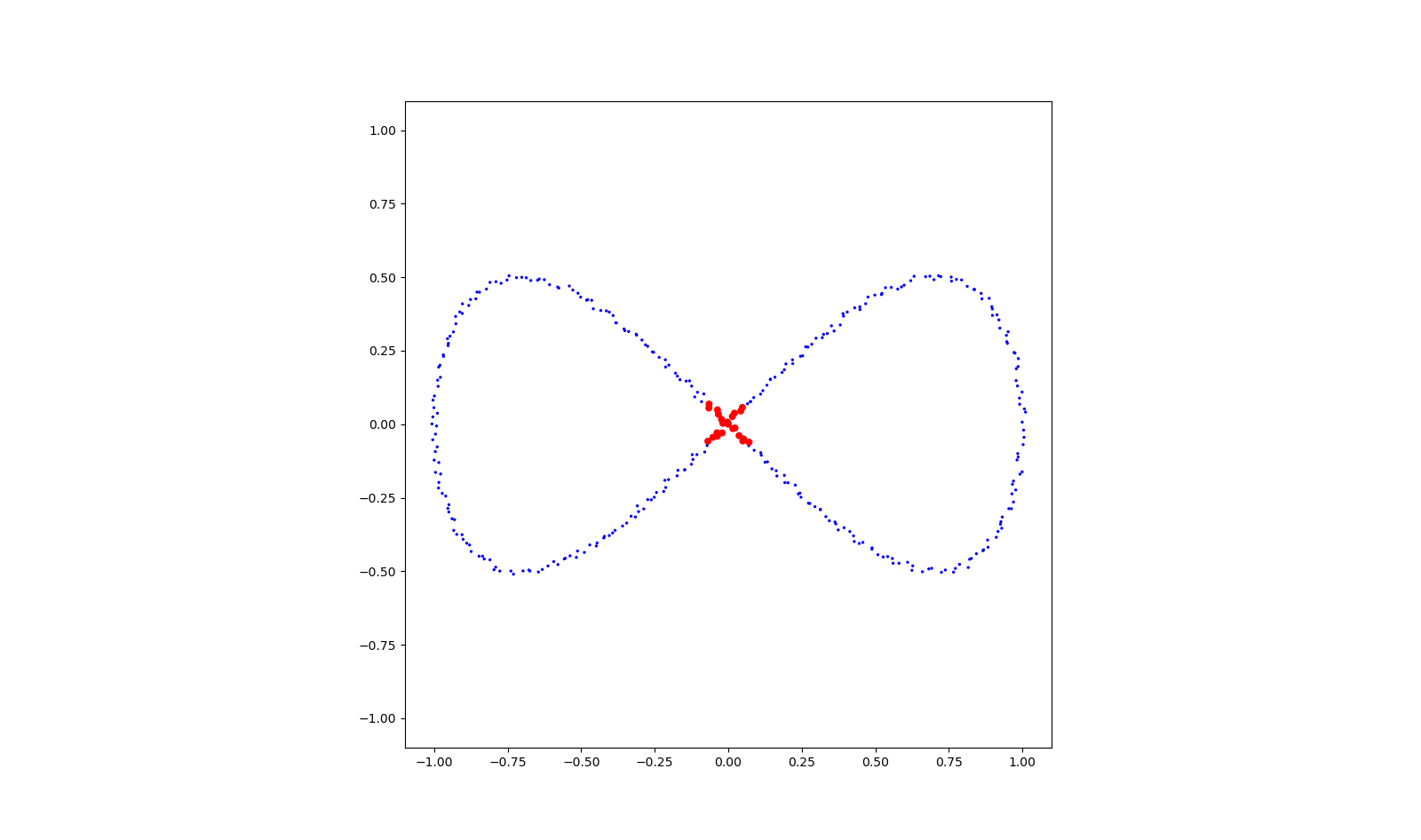}};
         \node(d1) at (-0.8\x, 1.25\x) {$d \approx 0.07$};
          \node(d1) at (0, 1.25\x) {$d \approx 0.035$};
           \node(d1) at (0.8\x, 1.25\x) {$d \approx 0.023$};
         \node[align = center] (z1) at (-1.3\x, 0.8\x){ $\heqto{\zeta}{ 3}$ };
         \node (z1) at (-1.3\x, 0){$\heqto{\zeta}{ 6}$};
          \node (z1) at (-1.3\x, -0.8\x){$\heqto{\zeta}{ 9}$};
    \end{tikzpicture}
\end{centering}
In practice, these choices of $\Phi$ and $u$ often tend to be too sensitive, especially when it comes to detecting curvature (see also \cref{ex:phi-strat_hausdorff}). In this example, however, they serve well to illustrate the convergence behavior in magnification parameter and Hausdorff distance. \\
First, note that for $\zeta = 3$, points in regions with high curvature are generally classified as belonging to $\ePhiStr[\zeta](\sampX)_p$. This is not surprising as for such comparatively small values, we may not expect magnifications at regular points to be close to the tangent spaces yet. 
By doubling $\zeta$, we may correctly classify these regions, provided that the sample quality is good enough to also approximate things at a local level (see the middle row). In this case, only an area around the singularity is classified as belonging to $\ePhiStr[\zeta](\sampX)_p$.
If we want to further shrink this area, and thus obtain a better approximation in stratified Hausdorff distance, we may again decrease $\zeta$ (see third row, in particular the most right picture). In case our sample quality is not sufficient, this may also classify several points far away from the singularity as singular though (see the second picture of the third row). \\ For less sensitive choices of parameter $u$ and alternative choice of $\Phi$, samples of lesser quality and smaller $\zeta$ may lead to good approximations in stratified Hausdorff distance. Consider, for example, \cref{fig:better_u} which was obtained with the same $\Phi$ and $u=0.4$. 
\begin{figure}
        \centering
       \includegraphics[width=0.4\textwidth]{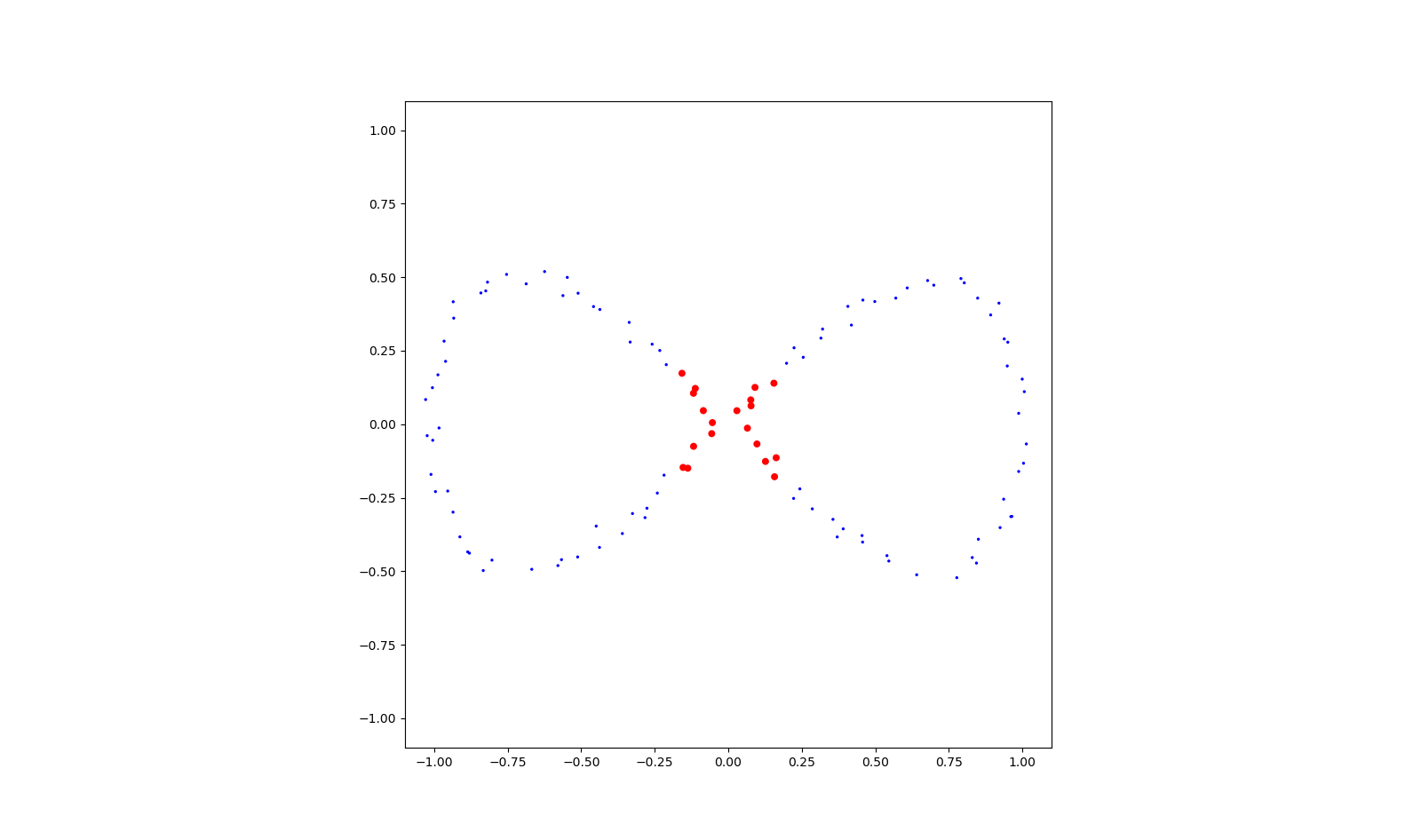}
        \caption{Approximated stratification of a sample around the lemniscate with $\zeta = 3$ and $u=0.4$.}
        \label{fig:better_u}
 \end{figure}
\end{example}

Finally, combining our two methods, i.e. persistent stratified homotopy types and stratification learning through tangent cones, we obtain a pipeline which associates to a non-stratified sample a persistent stratified homotopy type. The combination of \cref{prop:pers_strat_htpy_type_C-lipschitz} and \cref{thrm:recovery_thrm}, which is \cref{cor:composed_convergence}, guarantees that we may indeed approximate persistent stratified homotopy types of sufficiently regular Whitney stratified spaces from nearby samples, under the assumptions of \cref{thrm:recovery_thrm}. Using \cref{prop:thicken_stab_lok} we may then infer from these approximations information about the stratified homotopy type of $W$.
\begin{example}
As an illustration of the convergence of persistent stratified homotopy types obtained from non-stratified samples, consider \cref{fig:intro_barcodes}. It shows the barcode of the $0$-homology of the link part of the persistent stratified homotopy types associated to the stratified point clouds of \cref{ex:3x3_strat_sam}, going in a zigzag above the diagonal from the upper left to the lower right corner.
   \begin{figure}[htp]
       \centering
      \begin{centering}
       \begin{tikzpicture}
            \newdimen\x
        \x = 2.1cm   
            \node[align =center, text width = \x] (x1)  at (0,0){{\includegraphics[trim={1cm 0cm 1cm 1cm},clip ,width=\x]{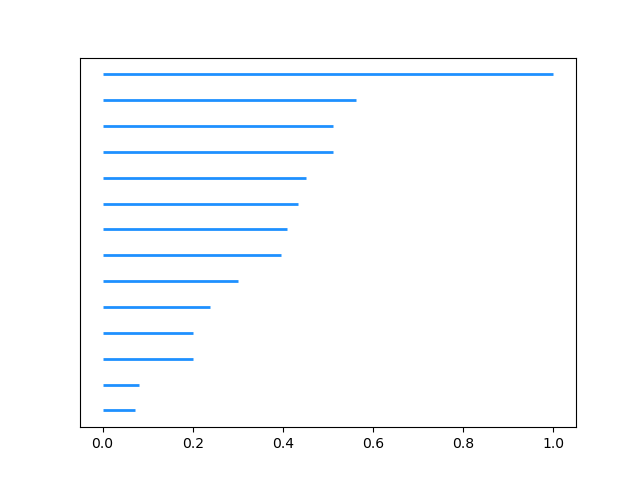}} \tiny $d = 0.07$ \\ $\zeta = 3$};
            \node[align =center, text width = \x] (x2)  at (1.1\x,0){{\includegraphics[trim={1cm 0cm 1cm 1cm},clip, width=\x]{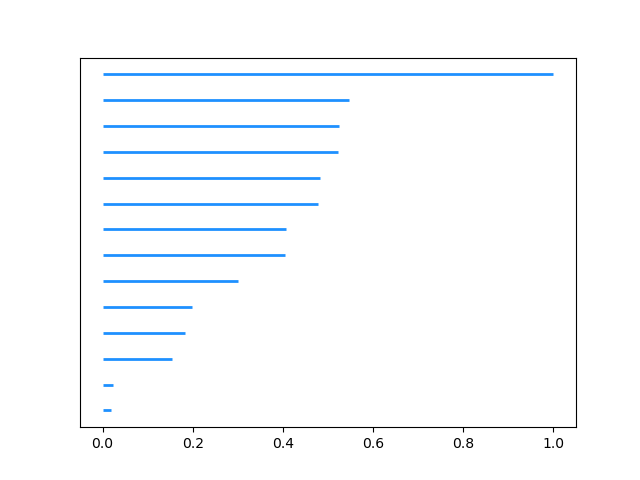}} \\ \tiny $d = 0.035$ \\ $\zeta = 3$ };
            \node[align =center, text width = \x] (x3)  at (2.2\x,0){{\includegraphics[trim={1cm 0cm 1cm 1cm},clip, width=\x]{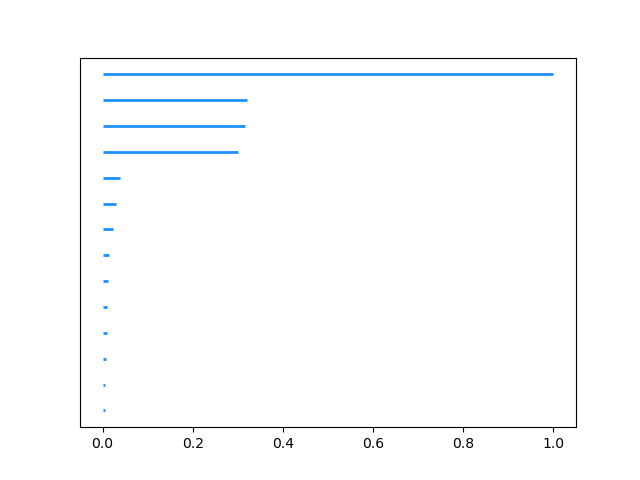}} \\ \tiny $d = 0.035$ \\ $\zeta = 6$};
            \node[align =center, text width = \x] (x4)  at (3.3\x,0){{\includegraphics[trim={1cm 0cm 1cm 1cm},clip, width=\x]{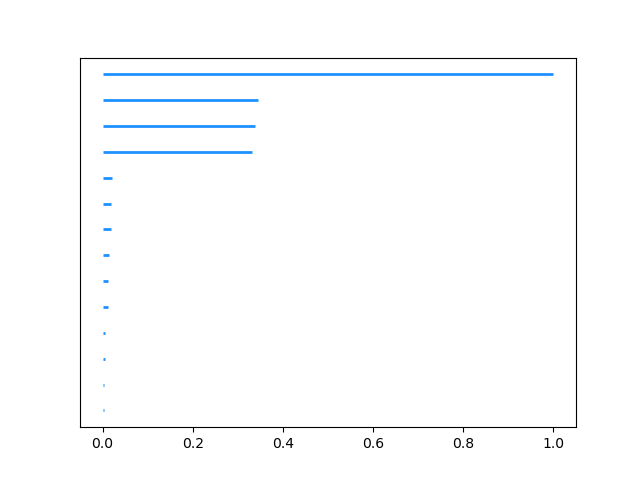}}\\ \tiny $d = 0.023$ \\ $\zeta = 6$};
            \node[align =center, text width = \x] (x5)  at (4.4\x,0){{\includegraphics[trim={1cm 0cm 1cm 1cm},clip, width=\x]{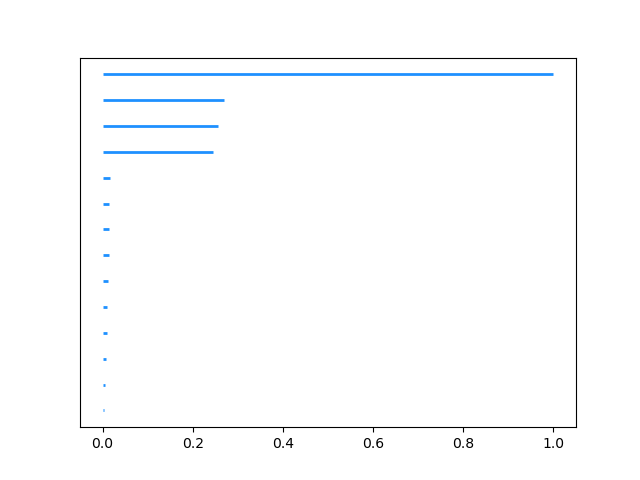}}\\ \tiny $d = 0.023$ \\ $\zeta = 9$};
            \end{tikzpicture}
   \end{centering}
       \caption{Barcodes (showing the $14$ longest bars) of the $0$-th homology of the link part of $\epersParam ( \ePhiStr(\sampX))$, with varying $\sampX$ and $\zeta$. Here $\sampX$ is a point cloud of Hausdorff distance to $V^0$ approximately $d$.}
       \label{fig:intro_barcodes}
   \end{figure}
   Note that for $\zeta = 3$, five disjoint regions are detected as singular and the link ends up having twelve path components. For $\zeta = 6$ and $\zeta = 9$, however, the stratified samples are close to $V^0$ in stratified Hausdorff distance and we detect four path components in the link. This is the expected number of path components from \cref{ex:intro_sublvl}.
\end{example}
\subsection{The case of more than two strata}\label{subsec:more_strata}
Let us end this introduction with some remarks on the case of multiple strata.
In fact, many of the constructions throughout this paper (such as the construction of diagrams representing stratified homotopy types) can be generalized to the case of more than two strata through a serious amount of inductive and technical effort. We have chosen to focus on the two strata case for the following reasons:
\begin{enumerate}
    \item The main goal of this paper is to establish a pipeline going all the way from a (non-stratified) sample of a stratified space to a persistent stratified homotopy type and investigate the properties of such a construction.
    In this sense, it is partially intended as a proof of concept, leaving room for many improvements and generalizations at several steps of the pipeline for future works.
    While some of the steps are fairly easy to replicate in a multi-strata scenario, this is not the case for all of them. In particular, learning stratifications from non-stratified data gets significantly harder when the underlying stratification poset is more complicated (this is due to examples such as the Whitney Umbrella, see for example \cite{helmer2021conormal} or \cite[p. 128-129]{banagl2007stratinvar}). Note that our results on convergence of magnifications to tangent cones are proven in the setting of more than two strata (\cref{prop:loc_sampl_convergence_of_magnifications}), and thus may allow generalizations for appropriate families of functions $\Phi$, for suitably nice spaces, avoiding examples such as the Whitney Umbrella. 
    We are aware that these types of stratification learning questions are currently the research focus of several other groups. 
    \item While it is certainly possible to generalize the definition of the persistent stratified homotopy type to more complicated posets (note that a lot of the abstract homotopy theory is already in place \cite{douteau2019stratified,douteauwaas2021,haine2018homotopy,ayala2018stratified}), this significantly increases the technical complexity of definitions and proofs involved, adding an inductive component. This adds another technical difficulty to an already somewhat lengthy paper, which we wanted to avoid at this point.
    \item In the case of multiple strata, there are several slightly different approaches to what the homotopy category of stratified spaces should be (compare \cite{haine2018homotopy} and \cite{douteauwaas2021}). While all of these approaches agree in the two strata case and on the class of Whitney stratified spaces, the investigation and comparison of the multi-strata case is still the content of ongoing research. New results and insights on the theoretical side may still greatly influence and simplify the transfer to topological data analysis, making it worthwhile to save the development of the multi-strata case for future projects. In particular, having inductive interpretations of the stratified homotopy theories defined in \cite{douteau2019stratified,douteauwaas2021,haine2018homotopy,ayala2018stratified} would significantly streamline the transfer to the setting of topological data analysis.
\end{enumerate}

\section{Stratified homotopy theory}\label{sec:strat_htpy_theo}
In this section, we summarize material concerning stratified spaces and their homotopy theory, as far as it is relevant to our investigations (\cref{subsec:strat_spaces,subsec:htpy_cat,subsec:strat_diag}). The exposition on stratified homotopy theory should be accessible for a reader familiar with basic notions in algebraic topology and category theory. Both for details and the complete model categorical picture we refer the reader to \cite{douteauwaas2021}, which contains a comprehensive overview. For more details on stratified spaces and their invariants consider, for example, \cite{banagl2007stratinvar}.
\subsection{Stratified spaces}\label{subsec:strat_spaces}
We begin by recalling some of the basic notions relevant to the theory of stratified space. Recall that the Alexandrov topology on a poset $P$, is the topology in which the closed sets are the sets which are closed below, under the relation on $P$.
\begin{definition}
A \define{stratified space} (over a poset $\pos$) is a pair $\stratSp = (\topSp, \phiStrat: \topSp \to \pos)$ where $\topSp$ is a topological space and $\phiStrat$ is continuous with respect to the Alexandrov topology on $\pos$. The map $\phiStrat$ is called the \define{stratification of $\stratSp$}. The fiber of the stratification over $p \in \pos$ \[\stratSp_p:=\phiStrat^{-1}\{p\}\] is called the \define{$p$-stratum of $\stratSp$}.
\end{definition}

\begin{example}\label{ex:basic_strat_spaces}
From an abstract point of view, any filtration $(\topSp_{\leq 0} \subset ... \subset \topSp_{\leq n}=X)$ by closed subsets of a topological space $\topSp$ induces a stratification over the poset $[n]= \{0 < ... <n\}$. However, stratified spaces also arise quite naturally in different fields of mathematics, and are often assumed to have manifold strata.
\begin{itemize}
    \item Let $(M, \partial M)$ be a compact manifold with boundary and let $\topSp$ be the space obtained by coning off the boundary of $M$, i.e. $\topSp = M \cup_{\partial M} C(\partial M)$, where \[
    C(Y) = \faktor{Y \times [0,1]}{ (y,0) \sim (y',0)}\] denotes the cone on a space $Y$. One obtains a stratification of $\topSp$ by the map
    \begin{align*}
        s\pp X \to  \{p,q\}; &&
        \begin{cases}
            x \mapsto q, & \textnormal{for }x \in X \setminus \{ \textnormal{cone point} \},\\
            x \mapsto p, & \textnormal{for } x = \textnormal{cone point}.
        \end{cases}    
    \end{align*}
    The resulting stratified space is locally Euclidean away from one isolated singularity, at which arbitrarily small neighborhoods are homeomorphic to the open cone $\mathring C(\partial M) = C(\partial M) \setminus \{1\} \times \partial M$.
    \item Given a smooth manifold $M$ with a compact Lie group $G$ acting smoothly and properly on $M$. The orbit space $M/G$ can then be stratified by orbit types (see, e.g., \cite[Chapter 4]{pflaum2001analytic} for more details).
    \item Any $n$-dimensional complex algebraic variety $\topSp$ can be equipped with the structure of a stratified space. A filtration by closed subsets is given by iteratively taking singular loci.
\end{itemize}

\end{example}
\begin{example}
The so-called pinched torus $PT^2$ can be described as the quotient space of the torus $T^2 = S^1 \times S^1$ by collapsing one circle $* \times S^1$ to a point, see \cref{fig:pinched_torus}. The image of this circle is the singular point, denoted $s$, of the pinched torus. The filtration $\{ s \} \subset PT^2$ induces a stratification over the poset $\{0 < 2\}$. The pinched torus is an example of a so-called \define{pseudomanifold}, an important class of stratified spaces that have been the subject of research to recover a form of generalized Poincar\'e duality for singular spaces (\cite{goresky1980intersection,goresky1983intersection}).
\begin{figure}
    \centering
    \includegraphics[width=0.7\textwidth]{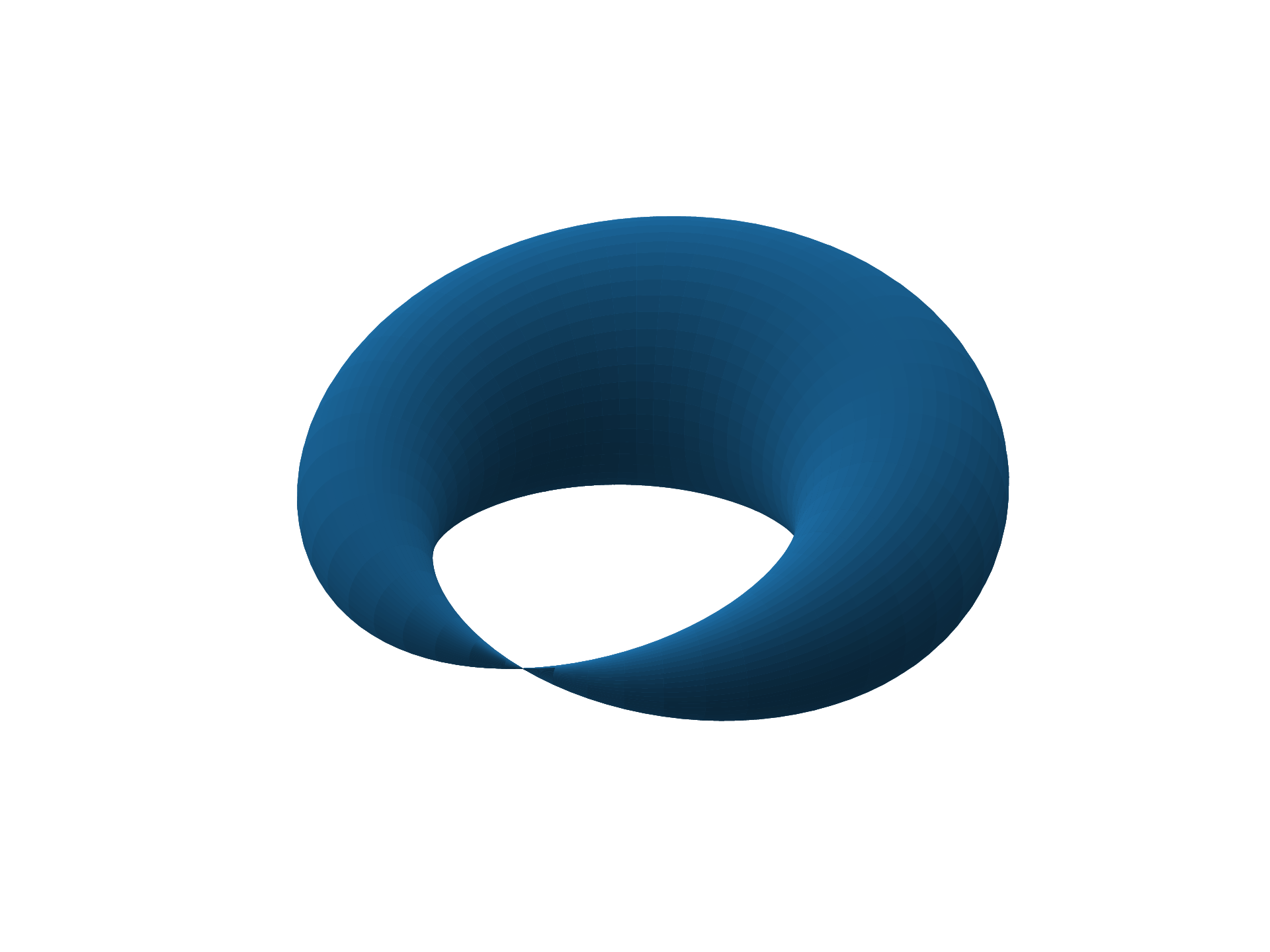}
    \caption{Pinched torus}
    \label{fig:pinched_torus}
\end{figure}
\end{example}

\begin{remark}
It is common to abuse notation insofar as one usually refers to the stratified space by its underlying topological space. Thus, we will freely use notation such as $x \in \stratSp$, when we mean $x \in \topSp$. However, as the second half of this paper is particularly concerned with learning stratifications, we will take care to differentiate rigorously between stratified and non-stratified objects then.
\end{remark}

\begin{definition}\label{def:strat_map}
A \define{stratum preserving map} between two $P$-stratified spaces $\topSp \rightarrow \pos$ and $\topSp[Y] \rightarrow \pos$ is a continuous map $f \colon \topSp \to \topSp[Y]$, making the diagram 

\[
    \begin{tikzcd}
        \topSp \arrow{rr}{f} \arrow{rd} & &
        \topSp[Y] \arrow{ld} 
        \\
       & \pos &
    \end{tikzcd}
\]
commute.
\end{definition}

\begin{notation}\label{not:strat_cat}
Stratified spaces over a poset $\pos$ together with stratum preserving maps define a category which we denote $\TopP$. Isomorphisms in $\TopP$ - i.e. stratum preserving homeomorphisms - will be denoted by $\cong_P$.
\end{notation}

\begin{remark}
There is a slight technical issue here insofar, as the homotopy theoretical perspective needs assumptions on the underlying topological spaces used. We assure the reader unfamiliar with the following technicalities that they can safely ignored them.
We generally denote by $\Top$ the category of $\Delta$-generated spaces, i.e. spaces which have the final topology with respect to maps coming from simplices (see \cite{dug2003delta} for details).
We generally assume all topological spaces involved to have this property. At times, this will mean that the topology on a space has to be slightly modified and replaced by a $\Delta$-generated one (for example $\mathbb{Q} \subset \mathbb R$ is not $\Delta$-generated, its $\Delta$-ification is given by a discrete countable space). However, since this operation does not change weak homotopy types, it is mostly irrelevant to our investigations of homotopy theory (see also \cite[Rem 2.10]{douteauwaas2021}). 
\end{remark}

\begin{notation}
Given a stratified space $\stratSp = (\topSp, \phiStrat: \topSp \to \pos)$ and $p \in \pos$ we write
\begin{align*}
\subLevel{\stratSp}{p} &:= \phiStrat^{-1}(\{q \leq p \}), \\
\subbLevel{\stratSp}{p} &:= \phiStrat^{-1}(\{q < p \}), \\
\supLevel{\stratSp}{p} &:= \phiStrat^{-1}(\{q \geq p \}), \\
\suppLevel{\stratSp}{p} &:= \phiStrat^{-1}(\{q > p\}).
\end{align*}
\end{notation}
For many theoretical as well as for our more applied investigations of stratified spaces, it is fruitful to impose additional regularity assumptions on the strata (such as manifold assumptions) and the way they interact. The notion central to this paper is the notion of a Whitney stratified space. These are characterized by the convergence behavior of secant lines around singularities. 
\begin{notation}\label{not:secant}
Given two distinct vectors $v,u \in \mathbb \supSp$, with $v \neq u$, we denote by $\secant{v}{u}$ the 1-dimensional subspace of $\supSp$ spanned by $v-u$.
\end{notation}
\begin{recollection}\label{recol:Whitney_strat}
A stratified space $\whs = \stratPair$ with $\topSp \subset \supSp$ locally closed is called \define{Whitney stratified}, if it fulfills the following properties.
\begin{enumerate}
    \item \define{Local finiteness}: Every point $x \in \topSp$ has a neighborhood intersecting only finitely many of the strata of $\whs$. 
     \item \define{Frontier condition:}\label{frontier_cond} $\whs_p$ is dense in $\subLevel{\whs}{p}$, for all $p \in \pos$.
    \item \define{Manifold condition:} $\whs_p$ is a smooth submanifold of $\supSp$, for all $p \in \pos$.
    \item \define{Whitney's condition (b):} Let $p, q \in \pos$ such that $p < q$ and let $x_n$, $y_n$ be sequences in $\whs_q$ and $\whs_{p}$ respectively, both convergent to some $y \in \whs_p$. Furthermore, assume that the secant lines $\secant{x_n}{y_n}$ converge to a $1$-dimensional space $l \subset \supSp$ and that the tangent spaces $\tangentSp[x_n]{\whs_q}$ converge to a linear subspace $\tau \subset \supSp$. Then $l \subset \tau$. (By convergence of vector spaces we mean convergence in the respective Grassmannians.)
\end{enumerate}
\end{recollection}
\begin{example}
Whitney's work (\cite{whitney1965local}, \cite{whitney1965tangent}) states that every algebraic and analytic variety admits a Whitney stratification. More general, Whitney stratifications can even be given to spaces such as semianalytic sets (see e.g. \cite{law1965ensembles}) or o-minimally definable sets (see e.g. \cite{le1998verdier}). Finally, if $\topSp$ is such that it has only isolated singularities and admits a Whitney stratification, then any stratification of $\topSp$, fulfilling frontier and boundary condition, with smooth strata is automatically a Whitney stratification. In particular, any definable set with isolated singularities and a dense open submanifold is canonically Whitney stratified with two strata.
Another class of Whitney stratified spaces arises from $G$-manifolds, already noted in \cref{ex:basic_strat_spaces}. For a proof, see \cite[Theorem 4.3.7]{pflaum2001analytic}.
\end{example}

Whitney's condition (b) has a series of immanent topological consequences, which ultimately led to the more general notion of a conically stratified space. The latter are (with some additional assumptions) one of the main objects of interest in the algebro-topological study of stratified spaces \cite{siebenmann1972deformation,goresky1980intersection,goresky1983intersection,quinn1988homotopically,lurie2012higher}.
In addition to the Whitney stratification assumption, we will frequently need additional control over how pathological the subsets of Euclidean space we allow for can be.
To obtain such additional control, we use the notion of a set $\topSp \subset \supSp$, definable with respect to some o-minimal structure (see \cite{van1998tame} for a definition). For the reader entirely unfamiliar with these notions it suffices to know that all semialgebraic or compact subanalytic sets have this property.
On the one hand, definability assumptions guarantee the existence of certain mapping cylinder neighborhoods (see \cref{ex:cyl_nbhds}) that allow thickenings that do not change the homotopy type (see \cref{lem:appendix_definably_thickenable}). At the same time, asserting additional control over the functions defining a set (polynomially bounded), has several consequences for the convergence behavior of tangent cones, already noted in \cite{hironaka1969normal,bernig2007tangent}. We will use these to recover stratifications from samples in \cref{sec:recover_strat}.
\begin{definition}
We say that a stratified space $\stratSp =  \stratPair$, with $\topSp \subset \supSp$ and $\pos$ finite, is \define{definable} (or \define{definably stratified}) if all of its strata are definable with respect to some fixed o-minimal structure.
\end{definition}
\subsection{Homotopy categories of stratified spaces}\label{subsec:htpy_cat}
Many of the algebraic invariants of stratified spaces - most prominently intersection homology - are invariant under a stratified notion of homotopy equivalence. 
\begin{definition}\label{def:strat_homotopic}
Let $f,f' \colon \stratSp \rightarrow \stratSp'$ be stratum preserving maps. We call $f$ and $f'$ \define{stratified homotopic}, if there exists a stratum preserving \[\htpy \colon (\topSp \times \uInt,\topSp \times \uInt \to \topSp \xrightarrow{\phiStrat} \pos) \rightarrow \stratSp' \] such that $\htpy_{\mid\topSp \times \{0\}} = f$ and $\htpy_{\mid\topSp \times \{1\}} = f'$. Furthermore, $f$ is called a \define{stratified homotopy equivalence}, if there exists another stratum preserving map $g\colon  \stratSp' \to \stratSp$ such that $f\circ g$ and $g \circ f$ are stratified homotopic to $\id[\stratSp']$ and $\id[\stratSp]$ respectively.
\end{definition}

\begin{remark}
Since we use different notions of equivalences of stratified spaces in this paper, we use the convention of speaking of \define{strict stratified homotopy equivalences} instead of stratified homotopy equivalences, to avoid any possibility of confusion. The class of all stratified spaces \define{strictly stratified homotopy equivalent} to a stratified space $\str$ is called the \define{strict stratified homotopy type of $\str$}.
\end{remark}
The use of strict stratified homotopy equivalence for topological data analysis faces one apparent issue.
Many of the justifications for the use of persistent approaches to the analysis of geometrical data rely on the fact that homotopy types of (sufficiently regular) spaces do not change under small thickenings (see for example \cite{niyogi2008finding}).
Unlike classical homotopy equivalence, however, stratified homotopy equivalence is a rather rigid notion.
\begin{example}\label{ex:thickenings_fail}
Consider the space $\topSp = S^1 \vee S^1$ embedded in $\mathbb R^2$ as a curve, shown in \cref{fig:S1vS1}. It features a singular point at the self-crossing. Denote the resulting stratified space over $\pos= \{ 0 <1 \}$ with the singularity sent to $0$ and the remainder to $1$ by $\str$.
While there generally seems to be no canonical way to thicken such a space, one possibility is to thicken both the total space as well as the singularity as in \cref{fig:thickening2}. The resulting thickened space $\str''$ is strictly stratified homotopy equivalent to the original curve with the singular stratum extended from a point to the crossing, denoted $\str'$, see \cref{fig:thickening1}.
However, $\str$ and $\str'$ (and hence $\str''$) are not strictly stratified homotopy equivalent. To see this, note that a stratified homotopy equivalence between $\str$ and $\str'$ would also have to be a homotopy equivalence of the underlying spaces. Such a map has to send a circle $S^1$ with degree $\pm 1$ onto another circle. But the image of any stratum preserving map between $\str$ and $\str'$ is (non-stratifiedly) contractible.
\begin{figure}
\centering
    \begin{minipage}{0.32\textwidth}
        \centering
        \includegraphics[width=1\textwidth,trim={3.5cm 3.5cm 3.5cm 3.5cm},clip]{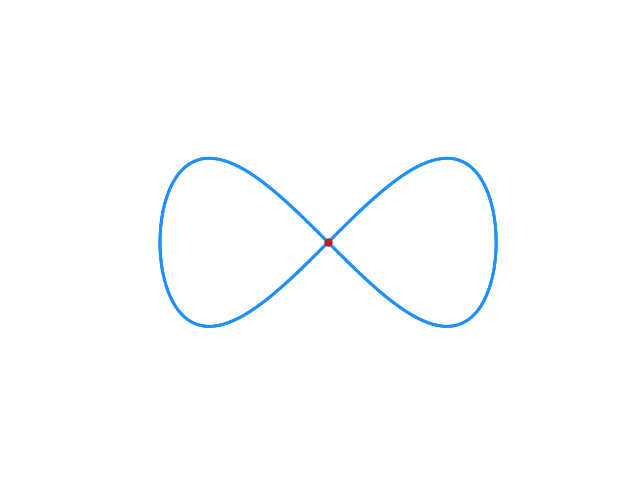}
\caption{Stratified singular curve, $\str$}
    \label{fig:S1vS1}
    \end{minipage}\hfill
    \begin{minipage}{0.32\textwidth}
        \centering
        \includegraphics[width=1\textwidth,trim={3.5cm 3.5cm 3.5cm 3.5cm},clip]{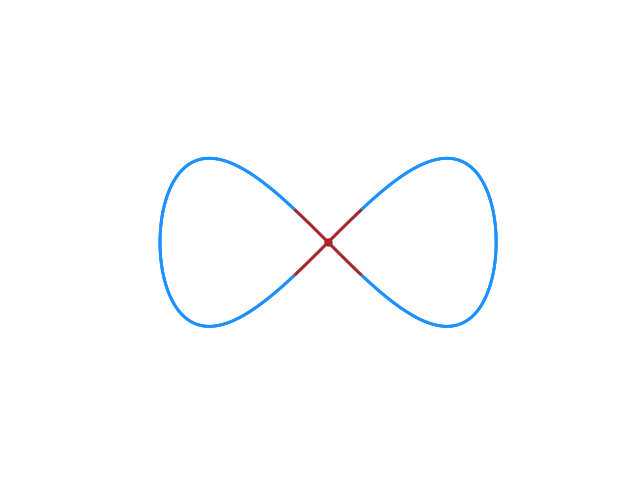}
    \caption{Alternative stratification, $\str'$}
    \label{fig:thickening1}
    \end{minipage}\hfill
    \begin{minipage}{0.32\textwidth}
    \centering
        \includegraphics[width=1\textwidth,trim={3.5cm 3.5cm 3.5cm 3.5cm},clip]{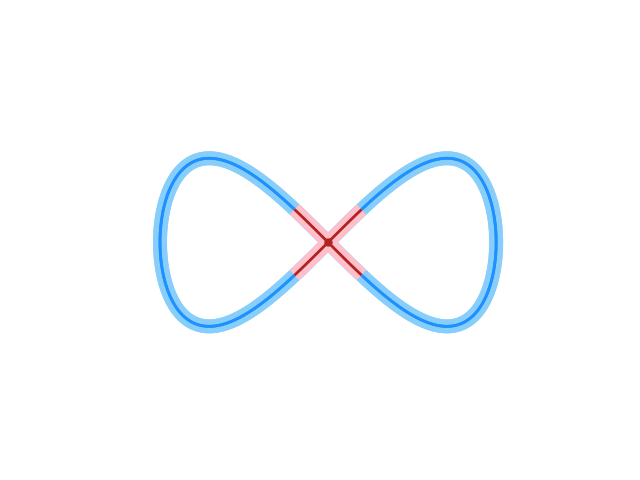}
    \caption{Stratified thickening, $\str''$}
    \label{fig:thickening2}
    \end{minipage}
\end{figure}
\end{example}
In some sense, the failure of stratified homotopy equivalence in \cref{ex:thickenings_fail} is due to the fact that the two thickenings are not sufficiently regular (i.e. Whitney stratified, or more generally conically stratified in the sense of \cite{lurie2012higher}) spaces anymore (this will become more apparent later on from \cref{thrm:fully_faithful_embedding} and \cref{fig:unclear_thickening}). Here, we already encounter the issue that to perform topological data analysis on \textit{nicely} stratified spaces, one generally needs to leave the \textit{nice} category. To make the intuition of why this phenomenon leads to the failure of stratified homotopy equivalence in \cref{ex:thickenings_fail} more rigorous, we need the notion of a homotopy link. These were first introduced in \cite{quinn1988homotopically} and can be thought of as a homotopy theoretical analog of the boundary of a regular neighborhood in the piecewise linear scenario. See also \cite{douteauwaas2021} for more geometrical intuitions.
\begin{definition}\label{def:holink}
Let $\stratSp$ be a stratified space and $p ,q \in \pos$ with $p<q$. The \define{homotopy link} of the $p$-stratum in the $q$-stratum is the space of so-called \define{exit paths}
\[
\stratlink( \stratSp) = \{ \gamma\colon \uInt \rightarrow \topSp \mid \gamma(0) \in \stratSp_p, \gamma(t) \in \stratSp_q, \forall t > 0 \}
\]
with its topology induced by $\stratlink(\stratSp) \subset \cont{0}{\uInt}{\topSp}{}$, where the latter denotes the space of continuous functions equipped with the compact open topology. The induced functors
\[
\TopP \to \Top 
\]
come with natural transformations 
\[
\str_p \leftarrow \stratlink( \stratSp) \to \str_q,
\]
given by the starting point and end point evaluation map.
\end{definition}
\begin{example}\label{ex:htpy_links}
Let us return to \cref{ex:thickenings_fail} to give an illustration of the homotopy link. For the original singular curve and both thickenings, the homotopy links are all homotopy equivalent to four isolated points (see \cref{fig:htpy_links}). This can be seen from \cref{con:equivalence_of_link_and_holink}, which states that the homotopy links are homotopy equivalent to the boundary of a cylinder neighborhood of the singular stratum. 
\begin{figure}
\centering
    \begin{minipage}{0.32\textwidth}
        \centering
        \includegraphics[width=1\textwidth,trim={3.5cm 3.5cm 3.5cm 3.5cm},clip]{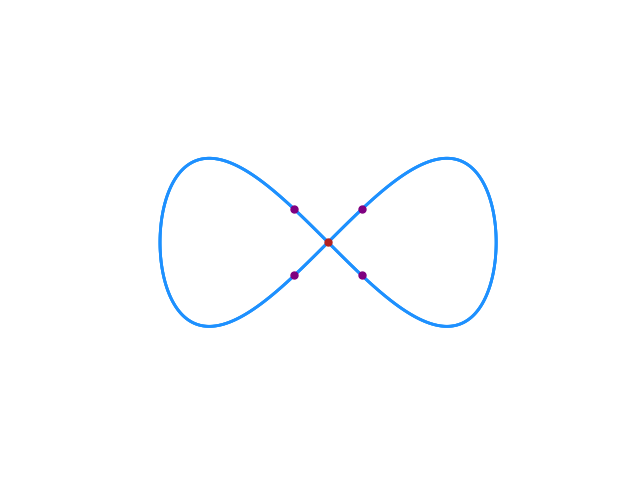}
    \end{minipage}\hfill
    \begin{minipage}{0.32\textwidth}
        \centering
        \includegraphics[width=1\textwidth,trim={3.5cm 3.5cm 3.5cm 3.5cm},clip]{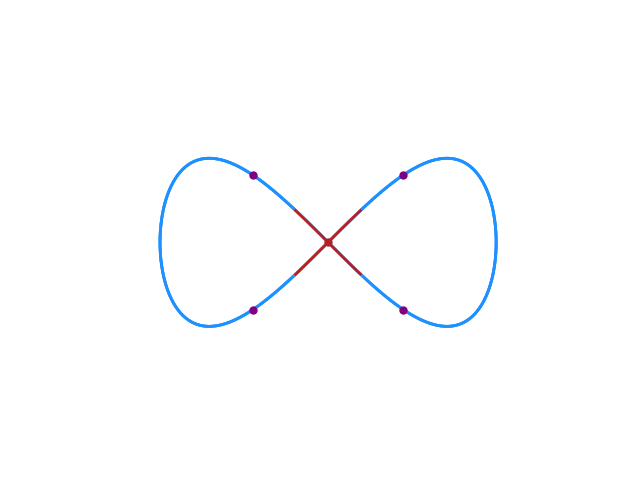}
    \end{minipage}\hfill
    \begin{minipage}{0.32\textwidth}
    \centering
        \includegraphics[width=1\textwidth,trim={3.5cm 3.5cm 3.5cm 3.5cm},clip]{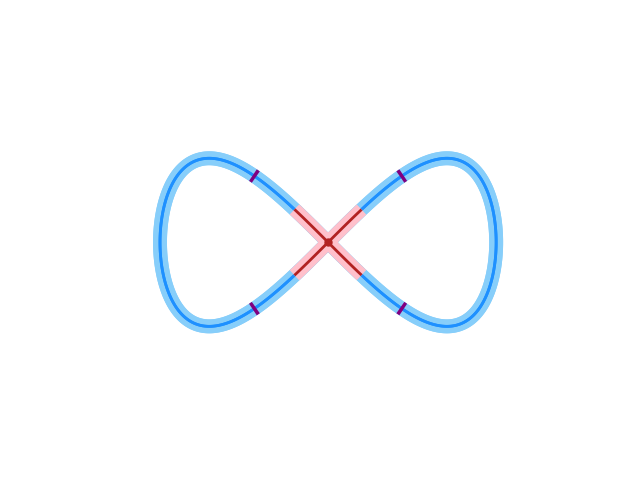}
    \end{minipage}
\caption{Geometric models of homotopy links marked in purple}
\label{fig:htpy_links}
\end{figure}
\end{example}
In \cite[Theorem 6.3]{miller2013strongly}, it was first shown that a stratum preserving between sufficiently regular stratified spaces is a stratified homotopy equivalence, if and only if it induces homotopy equivalences on all homotopy links and strata. This behavior is akin to the one described by the classical Whitehead theorem (see \cite{whitehead1949combinatorial1}, \cite{whitehead1949combinatorial2}) or more generally the behavior of cofibrant, fibrant objects in a model category. It is a general paradigm in abstract homotopy theory that to study a class of in some sense regular objects within a larger class of objects, up to a notion of equivalence, it can be useful to weaken that notion in a way, that it becomes less rigid on the whole class, but still agrees with the original notion on the class of regular objects. This is also the perspective on stratified homotopy theory that we take here that also allows us to circumvent the issue alluded to in \cref{ex:thickenings_fail}.
\begin{recollection}\label{recol:general_holink}
The definition of a homotopy link for pairs $\{ p < q\}$ generalizes to the case where $\{ p < q\}$ is replaced by a regular, i.e. strictly increasing, flag $\flagI = \{ p_0 < ... < p_n \}$. The resulting spaces are denoted
\[
\stratlink[\flagI](\stratSp).
\]
One then needs to replace the stratified interval $\uInt$ by a stratified simplex corresponding to $\flagI$. In the case of $\flagI= \{ p \}$ a singleton, this definition comes down to
\[
\stratlink[\flagI](\stratSp) = \stratSp_p.
\]
Since we are mainly concerned with the two strata case here, we refer the interested reader to \cite{douteauwaas2021} for rigorous definitions. 
\end{recollection}
\begin{definition}\label{def:weak_equiv}
A stratum preserving map $f\colon \stratSp \to \stratSp'$ in $\TopP$ is called a \define{weak equivalence of stratified spaces}, if it induces weak equivalences of topological spaces
\[
\stratlink[\flagI](\stratSp) \to \stratlink[\flagI](\stratSp'),
\]
for all regular flags $\flagI \subset \pos$. 
\end{definition}
\begin{notation}\label{not:localized_hoTop}
We denote by $\ho \TopP$ the category obtained by localizing $\TopP$ at the class of weak equivalences. The isomorphism class of $\str \in \ho \TopP$ is called the \define{stratified homotopy type of $\str$}. Isomorphisms in $\ho \Top_P$ will be denoted by $\simeq_P$.
\end{notation}
It is an immediate consequence of the fact that homotopy links map stratified homotopy equivalences to homotopy equivalences that any strict stratified homotopy equivalence is also a weak equivalence of stratified spaces. The converse is generally false.
\begin{example}\label{ex:weak_equiv}
Let us illustrate these concepts for the spaces from \cref{ex:thickenings_fail} where we already discussed that there is no strict stratified homotopy equivalence between the original curve and any of the described thickenings. However, all the spaces are weakly stratified homotopy equivalent. Indeed, this is already hinted at by the fact that we may find a homotopy equivalence between the respective regular and singular parts as well as the homotopy links as described in \cref{ex:htpy_links}. Consider \cref{fig:weak_equiv} for an illustration.
\begin{figure}
    \centering
    \includegraphics[width=1\textwidth]{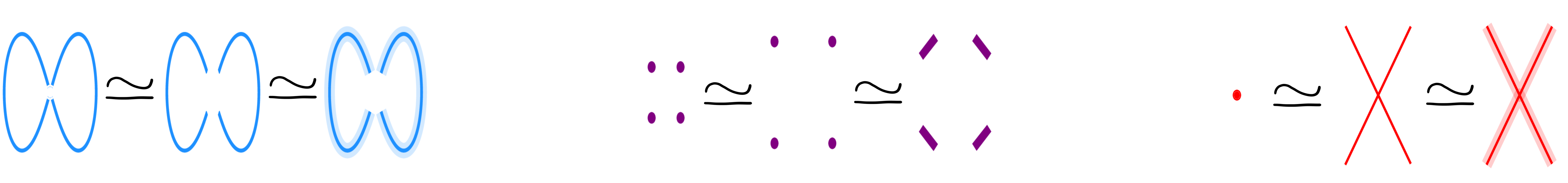}
    \caption{Regular strata, homotopy links and singular strata of the spaces in \cref{ex:thickenings_fail}}
    \label{fig:weak_equiv}
\end{figure}
\end{example}
Miller's result (\cite[Thm. 6.3]{miller2013strongly}) can in fact be strengthened to a fully faithful embedding of homotopy categories. Roughly speaking, a stratified space is called triangulable, if it admits a triangulation compatible with the stratification (for details see \cite{douteauwaas2021}). For the purpose of this paper, it suffices to know that Whitney stratified and (locally compact) definably stratified spaces even admit a PL-structure compatible with the stratification and are thus triangulable, see \cite{goresky1978triangulation}, \cite{shiota2005whitney}, \cite{czapla2012definable}. As a consequence of \cite[Theorem 1.2]{douteauwaas2021}, one then obtains the following result:
\begin{theorem}
{\cite[Theorem 1.2]{douteauwaas2021}} \label{thrm:fully_faithful_embedding}
Let $\WhitP \subset \TopP$ be the full subcategory of Whitney stratified spaces over $\pos$,
and $\simeq$ be the relation of stratified homotopy. Denote by $\quotient{\WhitP}{\simeq}$ the category obtained by identifiying stratified homotopic morphisms in $\WhitP$. Then the induced functor
\[
\quotient{\WhitP}{\simeq} \rightarrow \ho\TopP
\]
is a fully faithful embedding.
\end{theorem}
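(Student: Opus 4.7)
The plan is to realize this as a consequence of the model-categorical framework for stratified spaces established in \cite{douteauwaas2021}. The strategy is to exhibit Whitney stratified spaces as bifibrant (i.e.\ both cofibrant and fibrant) objects of a model structure on $\TopP$ whose weak equivalences agree with those of \cref{def:weak_equiv}, and then to invoke the standard correspondence between homotopy classes of maps between bifibrant objects and morphisms in the localized homotopy category.

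First, I would appeal to the model structure on $\TopP$ (or an equivalent category of diagrams) from \cite{douteauwaas2021}. The essential geometric input is that the stratified cylinder $\stratSp \times \uInt$, with stratification pulled back along the projection, serves as a good cylinder object for cofibrant $\stratSp$. This identifies the canonical (left) homotopy relation of the model structure with the stratified homotopy relation of \cref{def:strat_homotopic} on the subcategory of cofibrant objects, so that for bifibrant $\stratSp,\stratSp'$ the model-categorical and stratified-homotopy notions of ``homotopy class of map'' coincide.

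Second, I would verify that every Whitney stratified space is bifibrant. Cofibrancy follows from the triangulability of Whitney stratified spaces (\cite{goresky1978triangulation,shiota2005whitney}), which yields a presentation by stratum-compatible cell attachments with the required lifting properties. Fibrancy---the more substantial half---uses Whitney's condition (b) together with Thom--Mather theory to produce conical neighborhoods about lower strata; these conical structures are precisely what is needed to extend a stratified horn-filler indexed by a flag $\flagI$ across the full stratified simplex. With bifibrancy established, the classical model-categorical identity
\[
\operatorname{Hom}_{\ho \TopP}(\stratSp, \stratSp') \;=\; \operatorname{Hom}_{\TopP}(\stratSp, \stratSp')/\simeq
\]
for bifibrant $\stratSp,\stratSp'$ yields exactly the full and faithful embedding claim.

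The main obstacle I would expect is verifying fibrancy: extending a stratified horn into a Whitney stratified space demands compatible simultaneous extensions across all strata, which is not formal and rests on controlled tubular data around every Whitney singularity. Establishing this essentially recapitulates the core technical work of \cite[Theorem 1.2]{douteauwaas2021}. Hence, the most economical route is to cite the bifibrancy output of \cite{douteauwaas2021} and then apply the standard model-categorical machinery sketched above to conclude.
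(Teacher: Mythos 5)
This statement is imported verbatim from \cite[Theorem 1.2]{douteauwaas2021}; the paper offers no proof of its own beyond the citation, and your proposal correctly reconstructs the strategy of that reference (Whitney stratified spaces are fibrant via Thom--Mather conical structure and cofibrant via triangulability in the model structure whose weak equivalences are those of \cref{def:weak_equiv}, with $\stratSp \times \uInt$ as cylinder object identifying left homotopy with stratified homotopy) before, as the paper does, deferring the technical core to \cite{douteauwaas2021}. Your approach is therefore essentially the same as the paper's.
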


For our purpose, this result entails that for the study of stratified homotopy invariants of sufficiently regular stratified spaces through topological data analysis, one may as well work in the category $\ho \TopP$. As long as the spaces we investigate have these regularity properties, no information is lost by considering the stratified homotopy type instead of the strict stratified homotopy type. At the same time, \cref{prop:thicken_stab_lok,prop:pers_strat_htpy_type_C-lipschitz,thrm:recovery_thrm} point towards the fact that stratified homotopy types are well suited for applications in topological data analysis, ultimately fulfilling many of the relevant properties of the classical homotopy type.
\subsection{Stratification Diagrams}\label{subsec:strat_diag}
As noted in the previous section, for the passage to a persistent scenario, some notion of thickening of a stratified space is needed. 
In analogy to the classical scenario, this should assign to a stratified space $\str \subset \supSp$, a functor from the category given by the (positive) reals with the usual order $\mathbb R_{+}$ into some category representing stratified homotopy types $\mathcal C$. In the classical scenario, $\mathcal{C}$ is often taken to be the category of simplicial complexes (sets) using constructions such as the \v{C}ech or Vietoris-Rips complex. For now, let us refer to the image under such a functor $\pers{\str}$ as the \define{persistent stratified homotopy type} of $\str$, and similarly to the non-stratified construction using thickenings or \v{C}ech complexes as the \define{persistent homotopy type}.

This leaves us with the following question: How does one thicken a stratified subspace $\str \subset \supSp$ while fulfilling a series of stability and invariance properties that justify the use for topological data analysis (compare with \ref{enum:properties_of_PH1} to \ref{enum:properties_of_PH3}). We explain and show a series of such properties in \cref{sec:pers_strat}.
\begin{example}
In \cref{fig:unclear_thickening} we exhibit three different thickenings of the original space from \cref{ex:thickenings_fail}. The first thickening is neither weakly nor strictly stratified homotopy equivalent to the original curve (as can be seen by comparing homotopy links). The second thickening, being only weakly equivalent to the unthickened space, was discussed in \cref{ex:weak_equiv}. However, note that the inclusion of the original curve into it is not a stratum preserving. Hence, this notion of thickening does not allow for a persistent approach. For the third thickening, the inclusion of the original curve is even a strict stratified homotopy equivalence. However, it seems unclear how to systematically achieve such a thickening, particularly when working with samples.
\begin{figure}
\centering
    \begin{minipage}{0.32\textwidth}
        \centering
        \includegraphics[width=1\textwidth,trim={3.5cm 3.5cm 3.5cm 3.5cm},clip]{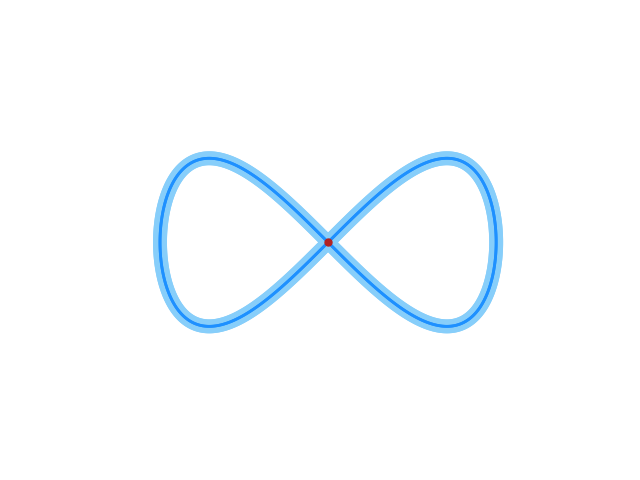}
    \end{minipage}\hfill
    \begin{minipage}{0.32\textwidth}
        \centering
        \includegraphics[width=1\textwidth,trim={3.5cm 3.5cm 3.5cm 3.5cm},clip]{Figures/thickening2.png}
    \end{minipage}\hfill
    \begin{minipage}{0.32\textwidth}
    \centering
        \includegraphics[width=1\textwidth,trim={3.5cm 3.5cm 3.5cm 3.5cm},clip]{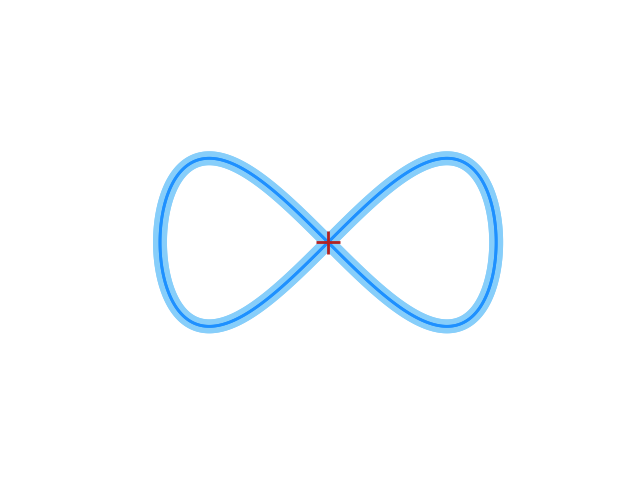}
    \end{minipage}
    \caption{Three possible thickenings}
    \label{fig:unclear_thickening}
\end{figure}
\end{example}
As illustrated in detail in \cref{sec:pers_strat}, thickenings can be done successfully by representing stratified homotopy types by so-called stratification diagrams.
\begin{definition}\label{def:diags}
We denote by $\mathrm{R}(P)$ the category with objects given by regular (i.e. strictly increasing) flags $\flagI = \{ p_0 < \dots < p_k\}$ in $\pos$ and morphisms given by inclusion relations of flags. We denote by \[ \Diag := \mathrm{Fun}(\mathrm{R}(P)^{\mathrm{op}},\Top) \] the category of $\mathrm{R}(P)^{\op}$ indexed diagrams of topological spaces. We call elements of $\Diag$ \define{(stratification) diagrams}.
\end{definition}
\begin{definition}\label{def:weak_eq_diags}
A morphism $f\colon \diag \to \diag'$ in $\Diag$, for which $f_{\flagI}$ is a weak equivalence at all $\flagI \in \mathrm{R}(P)$ is called a \define{weak equivalence of (stratification) diagrams}.
\end{definition}
\begin{notation}
We denote by $\ho \Diag$ the category obtained by localizing $\Diag$ at weak equivalences of diagrams.
\end{notation}
For our purposes, the important result on stratification diagrams is that they can equivalently be used to describe stratified homotopy types. This is due to the following result.
\begin{recollection}\label{recol:diag_are_equ}(For details see \cite{douteau2019stratified, douteauwaas2021}).
   (Generalized) homotopy links induce a functor 
   \begin{align*}
       \diagFun \pp \TopP &\to \Diag \\
   \stratSp &\mapsto \{ \flagI \mapsto \stratlink[\flagI]{(\stratSp)} \}.
   \end{align*}
    By definition, a stratum preserving map is a weak equivalence, if and only if its image under $\diagFun$ is a weak equivalence. In particular, one obtains an induced functor
    \[
    \diagFun\pp \ho \TopP \to \ho\Diag
    \]
    which turns out to be an equivalence of categories. In this sense, the stratification diagram encodes the same homotopy theoretic information as the original space. We will use this equivalence to identify these two homotopy categories and often not distinguish between a stratified space and its stratification diagram.
\end{recollection}
Homotopy links (and thus also stratification diagrams) defined as subspaces of mapping spaces are, at first glance, objects unsuited to a computational or algorithmic approach. To obtain more geometrical and combinatorially interpretable models of the latter, we will also use another equivalent description of stratified homotopy types, which occur naturally, particularly when trying to quantitatively recover stratifications from non-stratified data in \cref{sec:recover_strat}. Since our TDA investigation is mainly concerned with the two strata case, we will only consider $\pos = \{ p < q\}$ for the remainder of this section and only give definitions in this scenario. The relevant observation (see \cite{douteau2019stratified}) is that instead of considering the poset $\pos$ as a space with Alexandrov topology, we may instead consider it as a simplicial complex via its nerve $\nerve (\pos)$ (with vertices the elements of $\pos$ and simplices given by flags) and then consider its realization. In the particular case $\{ p < q\}$, the resulting space is canonically homeomorphic to $[0,1]$, with $p$ corresponding to $0$ and $q$ corresponding to $(0,1]$. This leads to the following definition:
\begin{definition}
A \define{strongly stratified space} (over $\pos= \{p < q\}$) is a pair \[\stratSp = (\topSp, \phiStrat: \topSp \to \uInt)\] where $\topSp$ is a topological space and $\phiStrat$ is continuous.
A \define{strongly stratum preserving map} $f: \str = (\topSp, \phiStrat) \to (\topSp', \phiStrat') = \str'$ is a map of topological spaces $f: \topSp \to \topSp'$ making the diagram
\[
\begin{tikzcd}
X \arrow[rd, "\phiStrat"'] \arrow[rr, "f"] && X' \arrow[ld, "\phiStrat'"] \\
& \uInt &
\end{tikzcd}
\]
commute.
\end{definition}
\begin{remark}
    The name, strongly stratified space $\stratSp = (\topSp, \phiStrat: \topSp \to \uInt)$ relates to the fact, that we may recover a stratified space by postcomposing with the stratification of $[0,1]$ given by 
    \begin{align*}
        [0,1] &\to \{p <q \} \\
        t &\mapsto \begin{cases}
            p & t = 0;\\
            q & t > 0 
        \end{cases}.
    \end{align*}
    In this sense, a strong stratification is a stronger notion than a stratification, which is obtained by storing the additional information of a parametrization of a neighborhood around the singular stratum.
\end{remark}
\begin{notation}
 We denote by $\TopNP$ the category with objects given by strongly stratified spaces and morphisms given by strongly stratum preserving maps. Isomorphisms in this category - i.e. strongly stratum preserving homeomorphisms - will be denoted by $\cong_{\nerve(\pos)}$.
\end{notation}
In a TDA scenario, where one usually works with metric spaces, strong stratifications arise naturally from stratifications.
\begin{example}\label{ex:strat_metric_to_strong_strat_metric}
Let $\str = ( \topSp, \phiStrat)$ be a stratified space equipped with a metric $\dmet{-}{-}$ on $\topSp$. Then, $\str$ can be equipped with the structure of a strongly stratified space, compatible with the original stratification. The strong stratification map is given by the minimum of the distance to the singular stratum function and $1$, i.e. by
\begin{align*}
    \dsingsampX[\str] \pp \topSp &\to [0,1] \\
    x &\mapsto \min \{ \dmet{x}{\str_p}, 1 \}.
\end{align*}
\end{example}
The central examples of particularly well-behaved strongly stratified spaces are those that have the structure of a mapping cylinder close to the singular stratum (see \cref{def:cyl_strat}). The structure of such spaces near the singular stratum is specified by the following example.
\begin{example}\label{ex:def_mapping_cyl_strat}
Given a map of topological spaces $r \pp L \to \topSp$, we can consider the mapping cylinder of $r$ 
\[
\stratCyl := L \times [0,1] \cup_{L \times 0, r} \topSp
\]
equipped with the teardrop topology \cite[Definition 2.1]{quinn1988homotopically} as a strongly stratified space via
\begin{align*}
    \pi_{\uInt}: \stratCyl &\to \uInt \\
    [(x,t)] &\mapsto t.
\end{align*}
Note that if the above $r$ is a proper map between locally compact Hausdorff spaces, then the usual quotient space topology agrees with the teardrop topology on the mapping cylinder \cite{hughes1999stratifications}. When working with metric spaces, there is the following criterion for a map
\[
f:\stratCyl \to Z
\]
into a metric space $Z$ to be continuous. The map $f$ is continuous, if and only if its restrictions to $L \times (0,1]$ and $\topSp$ are continuous, and the family of maps $f(-,t): L \to Z$ with $t >0$, converges uniformly to $f\mid_{X} \circ r$, as $t \to 0$ (consider \cite[Definition 2.1]{quinn1988homotopically}).
\end{example}
Similar to the relation between diagrams and stratified spaces, strongly stratified spaces can also be used to describe stratified homotopy types, as explained in the following recollection.
\begin{recollection}\label{recoll:hoTop}
    We have only described the construction of $\TopNP$ in the case of $\pos = \{ p < q \}$ here. For the more general case see \cite{douteau2019stratified,douteauwaas2021}. 
   Similarly to the stratified case, the strongly stratified category can be equipped with a notion of weak equivalence, leading to a homotopy category $\ho \TopNP$. The forgetful functor
   \[
   \TopNP \to  \TopP ,
   \]
   obtained by post composing the strong stratification with the stratification of the interval \[\uInt \to \{ p < q\}\] given by taking $0$ as the $p$-stratum, then (by passing to derived functors with respect to the model structures explained in \cite{douteau2021homotopy}) induces an equivalence of homotopy categories
   \[
    \ho\TopNP \to \ho\TopP.
   \]
   We will often treat strongly stratified spaces as stratified spaces under this forgetful functor. The equivalence of homotopy categories guarantees that no homotopy theoretical information is lost.
   
   We will not be making mathematical use of this result here. Nevertheless, it conceptually explains the multiple occurrences of strongly stratified spaces in our investigations of strongly stratified homotopy types. 
\end{recollection}
As in the stratified scenario we make frequent use of some short notation to access the analogs of strata in the strongly stratified case.
\begin{notation}\label{not:strong_levelsets}
Let $\str$ be a strongly stratified space and $v' \leq v \in [0,1]$. We use the following notation:
\begin{align*}
\str_v&:= \phiStrat^{-1}\{v\}, \\
\subLevel{\stratSp}{v} &:= \phiStrat^{-1}[0,v], \\
\supLevel{\stratSp}{v'} &:= \phiStrat^{-1}[v',1], \\
\supbLevel{\str}{v'}{v} &:= \phiStrat^{-1}[v',v].
\end{align*}
For values of $v,v'$ outside of $\uInt$ we define these as above, using the closest allowable value.
\end{notation}
It turns out that for particularly nice strongly stratified spaces, these sub- and superlevel sets can be used to recover the stratification diagram, cf. \cite{quinn1988homotopically, miller1994expansions, douteauwaas2021}. However, for the sake of completeness, we include details of this behavior with \cref{ex:cyl_nbhds}. As already alluded to above, these are stratified spaces for which the strata have cylinder neighborhoods.
\begin{definition}\label{def:cyl_strat}
We say a stratified space $\stratSp$ over $\pos=\{p <q \}$ is \define{cylindrically stratified}, if there exists a neighborhood $N$ of $\topSp_p$ and a space $L$ and a map of spaces $r\pp L \to \topSp_p$, such that 
\[
N \cong_P \stratCyl,
\]
where $\stratCyl$ denotes the stratified mapping cylinder of $r$ from \cref{ex:def_mapping_cyl_strat}.
We say a strongly stratified space $\str= (\topSp , s\pp \topSp \to [0,1)])$ is \define{cylindrically stratified}, if it is cylindrically stratified as a stratified space and there is a homeomorphism $f \colon s^{-1}(0,1) \xrightarrow{\sim }\str_{\frac{1}{2}} \times (0,1)$, making the diagram
\begin{diagram}
    s^{-1}(0,1) \arrow[rd, "{s\mid_{s^{-1}(0,1)}}"'] \arrow[rr, "\sim"', "f"] & & {\str_{\frac{1}{2}}} \times (0,1) \arrow[ld, "{ \pi_{(0,1)}}"] \\
   & (0,1) &
\end{diagram}
commute (i.e. a strongly stratum preserving homeomorphism, with respect to the strong stratifications induced by $s$ and $\pi_{[0,1]}$.)
\end{definition}
\begin{remark}\label{rem:def_cyl_strat}
Note, that the definition of a cylindrically stratified space in the strong case is slightly weaker than assuming a strongly stratified mapping cylinder neighborhood. We choose this definition for our purposes as it has precisely the same consequences and is much easier to verify.
Nevertheless, for compact $\str$, it follows by an application of the two-out-of-six property, as in \cref{lem:appendix_definably_thickenable}, that the inclusions
\[
\str_p \hookrightarrow \subLevel{\str}{v}
\]
for $0\leq v <1$, are homotopy equivalences.
\end{remark}
\begin{definition}
 A \define{cylindrically stratified metric space} $\stratSp$ over $\pos=\{p <q \}$ is a stratified space equipped with a metric $\dmet{-}{-}$, which is cylindrically stratified when considered as a strongly stratified space, with respect to the strong stratification induced by the metric (\cref{ex:strat_metric_to_strong_strat_metric}).
\end{definition}
It turns out that many of the stratified spaces, that we are interested in, are cylindrically stratified.
\begin{example}\label{ex:cyl_nbhds}
Whitney stratified spaces, equipped with the metric induced by the inclusion into $\supSp$, are cylindrically stratified up to a rescaling. They even admit neighborhoods that are strongly stratum preserving homeomorphic to a strongly stratified mapping cylinder of a fiber bundle (in particular, they are conically stratified). This is a classical result, found for example already in \cite{thom1969ensembles,mather1970notes}. We sketch a proof here for the sake of completeness.\\
Let $\stratSp= ( \topSp, \topSp \to \{ p <q \})$ be a Whitney stratified space with $\stratSp_p$ compact and $\topSp \subset \supSp$. 
By passing to a sufficiently small neighborhood of $\stratSp_p$ we may assume $X$ to lie in a (standard) tubular neighborhood $N$ of $\str_p$ in $\supSp$, such that the retraction map 
\begin{align*}
    r: N &\to \stratSp_p \\
x &\mapsto y_m,  
\end{align*}
where $y_m$ minimizes $\dmet{x}{y}$, is well defined and smooth. 
Next, consider the distance to $\stratSp_p$ map
\begin{align*}
    \dsingsampX[\stratSp] \pp \topSp &\to \mathbb R \\
        x &\mapsto \dmet{x}{\stratSp_p}.
\end{align*}
It is then a consequence of Thom's first isotopy lemma (which in this two strata case amounts to Ehresmann's Lemma \cite{ehresmann1951fibration}, see e.g.\cite{thom1969ensembles} and \cite[Thm. 6.7]{banagl2007stratinvar} for a modern source), that the map
\begin{align*}
    \topSp \cap N &\to \str_p \times \mathbb R \\
    x &\mapsto (r(x), \dsingsampX[\stratSp](x))
\end{align*}
restricts to a fiber bundle over $\str_p \times (0, \varepsilon]$, for $\varepsilon$ small enough.
If we denote by $\neiEps[\varepsilon]{\stratSp_p}$ a closed $\varepsilon$-neighborhood of $\stratSp_p$ in $X$ and  set $L= \dsingsampX[\stratSp]^{-1}(\varepsilon)$ this means that there is a homeomorphism
\[
f: \neiEps[\varepsilon]{\stratSp_p} \setminus \stratSp_p \xrightarrow{\sim} L \times (0, \varepsilon] 
\]
such that the diagram
\begin{equation}\label{diag:much_commute}
\begin{tikzcd}
\neiEps[\varepsilon]{\stratSp_p} \setminus \stratSp_p \arrow{rr}{f} \arrow[swap]{rd}{r \times \dsingsampX[\stratSp]} && L \times (0, \varepsilon] \arrow{ld}{r \times \pi_{(0, \varepsilon]}}\\
&\str_p \times (0, \varepsilon] &
\end{tikzcd}
\end{equation}
commutes.
By rescaling, we may assume without loss of generality that $\varepsilon=1$ and let $N = \neiEps[1]{\stratSp_p}$, the closed neighborhood of points with distance $\leq 1$ to $\stratSp_p$.\\
Now, consider the map 
\begin{align*}
g\pp \stratCyl \to  N; &&
\begin{cases}
(x,t) \mapsto f(x,t), & \textnormal{for }t > 0,\\
[(x,0)]=[y] \mapsto r(x)=y, & \textnormal{for }t=0.
\end{cases}    
\end{align*}
$g$ is clearly bijective and continuous on $\stratSp_p$ and $L \times (0,1]$. Furthermore, by the commutativity of Diagram \ref{diag:much_commute}, for $t \to 0$, $f(-,t): L \to N$ converges uniformly to $r\mid_L$. By the alternative characterization of the mapping cylinder topology in \cref{ex:def_mapping_cyl_strat}, it follows that $g$ is a continuous bijection, from a compactum to a Hausdorff space, and thus a homeomorphism. 
\end{example}
\begin{example}\label{ex:def_are_cylind_strat}
Compact definably stratified spaces $\str = (\topSp, s:\topSp \to \{ p < q\})$, are (up to a rescaling) cylindrically stratified. Indeed, note first that they are cylindrically stratified as topological spaces. This follows from the fact that they are triangulable in a way that is compatible with the strata (see \cite{van1998tame}). In particular, $\str_p$ always admits a mapping cylinder neighborhood given by a regular neighborhood in the piecewise linear sense. Furthermore, note that the map
\[
 \mathrm{d}_{\str_p}:\topSp \to \mathbb R
\]
again is definable. Thus, by Hardt's Theorem for definable sets (see \cite{van1998tame}), it restricts to a trivial fiber bundle over $(0, \varepsilon]$, for $\varepsilon$ sufficiently small. In other words, after rescaling, we indeed have a homeomorphism 
\[ 
\mathrm{d}_{\str_p}^{-1}(0,1) \to \str_{\frac{1}{2}} \times (0,1).
\]
over $(0,1)$.
\end{example}
\begin{remark}
We will generally consider all compact definably or Whitney stratified spaces to be appropriately rescaled, such that they are cylindrically stratified. Similar assumptions will be made for definably stratified spaces when using \cref{lem:appendix_definably_thickenable}.
\end{remark}
Finally, the following construction, together with \cref{prop:holinks_are_links}, tells us that stratification diagrams of cylindrically stratified spaces have more interpretable geometric models, usable for TDA.
\begin{construction}\label{con:equivalence_of_link_and_holink}
Given a stratified mapping cylinder $\stratCyl$ for $r: L \to Y$ a map of metrizable spaces, we may consider the map
\begin{align*}
   \alpha \pp L &\to \stratlink{\stratCyl} \\
    x &\mapsto \{ t \mapsto [(x,t)]\},
\end{align*}
mapping a point $x$ to the corresponding line segment in $\stratCyl$. A homotopy inverse to this map is given by 
\begin{align*}
   \beta \pp \stratlink{\stratCyl} &\to L\\
    \gamma &\mapsto \pi_{L}(\gamma(1)).
\end{align*}
Clearly, $\beta \circ \alpha =1_{L}$. A homotopy $\alpha \circ \beta \simeq 1_{\stratlink{\stratCyl{r}} }$ is given by
\begin{align*}
    \stratlink{\stratCyl} \times \uInt  &\to \stratlink{\stratCyl} \\
    (\gamma, s) \mapsto \{ t &\mapsto ( \pi_L( \gamma(s + (1-s)t ), t).
\end{align*}
Compare \cite{douteauwaas2021}, \cite{friedman2003stratified} and \cite{quinn1988homotopically} for similar, more detailed arguments covering the continuity of such maps. Now, if $\str$ is a metrizable, cylindrically stratified space  over $\pos = \{p < q\}$ and $N\cong \stratCyl$ is a stratified mapping cylinder neighborhood of $\stratSp_p$ with boundary $L$, then the inclusion
\[
\stratlink{N} \hookrightarrow \stratlink{\stratSp}
\]
is a (weak) homotopy equivalence. Essentially, the idea of the proof is to continuously retract paths in $\stratSp$ into $N$ (see \cite[Appendix]{friedman2003stratified} for details under slightly stronger assumptions). In particular, we then have a commutative diagram 
\[
\begin{tikzcd}
\str_q  \arrow[rr, equal] && \str_q \\
L\times \{v\} \arrow[u, hook] \arrow[d, "r"] \arrow[r, hook, "\simeq"]& \stratlink{(N)} \arrow[r, hook, "\simeq"] & \stratlink{(\str)} \arrow[d], \arrow[u] \\
\str_p \arrow[rr, equal]&& \str_p  \spacecomma
\end{tikzcd}
\]
for $v \in (0,1]$
\end{construction}
\begin{proposition}\label{prop:holinks_are_links}
Let $\str$ be a compact, cylindrically stratified metric space and $\diagParamTup$, such that $0 <\diagParamLow \leq \diagParamUp<1$. Then there is an isomorphism in $\ho \Diag$
\[
\{ \subLevel{\str}{\diagParamUp} \hookleftarrow \supbLevel{\str}{\diagParamLow}{\diagParamUp} \hookrightarrow  \supLevel{\str}{\diagParamLow}  \} \simeq \diagFun(\str).
\]
\end{proposition}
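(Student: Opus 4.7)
The plan is to construct an explicit natural transformation $\Phi \colon D_1 \to \diagFun(\str)$, where $D_1$ denotes the diagram on the left of the claimed isomorphism, and where $\diagFun(\str)$ is unfolded in the two-strata case as the span $\str_p \leftarrow \stratlink(\str) \to \str_q$ by \cref{recol:general_holink}. Since weak equivalences of diagrams are defined pointwise (\cref{def:weak_eq_diags}), it then suffices to verify that each of the three components of $\Phi$ is a weak equivalence of topological spaces.

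By the compact cylindrical hypothesis and \cref{rem:def_cyl_strat}, the space $\str$ carries both the strong cylinder homeomorphism $f \colon s^{-1}(0,1) \xrightarrow{\sim} L \times (0,1)$ (writing $L := \str_{1/2}$) and a mapping cylinder neighborhood of $\str_p$ with collapse map $r \colon L \to \str_p$. Using the resulting cylinder coordinates, I would define the components of $\Phi$ as follows: on $\subLevel{\str}{\diagParamUp}$, the retraction $(x, t) \mapsto r(x)$ extended by the identity on $\str_p$; on $\supLevel{\str}{\diagParamLow}$, the inclusion into $\str_q$; and on $\supbLevel{\str}{\diagParamLow}{\diagParamUp}$, the assignment $(x, t) \mapsto \bigl( s \mapsto (x, st) \bigr) \in \stratlink(\str)$. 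Both that $s \mapsto (x, st)$ is a valid exit path, and that this assignment is continuous as a map into $\stratlink(\str)$ (with the compact-open topology), follow from the teardrop-topology continuity criterion of \cref{ex:def_mapping_cyl_strat}. Strict commutativity of the two squares is then immediate from endpoint evaluation: $\Phi_B(x,t)(0) = r(x)$ matches the left retraction, and $\Phi_B(x,t)(1) = (x,t)$ matches the right inclusion.

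It remains to verify the three pointwise weak equivalences. The left is the homotopy equivalence from \cref{rem:def_cyl_strat}. For the middle, $\supbLevel{\str}{\diagParamLow}{\diagParamUp}$ deformation retracts onto the slice $L \cong \str_{1/2}$, on which $\Phi_B$ restricts, up to a reparameterization of exit paths, to the weak equivalence $\alpha$ of \cref{con:equivalence_of_link_and_holink}. I expect the main obstacle to be the right component, namely the inclusion $\supLevel{\str}{\diagParamLow} \hookrightarrow \str_q$, since the cylindrical homeomorphism is defined only over $s^{-1}(0,1)$ and gives no direct control over the potentially complicated set $s^{-1}\{1\}$. My plan there is to exhibit a deformation retraction $H \colon \str_q \times \uInt \to \str_q$ which is the identity on $\supLevel{\str}{\diagParamLow}$ (in particular on $s^{-1}\{1\}$) and which slides $s^{-1}(0, \diagParamLow) \cong L \times (0, \diagParamLow)$ up to $L \times \{\diagParamLow\}$ via $(x, t, \tau) \mapsto (x, \max(t, \tau \diagParamLow))$. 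Global continuity then follows from the pasting lemma applied to $s^{-1}(0, \diagParamLow]$ and $\supLevel{\str}{\diagParamLow}$, both closed in $\str_q$ and overlapping exactly in the slice $\str_{\diagParamLow}$ on which the two formulas agree.
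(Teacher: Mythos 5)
Your overall strategy---a single, explicit natural transformation compared pointwise, instead of the paper's zigzag---would be fine if it could be carried out, but there is a genuine gap: you conflate two different cylinder structures that the hypotheses do not let you identify. The diagram on the left is built from level sets of the strong stratification $s$ induced by the metric, and the homeomorphism $f\colon s^{-1}(0,1)\xrightarrow{\sim}\str_{\frac{1}{2}}\times(0,1)$ lives over the \emph{open} interval only; it gives no control over how the lines $\{x\}\times(0,1)$ behave as $t\to 0$. The retraction $r\colon L\to\str_p$ and the teardrop continuity criterion belong instead to the mapping cylinder neighborhood $N\cong\stratCyl$, which carries its own parametrization $\tilde s$ that is a priori unrelated to $s$ --- this is exactly the point of \cref{rem:def_cyl_strat}, which stresses that the strong cylindrical hypothesis is strictly weaker than having a strongly stratified mapping cylinder neighborhood compatible with $s$. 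Consequently: (i) in the $f$-coordinates your path $\sigma\mapsto(x,\sigma t)$ need not converge to any point of $\str_p$ as $\sigma\to 0$ (the lines may oscillate between points of $\str_p$), so it is not an exit path and the middle component of your transformation is undefined; (ii) if you instead work in the $N$-coordinates, the sets $\subLevel{\str}{\diagParamUp}$ and $\supbLevel{\str}{\diagParamLow}{\diagParamUp}$ need not be contained in $N$ at all --- compactness of $\str_p$ only yields $s^{-1}[0,\varepsilon]\subset N$ for \emph{some} small $\varepsilon$, not for $\varepsilon=\diagParamUp$ --- so neither $(x,t)\mapsto r(x)$ nor the exit-path formula is defined on its intended domain. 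The same conflation affects your appeal to the map $\alpha$ of \cref{con:equivalence_of_link_and_holink}, which is defined on the top $L$ of the mapping cylinder, not on $\str_{\frac{1}{2}}$.

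This mismatch is precisely what the paper's proof is organized around: it replaces your single morphism by a zigzag through the intermediate diagrams $\{s^{-1}[0,1)\hookleftarrow s^{-1}(0,1)\hookrightarrow s^{-1}(0,1]\}$ and $\{s^{-1}[0,1)\hookleftarrow L\times\{v\}\hookrightarrow s^{-1}(0,1]\}$, choosing $v$ so small that $L\times\{v\}=\tilde s^{-1}\{v\}$ lies in some $s^{-1}[0,\varepsilon]\subset N$, and then reconciles the two cylinder structures by a two-out-of-six argument on the nested inclusions $\tilde s^{-1}(0,\varepsilon')\subset s^{-1}(0,\varepsilon)\subset\tilde s^{-1}(0,1)\subset s^{-1}(0,1)$. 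Your treatment of the right-hand component (the deformation retraction of $\str_q$ onto $\supLevel{\str}{\diagParamLow}$ in the $f$-coordinates) is sound, and the left-hand equivalence is indeed \cref{rem:def_cyl_strat}; but to repair the middle component you would either have to strengthen the hypothesis to a mapping cylinder neighborhood compatible with $s$, or insert a comparison zigzag of the kind the paper uses.
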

\begin{proof}
Let $s$ be the strong stratification induced by the metric on $\stratSp$.
By assumption, $\stratSp$ admits a mapping cylinder neighborhood $N \cong \stratCyl$, for some map $r\pp L \to \str_p$. 
Denote $\tilde{s}\pp N \to [0,1]$, the alternative strong stratification induced by this choice of mapping cylinder neighborhood.
Since we assume that $\str_p$ is compact, we may assume, without loss of generality, that $N \subset s^{-1}[0,1)$.
By \cref{con:equivalence_of_link_and_holink}  (using the same notation), it suffices to expose a (canonical) zigzag of weak equivalence to the diagram
\[
\{ \str_p \leftarrow L \times \{v\} \hookrightarrow \str_q \},
\]
for some $v \in (0,1]$. Such a zigzag between diagrams is given as follows:
\begin{diagram}
\{ \subLevel{\str}{\diagParamUp} &  \supbLevel{\str}{\diagParamLow}{\diagParamUp} \arrow[r, hook] \arrow[d, "\simeq"] \arrow[l, hook']& \supLevel{\str}{\diagParamLow} \}\\
\{ s^{-1}[0,1) &  s^{-1}(0,1) \arrow[r, hook] \arrow[l, hook']& s^{-1}(0,1] 
\}  \\
\{ s^{-1}[0,1) &  L \times \{v\} \arrow[r, hook] \arrow[u, "\simeq"]\arrow[l, hook'] \arrow[d, "\simeq"]& s^{-1}(0,1] = S_q \} \\
\{ \str_p &  L \arrow[r, hook] \arrow[l, "r"]& S_q \}.\\
\end{diagram}
We describe the morphisms of diagrams from top to bottom, and show that they are weak equivalences.
The first morphism is given by inclusions. Since we have assumed that $s^{-1}(0,1)$ has the shape of an open cylinder $L' \times (0,1)$, this morphism is clearly given by weak equivalences at $\{p\}, \{q\}$ and $\{p < q\}$.
The next morphism is again given by inclusions. To see that it is a weak equivalence, we need to show that \[ L \times \{v\} \hookrightarrow \tilde{s}^{-1}(0,1] \hookrightarrow s^{-1}(0,1) \cong L' \times (0,1)\] is a weak equivalence. 
Since the first of these maps is a weak equivalence, it suffices to show that the second is one too. Since $\str_p$ is compact we find $\varepsilon, \varepsilon' >0$ such that 
\[
\tilde{s}^{-1}(0,\varepsilon') \subset s^{-1}(0,\varepsilon) \subset \tilde{s}^{-1}(0,1) \subset s^{-1}(0,1).
\]
Now, note that since all sets involved are given by open cylinders (on $L$ and $L'$ respectively), these inclusions fulfill the requirements for the two-out-of-six property of homotopy equivalences. In particular, all maps involved are weak equivalences (even homotopy equivalences).
Finally, the last morphism is constructed as follows. Both at $q$ and $\{p <q \}$ it is given by the identity. Assume that $v \in (0,1]$ was taken such that \[L \times \{v\} \cong \tilde{s}^{-1}\{v\} \subset s^{-1}(0,\varepsilon] \subset \tilde{s}^{-1}(0,1),\] for some $\varepsilon >0$ sufficiently small. Note that this is indeed possible by the compactness of $\str_p$. 
Then, at $\{p \}$  the morphism is given by the composition 
\begin{align*}
    s^{-1}[0,1) & \to s^{-1}[0, \varepsilon] \hookrightarrow N \cong \stratCyl \to \str_p
\end{align*}
where the first of these maps is given by 
\[
(x,t) \mapsto (x, \min\{t,\varepsilon\})
\]
under the identification $s^{-1}(0,1) \cong L' \times (0,1)$. By the assumption that $L \times \{v\}$ maps into $s^{-1}[0,\varepsilon]$, this map induces a morphism of diagrams. It remains to show that it is a weak equivalence. 
By the cylinder structure of $s^{-1}(0,1)$, the first map in this composition is a homotopy equivalence. The same holds true for the second map by a two-out-of-six argument, completely analogous to the one performed when comparing $L$ and $L' \times (0,1)$. Finally, the last map is the retraction of a mapping cylinder and thus also a homotopy equivalence. Combining this, we have shown that the final morphism is also a weak equivalence of diagrams.
\end{proof}

\section{Persistent stratified homotopy types}\label{sec:pers_strat}
In this section, we introduce the notion of persistent stratified homotopy type (\cref{subsec:def_pers_strat}) and investigate its stability properties (\cref{subsec:stab_pers_type}), in the particular case of Whitney stratified spaces with two strata (\cref{subsec:stab_pers_type_whit}).
Before we focus on the specific case of persistent \textit{stratified} homotopy types, let us first make a little more precise what we mean by
persistent homotopy types and their role in topological data analysis.
As already alluded to, for the case of persistent homology in the introduction, the perspective we take here on persistent approaches to TDA is that they can usually be decomposed into a two-step process. Conceptually, it can be described as follows: \\
\\
Let $\textbf{S}$ denote some categories of objects representing datasets which contain geometrical information. For example, we can take the category given by all subsets of $\mathbb R^N$, with morphisms given by inclusions. Let $\textbf{T}$ denote some category of geometrical and or combinatorial objects, for example the categories $\Top$, $\textbf{sSet}$ (the category of simplicial sets), $\TopP$, $\Diag$, equipped with a class of morphisms called weak equivalences. Let $\textbf{A}$ denote some category of algebraic objects, for example, the category of vector spaces over some fixed field. Finally, let
\[
H \pp \textbf{T} \to \textbf{A}
\]
be a functor that sends weak equivalences to isomorphisms, for example for $\textbf{T} = \Top$ this could be a homology functor.
We denote $\mathbb R_+$ the category given by  the poset of nonnegative reals.
\begin{notation}\label{not:pers_H}
Given any category $\mathcal C$ and another category $U$ (most prominently $\mathbb R_+$). We denote by $\mathcal C^{U}$ the category of functors from $U$ to $\mathcal C$, with morphisms given by natural transformations.
\end{notation}
Then, a persistent version of $H$ is constructed by taking a composition
\begin{align*}
    \textbf{S} \xrightarrow{\mathcal P} \textbf{T}^{\mathbb{R}_+} \xrightarrow{H^{\mathbb{R}_+}} \textbf{A}^{\mathbb{R}_+},
\end{align*}
for some functor $ \textbf{S} \xrightarrow{\mathcal P} \textbf{T}^{\mathbb{R}_+}$ turning objects in $\textbf{S}$ into persistent objects in $\textbf{T}$, i.e. elements of $\textbf{T}^{\mathbb{R}_+}$. Examples of such functors include sending a subspace of $\mathbb R^N$ to the family of its $\varepsilon$ thickenings, filtered \v{C}ech- or Vietoris-Rips complexes. 
Since $H$ sends weak equivalences into isomorphisms, we obtain a factorization
\begin{diagram}
\textbf{S} \arrow[r] \arrow[rd, "\mathcal P"] \arrow[rr, "PH", bend left = 60]& \textbf{T}^{\mathbb{R}_+} \arrow[r, "H^{\mathbb R_+}"] \arrow[d]&\textbf{A}^{\mathbb{R}_+} \\
& \ho \textbf{T}^{\mathbb{R}_+} \arrow[ru, dashed] &
\end{diagram}
\begin{notation}\label{not:hoTR}
All throughout this paper, $\ho \textbf{T}^{\mathbb R_+}$ denotes the category obtained by localizing $\textbf{T}^{\mathbb{R}_+}$ at such natural transformations, which are given by a weak equivalence at each $\alpha \in \mathbb R_+$. Such natural transformations will be called weak equivalences of functors. We will also refer to isomorphisms in the homotopy category $\ho \textbf{T}^{\mathbb{R}_+}$ (which are always represented by zigzags of weak equivalences in $\textbf{T}^{\mathbb{R}_+}$) as weak equivalences. The same notation and language is used when $\mathbb R_+$ is replaced by a more general indexing category. (The reader wondering about the relation of this construction to taking the homotopy category first and then passing to persistent objects is referred to \cref{rem:order_ho_fun}.) 
\end{notation}
 The functor $\mathcal P$ can then be understood as assigning to an object in $\textbf{S}$ a persistent homotopy type. By a slight abuse of language, we thus refer to $\mathcal P(\sampX)$ as the \define{persistent homotopy type of } $\sampX \in \textbf{S}$, even though it depends on a choice of functor $\textbf{S} \to \textbf{T}^{\mathbb{R}_+}$. (Note, however, that this abuse of language is no more incorrect than speaking of \textit{the persistent homology}, which also depends on choices such as \v{C}ech- or Vietoris-Rips complexes.) Then, as we illustrated in the introduction to the scenario of classical persistent homology, many properties of the functor $PH$ may already be understood by studying the functor $\mathcal P$. At the same time, the advantage of such a modular approach is that it quickly allows obtaining results for all sorts of choices of $H$.
\subsection{Definition of the persistent stratified homotopy type}\label{subsec:def_pers_strat}
Let us now move to the specific case of persistent stratified homotopy types. The goal is to expose a functor $\epers$ with values in the category $\ho \TopP^{\mathbb R_+}$. Furthermore, for the values of such functor to deserve the name, \textit{persistent stratified homotopy types}, we need to show it fulfills properties analogous to the properties \ref{enum:properties_of_PH1}, \ref{enum:properties_of_PH2} and \ref{enum:properties_of_PH3} of the non-stratified persistent homotopy type. 
Let us summarize the pipeline suggested by \cref{subsec:htpy_cat,subsec:strat_diag}.\\
We start with a Whitney stratified or definably stratified space $\str \subset \supSp$ (or later on a sample of any of the latter) aiming to obtain a persistent version of its stratified homotopy type. By \cref{recol:diag_are_equ} the stratified homotopy type of $\str$ is equivalently described by its stratification diagram $\diagFun{(\str)}$. It is an immediate consequence of \cite[Thm. 2.12]{douteau2021homotopy} (which is a stronger version of \cref{recol:diag_are_equ}) that the homotopy link functor from \cref{recol:diag_are_equ} sending a stratified space to a stratification diagram, induces an equivalence of categories 
\begin{diagram}
\ho \TopP^{\mathbb R_+} \xrightarrow{\sim} \ho \Diag ^{\mathbb R_+}.
\end{diagram}
Hence, we may equivalently expose a functor $\epers$ valued in $\ho \Diag ^{\mathbb R_+}$.
To do so, we need to obtain a geometric description of the stratification diagram $\diagFun(S)$, which admits thickenings. By \cref{ex:cyl_nbhds,ex:def_are_cylind_strat}, $\str$ naturally admits the structure of a cylindrically stratified space (up to a rescaling). Then, by \cref{prop:holinks_are_links}, up to a weak equivalence we may recover the stratification diagram $\diagFun{(\str)}$ of $\str$ by the diagram of sub- and superlevel sets
\[
 \subLevel{\str}{\diagParamUp} \hookleftarrow \supbLevel{\str}{\diagParamLow}{\diagParamUp} \hookrightarrow  \supLevel{\str}{\diagParamLow}. 
\]
The diagram obtained in this fashion has the advantage that it admits an obvious notion of thickening.
One simply thickens the parts of the diagram contained in $\supSp$ separately. \\
\\
Let us now set up the framework to analyze the stability properties of these constructions and their interactions with sampling. For the remainder of this subsection let $\pos= \{ p <q \}$ be a poset with two elements.
Next, we define a series of spaces of subspaces of $\supSp$. One should mainly think of elements of these spaces as samples, sampled nearby a continuous space, whose convergence behavior we are investigating. Nevertheless, in the generality below all sorts of complicated, non-finite sets are permitted. We will follow the naming convention of using blackboard bold letters like $\sampX$, when suggesting an object conceptually takes the role of a sample. We use usual letters, like $\topSp$, when an objects takes the role of a space nearby which samples are taken. Of course, this convention does not apply to the fields $\mathbb Q, \mathbb R$ and $\mathbb C$.
\begin{remark}
Throughout the remainder of this paper, we will be defining several distances on categories and spaces whose elements are themselves some version of spaces. In this context, the term metric will mean \define{symmetric Lawvere metric}, that is, we allow for the value $\infty$, and do not require nonidentical elements to have positive distance. 
\end{remark}
\begin{notation}\label{not:thickening}
    Throughout the remainder of this paper, $\norm{-}$ always denotes the Euclidean norm on $\supSp$.
    Given a subset $\sampX \subset \supSp$, and some non-negative real number $\alpha  \geq 0$, we denote by $\thicken{\sampX}$ the $\alpha$ thickening of $\sampX$, given by
    \[
    \{ y \in \supSp \mid \exists x \in \sampX: \norm{x-y}  \leq \alpha \}.
    \]
    We will take care, to always use greek letters for thickenings, as to avoid any possible confusion with the indexing letters denoting strata.
\end{notation}
\begin{definition}\label{def:sam}
Denote by $\Sam$ the space of subspaces of $\supSp$, $\{\sampX \subset \supSp \}$, equipped with the (extended pseudo) metric given by the Hausdorff-distance. That is, for $\sampX, \sampX' \in \Sam$, we set 
\[
\dmet[\HD]{\sampX}{\sampX'} = \inf\{\alpha > 0 \mid \sampX \subset \thicken{\sampX'},\sampX' \subset \thicken{\sampX} \}.
\]
We refer to $\Sam$ as the \define{space of samples} (of $\supSp$). 
\end{definition}
Next, we define a metric on the set of stratified samples.
\begin{definition}\label{def:samP}
Denote by $\SamP$ the space of pairs in $\supSp$,
\[ \{ (\sampX, \singsampX) \mid \singsampX \subset \sampX\} \subset \Sam^2,\] 
equipped with the (extended pseudo) metric induced by the inclusion.\\
That is, for $\sampX, \sampX' \in \Sam_P$, we set 
\[
\dmet[\SamP]{(\sampX, \singsampX)}{(\sampX', \singsampX')} := \max\{ \dmet[\HD]{\sampX}{\sampX'}, \dmet[\HD]{\singsampX}{\singsampX[\sampX']}\}. 
\]
We also refer to $\SamP$ as the \define{space of ($\pos$-)stratified samples} (of $\supSp$).
\end{definition}
We can think of $\SamP$ as a metricized, sample version of the category $\TopP$. 
Next, we need an analogous construction for the category $\Diag$.
\begin{definition}\label{def:DSam}
Denote by $\DSam$ the space of triples of subspaces of $\supSp$ 
\[ \{(\sampD_{p},\sampD_{\{p < q\}},
\sampD_{q}) \mid \sampD_{q} \supset \sampD_{\{p<q\}} \subset \sampD_{p}, \} \subset \Sam^{3},
\] equipped with the subspace metric.
That is, for $\sampD, \sampD' \in \DSam$ we set
\[
\dmet[\DSam]{\sampD}{\sampD'}:= \max_{\flagI \in \mathrm{R}(\pos)}{\dmet[\HD]{\sampD_{\flagI}}{\sampD'_{\flagI}}}.
\]
We also refer to $\DSam$ as the \define{space of stratification diagram samples} (of $\supSp$).
\end{definition}
Finally, we repeat a similar process for $\TopNP$.
\begin{definition}\label{def:sampNP}
Denote by $\SamNP$ the space \[\{(\sampX, \phiStrat[\sampX]\colon \sampX \to [0,1]) \mid \sampX \subset \supSp \},\] equipped with the (extended pseudo) metric given as follows.
Embed $\SamNP$ into the space of subspaces of $\supSp \times [0,1]$, equipped with the Hausdorff distance on the product metric, by sending $\phiStrat$ to its graph. The metric on $\SamNP$ is then given by the subspace metric under this embedding.
\\
Equivalently, this comes down to the following. For $(\sampX,s), (\sampX',s'
)\in \SamNP$,  we set 
\begin{align*}
\dmet[\SamNP]{(\sampX, \phiStratsampX)}{(\sampX', \phiStratsampX')} := \max_{\sampX_0, \sampX_1 \in \{\sampX, \sampX'\}^2} \{
\inf \{ &\varepsilon>0 \mid  \forall x \in \sampX_0 \exists y \in \sampX_1:\\
&\norm{x-y} ,\vert \phiStratsampX_0(x) - \phiStratsampX_1(y)\vert  \leq \varepsilon \}\}.
\end{align*}
We also refer to $\SamNP$ as the \define{space of strongly ($P$-)stratified samples} (of $\supSp$).
\end{definition}
\begin{notation}
For the remainder of this work, we denote $\diagParam = \diagParamTup \in (0,1)^2$ with $\diagParamLow < \diagParamUp $, and $\spaceParam \in (0,1)$. Furthermore, we equip $(0,1)^2$ with the usual order in the second, and the opposite order in the first component, that is we write
\[
 \diagParam \leq \diagParam' \iff \diagParamLow' \leq \diagParamLow \mathrm{\quad and \quad} \diagParamUp \leq \diagParamUp'.
\]
We denote by $\paramSpace \subset (0,1)^2$ the subposet of $(0,1)^2$ with this order, consisting of $\diagParamTup$ with $0 < \diagParamLow < \diagParamUp < 1$. 
\end{notation}
Finally, let us model the several constructions connecting stratified spaces, strongly stratified spaces and stratification diagrams we described in \cref{sec:strat_htpy_theo} in this framework.
\begin{construction}\label{con:connecting_maps}
Consider the following three maps.
\[
\begin{tikzcd}
\SamP \arrow[r, shift left, "\estrongStr" ]  & \arrow[l, shift left, "\eforgetStr"] \SamNP \arrow[r, "\ediagification"] & \DSam
\end{tikzcd}
\]
These are defined via:
    \begin{align*}
        ( \sampX, \singsampX) &\xmapsto{\estrongStr} (\sampX, \min \{\dsingsampX, 1\}), \\
       \stratSamp = ( \sampX, \phiStratsampX ) &\xmapsto{\eforgetStr} (\sampX, \subLevel{\stratSamp}{\spaceParam}),\\
    \stratSamp  = ( \sampX, \phiStratsampX ) & \xmapsto{\ediagification} (\subLevel{\stratSamp}{\diagParamUp}, \supbLevel{\stratSamp}{\diagParamLow}{\diagParamUp}, \supLevel{\stratSamp}{\diagParamLow}) \spaceperiod
    \end{align*}
The map $\estrongStr$ corresponds to the assignment of a strong stratification to a stratified metric space (see \cref{ex:strat_metric_to_strong_strat_metric}).
$\eforgetStr$ gives a family of models for the forgetful functor, $\TopNP \to \TopP$, described in \cref{recoll:hoTop}. 
Finally, by \cref{prop:holinks_are_links}, $\ediagification$ (composed with $\estrongStr$) provides a model for the functor assigning to a stratified space its stratification diagram, $\diagFun \pp \TopP \to \Diag$ (see \cref{recol:diag_are_equ}). 
\end{construction}
\begin{example}
Consider the three pictures \cref{fig:PT_ex_fun,fig:PT_ex_fun_str,fig:PT_ex_fun_diag}. \cref{fig:PT_ex_fun} shows the pinched torus $PT$ as a stratified subspace of $\mathbb R^3$, with the singularity marked in red. \cref{fig:PT_ex_fun_str} shows $\strongStr{PT}$, where the color coding indicates the strong stratification. Finally, \cref{fig:PT_ex_fun_diag} shows the image under $\ediagification$ for $\diagParam= (0.2,0.4)$. Specifically, the union of the red and purple part give the $p$-part of the diagram, the purple part the $\{p<q\}$-part, and the union of the purple and the blue one the $q$-part.
\begin{figure}
    \centering
    \begin{minipage}{0.32\textwidth}
        \centering
        \includegraphics[width=1\textwidth]{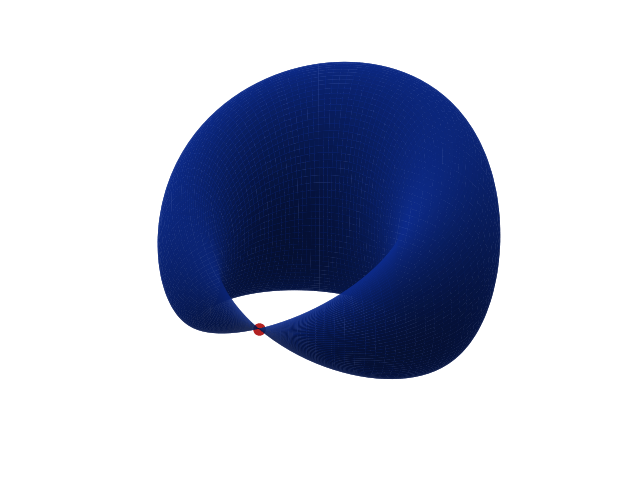}
        \caption{The pinched torus $PT$ as an element of $\SamP$}
        \label{fig:PT_ex_fun}
    \end{minipage}\hfill
    \begin{minipage}{0.32\textwidth}
        \centering
        \includegraphics[width=1\textwidth]{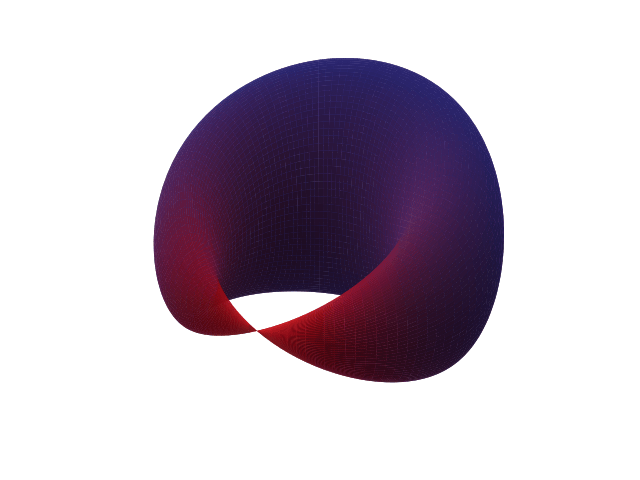}
        \caption{Illustration of $\strongStr{PT}$. The colouring indicates the strong stratification.}
        \label{fig:PT_ex_fun_str}
    \end{minipage}
    \begin{minipage}{0.32\textwidth}
        \centering
        \includegraphics[width=1\textwidth]{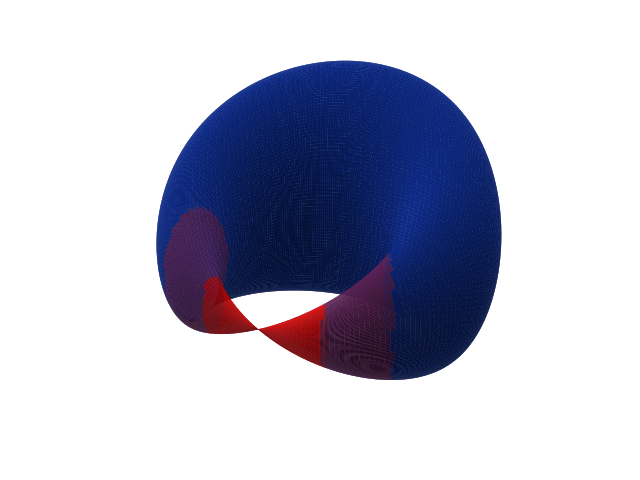}
        \caption{ Illustration of $\diagification{\strongStr{PT}}$.}
        \label{fig:PT_ex_fun_diag}
    \end{minipage}
\end{figure}

\end{example}
We will later make use of the following immediate elementary relation between $\ediagification$ and $\eforgetStr$.
\begin{lemma}\label{lem:forget_through_diag}
Let $\diagParam = (\diagParamLow, \spaceParam) \in \Omega$. Then,
\[
\forgetStr{\stratSamp} = ( \diagification{\stratSamp}_p \cup \diagification{\stratSamp}_{\{p < q\}} \cup \diagification{\stratSamp}_q, \diagification{\stratSamp}_p)
\]
for all $\stratSamp \in \SamNP$.
\end{lemma}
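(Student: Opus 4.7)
The proof is a direct unwinding of the definitions of $\eforgetStr$ and $\ediagification$ from \cref{con:connecting_maps}, using the sub- and superlevel set notation of \cref{not:strong_levelsets} together with the assumption $\diagParamUp = \spaceParam$.

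The plan is to verify the two coordinates of the claimed equality separately. Write $\stratSamp = (\sampX, \phiStratsampX \colon \sampX \to [0,1])$. By definition,
\[
\diagification{\stratSamp}_p = \phiStratsampX^{-1}[0,\diagParamUp], \qquad
\diagification{\stratSamp}_{\{p<q\}} = \phiStratsampX^{-1}[\diagParamLow,\diagParamUp], \qquad
\diagification{\stratSamp}_q = \phiStratsampX^{-1}[\diagParamLow,1].
\]
For the second coordinate of $\forgetStr{\stratSamp}$, we simply substitute $\spaceParam = \diagParamUp$ into the definition to obtain $\subLevel{\stratSamp}{\spaceParam} = \phiStratsampX^{-1}[0,\spaceParam] = \phiStratsampX^{-1}[0,\diagParamUp] = \diagification{\stratSamp}_p$, as required.

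For the first coordinate, one computes
\[
\diagification{\stratSamp}_p \cup \diagification{\stratSamp}_{\{p<q\}} \cup \diagification{\stratSamp}_q = \phiStratsampX^{-1}\bigl([0,\diagParamUp] \cup [\diagParamLow,\diagParamUp] \cup [\diagParamLow,1]\bigr) = \phiStratsampX^{-1}[0,1] = \sampX,
\]
where the penultimate equality uses the assumption $\diagParam \in \Omega$, which ensures $0 < \diagParamLow < \diagParamUp < 1$, so that $[0,\diagParamUp] \cup [\diagParamLow,1] = [0,1]$.

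There is no real obstacle here; the statement is purely bookkeeping. Its purpose (for later use) is to record that the underlying space of the forgetful image agrees with the union of the three pieces of the diagram, and that the ``singular'' part of $\forgetStr{\stratSamp}$ coincides on the nose with the vertex $\diagification{\stratSamp}_p$ of the diagram, so that subsequent stability arguments can freely interchange the two descriptions without tracking additional identifications.
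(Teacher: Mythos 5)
Your proof is correct: the paper states this lemma without proof (calling it "immediate"), and your unwinding of the definitions of $\eforgetStr$ and $\ediagification$ from \cref{con:connecting_maps}, with the identification $\diagParamUp = \spaceParam$, is exactly the intended verification.
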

\begin{remark}\label{rem:sam_spaces_posets}
Note, that all of the described sample spaces naturally admit the structure of a poset. In the case of $\Sam$, $\SamP$ and $\DSam$ it is simply given by inclusions. In case of $\SamNP$, it is obtained by treating elements of $\SamNP$ as their graph, i.e. as a subset of $\supSp \times \uInt$ and then using the inclusion relation. Equivalently, this means
\[
 ( \sampX , s) \leq ( \sampX', s' ) \iff ( x \in \sampX \implies ( x \in \sampX' \And s(x)= s'(x)) ).
\]
In this fashion, the spaces of samples may also be treated as categories and the maps of \cref{con:connecting_maps} are functors. 
Furthermore, from this perspective we can treat $\SamP$ as a subcategory of $\TopP$ treating the equivalence in \cref{prop:ALT_holinks_are_links} as a natural equivalence. We will not make much use of this perspective here. However, it allows for notation such as $\Sam^{I}$, where $I$ is some indexing category to make sense, and we will use this freely. Furthermore, from this perspective the metrics on $\SamP$ and $\DSam$ are induced by the flow given by componentwise thickening (see  \cref{ex:hausdorff_distance} for details).
\end{remark}
\begin{notation}\label{not:htpy_const}
In the remainder of the section, we will frequently state that certain functors are \define{homotopically constant}, which means the following: If $\textbf{T}$ is a category with weak equivalences and $U$ some indexing category, then we say $F \in \ho \textbf{T}^{U}$ is homotopically constant with value $T \in \textbf{T}$, if there is an isomorphism in $\ho \textbf{T}^{U}$
\[
F \simeq T,
\]
where we treat $T$ as as object of the functor category $\textbf{T}^U$ by sending it to the constant functor of value $T$. Note that this implies in particular that all structure maps of $F$ are weak equivalences.
\end{notation}
The categorical perspective on the sampling spaces can be used to define a parameter independent version of 
$\ediagification$.
\begin{construction}
Note that for $\diagParam \leq \diagParam'$ we have natural inclusions
\[
 \ediagification \hookrightarrow \ediagification[\diagParam'].
\]
These induce a map 
\[
 \efreediagification\colon \SamNP \to \DSam^{\paramSpace}.
\]
\end{construction}
\cref{prop:holinks_are_links} may then be rephrased as follows. 
\begin{proposition}\label{prop:ALT_holinks_are_links}
Let $\str \in \SamP$ be cylindrically stratified. Then, for all $\diagParam \in \paramSpace$ we have a weak equivalence
\begin{align*}
    \diagFun( \str) \simeq \ediagification  \circ \estrongStr (\str).
\end{align*}
In fact, even more, there is an isomorphism in the homotopy category $\ho \Diag^{\paramSpace}$ 
\[
\diagFun( \str) \simeq \efreediagification  \circ \estrongStr (\str).
\]
\end{proposition}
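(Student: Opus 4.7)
The first statement is essentially a reformulation of \cref{prop:holinks_are_links}. Unwinding definitions, I observe that $\estrongStr(\str) = (\sampX, \min\{\dsingsampX, 1\})$, and since $\diagParam \in \paramSpace$ ensures $0 < \diagParamLow < \diagParamUp < 1$, the truncation by $1$ is irrelevant at the level $\diagParamUp$, so
\[
\ediagification \circ \estrongStr(\str) \;=\; \{\subLevel{\str}{\diagParamUp} \hookleftarrow \supbLevel{\str}{\diagParamLow}{\diagParamUp} \hookrightarrow \supLevel{\str}{\diagParamLow}\}.
\]
The pointwise weak equivalence then follows immediately from \cref{prop:holinks_are_links}.

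For the $\paramSpace$-indexed version, my plan is to upgrade the zigzag of weak equivalences constructed in the proof of \cref{prop:holinks_are_links} to a zigzag of natural transformations of functors $\paramSpace \to \Diag$. The crucial observation is that the three intermediate diagrams built there — namely $\{s^{-1}[0,1) \leftarrow s^{-1}(0,1) \rightarrow s^{-1}(0,1]\}$, $\{s^{-1}[0,1) \leftarrow L\times\{v\} \rightarrow s^{-1}(0,1]\}$, and $\{\str_p \leftarrow L \rightarrow \str_q\}$ — depend only on $\str$ and on the auxiliary choices $v, \varepsilon$ coming from the cylindrical mapping cylinder neighborhood of $\str_p$. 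None of these choices involve $\diagParam$, and a single pair $(v, \varepsilon)$ works uniformly for all $\diagParam \in \paramSpace$, since compactness of $\str_p$ ensures their existence independently of where we cut.

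Given this uniformity, for any $\diagParam \leq \diagParam'$ in $\paramSpace$ the transition inclusion $\ediagification(\strongStr(\str)) \hookrightarrow \ediagification[\diagParam'](\strongStr(\str))$ manifestly commutes with the subsequent componentwise inclusions into $\{s^{-1}[0,1) \leftarrow s^{-1}(0,1) \rightarrow s^{-1}(0,1]\}$. Hence the first map in the zigzag of \cref{prop:holinks_are_links} assembles into a natural transformation from $\efreediagification \circ \estrongStr(\str)$ to the constant functor on $\paramSpace$ with value this intermediate diagram, and this natural transformation is pointwise a weak equivalence of diagrams. The remaining two legs of the zigzag are entirely independent of $\diagParam$ and already identify this constant functor with the constant functor at $\diagFun(\str)$ in $\ho \Diag$. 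Composing yields the required isomorphism $\diagFun(\str) \simeq \efreediagification \circ \estrongStr(\str)$ in $\ho \Diag^{\paramSpace}$.

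The main obstacle is essentially bookkeeping rather than geometric: I need to check that each constituent of the zigzag produced in the proof of \cref{prop:holinks_are_links} is indeed natural with respect to $\paramSpace$, which reduces to confirming that all intermediate targets are $\diagParam$-independent and that the choices of $v$ and $\varepsilon$ can be made globally in $\paramSpace$. Since no new geometric input beyond \cref{prop:holinks_are_links} is required, the proof should be short once the naturality is spelled out.
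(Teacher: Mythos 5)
Your proposal is correct and follows essentially the same route as the paper's own proof: the paper likewise observes that the zigzag constructed in \cref{prop:holinks_are_links} first maps $\ediagification \circ \estrongStr(\str)$ into a $\diagParam$-independent diagram, so that it assembles into a weak equivalence from $\efreediagification \circ \estrongStr(\str)$ to a constant functor, which the rest of the zigzag identifies with the constant functor at $\diagFun(\str)$. Your additional bookkeeping about the uniformity of the choices of $v$ and $\varepsilon$ is exactly the verification the paper leaves implicit.
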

\begin{proof}
Note, that in the proof of \cref{prop:holinks_are_links} we in fact first constructed a weak equivalence of $\ediagification  \circ \estrongStr (\str)$ with a diagram independent of $\diagParam$. It is immediate from the construction there, that this weak equivalences induces a weak equivalence from $\efreediagification \circ \estrongStr(\str)$ to a constant functor. The second part of the proof then shows that this constant functor is weakly equivalent to the constant functor with value $\diagFun{(\str)}$.
\end{proof}
In particular, under the equivalence of homotopy categories $\ho \TopP \cong \ho \Diag$, $\str$ and $\ediagification \circ \estrongStr (\str)$ represent the same stratified homotopy type. As a consequence, to define persistent stratified homotopy types, we can thicken stratification diagrams instead of stratified spaces.
\begin{construction}\label{con:thickening_diag}
Define \define{the thickening of $\sampD$ by $\alpha \geq 0$} via:
\[
    \thicken{\sampD} := ( \thicken{(\sampD_{q})}, \thicken{(\sampD_{ \{p < q \}})}, \thicken{(\sampD_{q})} ).
\]
For $\alpha \leq \alpha'$ there are the obvious inclusions of diagrams \[ \thicken{\sampD} \hookrightarrow \thicken[\alpha']{\sampD} \]
We thus obtain a map (functor from the categorical perspective) 
    \begin{align*}
        \DSam \to \DSam^{\mathbb R_+} \\
         \sampD \mapsto \{ \alpha \mapsto \thicken{\sampD} \}
    \end{align*}
with the structure maps given by inclusions. We may then treat the sample diagrams as elements of $\Diag$, ultimately obtaining the composition:
\begin{align*}
 \epersd \colon \DSam \to \DSam^{\mathbb R_+} \to  \ho \Diag^{\mathbb R_+} \simeq \ho \TopP^{\mathbb R_+} .
\end{align*}
\end{construction}
We now have everything in place to define persistent stratified homotopy types.
\begin{definition}\label{pers_strat_htpy_type}
The \define{persistent stratified homotopy type} of a stratified sample $\stratSamp \in  \SamP$ (depending on the parameter $\diagParam$) is defined as the image of $\stratSamp$ under the composition
\[
    \epersParam\colon\SamP \xrightarrow{\estrongStr} \SamNP \xrightarrow{\ediagification} \DSam \xrightarrow{\epersd} \pStrat, 
\]
where the final map is the one defined in \cref{con:thickening_diag}. 
\end{definition}
\begin{remark}\label{rem:computability_of_strat_pers}
Note, that by construction, $\epersParam$ fulfills an analog of Property \ref{enum:properties_of_PH1}, i.e. it admits a combinatorial interpretation which, for finite samples, can be stored on a computer. 
Indeed, by construction the persistent stratified homotopy type of $\stratSamp \in \SamP$ is equivalently represented by the image of $\stratSamp$ under
\[
\epersParam \pp \SamP \xrightarrow{\estrongStr} \SamNP \xrightarrow{\ediagification} \DSam \xrightarrow{\epersd} \ho \Diag^{\mathbb R_+},
\]
named the same by abuse of notation.
Then, it is a consequence of the classical nerve theorem (see e.g. \cite[Prop. 4G.3]{hatcher2002algtop} or \cite{borsuk1948nerve}) that for $\stratSamp \in \SamP$, $\persParam{\stratSamp}$ is equivalently represented by the diagram of \v{C}ech complexes
\begin{align*}
    \alpha \mapsto \{ \flagI \mapsto \textnormal{\v{C}}_{\alpha}( \ediagification(\estrongStr( \stratSamp))_{\flagI}) \}
\end{align*}
where $\textnormal{\v{C}}_{\alpha}$ denotes the \v{C}ech complex with respect to $\alpha$. When $\stratSamp$ is finite, this data can be stored on a computer and algorithmically evaluated.
\end{remark}
\begin{definition}\label{def:parameter-free}
The (parameter-free) \define{persistent stratified homotopy type} of a stratified sample $\stratSamp \in  \SamP$  is defined as the image of $\stratSamp$ under the composition
\begin{align*}
    \epers\colon \SamP  \xrightarrow{\estrongStr} \SamNP \xrightarrow{\efreediagification} \DSam^{\paramSpace} \xrightarrow{\epersd^{\paramSpace}} (\pStrat)^{\paramSpace} \cong \opStrat.
\end{align*}
\end{definition}
Then, the following two results guarantee that for sufficiently regular stratified spaces the homotopy type does not change under small thickenings (this is Property \ref{enum:properties_of_PH3}, see \cite{niyogi2008finding} for the analogous result in the non-stratified smooth setting). This justifies the use of persistent stratified homotopy types as a means to infer stratified homotopic information. Recall that the weak feature size of a subspace $\topSp$ of $\supSp$ (\cite{chazal2005weak}), is a non-negative real number $\varepsilon$ associated to $\topSp$, which has the property that the natural inclusion $\thicken{\topSp} \hookrightarrow \thicken[\alpha']{\topSp}$ is a homotopy equivalences, for $0 < \alpha \leq \alpha' < \varepsilon$.
\begin{proposition}\label{prop:thicken_stab_lok}
Let $\stratSp \in \SamP$ be a compact, definable stratified metric space. Then, for any $v \in \paramSpace$, there exists an $\varepsilon >0$, such that the structure map
\[
\persParam{\stratSp}(\alpha) \to \persParam{\stratSp}(\alpha')
\]
is a weak equivalence, for all $0 \leq \alpha \leq \alpha' < \varepsilon$. In particular, 
\[
\persParam{\stratSp}\mid_{[0,\varepsilon)} \simeq S.
\]
In other words, the persistent stratified homotopy type of $S$ at $\diagParam$ restricted to $[0,\varepsilon)$, is weakly equivalent to the constant functor with value $\str$. Furthermore, $\varepsilon$ can be taken to be the minimum of the weak feature size of the entries of $\diagification{\topSp}$ (see \cite{chazal2005weak}), and the latter is positive.
\end{proposition}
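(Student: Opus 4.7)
The plan is to unwind the definition of $\epersParam$, reduce the claim to an entry-wise statement about ordinary thickenings in $\supSp$, and then quote the weak feature size property for each of the three (compact, definable) entries of the stratification diagram.

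First, by \cref{prop:ALT_holinks_are_links}, the stratification diagram $\diagFun(\stratSp)$ is already weakly equivalent (as a constant functor in $\alpha$) to $\sampD := \ediagification \circ \estrongStr(\stratSp)$, whose three entries are $\sampD_p = \subLevel{\stratSp}{\diagParamUp}$, $\sampD_{\{p<q\}} = \supbLevel{\stratSp}{\diagParamLow}{\diagParamUp}$ and $\sampD_q = \supLevel{\stratSp}{\diagParamLow}$. By construction of $\epersd$ in \cref{con:thickening_diag}, $\persParam{\stratSp}(\alpha)$ is exactly the componentwise thickening $\thicken{\sampD}$, with structure maps for $\alpha \leq \alpha'$ given by the componentwise inclusions $\thicken{\sampD_{\flagI}} \hookrightarrow \thicken[\alpha']{\sampD_{\flagI}}$. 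Since weak equivalences in $\Diag$ are detected entry-wise (\cref{def:weak_eq_diags}), the claim reduces to showing that each inclusion $\thicken{\sampD_{\flagI}} \hookrightarrow \thicken[\alpha']{\sampD_{\flagI}}$ is a (weak) homotopy equivalence for $0 \leq \alpha \leq \alpha' < \varepsilon$.

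Next I would observe that each $\sampD_{\flagI}$ is a compact definable subset of $\supSp$: since the distance-to-stratum function $\mathrm{d}_{\stratSp_p}$ is definable, the three sub- and superlevel sets cut out by the real parameters $\diagParamLow, \diagParamUp$ are definable, and they are contained in the compact set $\topSp$. For a compact definable set, the weak feature size is strictly positive; this is the content (or an immediate consequence) of the tameness results invoked in \cref{lem:appendix_definably_thickenable}, ultimately using that definable functions have only finitely many critical values. Setting $\varepsilon$ to be the minimum of these three weak feature sizes, one has $\varepsilon > 0$, and by definition of weak feature size the inclusions $\thicken{\sampD_{\flagI}} \hookrightarrow \thicken[\alpha']{\sampD_{\flagI}}$ are homotopy equivalences whenever $0 < \alpha \leq \alpha' < \varepsilon$. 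The case $\alpha = 0$ follows by a standard deformation-retraction argument for compact definable sets (again via \cref{lem:appendix_definably_thickenable}), giving that $\sampD_{\flagI} \hookrightarrow \thicken[\alpha']{\sampD_{\flagI}}$ is a homotopy equivalence for $0 < \alpha' < \varepsilon$.

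Assembling these component equivalences yields a weak equivalence of diagrams $\sampD \to \thicken{\sampD}$ for every $\alpha \in [0, \varepsilon)$ that is compatible with the structure maps (the latter commute with thickening because componentwise thickening is monotone in $\alpha$). Hence $\persParam{\stratSp}\mid_{[0,\varepsilon)}$ is isomorphic in $\ho \Diag^{[0,\varepsilon)}$ to the constant functor with value $\sampD$, which in turn is weakly equivalent to $\diagFun(\stratSp)$ by \cref{prop:ALT_holinks_are_links}. Under the equivalence $\ho \TopP \simeq \ho \Diag$ of \cref{recol:diag_are_equ}, this gives the desired $\persParam{\stratSp}\mid_{[0,\varepsilon)} \simeq \stratSp$.

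The main obstacle is the definability-to-weak-feature-size step: one must be careful to confirm that the three pieces of $\sampD$ really are definable and compact (so that the appendix lemma applies), and that the weak feature size really is the correct constant to invoke. Everything else is essentially bookkeeping with the fact that weak equivalences in $\Diag$ are entry-wise and that thickening is a monotone endofunctor of $\Sam$.
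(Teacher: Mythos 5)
Your proposal is correct and follows essentially the same route as the paper's proof: reduce to the entry-wise statement via the definition of weak equivalences in $\Diag$, observe that each entry of $\diagification{\stratSp}$ is compact and definable (since the distance-to-stratum function is definable), invoke \cref{lem:appendix_definably_thickenable} for the positivity of the weak feature size and the invariance of the homotopy type under small thickenings, and handle $\alpha = 0$ via \cref{prop:ALT_holinks_are_links}. Your write-up is somewhat more explicit than the paper's about why the entries are definable and about the assembly of the componentwise equivalences into an equivalence of functors, but the substance is identical.
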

\begin{proof}
Note that by definition of a weak equivalence in the category of stratification diagrams, this statement really just says there exists an $\varepsilon >0$, such that for each flag $\flagI$ in $P$ the inclusions 
\[ \diagification{\stratSp}_{\flagI} 
\hookrightarrow
\thicken[\alpha]{(\diagification{\stratSp}_{\flagI})} \]
are weak equivalences, for $\alpha \leq \varepsilon$. Note, however, that by the definability assumption $\diagification{\stratSp}_{\flagI}$ is again definable. Hence, this follows from the fact that the homotopy type of compact definable sets is invariant under slight thickenings
(see \cref{lem:appendix_definably_thickenable} for the precise statement and the fact that $\varepsilon$ can be taken as the minimum of the weak feature size of the entries of $\diagification{\topSp}$).
For $\alpha = 0$, we have 
\[
\persParam{\stratSp}(0) \simeq \stratSp
\]
by \cref{prop:ALT_holinks_are_links}. 
\end{proof}
\begin{proposition}\label{prop:thicken_stab_glob}
Let $\stratSp \in \SamP$ be a compact, definably stratified space. Then (up to a linear rescaling), the persistent stratified homotopy type
\[
    \pers{\stratSp}\colon \paramSpace \times \mathbb R_{+} \to \ho \TopP
\]
is homotopically constant with value $\stratSp$ on an open neighborhood of $\paramSpace \times \{ 0\}$. 
\end{proposition}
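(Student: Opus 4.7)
The strategy is to work locally: I fix a point $\diagParam^0 \in \paramSpace$ and produce an open box neighborhood $U \times [0, \varepsilon)$ of $(\diagParam^0, 0)$ on which $\pers{\stratSp}$ is homotopically constant with value $\stratSp$. Taking the union over all $\diagParam^0$ yields the desired open neighborhood of $\paramSpace \times \{0\}$. Since $\stratSp$ is compact and definably stratified, \cref{ex:def_are_cylind_strat} lets me assume (after a linear rescaling) that $\stratSp$ is cylindrically stratified. Consequently \cref{prop:ALT_holinks_are_links} gives an isomorphism $\efreediagification \circ \estrongStr(\stratSp) \simeq \diagFun(\stratSp)$ in $\ho\Diag^{\paramSpace}$, so at $\alpha=0$ the functor $\pers{\stratSp}|_{\paramSpace \times \{0\}}$ is already homotopically constant with value $\stratSp$. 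Hence I only need to show that a small thickening does not destroy this.

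I next choose a compact box $K = [a_1, b_1] \times [a_2, b_2] \subset \paramSpace$ containing $\diagParam^0$ in its interior, with $b_1 < a_2$ so that $\diagParamLow < \diagParamUp$ on $K$. For each flag $\flagI \in \mathrm{R}(\pos)$ the family $\{(\diagification{\stratSp})_{\flagI}\}_{\diagParam \in K}$ can be sandwiched between two fixed compact definable sets
\[
B_{\flagI} \subset (\diagification{\stratSp})_{\flagI} \subset A_{\flagI},
\]
obtained from the extremal values of $K$, e.g.\ $B_p = \dsingsampX^{-1}[0, a_2]$, $A_p = \dsingsampX^{-1}[0, b_2]$, and analogously for $\flagI = \{q\}$ and $\flagI = \{p<q\}$. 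The cylindrical structure (cf.\ \cref{rem:def_cyl_strat}) ensures that all inclusions in these sandwiches are (strict) homotopy equivalences, since the three sets are all equivalent either to $\stratSp_p$, the link $L$, or $\stratSp_q$ via the cylinder retractions. I then invoke \cref{lem:appendix_definably_thickenable} applied to the finitely many definable compact sets $A_{\flagI}$ and $B_{\flagI}$ to obtain a single $\varepsilon > 0$ such that for every $0 \leq \alpha < \varepsilon$ the inclusions $A_{\flagI} \hookrightarrow \thicken{A_{\flagI}}$ and $B_{\flagI} \hookrightarrow \thicken{B_{\flagI}}$ are homotopy equivalences.

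A two-out-of-three chase on the commutative square
\[
\begin{tikzcd}
B_{\flagI} \arrow[r, hook, "\simeq"] \arrow[d, hook, "\simeq"'] & A_{\flagI} \arrow[d, hook, "\simeq"]\\
\thicken{B_{\flagI}} \arrow[r, hook] & \thicken{A_{\flagI}}
\end{tikzcd}
\]
shows the bottom inclusion is a weak equivalence. Since $B_{\flagI} \subset (\diagification{\stratSp})_{\flagI} \subset A_{\flagI}$ implies the same nesting for the thickenings, and all four corner sets are weakly equivalent via the cylindrical homotopy equivalences, a further two-out-of-three argument yields that the inclusion $(\diagification{\stratSp})_{\flagI} \hookrightarrow \thicken{(\diagification{\stratSp})_{\flagI}}$ is a weak equivalence for all $\diagParam \in K$ and all $0 \leq \alpha < \varepsilon$. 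This handles the structure maps of $\pers{\stratSp}$ in the $\mathbb{R}_+$-direction uniformly on $K$.

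Finally, for the structure maps in the $\paramSpace$-direction at positive $\alpha$: for $\diagParam \leq \diagParam'$ in $U := \mathrm{int}(K)$, I apply two-out-of-three to the commutative square
\[
\begin{tikzcd}
(\diagification{\stratSp})_{\flagI} \arrow[r, hook, "\simeq"] \arrow[d, hook, "\simeq"'] & (\diagification[\diagParam']{\stratSp})_{\flagI} \arrow[d, hook, "\simeq"]\\
\thicken{(\diagification{\stratSp})_{\flagI}} \arrow[r, hook] & \thicken{(\diagification[\diagParam']{\stratSp})_{\flagI}}
\end{tikzcd}
\]
whose top edge is a weak equivalence by \cref{prop:ALT_holinks_are_links} and whose vertical edges are weak equivalences by the previous step. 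Combining both types of structure maps shows that $\pers{\stratSp}|_{U \times [0, \varepsilon)}$ is homotopically constant with value $\diagFun(\stratSp) \simeq \stratSp$, completing the local construction. The main obstacle is obtaining a uniform thickening parameter $\varepsilon > 0$ that works simultaneously for all $\diagParam$ in a neighborhood of $\diagParam^0$, since \cref{prop:thicken_stab_lok} a priori only gives a $\diagParam$-dependent $\varepsilon$; the sandwich construction circumvents this by reducing the continuous family of intermediate sets to finitely many fixed compact definable sets.
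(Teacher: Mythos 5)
Your overall architecture agrees with the paper's: constancy along $\paramSpace \times \{0\}$ via \cref{prop:ALT_holinks_are_links}, then weak invariance of the structure maps in the $\mathbb R_+$-direction on a neighborhood, then a two-out-of-three chase to handle the $\paramSpace$-direction. The difference is how the uniform thickening bound is obtained: the paper uses that the minimum $\omega_{\diagParam}$ of the weak feature sizes of the entries of $\diagification{\stratSp}$ is positive and varies continuously in $\diagParam$, so that $\{(\diagParam,\alpha) \mid \alpha < \omega_{\diagParam}\}$ is already the desired open neighborhood. Your substitute for this, the sandwich argument, contains a genuine gap. Writing $D = (\diagification{\stratSp})_{\flagI}$, you know that $B_{\flagI} \hookrightarrow D \hookrightarrow A_{\flagI}$ are homotopy equivalences and that $B_{\flagI} \hookrightarrow \thicken{B_{\flagI}}$ and $A_{\flagI} \hookrightarrow \thicken{A_{\flagI}}$ are homotopy equivalences; two-out-of-three then gives that the \emph{composite} $\thicken{B_{\flagI}} \hookrightarrow \thicken{D} \hookrightarrow \thicken{A_{\flagI}}$ is a weak equivalence, but this determines nothing about the intermediate space $\thicken{D}$: two-out-of-three needs two of the three maps in a composite, and two-out-of-six needs a three-map chain with two overlapping composites known, neither of which you have (the only unknown maps are exactly the ones involving $\thicken{D}$).

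The implication is in fact false, not merely unjustified. Take $A$ to be the closed disk of radius $2$ about the origin in $\mathbb R^2 \cong \mathbb C$, $B = \{0\}$, and $D = ([0,1]\times\{0\}) \cup \{e^{i\theta} \mid \theta \in [0, 2\pi-\delta]\}$ for small $\delta>0$. All three sets are compact, definable and contractible, and both inclusions are homotopy equivalences; $\thicken{A}$ and $\thicken{B}$ are disks for every $\alpha$; yet as soon as $\alpha$ exceeds roughly $\delta/2$ the gap in the arc closes and $\thicken{D} \simeq S^1$, so $D \hookrightarrow \thicken{D}$ fails to be a weak equivalence. Since $\delta$ can be made arbitrarily small, no $\varepsilon$ depending only on the outer and inner sets of a sandwich can control the thickening behavior of everything equivalent to them in between; you genuinely need a positive lower bound for the weak feature sizes of the sets $(\diagification{\stratSp})_{\flagI}$ themselves, uniformly over $\diagParam$ in a compactum, which is what the paper's continuity claim supplies. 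A secondary point: at the end you pass from ``all structure maps of $\pers{\stratSp}$ on the neighborhood are weak equivalences'' to ``homotopically constant'' without comment. In $\ho\Diag^{U}$ this is not automatic (cf.\ \cref{rem:order_ho_fun}) and is exactly the content of \cref{lem:appendix_constant_diagram}; applying that lemma to the full open neighborhood also disposes of the need to glue your local boxes.
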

\begin{proof}
That the functor is homotopically constant with value $\stratSp$ on $ \paramSpace \times \{ 0\}$ is the content of \cref{prop:ALT_holinks_are_links}. Let $\omega_v$ denote the minimum of the weak feature sizes (compare to \cite{chazal2005weak}) of the entries of $\diagification{\stratSp}$. An elementary argument shows that $\omega_v$ varies continuously in $\diagParam$. By \cref{lem:appendix_definably_thickenable} all weak feature sizes involved are positive.
We take 
\[
U := \{ (\diagParam, \alpha) \mid \textnormal{$\alpha$ is smaller than } \omega_v \}.
\]
From \cref{prop:thicken_stab_lok} it follows, that all structure maps of $\pers{\stratSp}$ on $U$ in direction $\mathbb R_+$ are weak equivalences. From this, it already follows that all structure maps of $\pers{\stratSp}\mid_{U}$ are weak equivalences. For the slightly stronger result that this already implies that $\pers{\stratSp}\mid_{U}$ is homotopically constant, see \cref{lem:appendix_constant_diagram}.
\end{proof}
\subsection{Metrics on categories of persistent objects}\label{subsec:flows}
One of the central requirements for the use of persistent homology in practice is the fact that it is stable with respect to Hausdorff and interleaving distance (\ref{enum:properties_of_PH2}, first shown in \cite{chazal2008interleave}).
Investigating the use of metrics in persistent scenarios and the stability of functors with respect to them has since been the content of ongoing research (\cite{bramer2020atomic,hofer2017deep,lesnick2015interleave,bjerkevik2021lpdist,bubenik2020algwasser}). The stability of persistent homology with respect to the interleaving distance may, however, already be phrased at the level of persistent homotopy types (even on the level of persistent spaces), as we explain in the remainder of this subsection.
The goal of \cref{subsec:stab_pers_type} is to investigate the stability behavior of the persistent stratified homotopy type. To do so, we make us of the notion of a flow introduced \cite{silva2018interleave}. For the sake of conciseness, we recall a slightly less general definition here. 
\begin{recollection}[\cite{silva2018interleave}]\label{recol:flow}
    A \define{strict flow} on a category $\mathcal{C}$ is a strict monoidal functor $(-)_{-}\pp \mathbb R_{+} \to \textnormal{End}(\mathcal C)$. In other words, to each $\varepsilon \in \mathbb R_+$ we assign an endofunctor $(-)_{\varepsilon}$ and whenever $\varepsilon \leq \varepsilon'$ we assign (functorially) a natural transformation $s_{\varepsilon \to \varepsilon'}\pp (-)_{\varepsilon} \to (-)_{\varepsilon'}$. Being strict monoidal means that $(-)_0 = 1_{\mathcal C}$, 
    $(-)_{\varepsilon'} \circ (-)_{\varepsilon} = (-)_{\varepsilon + \varepsilon'}$ and $(s_{\varepsilon \leq \varepsilon'})_{\delta} = s_{\varepsilon + \delta \leq \varepsilon' + \delta}$. Generally, one should think of flows as a notion of shift on $\mathcal C$. 
    Then, just as in the scenario of the interleaving distance for persistence modules \cite{chazal2008interleave}, one says that $X,Y \in \mathcal C$ are \define{$\varepsilon$-interleaved} if there are morphisms $f\pp X \to Y_{\varepsilon}$ and $g \pp Y \to X_{\varepsilon}$ and such that the diagram
    \begin{diagram}
     X \arrow[d, "f"'] \arrow[rd] & Y \arrow[d, "g"] \arrow[ld] \\
     {Y_\varepsilon}\arrow[d, "{g_{\varepsilon}}"'] \arrow[rd, crossing over] & {X_{\varepsilon}} \arrow[d, "{f_{\varepsilon}}"] \arrow[ld] \\
     {X_{2\varepsilon}} & {Y_{2\varepsilon}} 
    \end{diagram}
    commutes (all unlabelled morphisms are given by the flow). One then obtains a (symmetric Lawvere) metric space by setting
    \begin{equation}
        \dmet[\In]{X}{Y} := \inf \{ \varepsilon \geq 0 \mid X,Y \textnormal{ are $\varepsilon$-interleaved} \}.
    \end{equation}
    An immediate consequence of this definition is that any functor $F\pp \mathcal C \to \mathcal D$ between categories with a (strict) flow that fulfills $(-)_{\varepsilon} \circ F = F \circ (-)_{\varepsilon}$ for all $\varepsilon \in \mathbb R_+$ is necessary $1$-Lipschitz with respect to the interleaving distances, that is it fulfills \[\dmet[\In]{F(X)}{F(Y)} \leq \dmet[\In]{X}{Y},\] for $X,Y \in \mathcal{C}$.
\end{recollection}
\begin{example}\label{ex:int_distance}
Consider $\mathbb{R}^k$ equipped with the product partial ordering, given by $u \leq v :\iff \forall i \in \{1,\dots,n \} \colon u_i \leq v_i$.
Let $U \subset \mathbb R^k$ be a generalized interval (i.e. a subset of $\mathbb R^k$, such that $u,v \in U, u\leq w\leq v$ implies $w \in U$). Furthermore, let $\textbf{T}$ be a category with a terminal object $*$. 
    Then any functor category $\textbf{T}^{U}$ is naturally equipped with a shift type flow, given by 
        \begin{align*}
            X_\varepsilon(u):= 
                \begin{cases}
    X(u + \varepsilon(1,\dots,1)) &\textnormal{, for $u +  \varepsilon(1,\dots,1) \in U$;}\\
    * &\textnormal{, for $u + \varepsilon(1,\dots,1) \notin U$}.
        \end{cases}
        \end{align*}
    If $\textbf{T}$ is equipped with a notion of weak equivalence (which includes all isomorphisms), then by construction the flow respects weak equivalences in $\textbf{T}^{U}$. Thus, it descends to a flow on the homotopy category $\ho \textbf{T}^{U}$ obtained by localizing weak equivalences of functors. In particular, this construction equips the persistent homotopy category $\ho \Top^{\mathbb R_+}$ with a symmetric Lawvere metric, such that the functor 
    \[
    \Top^{\mathbb R_+} \to \ho \Top^{\mathbb R_+}
    \]
    is $1$-Lipschitz.
    More generally, the same construction works for the cases $\textbf{T} = \TopP, \Diag$. We call distances of this type \define{interleaving distances}.
    Furthermore, for any functor between two such categories $\textbf{T},\textbf{T}'$, which descends to the homotopy category, the induced functors 
    \begin{align*}
        \textbf{T}^{\mathbb R_+} &\to \textbf{T}'^{\mathbb R_+}, \\
        \ho \textbf{T}^{\mathbb R_+} &\to \ho\textbf{T}'^{\mathbb R_+} 
    \end{align*}
    commute with shifting  and are thus $1$-Lipschitz. Whenever we refer to a metric on such a functor category (or its homotopy category), we will be referring to the interleaving distance.
\end{example}
\begin{example}\label{ex:hausdorff_distance}
Denote by $\Sam$ the category of subsets of $\supSp$, with morphisms given by inclusions. If we take 
$
\thicken[\varepsilon]{\sampX}
$
to be given by an $\varepsilon$ thickening,
for $\varepsilon \in \mathbb R_{+}$ then this construction defines a strict flow on $\Sam$. The distance induced by the flow is the Hausdorff distance (compare \cite{silva2018interleave}). Clearly, the functor 
\begin{align*}
    \mathcal{P}\pp \Sam &\to \Top^{\mathbb R_+} \\
    \sampX &\mapsto \{ \varepsilon \mapsto \sampX_{\varepsilon} \}
\end{align*}
commutes with flows. In particular, this gives an, albeit somewhat unnecessarily abstract, proof of the fact that persistent homotopy types are stable with respect to Hausdorff and interleaving distance, which immediately implies the stability of persistent homology (\cite{chazal2008interleave}). More generally, if we define thickening componentwise, then the Hausdorff style distances on $\SamP$ and $\DSam$ (\cref{def:samP,def:DSam}) are also induced by the thickening flow. 
\end{example}
In the next subsection, we are going to make frequent use of \textit{tautological} stability results as in \cref{ex:int_distance,ex:hausdorff_distance}. For example, from the flow perspective, just as in the non-stratified scenario, one immediately obtains:
\begin{lemma}\label{lem:pers_diag_lip}
$\epersd \pp \DSam \to \pStrat $ is $1$-Lipschitz.
\end{lemma}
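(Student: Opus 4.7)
The plan is to deduce the claim from the general principle recalled in \cref{recol:flow}: any functor between categories with strict flows that strictly commutes with the flows is automatically $1$-Lipschitz with respect to the associated interleaving distances. Both the domain and codomain here already come equipped with such flows. On $\DSam$, by \cref{ex:hausdorff_distance} the Hausdorff-type metric is induced by the componentwise thickening flow $(\sampD)_{\varepsilon} = \thicken[\varepsilon]{\sampD}$. On $\pStrat = \ho \Diag^{\mathbb R_+}$, by \cref{ex:int_distance} the interleaving distance is induced by the shift flow $X_{\varepsilon}(\alpha) = X(\alpha + \varepsilon)$ (with structure maps shifted accordingly).

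The core step is to verify that $\epersd$ intertwines these two flows. Unpacking \cref{con:thickening_diag}, for $\sampD \in \DSam$ and $\alpha \in \mathbb R_+$ the value $\epersd(\sampD)(\alpha)$ is the stratification diagram $\thicken[\alpha]{\sampD}$ obtained by thickening each entry by $\alpha$. Applying the thickening flow first and then $\epersd$ yields
\[
\epersd\bigl((\sampD)_{\varepsilon}\bigr)(\alpha) \;=\; \thicken[\alpha]{\bigl(\thicken[\varepsilon]{\sampD}\bigr)},
\]
while applying $\epersd$ first and then shifting yields $\thicken[\alpha + \varepsilon]{\sampD}$. Using the elementary identity $\thicken[\alpha]{\thicken[\varepsilon]{\sampX}} = \thicken[\alpha+\varepsilon]{\sampX}$ for any $\sampX \subset \supSp$ (one inclusion follows from the triangle inequality; the other by truncating a segment from $\sampX$ to $y$ at distance $\varepsilon$), applied separately to each of the three entries indexed by $\flagI \in \mathrm{R}(\pos)$, both expressions coincide on the nose in $\Diag$. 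The same identity checks that the structure maps match: the inclusion $\thicken[\alpha]{\sampD} \hookrightarrow \thicken[\alpha']{\sampD}$ of the shifted functor corresponds under the above equality to the inclusion coming from $\alpha + \varepsilon \leq \alpha' + \varepsilon$.

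Hence $\epersd$ commutes with the flows strictly at the level of $\DSam \to \Diag^{\mathbb R_+}$, and therefore also after composing with the localization $\Diag^{\mathbb R_+} \to \ho\Diag^{\mathbb R_+} = \pStrat$. Given any pair $\sampD, \sampD' \in \DSam$ that are $\varepsilon$-close in the Hausdorff metric of $\DSam$, the componentwise inclusions $\sampD \subset \thicken[\varepsilon]{\sampD'}$ and $\sampD' \subset \thicken[\varepsilon]{\sampD}$ define morphisms realizing $\sampD$ and $\sampD'$ as $\varepsilon$-interleaved objects, and applying $\epersd$ transports this into an $\varepsilon$-interleaving in $\pStrat$. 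This yields $\dmet[\In]{\epersd(\sampD)}{\epersd(\sampD')} \leq \dmet[\DSam]{\sampD}{\sampD'}$, which is the desired Lipschitz estimate. There is no real obstacle here; the only thing to be careful about is the strict equality $\thicken[\alpha]{\thicken[\varepsilon]{\sampX}} = \thicken[\alpha+\varepsilon]{\sampX}$, which is where the Euclidean (geodesic) structure of $\supSp$ is implicitly used.
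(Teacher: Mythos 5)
Your proof is correct and takes essentially the same route as the paper, which states the lemma as an immediate consequence of the flow formalism of \cref{recol:flow}: the Hausdorff-type metric on $\DSam$ and the interleaving distance on $\pStrat$ are both flow distances, and $\epersd$ strictly intertwines the thickening and shift flows via $\thicken[\alpha]{\thicken[\varepsilon]{\sampD}} = \thicken[\alpha+\varepsilon]{\sampD}$. Your explicit verification of that identity (and of the matching of structure maps) just spells out what the paper leaves as tautological.
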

\begin{example}\label{def:global_interleaving_distance}
    We may identify $\paramSpace$ with the subset \[U=\{ (x,y) \in (-1,0) \times (0,1) \mid -x < y \}\] by mapping $\diagParamTup \mapsto (-\diagParamLow, \diagParamUp )$. Note that this defines an order preserving map. Then the construction in \cref{ex:int_distance} defines a flow distance on $\opStrat$.
\end{example}
\subsection{Stability of persistent stratified homotopy types}\label{subsec:stab_pers_type}
As we have seen in \cref{subsec:flows}, especially \cref{ex:hausdorff_distance}, the stability of persistent homotopy type with respect to the Hausdorff distance is almost tautological. When beginning with a stratification diagram of samples, the situation is similarly easy for the stratified scenario (see \cref{lem:pers_diag_lip}). 
The situation for persistent stratified homotopy types when starting from a stratified sample is a little more subtle. This stems from the fact that taking sublevel sets with respect to some function (or equivalently intersecting with a closed subset) is generally not even a continuous operation with respect to the Hausdorff distance.
\begin{example}\label{ex:sublevel_not_stable}
Let $N=1$, i.e. $\supSp = \mathbb R$ and $A = (-\infty,0]$. Let $\sampX = \{ 0 \}$ and $\sampX_{n}= \{\frac{1}{n}\}$. $\sampX_{n}$ converges to $\sampX$ in the Hausdorff distance. However, the intersections $A \cap \sampX_n$ are empty, while $A \cap \sampX= \sampX$. In particular, we have 
\[
\dmet[\HD]{A \cap \sampX_n}{A \cap \sampX} = \infty,
\]
for all $n > 0$.
\end{example}
The problem in \cref{ex:sublevel_not_stable} is that $\sampX$ is lacking a certain amount of homogeneity while passing into the interior of $A$. Indeed, if we replaced $ \sampX$ by $[-\varepsilon,0]$ then no such phenomena occur and one can easily show that $A \cap -$ is continuous in $[-\varepsilon,0]$. As a more involved incarnation of this phenomenon, we now show stability of persistent stratified homotopy types, when sampling around a compact cylindrically stratified metric space.\\
A peculiarity of the stratified setting is that stability of persistent stratified homotopy types does generally not hold globally, as it does in the non-stratified case, but only at sufficiently regular elements of $\SamP$. To capture this notion of stability, we need the following notion of local Lipschitz continuity.
\begin{definition}\label{def:lipschitz} Let $K \in [0 ,\infty)$. 
We say that a function of (symmetric Lawvere) metric spaces $f: M \to M'$ is \define{$K$-Lipschitz (continuous) at $x \in M$}, if there is a $\delta >0$, such that 
\[
\dmet{f(x)}{f(y)} \leq K \dmet{x}{y},
\]
for all $y \in M$ with $\dmet{x}{y} \leq \delta$. We say \define{$f$ is $K$-Lipschitz (continuous)}, if it is $K$-Lipschitz for $\delta = \infty$, at every $x \in M$.
\end{definition}

Let us begin by gathering some of the more obvious elementary results. For the remainder of this subsection let $\pos= \{ p <q \}$ be a poset with two elements and let $\diagParam= \diagParamTup \in \paramSpace$ and $\spaceParam \in [0,1]$.

\begin{proposition}\label{prop:strong_str_1}
The map
\begin{align*}
    \estrongStr\colon \SamP \to \SamNP
\end{align*}
is 2-Lipschitz. 
\end{proposition}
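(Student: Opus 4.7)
The plan is to unfold the definitions of the metrics on $\SamP$ and $\SamNP$ and then produce the required witness point $y \in \sampX'$ for each $x \in \sampX$ (and symmetrically) by combining an approximation obtained from the Hausdorff distance between the underlying samples with a triangle inequality argument on the distances to the singular strata.

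More concretely, fix $(\sampX, \singsampX), (\sampX', \singsampX') \in \SamP$ and set
\[
d := \dmet[\SamP]{(\sampX, \singsampX)}{(\sampX', \singsampX')} = \max\{\dmet[\HD]{\sampX}{\sampX'}, \dmet[\HD]{\singsampX}{\singsampX'}\}.
\]
The goal is to show $\dmet[\SamNP]{\estrongStr(\sampX, \singsampX)}{\estrongStr(\sampX', \singsampX')} \leq 2d$. I would fix an arbitrary $\alpha > d$ (which exists whenever $d < \infty$; the case $d = \infty$ is vacuous). For each $x \in \sampX$, the Hausdorff bound $\dmet[\HD]{\sampX}{\sampX'} \leq \alpha$ provides $y \in \sampX'$ with $\|x - y\| \leq \alpha$. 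This gives the first of the two conditions required by $\dmet[\SamNP]{-}{-}$.

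The second condition is the estimate $|\min\{\dsingsampX(x), 1\} - \min\{\dmet{y}{\singsampX'}, 1\}| \leq 2\alpha$. Since the map $t \mapsto \min\{t, 1\}$ is $1$-Lipschitz on $[0, \infty]$, it suffices to show $|\dsingsampX(x) - \dmet{y}{\singsampX'}| \leq 2\alpha$. For any $z \in \singsampX$, use $\dmet[\HD]{\singsampX}{\singsampX'} \leq \alpha$ to pick $z' \in \singsampX'$ with $\|z - z'\| \leq \alpha$; then
\[
\dmet{y}{\singsampX'} \leq \|y - z'\| \leq \|y - x\| + \|x - z\| + \|z - z'\| \leq 2\alpha + \|x - z\|.
\]
Taking the infimum over $z \in \singsampX$ yields $\dmet{y}{\singsampX'} \leq \dsingsampX(x) + 2\alpha$. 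The reverse inequality is obtained symmetrically by starting with an arbitrary $z' \in \singsampX'$ and approximating it by $z \in \singsampX$ with $\|z - z'\| \leq \alpha$.

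Finally, the roles of $(\sampX, \singsampX)$ and $(\sampX', \singsampX')$ are completely symmetric, so the same argument produces, for every $x' \in \sampX'$, a matching $y' \in \sampX$ with both bounds $\leq 2\alpha$. Hence $\dmet[\SamNP]{\estrongStr(\sampX, \singsampX)}{\estrongStr(\sampX', \singsampX')} \leq 2\alpha$ for every $\alpha > d$, and letting $\alpha \searrow d$ concludes the proof. No step poses a real obstacle; the only thing to watch out for is the (in)finite-valuedness of the pseudo-metric (the case $\singsampX = \emptyset$ forces $\dsingsampX \equiv \infty$, hence $\min\{\dsingsampX, 1\} \equiv 1$, which is consistent on both sides whenever the Hausdorff distance between singular strata is finite), and the choice of $\alpha > d$ rather than $\alpha = d$ to accommodate the infimum in the Hausdorff definition.
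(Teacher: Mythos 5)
Your proof is correct and is exactly the argument the paper compresses into ``immediate from the triangle inequality'': the factor $2$ arises from chaining the $\alpha$-approximation of $x$ by $y$ with the $\alpha$-approximation of the singular stratum, and the $1$-Lipschitzness of $t \mapsto \min\{t,1\}$ handles the truncation. Nothing further is needed.
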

\begin{proof}
This is immediate from the triangle inequality. 
\end{proof}
As an immediate consequence of the definition of the metric for strongly stratified samples, one obtains.
\begin{lemma}\label{lem:inclusion_lemma}
Let $\stratSamp, \stratSamp' \in \SamNP$ and $v' \leq v \in [0,1]$.
Then
 \[ 
  \supbLevel{(\stratSamp')}{v'}{v} \subset \thicken[\delta]{(\supbLevel{\stratSamp}{v'-\delta}{v + \delta })},
 \]
for any $\delta > \dmet[\SamNP]{\stratSamp}{\stratSamp'}$. 
\end{lemma}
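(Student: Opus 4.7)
The proof is a direct unpacking of the definitions. My plan is to pick an arbitrary point $x$ in the left-hand side, use the definition of the strongly stratified metric to produce a nearby point $y$ in $\sampX$ whose strong-stratification value is close to that of $x$, and then check that $y$ lies in the thickened sublevel band on the right-hand side.

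More concretely, fix $\delta > \dmet[\SamNP]{\stratSamp}{\stratSamp'}$. By definition of $\dmet[\SamNP]$ (applied in the direction $\sampX_0 = \sampX'$, $\sampX_1 = \sampX$), for every $x \in \sampX'$ there exists $y \in \sampX$ with
\[
\norm{x - y} \leq \delta \quad \text{and} \quad \lvert \phiStratsampX'(x) - \phiStratsampX(y) \rvert \leq \delta.
\]
Now take $x \in \supbLevel{(\stratSamp')}{v'}{v}$, i.e.\ $x \in \sampX'$ with $v' \leq \phiStratsampX'(x) \leq v$. The second inequality above immediately yields $v' - \delta \leq \phiStratsampX(y) \leq v + \delta$, so $y \in \supbLevel{\stratSamp}{v' - \delta}{v + \delta}$, and the first gives $\norm{x - y} \leq \delta$, hence $x \in \thicken[\delta]{(\supbLevel{\stratSamp}{v' - \delta}{v + \delta})}$.

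There is essentially no obstacle here, only a minor bookkeeping issue: \cref{not:strong_levelsets} specifies that sub-/superlevel parameters falling outside $[0,1]$ are clamped to the nearest value in $[0,1]$. Since clamping only enlarges the band $\supbLevel{\stratSamp}{v'-\delta}{v+\delta}$ relative to the naive formula, the inclusion we derive remains valid under this convention, so no separate case analysis is required.
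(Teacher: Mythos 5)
Your proof is correct and is exactly the argument the paper intends: the lemma is stated there as an immediate consequence of the definition of $\dmet[\SamNP]{-}{-}$ with no written proof, and your unpacking (choose $y\in\sampX$ with $\norm{x-y}\leq\delta$ and $\vert \phiStratsampX'(x)-\phiStratsampX(y)\vert\leq\delta$, then read off membership in the shifted band) is precisely that argument. Your remark about the clamping convention in \cref{not:strong_levelsets} is also fine, since $\phiStratsampX$ takes values in $[0,1]$ the clamped and unclamped preimages coincide anyway.
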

As a corollary of \cref{lem:inclusion_lemma,prop:strong_str_1} together with the definition of the flow distance on $ \opStrat$ (\cref{def:global_interleaving_distance}), we obtain:
\begin{corollary}\label{cor:flexible_pers_strat_Lipsch}
The map
\[
\epers\colon \SamP \to \opStrat
\]
is $2$-Lipschitz.
\end{corollary}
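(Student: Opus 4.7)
The proof will be a direct composition argument. By \cref{def:parameter-free}, the map $\epers$ factors as
\[
    \SamP \xrightarrow{\estrongStr} \SamNP \xrightarrow{\efreediagification} \DSam^{\paramSpace} \xrightarrow{\epersd^{\paramSpace}} \opStrat.
\]
The first factor is $2$-Lipschitz by \cref{prop:strong_str_1}. The third factor is $1$-Lipschitz, since \cref{lem:pers_diag_lip} gives that $\epersd$ is $1$-Lipschitz, and applying $\epersd$ pointwise in the $\paramSpace$-variable preserves the Lipschitz constant for the interleaving distances induced on source and target by the flows of \cref{ex:hausdorff_distance,ex:int_distance,def:global_interleaving_distance}. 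The task therefore reduces to showing that the middle map $\efreediagification\colon \SamNP \to \DSam^{\paramSpace}$ is $1$-Lipschitz, where the target carries the combined flow built from the $\paramSpace$-shift flow of \cref{def:global_interleaving_distance} and the componentwise thickening flow of \cref{ex:hausdorff_distance}.

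This middle step is the heart of the argument, and it is precisely here that the simultaneous use of the $\paramSpace$-parameter and the thickening parameter pays off: as noted in \cref{ex:sublevel_not_stable}, individual fiber maps $\ediagification$ for fixed $\diagParam$ are not even continuous, so only the parameter-free version can be Lipschitz. The direct tool is \cref{lem:inclusion_lemma}. Given $\stratSamp, \stratSamp' \in \SamNP$ with $\dmet[\SamNP]{\stratSamp}{\stratSamp'} \leq \delta$ and any $\diagParam = (\diagParamLow, \diagParamUp) \in \paramSpace$, one applies the lemma separately to each of the three flags, with parameters $(v', v) \in \{(0, \diagParamUp),\, (\diagParamLow, \diagParamUp),\, (\diagParamLow, 1)\}$ respectively (using \cref{not:strong_levelsets} to handle values that leave $[0,1]$ after shifting). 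This produces componentwise inclusions
\[
\ediagification(\stratSamp') \subset \thicken[\delta]{\ediagification[\diagParam + \delta](\stratSamp)},
\]
where $\diagParam + \delta$ denotes the shift in $\paramSpace$ decreasing $\diagParamLow$ by $\delta$ and increasing $\diagParamUp$ by $\delta$, together with the symmetric inclusion swapping $\stratSamp$ and $\stratSamp'$. These are exactly the data of a $\delta$-interleaving between $\efreediagification(\stratSamp)$ and $\efreediagification(\stratSamp')$ in $\DSam^{\paramSpace}$.

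The main delicate point I anticipate is not any hard estimate, but the bookkeeping of which flow sits on which functor category, and checking that the inclusions extracted from \cref{lem:inclusion_lemma} are natural in both $\diagParam \in \paramSpace$ and $\flagI \in \mathrm{R}(\pos)$, so that they assemble into a genuine morphism of $\paramSpace$-indexed diagrams in $\DSam$ rather than merely a compatible family of pointwise inclusions. Since all ingredients (thickenings, level-set restrictions with monotone endpoints, and Alexandrov-type flag inclusions) are themselves inclusion preserving, this naturality is essentially automatic. Once it is in place, composing the three Lipschitz estimates gives that $\epers$ is $2 \cdot 1 \cdot 1 = 2$-Lipschitz, as asserted.
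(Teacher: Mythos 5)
Your proof is correct and follows exactly the route the paper intends: the paper derives this corollary directly from \cref{prop:strong_str_1}, \cref{lem:inclusion_lemma}, and the definition of the flow distance on $\opStrat$, which is precisely your factorization with the $2$ coming from $\estrongStr$ and the diagonal $\paramSpace$-plus-thickening interleaving coming from \cref{lem:inclusion_lemma}. Your write-up simply spells out the bookkeeping that the paper leaves implicit.
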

The case of the persistent stratified homotopy type depending on $\diagParam$ is more subtle, since one lacks the possibility of diagonal interleavings. The following technical lemma is the decisive argument in showing stability nevertheless.
\begin{lemma}\label{lem:diag_inequality}
Let $\stratSamp, \stratSamp' \in \SamNP$. 
Let $\delta> \dmet[\SamNP]{\stratSamp}{\stratSamp'}$ and suppose $\diagParam + \delta := (\diagParamLow - \delta, \diagParamUp + \delta) \in \paramSpace$ and $\diagParam - \delta:=  (\diagParamLow + \delta, \diagParamUp - \delta) \in \paramSpace$.
Then
\[
\dmet[\DSam]{\diagification{\stratSamp}}{ \diagification{\stratSamp'}} \leq \delta + \max\{ \dmet[\DSam]{\diagification{\stratSamp}}{\diagification[\diagParam \pm \delta]{\stratSamp}} \}.
\]
Similarly, if $ u \pm \delta \in (0,1)$, then 
    \[ \dmet[\SamP]{\forgetStr{\stratSamp}}{\forgetStr{\stratSamp'}} \leq \delta + \max \{\dmet[\SamP]{\forgetStr{\stratSamp}}{\forgetStr[u \pm \delta]{\stratSamp}} \}. 
    \]
\end{lemma}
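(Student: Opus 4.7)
The plan is to prove both bounds by a symmetric strategy: for the ``upward'' Hausdorff inclusion, covering $\diagification{\stratSamp'}$ by a thickening of $\diagification{\stratSamp}$, we route through the shift $\diagParam + \delta$, and for the ``downward'' inclusion we route through $\diagParam - \delta$. The key observation is that \cref{lem:inclusion_lemma} naturally produces thickenings of the $\diagParam+\delta$-widened level sets when one compares $\stratSamp'$ to $\stratSamp$, so both directions then reduce, by a triangle inequality, to controlling how much the diagram of $\stratSamp$ itself changes when the parameter is perturbed by $\pm\delta$.

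Concretely, fix a flag $\flagI \in \mathrm{R}(\pos)$. For the inclusion $\diagification{\stratSamp'}_\flagI \subset \thicken[\eta]{\diagification{\stratSamp}_\flagI}$, \cref{lem:inclusion_lemma} applied to each of $\subLevel{(-)}{\diagParamUp}$, $\supbLevel{(-)}{\diagParamLow}{\diagParamUp}$, and $\supLevel{(-)}{\diagParamLow} = \supbLevel{(-)}{\diagParamLow}{1}$ directly yields $\diagification{\stratSamp'}_\flagI \subset \thicken[\delta]{\diagification[\diagParam + \delta]{\stratSamp}_\flagI}$; combining this with the inclusion direction of $\dmet[\HD]{\diagification[\diagParam + \delta]{\stratSamp}_\flagI}{\diagification{\stratSamp}_\flagI}$ gives $\diagification{\stratSamp'}_\flagI \subset \thicken[\delta + \eta_+]{\diagification{\stratSamp}_\flagI}$ for any $\eta_+ \geq \dmet[\DSam]{\diagification{\stratSamp}}{\diagification[\diagParam + \delta]{\stratSamp}}$. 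For the reverse inclusion one inverts the order of operations: first absorb the parameter shift by writing $\diagification{\stratSamp}_\flagI \subset \thicken[\eta_-]{\diagification[\diagParam - \delta]{\stratSamp}_\flagI}$ for any $\eta_- \geq \dmet[\DSam]{\diagification{\stratSamp}}{\diagification[\diagParam - \delta]{\stratSamp}}$, then apply \cref{lem:inclusion_lemma} with the roles of $\stratSamp$ and $\stratSamp'$ swapped, obtaining $\diagification[\diagParam - \delta]{\stratSamp}_\flagI \subset \thicken[\delta]{\diagification{\stratSamp'}_\flagI}$, because the $-\delta$ shift in the parameter is exactly canceled by the $+\delta$ widening built into the lemma. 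Taking the maximum over flags $\flagI$ and over the two signs yields the first bound.

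The second inequality reduces to essentially the same argument for a single threshold. Since $\dmet[\SamNP]{\stratSamp}{\stratSamp'} < \delta$, the underlying samples satisfy $\dmet[\HD]{\sampX}{\sampX'} \leq \delta$, which bounds the first coordinate of $\dmet[\SamP]{\forgetStr{\stratSamp}}{\forgetStr{\stratSamp'}}$. For the singular coordinate $\subLevel{(-)}{\spaceParam}$, the two-step argument above, now applied to a single level set rather than to three, gives
\[
\dmet[\HD]{\subLevel{\stratSamp}{\spaceParam}}{\subLevel{\stratSamp'}{\spaceParam}} \leq \delta + \max_{\pm} \dmet[\HD]{\subLevel{\stratSamp}{\spaceParam}}{\subLevel{\stratSamp}{\spaceParam \pm \delta}},
\]
and since the underlying sample is unchanged by $\eforgetStr[\spaceParam \pm \delta]$, each term $\dmet[\HD]{\subLevel{\stratSamp}{\spaceParam}}{\subLevel{\stratSamp}{\spaceParam \pm \delta}}$ coincides with $\dmet[\SamP]{\forgetStr{\stratSamp}}{\forgetStr[\spaceParam \pm \delta]{\stratSamp}}$.

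The main subtlety is the asymmetry of \cref{lem:inclusion_lemma}: it compares $\stratSamp'$-level sets only to \emph{widened} $\stratSamp$-level sets, never to narrowed ones, and this failure is precisely why \cref{ex:sublevel_not_stable} obstructs naive stability. The trick is to absorb this asymmetry by shifting the parameter in \emph{opposite} directions for the two Hausdorff inclusions, so that the internal perturbation $\diagification[\diagParam \pm \delta]{\stratSamp}$ carries exactly the defect. Everything else is a routine triangle-inequality assembly, plus the observation that the case of $\supLevel{\stratSamp}{\diagParamLow}$ is covered by taking $v = 1$ in \cref{lem:inclusion_lemma}.
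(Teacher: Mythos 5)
Your proof is correct and follows essentially the same route as the paper: the paper's proof exhibits exactly the same two chains of inclusions, routing one Hausdorff inclusion through $\thicken[\delta]{\diagification[\diagParam+\delta]{\stratSamp}}$ via \cref{lem:inclusion_lemma} and the other through $\thicken[\alpha]{\diagification[\diagParam-\delta]{\stratSamp}}$ with the lemma applied with the roles of $\stratSamp$ and $\stratSamp'$ swapped. The only difference is that you spell out the second inequality, which the paper dismisses as completely analogous.
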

\begin{proof} We prove the diagram case, the other one can be shown completely analogously.
Let $\alpha >  \dmet[\DSam]{\diagification{\stratSamp}}{\diagification[\diagParam \pm \delta]{\stratSamp}}$. We then have inclusions
\[
\begin{tikzcd}[row sep = small]
\diagification{\stratSamp} \arrow[r, hook]&\thicken[\alpha]{\diagification[\diagParam-\delta]{\stratSamp}}\arrow[r, hook]& \thicken[\delta + \alpha]{\diagification{\stratSamp'}} \spacecomma \\ 
\diagification{\stratSamp'} \arrow[r, hook]& \thicken[\delta]{\diagification[\diagParam + \delta]{\stratSamp}} \arrow[r, hook] & \thicken[\delta+\alpha]{\diagification{\stratSamp} } \spaceperiod 
\end{tikzcd}
\]
The upper left and lower right inclusion follow by the assumption on $\alpha$. The lower left and upper right inclusions follow by \cref{lem:inclusion_lemma}. Hence, the result follows by considering the diagram distance as coming from a thickening flow as in \cref{ex:hausdorff_distance}.
\end{proof}
Morally speaking, the way we should think of \cref{lem:diag_inequality}, is that the continuity of $\ediagification$ in a strongly stratified sample $\stratSamp$ depends on the continuity of $\diagification{\stratSamp}$ in the parameter $\diagParam$.
As an immediate corollary of the second part of \cref{lem:diag_inequality} we obtain the following result, which will come in handy in \cref{subsec:restrat}.
\begin{corollary}
\label{prop:cont_if_const_u}
Let $\delta >0$ such that $u \pm \delta \in (0,1)$.
Let $\stratSamp = ( \sampX, \phiStratsampX) \in \SamNP$ be such that  $\stratSamp_{\leq u}= \stratSamp_{\leq u\pm \delta}$. Then
\[
\eforgetStr\colon \SamNP \to \SamP
\]
is $1$-Lipschitz at $\stratSamp$ (on an open ball with radius $\delta$).
\end{corollary}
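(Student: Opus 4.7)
The plan is to derive the corollary by a straightforward application of the second inequality of \cref{lem:diag_inequality}, using the standing hypothesis on $\stratSamp$ to force the correction term in that inequality to vanish.

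First I would fix an arbitrary $\stratSamp' \in \SamNP$ lying in the open ball $\{\stratSamp'' \mid \dmet[\SamNP]{\stratSamp}{\stratSamp''} < \delta\}$, and set $\delta_0 := \dmet[\SamNP]{\stratSamp}{\stratSamp'}$. Choose any auxiliary $\delta' \in (\delta_0, \delta)$; then in particular $u \pm \delta' \in (0,1)$, so \cref{lem:diag_inequality} (applied with $\delta'$ in place of $\delta$) yields the estimate
\[
\dmet[\SamP]{\forgetStr{\stratSamp}}{\forgetStr{\stratSamp'}} \;\leq\; \delta' + \max\bigl\{ \dmet[\SamP]{\forgetStr{\stratSamp}}{\forgetStr[u \pm \delta']{\stratSamp}} \bigr\}.
\]

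Next I would argue that the max term is zero. Sublevel sets are monotone in the threshold, so from $u - \delta \leq u - \delta' \leq u \leq u + \delta' \leq u + \delta$ and the hypothesis $\stratSamp_{\leq u} = \stratSamp_{\leq u \pm \delta}$ we obtain the chain of equalities $\subLevel{\stratSamp}{u-\delta'} = \subLevel{\stratSamp}{u} = \subLevel{\stratSamp}{u+\delta'}$. By the definition of $\eforgetStr$ in \cref{con:connecting_maps} this translates to $\forgetStr[u \pm \delta']{\stratSamp} = \forgetStr{\stratSamp}$ as elements of $\SamP$, so both terms inside the max are $0$.

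Combining these two steps gives $\dmet[\SamP]{\forgetStr{\stratSamp}}{\forgetStr{\stratSamp'}} \leq \delta'$, and since $\delta'$ was arbitrary in $(\delta_0, \delta)$, taking the infimum yields $\dmet[\SamP]{\forgetStr{\stratSamp}}{\forgetStr{\stratSamp'}} \leq \delta_0 = \dmet[\SamNP]{\stratSamp}{\stratSamp'}$. This is precisely the $1$-Lipschitz estimate on the $\delta$-ball around $\stratSamp$ demanded by \cref{def:lipschitz}. There is no genuine obstacle: all the hard work sits inside \cref{lem:diag_inequality}, and the only point to check is that the local-constancy hypothesis is preserved when one shrinks $\delta$ to any $\delta' < \delta$, which is automatic from monotonicity of the sublevel sets.
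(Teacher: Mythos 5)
Your proof is correct and follows exactly the route the paper intends: the paper states this as an immediate consequence of the second inequality in \cref{lem:diag_inequality}, and your argument simply fills in the (routine) details of why the max term vanishes under the local-constancy hypothesis and why the infimum over $\delta'$ gives the Lipschitz bound.
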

The continuity of $\diagification{\stratSamp}$ in $\diagParam$ can furthermore be reduced to the continuity of the $\{ p < q\}$ parts of diagrams, by the following lemma.
\begin{lemma}\label{lem:diag_trick}
Let $\stratSamp \in \SamNP$ and $\diagParam, \diagParam' \in \paramSpace$  and set $a= \min\{\diagParamLow,\diagParamLow'\}, b=\max\{\diagParamUp,\diagParamUp'\}$,
then
\[
 \dmet[\DSam]{\diagification[\diagParam]{\stratSamp}}{\diagification[\diagParam']{\stratSamp}} \leq \max \{ \dmet[\HD]{\stratSamp^{\diagParamLow}_{\diagParamUp}}{\stratSamp^{\diagParamLow'}_{\diagParamUp'}}, \dmet[\HD]{\stratSamp^{a}_{\diagParamUp'}}{\stratSamp^{a}_{\diagParamUp}}, \dmet[\HD]{\stratSamp^{\diagParamLow}_{b}}{\stratSamp^{\diagParamLow'}_{b}} \}. 
\]
\end{lemma}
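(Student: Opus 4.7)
The plan is to unwind the definition of $\dmet[\DSam]$ as a max over flags in $\mathrm{R}(\{p<q\})$, and then bound each of the three resulting Hausdorff distances separately by one of the three quantities on the right-hand side. Write $\diagification[\diagParam]{\stratSamp} = (\subLevel{\stratSamp}{\diagParamUp},\, \supbLevel{\stratSamp}{\diagParamLow}{\diagParamUp},\, \supLevel{\stratSamp}{\diagParamLow})$, and similarly with primes. By \cref{def:DSam},
\[
\dmet[\DSam]{\diagification[\diagParam]{\stratSamp}}{\diagification[\diagParam']{\stratSamp}} = \max\bigl\{\dmet[\HD]{\subLevel{\stratSamp}{\diagParamUp}}{\subLevel{\stratSamp}{\diagParamUp'}},\ \dmet[\HD]{\supbLevel{\stratSamp}{\diagParamLow}{\diagParamUp}}{\supbLevel{\stratSamp}{\diagParamLow'}{\diagParamUp'}},\ \dmet[\HD]{\supLevel{\stratSamp}{\diagParamLow}}{\supLevel{\stratSamp}{\diagParamLow'}}\bigr\}.
\]
The middle term matches the first term on the right of the claimed inequality, so nothing needs to be done there.

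For the $\{p\}$-part (the sublevel sets), assume without loss of generality that $\diagParamUp \leq \diagParamUp'$ (the converse case is symmetric). Then $\subLevel{\stratSamp}{\diagParamUp} \subset \subLevel{\stratSamp}{\diagParamUp'}$, so only one direction of the Hausdorff bound needs to be checked. Given $x \in \subLevel{\stratSamp}{\diagParamUp'}$, either $\phiStratsampX(x) \leq \diagParamUp$, in which case $x \in \subLevel{\stratSamp}{\diagParamUp}$ and there is nothing to show, or $\phiStratsampX(x) \in (\diagParamUp, \diagParamUp']$. In the latter case, since $a = \min\{\diagParamLow,\diagParamLow'\} \leq \diagParamUp$, we have $x \in \supbLevel{\stratSamp}{a}{\diagParamUp'}$. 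Any witness for $\dmet[\HD]{\supbLevel{\stratSamp}{a}{\diagParamUp'}}{\supbLevel{\stratSamp}{a}{\diagParamUp}} \leq \alpha$ then provides $y \in \supbLevel{\stratSamp}{a}{\diagParamUp} \subset \subLevel{\stratSamp}{\diagParamUp}$ with $\norm{x-y} \leq \alpha$. This bounds the $p$-part by the second term on the right.

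The $\{q\}$-part is analogous with the roles of upper and lower thresholds swapped: assuming $\diagParamLow \leq \diagParamLow'$, one has $\supLevel{\stratSamp}{\diagParamLow'} \subset \supLevel{\stratSamp}{\diagParamLow}$, and for $x \in \supLevel{\stratSamp}{\diagParamLow}$ with $\phiStratsampX(x) \in [\diagParamLow, \diagParamLow')$ we use that $\phiStratsampX(x) < \diagParamLow' \leq b$ to deduce $x \in \supbLevel{\stratSamp}{\diagParamLow}{b}$; a witness for the Hausdorff distance between $\supbLevel{\stratSamp}{\diagParamLow}{b}$ and $\supbLevel{\stratSamp}{\diagParamLow'}{b} \subset \supLevel{\stratSamp}{\diagParamLow'}$ finishes the bound by the third term.

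There is no real obstacle here — the proof is entirely a bookkeeping argument once one observes the key geometric point: although the sublevel/superlevel sets involved in the $\{p\}$- and $\{q\}$-parts of the diagram are not themselves of the "slab" form appearing on the right-hand side, one can always sandwich the symmetric difference between the two sublevel (resp.\ superlevel) sets inside a slab $\supbLevel{\stratSamp}{a}{\diagParamUp \vee \diagParamUp'}$ (resp.\ $\supbLevel{\stratSamp}{\diagParamLow \wedge \diagParamLow'}{b}$), and this slab is in turn contained in the corresponding sublevel (resp.\ superlevel) set. The only care to be taken is to verify that $a \leq \diagParamUp \wedge \diagParamUp'$ and $\diagParamLow \vee \diagParamLow' \leq b$, which is immediate from $\diagParam, \diagParam' \in \paramSpace$.
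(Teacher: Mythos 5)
Your proof is correct and is essentially the paper's argument made explicit: the paper compresses your case analysis into the single observation that $\dmet[\HD]{\sampX}{\sampX[Y]} \leq \dmet[\HD]{\sampX \setminus \sampX[A]}{\sampX[Y] \setminus \sampX[A]}$ whenever $\sampX[A]$ is a common subset, applied with $\sampX[A] = \phiStratsampX^{-1}[0,a)$ for the $p$-part and $\sampX[A] = \phiStratsampX^{-1}(b,1]$ for the $q$-part. Nothing is missing.
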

\begin{proof}
This is an immediate consequence of the fact that 
\[
\dmet[\HD]{\sampX}{ \sampX[Y]} \leq \dmet[\HD]{\sampX \setminus \sampX[A]}{\sampX[Y] \setminus \sampX[A]},
\]
for $\sampX[A] \subset \sampX , \sampX[Y] \in \Sam$.
\end{proof}
In case of compact cylindrically stratified spaces, $\diagification{\stratSamp}$ does indeed vary continuously in $\diagParam$.
\begin{proposition}\label{prop:diag_vary_cont_in_v}
Let $\str \in \SamNP$ be compact and cylindrically stratified. Then
\begin{align*}
    (0,1) &\to \SamP \\
    u &\mapsto \forgetStr{\stratSp}
\end{align*}
and 
\begin{align*}
    \paramSpace &\to \DSam \\
    \diagParam &\mapsto \diagification{\stratSp}
\end{align*}
are continuous.
\end{proposition}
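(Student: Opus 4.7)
The plan is to exploit the cylindrical structure hypothesis by reducing both continuity claims to the uniform continuity of a single homeomorphism on suitable compact subsets of $\supSp$.

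By \cref{def:cyl_strat}, there is a strongly stratum preserving homeomorphism $f\colon s^{-1}(0,1) \xrightarrow{\sim} \str_{1/2} \times (0,1)$ over $(0,1)$. Since $\str$ is compact, the level $\str_{1/2} = s^{-1}\{1/2\}$ is a closed subset of $\sampX$ and hence compact. Consequently, for every choice of $0 < a \leq b < 1$, the restriction of $f^{-1}$ to the compactum $\str_{1/2} \times [a,b]$ is continuous and therefore uniformly continuous as a map into $\supSp$. This is the key uniform estimate that drives the rest of the argument.

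First I would establish continuity of $u \mapsto \forgetStr{\str}$. Since the ambient sample $\sampX$ does not depend on $u$, it suffices to prove Hausdorff continuity of $u \mapsto \subLevel{\str}{u}$. Fix $u_0 \in (0,1)$ and choose $0 < a < u_0 < b < 1$; then for $u \in [a,b]$ the symmetric difference $\subLevel{\str}{u} \triangle \subLevel{\str}{u_0}$ is contained in $s^{-1}(\min(u,u_0), \max(u,u_0)]$ and thus lies entirely in the cylindrical region. Any such point has the form $y = f^{-1}(l,t)$ with $l \in \str_{1/2}$ and $t$ between $u$ and $u_0$. Mapping $y$ to $f^{-1}(l, u_0)$ produces a point of $\subLevel{\str}{u_0}$ (since $s(f^{-1}(l,u_0)) = u_0$), and by uniform continuity its distance from $y$ tends to zero as $u \to u_0$, uniformly in $l$. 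The symmetric argument, mapping to $f^{-1}(l, u)$ instead, handles the reverse inclusion, yielding Hausdorff continuity. For $\diagParam \mapsto \diagification{\str}$, I would invoke \cref{lem:diag_trick}, which bounds the diagram distance by Hausdorff distances between slabs of the form $s^{-1}[a',b']$ for $(a',b')$ varying near $\diagParam$. For parameters in $\paramSpace$, all such slabs lie entirely in $s^{-1}(0,1)$, and the very same uniform continuity argument---matching $f^{-1}(l,t)$ on one slab with $f^{-1}(l,t')$ on the other for the appropriate endpoint $t'$---shows that these slabs vary continuously in Hausdorff distance.

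The main obstacle I anticipate is the usual subtlety of level-set arguments: points of $s^{-1}\{0\}$ (and, a priori, of $s^{-1}\{1\}$) lie outside the cylindrical region and are not directly controlled by $f$. This is circumvented by choosing a closed interval $[a,b] \subset (0,1)$ whose interior contains the fixed parameter, so that a whole neighborhood of that parameter consists of levels inside $(a,b)$; the non-cylindrical strata $s^{-1}\{0\}$ and $s^{-1}\{1\}$ then contribute identically to both of the sets being compared (either both contain them or neither does) and therefore cancel out of the symmetric difference, leaving the entire analysis confined to the cylindrical region where the uniform estimate applies.
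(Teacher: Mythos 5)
Your proof is correct and follows essentially the same route as the paper's: both reduce the diagram case to \cref{lem:diag_trick} and then use the cylindrical homeomorphism $f$ to transfer the Hausdorff estimate to the product $\str_{1/2}\times(0,1)$, where it is immediate. The paper phrases the transfer as the metric-independence of the Hausdorff topology on compacta, which is exactly the uniform-continuity argument you spell out; the paper also dispatches $\eforgetStr$ by observing its nontrivial component coincides with the $p$-component of $\ediagification$, whereas you treat it directly, but the substance is identical.
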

\begin{proof}
Note that it suffices to show the case of $\ediagification$, since the nontrivial part of the continuity for $\eforgetStr$ is given by the $(\eforgetStr)_p$ component, and the latter is defined identically to the $p$-component of $\diagification{\stratSp}$.
By \cref{lem:diag_trick}
it suffices to show that for $\diagParam \to \diagParam^0$, we also have  
\[
\dmet[\HD]{ \supbLevel{\stratSp}{\diagParamLow}{\diagParamUp}}{\supbLevel{\stratSp}{\diagParamLow^0}{\diagParamUp^0}} \to 0 .
\]
Next, note that the topology of the Hausdorff distance on the space of compact subspaces of a space only depends on the topology of the latter. Set $L:=\stratSp^{\frac{1}{2}}_{\frac{1}{2}}$. Then by the cylinder assumption we may without loss of generality compute $\mathrm{d}$ in $L \times (0,1)$ equipped with the product metric. In particular, $\supbLevel{\stratSp}{\diagParamLow}{\diagParamUp} = L \times [\diagParamLow, \diagParamUp]$. We then have 
\begin{align*}
\dmet[\HD]{\supbLevel{\stratSp}{\diagParamLow}{\diagParamUp}}{ \supbLevel{\stratSp}{\diagParamLow^0}{\diagParamUp^0}} &= \dmet[\HD]{L \times [\diagParamLow, \diagParamUp]}{L \times [\diagParamLow^0, \diagParamUp^0]} \\
& \leq \max\{\vert \diagParamLow- \diagParamLow^0\vert, \vert\diagParamUp - \diagParamUp^0\vert \} \xrightarrow{\diagParam \to \diagParam^0} 0.
\end{align*}
\end{proof}
From \cref{prop:diag_vary_cont_in_v,lem:diag_inequality} we obtain the following result. Here $\paramSpace$ is equipped with the metric induced by the maximum norm.
\begin{corollary}\label{cor:diag_forget_cont}
Let $\stratSp \in \SamNP$ be compact and cylindrically stratified. Then
\begin{align*}
    \eforgetStr\colon \SamNP \to \SamP, \\
    \ediagification\colon \SamNP \to \DSam
\end{align*}
are continuous at $\stratSp$.\\ Even more, if $\stratSp_{\leq -}\colon (0,1) \to \Sam$ is $K$-Lipschitz in a neighborhood of $u$ (respectively $\stratSp_{-}^{-}$ in a neighborhood of $v$), then $\eforgetStr$ ($\ediagification$) is $(K+1)$-Lipschitz at $\stratSp$. 
\end{corollary}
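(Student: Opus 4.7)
The plan is to combine \cref{lem:diag_inequality} with \cref{prop:diag_vary_cont_in_v} in a straightforward way, treating the two statements (for $\eforgetStr$ and for $\ediagification$) uniformly. I will focus on the $\ediagification$ case; the $\eforgetStr$ case follows by the exact same argument, using the second inequality of \cref{lem:diag_inequality} and the corresponding part of \cref{prop:diag_vary_cont_in_v}.

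First, I would fix $\stratSp \in \SamNP$ compact and cylindrically stratified, and fix $\diagParam \in \paramSpace$. For any $\stratSamp' \in \SamNP$ and any $\delta > \dmet[\SamNP]{\stratSp}{\stratSamp'}$ small enough that $\diagParam \pm \delta \in \paramSpace$, \cref{lem:diag_inequality} applied with $\stratSamp = \stratSp$ yields
\[
\dmet[\DSam]{\diagification{\stratSp}}{\diagification{\stratSamp'}} \leq \delta + \max\{\dmet[\DSam]{\diagification{\stratSp}}{\diagification[\diagParam + \delta]{\stratSp}},\dmet[\DSam]{\diagification{\stratSp}}{\diagification[\diagParam - \delta]{\stratSp}}\}.
\]
The point is that the second term only depends on $\stratSp$ and on how $\diagParam$ is perturbed; it is entirely a statement about the continuity of $\diagParam \mapsto \diagification{\stratSp}$.

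Second, I would invoke \cref{prop:diag_vary_cont_in_v}, which tells us that $\diagParam \mapsto \diagification{\stratSp}$ is continuous (since $\stratSp$ is compact and cylindrically stratified). Taking $\stratSamp' \to \stratSp$ in $\SamNP$ lets us choose $\delta \to 0$, and then the two terms $\dmet[\DSam]{\diagification{\stratSp}}{\diagification[\diagParam \pm \delta]{\stratSp}}$ both go to $0$. Combined with the leading $\delta$ on the right-hand side, this shows $\dmet[\DSam]{\diagification{\stratSp}}{\diagification{\stratSamp'}} \to 0$, i.e. continuity of $\ediagification$ at $\stratSp$.

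Third, for the quantitative Lipschitz refinement, assume $\diagParam' \mapsto \stratSp_{\diagParamLow'}^{\diagParamUp'}$ (equivalently, via \cref{lem:diag_trick}, the function $\diagParam' \mapsto \diagification[\diagParam']{\stratSp}$) is $K$-Lipschitz on a neighborhood of $\diagParam$. Then for sufficiently small $\delta$ each of $\dmet[\DSam]{\diagification{\stratSp}}{\diagification[\diagParam \pm \delta]{\stratSp}}$ is bounded by $K\delta$, so the displayed inequality gives
\[
\dmet[\DSam]{\diagification{\stratSp}}{\diagification{\stratSamp'}} \leq (K+1)\delta.
\]
Taking the infimum over admissible $\delta > \dmet[\SamNP]{\stratSp}{\stratSamp'}$ yields $(K+1)$-Lipschitz continuity at $\stratSp$. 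The potentially subtle step is checking that the hypothesis ``$\stratSp_{-}^{-}$ is $K$-Lipschitz in a neighborhood of $\diagParam$'' really controls the full diagram distance $\dmet[\DSam]{\diagification{\stratSp}}{\diagification[\diagParam \pm \delta]{\stratSp}}$; but this is exactly the content of \cref{lem:diag_trick}, which reduces the diagram distance to the Hausdorff distance between the middle pieces $\stratSp_{\diagParamLow}^{\diagParamUp}$ for varying $\diagParam$, so no additional work is required. The same scheme, with the second part of \cref{lem:diag_inequality} in place of the first, handles $\eforgetStr$.
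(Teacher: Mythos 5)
Your proposal is correct and is exactly the argument the paper intends: the corollary is stated as following "from \cref{prop:diag_vary_cont_in_v,lem:diag_inequality}", and you combine these two results (plus \cref{lem:diag_trick} to translate the Lipschitz hypothesis on $\stratSp_{-}^{-}$ into a bound on the diagram distance) in precisely the intended way. The only nitpick is that your parenthetical "equivalently" overstates \cref{lem:diag_trick}, which gives only the one implication you actually use.
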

In total, we finally obtain the following stability result for persistent stratified homotopy types, which can be seen as a (slightly weaker) version of the classical, non-stratified Property \ref{enum:properties_of_PH1}. In the next subsection (specifically in \cref{prop:pers_strat_htpy_type_C-lipschitz}), we strengthen this general stability result significantly for the case of Whitney stratified spaces.
\begin{theorem}
\label{cor:continuity_of_pers_tame}
Let $\stratSp \in \SamP$ be compact and cylindrically stratified. Then
\[
\epersParam\colon \SamP \to \pStrat
\]
is continuous at $\stratSp$. Even more, if $\stratSp_{-}^{-} \colon \paramSpace \to \Sam$ is $K$-Lipschitz in a neighborhood of $\diagParam$, then $\epersParam$ is $2(K+1)$-Lipschitz at $\stratSp$. 
\end{theorem}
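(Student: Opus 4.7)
The plan is to read the statement off directly from the defining composition of $\epersParam$ in Definition~\ref{pers_strat_htpy_type},
\[
\SamP \xrightarrow{\estrongStr} \SamNP \xrightarrow{\ediagification} \DSam \xrightarrow{\epersd} \pStrat,
\]
by chaining the local Lipschitz/continuity bounds that have been established for each of the three factors individually.

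For the first factor, Proposition~\ref{prop:strong_str_1} gives that $\estrongStr$ is globally $2$-Lipschitz, and in particular continuous and $2$-Lipschitz at $\stratSp$. For the third factor, Lemma~\ref{lem:pers_diag_lip} gives that $\epersd$ is globally $1$-Lipschitz. The only non-trivial ingredient is the behaviour of the middle factor $\ediagification$ at the strongly stratified sample $\strongStr{\stratSp}$. Here I would invoke Corollary~\ref{cor:diag_forget_cont}: since $\stratSp$ is compact and cylindrically stratified, so is $\strongStr{\stratSp}$ (the strong stratification induced by the metric is precisely the one used in Example~\ref{ex:cyl_nbhds}, after the implicit rescaling), and thus $\ediagification$ is continuous at $\strongStr{\stratSp}$ without any further assumption, and is $(K+1)$-Lipschitz at $\strongStr{\stratSp}$ whenever $\stratSp^{-}_{-}$ is $K$-Lipschitz in a neighborhood of $\diagParam$.

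It remains to compose. Continuity of $\epersParam$ at $\stratSp$ is then immediate as a composition of maps continuous at the respective images. For the Lipschitz estimate, let $\delta > 0$ be a radius on which $\ediagification$ is $(K+1)$-Lipschitz around $\strongStr{\stratSp}$; by the global $2$-Lipschitz bound on $\estrongStr$, any $\stratSamp \in \SamP$ with $\dmet[\SamP]{\stratSp}{\stratSamp} < \delta/2$ satisfies $\dmet[\SamNP]{\strongStr{\stratSp}}{\strongStr{\stratSamp}} < \delta$, so the estimates compose to
\[
\dmet[\pStrat]{\persParam{\stratSp}}{\persParam{\stratSamp}} \le 1 \cdot (K+1) \cdot 2 \cdot \dmet[\SamP]{\stratSp}{\stratSamp} = 2(K+1)\, \dmet[\SamP]{\stratSp}{\stratSamp},
\]
which is precisely the asserted local $2(K+1)$-Lipschitz bound.

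There is no real obstacle here beyond bookkeeping: all difficulty has already been absorbed into Corollary~\ref{cor:diag_forget_cont} (and, further upstream, into Lemma~\ref{lem:diag_inequality} and Proposition~\ref{prop:diag_vary_cont_in_v}, which compare the distance of the diagrammatic images to the Lipschitz behaviour of $\stratSp_{-}^{-}$ in the parameter $\diagParam$). The only point to watch is the matching of neighborhoods in the three factors, which is automatic because $\estrongStr$ and $\epersd$ are Lipschitz globally rather than only locally, so the local radius from Corollary~\ref{cor:diag_forget_cont} propagates cleanly to a local radius on $\SamP$.
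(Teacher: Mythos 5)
Your proof is correct and follows exactly the paper's own argument: decompose $\epersParam = \epersd \circ \ediagification \circ \estrongStr$, apply \cref{prop:strong_str_1}, \cref{cor:diag_forget_cont}, and \cref{lem:pers_diag_lip} to the respective factors, and compose the constants. The only difference is that you spell out the neighborhood bookkeeping that the paper leaves implicit with ``the second statement follows similarly.''
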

\begin{proof}
Recall that $\epersParam = \epersd \circ \ediagification \circ \estrongStr$. $\estrongStr$ is $2$-Lipschitz by \cref{prop:strong_str_1}. Furthermore, by \cref{cor:diag_forget_cont}, $\ediagification$ is continuous in $\strongStr{S}$. Finally, $\epersd$ is $1$-Lipschitz by \cref{lem:pers_diag_lip}. The second statement follows similarly. 
\end{proof}
\subsection{Stability at Whitney stratified spaces}\label{subsec:stab_pers_type_whit}
One way to think of Whitney's condition (b) is that it gives additional control over the derivatives of the rays of the mapping cylinder neighborhood of a stratified space. This additional control can be used to improve the stability result in \cref{cor:continuity_of_pers_tame} to Lipschitz continuity (\cref{prop:links_thicken_L_continuously}).
To show this, we need to first consider an asymmetric version of the Hausdorff distance for subspaces of $\supSp$. For the remainder of this subsection, $\pos$ is not restricted to the case of two elements.
\begin{definition}
Let $V,U \subset \supSp$ be linear subspaces. The (asymmetric) \define{distance of $V$ to $U$} is given by 
    \[
    \distVec{V}{U}=\sup_{v \in V, \norm{v} = 1}\inf \{ \norm{v-u} \mid u\in U \} =  \sup_{v \in V, \norm{v}=1}\{ \norm{ \pi_{U^{\perp}}(v)} \},
    \]
where $\pi_{U^{\perp}}$ denotes the orthonormal projection to the orthogonal complement of $U$.
\end{definition}
Whitney's condition (b) can be expressed in terms of a function, which measures the failure of secants being contained in the tangent space, as follows (compare \cite{hironaka1969normal}).
\begin{construction}\label{con:pqbeta}
Let $\str = \stratPair$ be a stratified space with smooth strata, contained in $\supSp$.
Consider the function
\begin{align*}
\funpqbeta\colon \topSp \times \topSp \to \mathbb R; &&
\begin{cases}
 (x,y) &\mapsto \distVec{\secant{x}{y}}  {\tangentSp[x]{X_{\phiStrat(x)}}} \text{ if }x \neq y, \\
 (x,x) &\mapsto 0 \text{ else}
\end{cases}
\end{align*}
where we consider all tangent spaces involved as linear subspaces of $\supSp$. 
\end{construction}
Condition (b) can be formulated as $\beta$ restricting to a continuous function on certain subspaces of $X \times X$.
\begin{proposition}\label{prop:equ_char_Whitney}
Let $\str = \stratPair$, be as in the assumption of \cref{con:pqbeta} and further so that the frontier and local finiteness condition are fulfilled. 
Then, $\str$ is a Whitney stratified space if and only if \[ \funpqbeta\mid_{ (\topSp_q \times \topSp_p) \cup \Delta_{\topSp_p}}:  (\topSp_q \times \topSp_p) \cup \Delta_{\topSp_p} \to \mathbb R,\] is continuous, for all pairs $q \geq p \in \pos$.  Here $\Delta_{\topSp_p}$ denotes the diagonal of $\topSp_p \times \topSp_p$,
\end{proposition}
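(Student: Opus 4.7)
The plan is to treat the two directions separately, after the observation that on the open locus $\{(x,y) \in \topSp_q \times \topSp_p : x \neq y\}$ continuity of $\funpqbeta$ is automatic from the manifold condition: $\tangentSp[x]{\str_q}$ varies continuously in $x \in \str_q$ by smoothness, $\secant{x}{y}$ varies continuously in $(x,y)$ wherever $x \neq y$, and the Grassmannian distance $\distVec{-}{-}$ is itself jointly continuous. When $p < q$ the set $\topSp_q \times \topSp_p$ does not meet the diagonal $\Delta_{\topSp_p}$, so the entire content of the proposition is the behaviour of $\funpqbeta$ near $\Delta_{\topSp_p}$, i.e. as $(x,y) \to (z,z)$.

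For the ``if'' direction, I assume continuity holds and verify Whitney's condition (b). Given sequences $x_n \in \whs_q$, $y_n \in \whs_p$ with $x_n, y_n \to y \in \whs_p$, and with $\secant{x_n}{y_n} \to l$ and $\tangentSp[x_n]{\whs_q} \to \tau$ in the respective Grassmannians, the pairs $(x_n, y_n) \in \topSp_q \times \topSp_p$ converge to $(y,y) \in \Delta_{\topSp_p}$. Assumed continuity at $(y,y)$ gives $\funpqbeta(x_n, y_n) \to \funpqbeta(y,y) = 0$, while joint continuity of $\distVec{-}{-}$ gives $\funpqbeta(x_n, y_n) \to \distVec{l}{\tau}$. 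Hence $\distVec{l}{\tau} = 0$, which is precisely $l \subset \tau$.

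For the ``only if'' direction, assume $\str$ is Whitney stratified. The only non-trivial point is continuity at a diagonal point $(z,z) \in \Delta_{\topSp_p}$ in the case $p < q$. Let $(w_n, y_n) \to (z,z)$ in $(\topSp_q \times \topSp_p) \cup \Delta_{\topSp_p}$; the subsequence of terms lying in $\Delta_{\topSp_p}$ contributes $0$, so after passing to a subsequence I may assume $w_n \in \topSp_q$, $y_n \in \topSp_p$ for all $n$. Suppose for contradiction $\funpqbeta(w_n, y_n) \geq \varepsilon > 0$ along some subsequence. By compactness of the relevant Grassmannians, extract a further subsequence along which $\secant{w_n}{y_n} \to l$ and $\tangentSp[w_n]{\str_q} \to \tau$. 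Whitney's condition (b) gives $l \subset \tau$, so $\distVec{l}{\tau} = 0$, contradicting $\funpqbeta(w_n, y_n) \to \distVec{l}{\tau}$ obtained from joint continuity of $\distVec{-}{-}$. The case $q = p$ is analogous, with the smoothness of $\str_p$ replacing Whitney (b): any secant limit for sequences in a smooth submanifold lies in the tangent space at the limit point.

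The principal technical ingredient, and the main obstacle, is the compactness of the Grassmannian together with the joint continuity of $\distVec{-}{-}$: these convert Whitney (b), which is a statement about convergent subsequences, into the genuine continuity statement for $\funpqbeta$. A minor subtlety is that the approaching sequences $(w_n, y_n)$ may intermix terms lying in $\topSp_q \times \topSp_p$ with terms on $\Delta_{\topSp_p}$; this is dispatched by splitting into the two subsequences, noting that the diagonal terms contribute $0$ to $\funpqbeta$, and applying the Grassmannian compactness argument to the remainder.
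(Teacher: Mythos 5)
Your proof is correct and follows essentially the same route as the paper's: continuity is automatic off the diagonal, the ``if'' direction is immediate from continuity of $\distVec{-}{-}$, and the ``only if'' direction is the same Grassmannian-compactness subsequence argument that the paper packages as upper semi-continuity of $\funpqbeta$ (which suffices since $\funpqbeta\geq 0$ vanishes on $\Delta_{\topSp_p}$). The only (harmless) difference is that by working exactly on $(\topSp_q\times\topSp_p)\cup\Delta_{\topSp_p}$ you never need Whitney's condition (a), which the paper invokes only because its auxiliary semicontinuity lemma is stated on the larger set $\whs_{\geq p}\times\whs_p$.
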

\begin{proof}
This statement is somewhat folklore. For the sake of completeness, we provide a proof in \cref{pf:equ_char_Whitney}.
\end{proof}
Next, we need the notion of integral curves, as defined for example in \cite{hironaka1969normal}. 
\begin{proposition}\cite[Lemma 4.1.1]{hironaka1969normal}\label{prop:existent_of_integral_curves}
Let $\whs = \stratPair$ be a Whitney stratified space and $y \in \whs_p$, for some $p\in \pos$.
Let $B=\ballEps[d]{y} \subset \supSp$ be a ball of radius $d$ around $y$, such that $\funpqbeta(-,y)$ is bounded uniformly by some $\delta < 1$, on $W_{\geq p} \cap B$.
Then, for any $x \in \whs_{\geq p} \cap B$, $x\neq y$, there exists a unique curve $\phi\colon [0,d] \to \whs \cap B$, fulfilling
\begin{enumerate}
\item $\phi(0)=y$ and $\phi(\norm{y-x}) = x$,
    \item $\phi$ is almost everywhere differentiable. At differentiable points, $t \neq 0$, the differential is given by
    \[\phi'(t) = 
    \frac{\norm{ \phi(t) - y}}{\norm{\pi_{\phi(t)}(\phi(t)-y)}^2}\pi_{\phi(t)}(\phi(t)-y),
    \]
    where $\pi_{\phi(t)}$ denotes the projection to $\tangentSp[\phi(t)]{\whs_{\phiStrat(\phi(t))}}$. 
     \end{enumerate}
\end{proposition}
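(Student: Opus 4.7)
The statement is classical and due to Hironaka; I would proceed along the lines of his original argument, adapting the modern formulation of Whitney (b) in terms of the continuity of $\funpqbeta$ (\cref{prop:equ_char_Whitney}).

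The plan is to reinterpret the target ODE as the integral equation for a vector field on the strata. On any stratum $\whs_s \cap B$ with $s \geq p$ and $z \neq y$, set
\[
V(z) \; := \; \frac{\norm{z-y}}{\norm{\pi_z(z-y)}^2}\,\pi_z(z-y),
\]
where $\pi_z$ denotes orthogonal projection onto $\tangentSp[z]{\whs_{\phiStrat(z)}}$. The hypothesis $\funpqbeta(-,y) \leq \delta < 1$ on $\whs_{\geq p} \cap B$ is precisely the statement that $\norm{\pi_{\secant{z}{y}^{\perp}}((z-y)/\norm{z-y})} \leq \delta$, equivalently $\norm{\pi_z(z-y)} \geq \sqrt{1-\delta^2}\,\norm{z-y}$. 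This guarantees $V$ is a well-defined smooth vector field on each stratum inside $B \setminus \{y\}$ with $\norm{V(z)} \leq (1-\delta^2)^{-1/2}\,\norm{z-y}$. The crucial algebraic observation is that any solution $\phi$ of $\phi' = V \circ \phi$ satisfies
\[
\tfrac{d}{dt}\norm{\phi(t)-y}^2 \; = \; 2\,\langle \phi(t)-y,\,\phi'(t)\rangle \; = \; 2\,\norm{\phi(t)-y},
\]
since $\pi_{\phi(t)}$ is self-adjoint and fixes $\pi_{\phi(t)}(\phi(t)-y)$. Hence $\norm{\phi(t)-y} = t$ automatically, so boundary conditions and the requirement $\phi([0,d]) \subset B$ come for free.

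For existence, I would integrate $V$ backwards from $x$ at time $t_0 := \norm{x-y}$. On the open stratum containing $x$, Picard--Lindelöf yields a unique maximal smooth solution on some interval $(t_1,t_0]$. Either $t_1 = 0$ (and we are done on this stratum) or the solution approaches the boundary of the stratum as $t \to t_1^+$. Since $\norm{\phi(t)-y}=t$ remains uniformly bounded and $\phi(t)$ lives in the compactum $\overline{B}$, one can extract a limit $y_1 := \lim_{t \to t_1^+}\phi(t)$; by the frontier condition this lies in a strictly lower stratum. Restarting the ODE at $y_1$ in that stratum and iterating, local finiteness of the stratification inside $B$ guarantees that this descent terminates, and $\norm{\phi(t)-y}=t$ forces the remaining curve to reach $y$ precisely at $t=0$. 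Joining the pieces produces a continuous $\phi\colon [0,d] \to \whs \cap B$, smooth off a discrete set of transition times, satisfying the two required conditions.

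The main obstacle, and the step where Whitney (b) genuinely enters, is uniqueness across transition times. Within a fixed stratum uniqueness is standard ODE theory. Suppose $\phi,\tilde\phi$ are two solutions meeting at a transition time $t_1$ at a point $y_1$ in a lower stratum. The equation prescribes $\phi'(t_1^-)$ in terms of $\pi_{\phi(t_1)}(\phi(t_1)-y)$; to propagate uniqueness across $t_1$ one needs the direction of approach on the higher stratum to coincide with the prescribed tangential radial direction in the lower stratum, modulo the normalization $\norm{\phi(t)-y} = t$. This is where the continuity of $\funpqbeta$ on $(\whs_q \times \whs_p)\cup \Delta_{\whs_p}$ from \cref{prop:equ_char_Whitney} is decisive: it forces $\pi_{\phi(t)}(\phi(t)-y)/\norm{\phi(t)-y}$ to converge, as $t \to t_1^+$, to $\pi_{y_1}(y_1-y)/\norm{y_1-y}$, so the two branches share one-sided derivatives and the Picard--Lindelöf argument on the lower stratum applies at $t_1$. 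Propagating this matching along the finitely many transition times yields global uniqueness, completing the proof.
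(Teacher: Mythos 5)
The paper offers no proof of this proposition at all: it is quoted verbatim from Hironaka (Lemma 4.1.1 of \cite{hironaka1969normal}), so your proposal is measured against his argument rather than against anything in the text. Your overall architecture does match his: interpret the formula as a vector field $V(z)=\frac{\norm{z-y}}{\norm{\pi_z(z-y)}^2}\pi_z(z-y)$ on each stratum, use $\funpqbeta(-,y)\leq\delta<1$ to see that $\norm{\pi_z(z-y)}\geq\sqrt{1-\delta^2}\,\norm{z-y}$ so that $V$ is well defined, derive $\langle z-y,V(z)\rangle=\norm{z-y}$ and hence $\norm{\phi(t)-y}=t$, and build $\phi$ by integrating through finitely many strata. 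Two slips in the existence half: the correct bound is $\norm{V(z)}=\norm{z-y}/\norm{\pi_z(z-y)}\leq(1-\delta^2)^{-1/2}$, a \emph{constant} (indeed $\norm{V}\geq 1$ always, since orthogonal projection is a contraction), not $(1-\delta^2)^{-1/2}\norm{z-y}$; and the curve is required on all of $[0,d]$, so you must also integrate forward past $x$, not only backward toward $y$.

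The genuine gap is in uniqueness across a transition time $t_1>0$ where the backward solution leaves a stratum $\whs_{q'}$ at a point $y_1$ of a lower stratum $\whs_{p'}$. Your claimed convergence of one-sided derivatives is false in general: by Whitney's condition (a), the tangent planes $\tangentSp[\phi(t)]{\whs_{q'}}$ converge (after passing to a subsequence) to some $\tau\supset\tangentSp[y_1]{\whs_{p'}}$ that is typically strictly larger, so $\pi_{\phi(t)}(\phi(t)-y)\to\pi_{\tau}(y_1-y)$, which differs from $\pi_{\tangentSp[y_1]{\whs_{p'}}}(y_1-y)$ whenever $y_1-y$ has a component in the orthogonal complement of $\tangentSp[y_1]{\whs_{p'}}$ inside $\tau$. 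Moreover, the continuity of $\funpqbeta$ at the diagonal (\cref{prop:equ_char_Whitney}) controls secants between points converging to \emph{each other}, whereas the secant in your ODE is to the fixed, distant basepoint $y$; it gives no information at $y_1\neq y$. The curve genuinely has corners at transitions, which is consistent with the statement (only a.e.\ differentiability is claimed) but destroys your mechanism. More importantly, even a matching of one-sided derivatives would not suffice: Picard--Lindel\"of in $\whs_{p'}$ with initial value $\phi(t_1)=y_1$ only rules out competitors that also lie in $\whs_{p'}$ for $t<t_1$. What actually has to be excluded is a competing a.e.\ differentiable solution that, as $t$ decreases past $t_1$ (or indeed past any time), moves into a stratum strictly \emph{above} the one containing its current position, which the frontier condition permits pointwise. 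This exclusion is the real content of Hironaka's uniqueness argument and requires the quantitative Whitney-type estimates (as in \cref{lem:approximation_lemma}), not just continuity of $\funpqbeta$; your sketch does not address it.
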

\begin{definition}
A curve as in \cref{prop:existent_of_integral_curves} is called the \define{integral curve associated to the pair $x,y$}.
\end{definition}
The existence of integral curves allows for additional control over the mapping cylinder neighborhoods defined in \cref{ex:cyl_nbhds}. This is essentially due to the following result.
\begin{proposition}\cite[Proof of 4.1.1]{hironaka1969normal}\label{prop:properties_of_integral_curves}
Let $\whs$ be a Whitney stratified space over $\pos$ and let $\phi\colon [0,d] \to \whs$ be the integral curve associated to $x \in \whs_q$, $y \in \whs_p$, $q \geq p \in \pos$, with notation as in \cref{prop:existent_of_integral_curves}. Then $\phi$ has the following properties. 
\begin{enumerate}
    \item $\norm{\phi(t) - y} = t$, for $t \in [0,d]$.
    \item $\norm{\phi(t) - \phi(t')} \leq \frac{1}{\sqrt{1-\delta^2}} \vert t-t'\vert$, for $t,t'
\in [0,d]$.
\end{enumerate}
\end{proposition}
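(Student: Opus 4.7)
The plan is to derive both properties directly from the ODE characterization of $\phi$ given in \cref{prop:existent_of_integral_curves}, combined with the definition of $\funpqbeta$ from \cref{con:pqbeta}.

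For property 1, I would set $f(t) := \norm{\phi(t) - y}^2$ and differentiate. Since $\pi_{\phi(t)}$ is the orthogonal projection onto a linear subspace, the identity $\langle v, \pi_{\phi(t)}(v)\rangle = \norm{\pi_{\phi(t)}(v)}^2$ holds for every $v$. Plugging the formula for $\phi'(t)$ into $f'(t) = 2\langle \phi(t) - y, \phi'(t)\rangle$ and applying this identity with $v = \phi(t) - y$, the factor $\norm{\pi_{\phi(t)}(\phi(t)-y)}^2$ cancels cleanly, leaving $f'(t) = 2\norm{\phi(t) - y} = 2\sqrt{f(t)}$. Since $\phi(0) = y$ gives $f(0) = 0$, the relation $(\sqrt{f})'(t) = 1$ integrates to $\sqrt{f(t)} = t$, which is exactly property 1.

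For property 2, the natural route is to bound $\norm{\phi'(t)}$ almost everywhere and then integrate. Directly from the ODE, $\norm{\phi'(t)} = \norm{\phi(t)-y}/\norm{\pi_{\phi(t)}(\phi(t)-y)}$. The definition of $\funpqbeta$ says that, for the unit vector $(\phi(t)-y)/\norm{\phi(t)-y}$, its component orthogonal to $\tangentSp[\phi(t)]{\whs_{\phiStrat(\phi(t))}}$ has norm $\funpqbeta(\phi(t), y)$; the Pythagorean theorem therefore gives
\[
\norm{\pi_{\phi(t)}(\phi(t)-y)} \;=\; \sqrt{1 - \funpqbeta(\phi(t), y)^2}\;\norm{\phi(t) - y}.
\]
By property 1, $\phi(t)$ stays inside $B = \ballEps[d]{y}$ (as $\norm{\phi(t) - y} = t \leq d$), so the standing hypothesis $\funpqbeta(-,y) \leq \delta < 1$ on $\whs_{\geq p} \cap B$ applies and yields $\norm{\phi'(t)} \leq 1/\sqrt{1-\delta^2}$ at every point where $\phi$ is differentiable.

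The final step is to upgrade this pointwise a.e.\ bound to the Lipschitz estimate $\norm{\phi(t) - \phi(t')} \leq \vert t - t'\vert /\sqrt{1-\delta^2}$. This is the only step where I expect mild subtlety: a.e.\ differentiability by itself is not enough to recover $\phi(t) - \phi(t')$ as the integral of $\phi'$, so one has to invoke the absolute continuity of $\phi$. In the Hironaka construction $\phi$ is piecewise smooth on the preimage of each stratum and of bounded variation on $[0,d]$ (this is the content of the construction in \cite{hironaka1969normal} that underlies \cref{prop:existent_of_integral_curves}), so the fundamental theorem of calculus applies on each smooth piece and the pieces concatenate to give the global Lipschitz bound. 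Everything else in the argument is an honest ODE computation.
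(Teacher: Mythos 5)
Your argument is correct. Note first that the paper gives no proof of this proposition at all — it is stated with a citation to the proof of Lemma 4.1.1 in Hironaka's paper — so what you have done is reconstruct the underlying computation, and the reconstruction is sound. Property 1: the cancellation via $\langle v,\pi_{\phi(t)}(v)\rangle=\norm{\pi_{\phi(t)}(v)}^2$ is exactly right, and the potential non-uniqueness of $f'=2\sqrt{f}$, $f(0)=0$ is harmless here because the ODE formula for $\phi'(t)$ presupposes $\pi_{\phi(t)}(\phi(t)-y)\neq 0$, hence $\phi(t)\neq y$ and $f(t)>0$ for $t>0$, so $(\sqrt{f})'=1$ can be integrated away from $0$ and extended by continuity. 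Property 2: the Pythagorean identity $\norm{\pi_{\phi(t)}(\phi(t)-y)}=\sqrt{1-\funpqbeta(\phi(t),y)^2}\,\norm{\phi(t)-y}$ is a correct unwinding of the definition of $\funpqbeta$ for the one-dimensional secant line, and the resulting bound $\norm{\phi'(t)}\leq 1/\sqrt{1-\delta^2}$ is valid on all of $[0,d]$ since \cref{prop:existent_of_integral_curves} already asserts $\phi$ maps into $\whs\cap B$ (you do not even need Property 1 for this). You correctly identified the one genuine subtlety — that an a.e.\ bound on $\phi'$ yields a Lipschitz bound only via absolute continuity — and your resolution (the curve in Hironaka's construction is obtained by an integration procedure and is of bounded variation, piecewise smooth on stratum preimages) is the right one; the same absolute-continuity point is also implicitly used in your integration for Property 1, so it is worth stating it once up front rather than only for Property 2.
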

As a consequence of this result, the continuity result of \cref{cor:continuity_of_pers_tame}
can be improved to Lipschitz continuity.
\begin{proposition}\label{prop:links_thicken_L_continuously}
Let $\pos = \{ p <q \}$ and let $\whs \in \SamP$ be a Whitney stratified space with compact singular stratum $\whs_{p}$. Then, for any $C >1$, there exists an $R>0$, such that the function  
\begin{align*}
   \paramSpace \cap (0,R)^2 &\to \DSam \\
    \diagParam &\mapsto \diagification{\strongStr{\whs}}
\end{align*}
is $C$-Lipschitz continuous.
\end{proposition}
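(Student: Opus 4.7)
The strategy is to reduce the claim to Hausdorff stability of the link strips $\supbLevel{\whs}{c}{d}$, and then to derive this stability from the geometric control on integral curves provided by \cref{prop:existent_of_integral_curves,prop:properties_of_integral_curves}.

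First, by \cref{lem:diag_trick} applied to $\stratSamp = \strongStr{\whs}$, the quantity $\dmet[\DSam]{\diagification[\diagParam]{\strongStr{\whs}}}{\diagification[\diagParam']{\strongStr{\whs}}}$ is bounded by the maximum of three Hausdorff distances of the form $\dmet[\HD]{\supbLevel{\whs}{c}{d}}{\supbLevel{\whs}{c'}{d'}}$ with endpoints $c,d,c',d' \in (0,R)$ satisfying $\max\{|c-c'|,|d-d'|\} \leq \max\{|\diagParamLow-\diagParamLow'|,|\diagParamUp-\diagParamUp'|\}$. Hence it suffices to choose $R > 0$ so that every such Hausdorff distance is bounded by $C \cdot \max\{|c-c'|,|d-d'|\}$. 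To produce $R$, fix $\delta \in (0,1)$ with $1/\sqrt{1-\delta^2} < C$; by \cref{prop:equ_char_Whitney}, $\funpqbeta$ is continuous on $(\whs_q \times \whs_p) \cup \Delta_{\whs_p}$ and vanishes on the diagonal, so the compactness of $\whs_p$ yields an $R > 0$ with $\funpqbeta(x,y) \leq \delta$ whenever $y \in \whs_p$, $x \in \whs_{\geq p}$, and $\norm{x-y} \leq R$.

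The key claim is then: for every $x \in \whs$ with $t := \mathrm{d}(x, \whs_p) \in (0, R)$ and every target level $t' \in (0, R)$, there exists $x' \in \whs$ with $\mathrm{d}(x', \whs_p) = t'$ and $\norm{x - x'} \leq |t - t'|/\sqrt{1-\delta^2}$. Granted this, the required Hausdorff bound follows by a small case analysis according to whether $x$ already lies in $\supbLevel{\whs}{c'}{d'}$ or needs to be slid to one of the endpoints $c'$ or $d'$. To produce $x'$, I would pick a nearest point $y \in \whs_p$ of $x$ and consider the integral curve $\phi \colon [0, R] \to \whs$ from $y$ provided by \cref{prop:existent_of_integral_curves}, so that $\phi(t) = x$. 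The function $\tau(s) := \mathrm{d}(\phi(s), \whs_p)$ satisfies $\tau(s) \leq s$ (since $\norm{\phi(s) - y} = s$), $\tau(0) = 0$, and $\tau(t) = t$. For the inward case $t' \leq t$, an intermediate value argument on $[0,t]$ yields $s^* \in [t', t]$ with $\tau(s^*) = t'$, and the Lipschitz estimate of \cref{prop:properties_of_integral_curves} gives $\norm{\phi(s^*) - x} \leq (t - s^*)/\sqrt{1-\delta^2} \leq (t - t')/\sqrt{1-\delta^2}$, as desired.

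The main obstacle is the outward case $t' > t$, since $\tau$ is a priori only Lipschitz and need not climb past $t$, let alone reach $t'$. I would resolve this by invoking the cylinder structure of \cref{ex:cyl_nbhds}: for $R$ small enough, $\thicken[R]{\whs_p} \cap \whs$ is a stratified mapping cylinder over $\whs_p$ with cylinder coordinate $\mathrm{d}_{\whs_p}$, and the integral curves from points of $\whs_p$ are precisely the radial rays of this cylinder, as they arise from Thom's first isotopy lemma in exactly the same way used in the proof sketch of \cref{ex:cyl_nbhds}. Under this identification one has $\tau(s) = s$ on the entire interval $[0,R]$ on which $\phi$ is defined, so $x' := \phi(t')$ witnesses the claim, with $\norm{\phi(t') - x} \leq (t'-t)/\sqrt{1-\delta^2}$ by \cref{prop:properties_of_integral_curves}. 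Combining the above with \cref{lem:diag_trick} then yields the promised $C$-Lipschitz continuity of $\diagParam \mapsto \diagification[\diagParam]{\strongStr{\whs}}$ on $\paramSpace \cap (0,R)^2$.
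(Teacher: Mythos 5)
Your overall strategy matches the paper's: reduce via \cref{lem:diag_trick} to Hausdorff stability of the link strips, fix $\delta$ with $1/\sqrt{1-\delta^2}<C$, and slide points between level sets of $\mathrm{d}_{\whs_p}$ along Hironaka's integral curves using the Lipschitz estimate of \cref{prop:properties_of_integral_curves}. Your inward case is fine. The gap is in the outward case, and your proposed fix does not work: you assert that the integral curves emanating from $y\in\whs_p$ ``are precisely the radial rays'' of the mapping cylinder structure of \cref{ex:cyl_nbhds}, so that $\mathrm{d}(\phi(s),\whs_p)=s$ for all $s$. This is unjustified. The trivialization in \cref{ex:cyl_nbhds} comes from Thom's first isotopy lemma, and its rays are flow lines of some controlled lift of $\partial/\partial t$, whereas Hironaka's integral curves are determined by a specific ODE (projecting the radial direction $\phi(s)-y$ onto the tangent space of the ambient stratum $\whs_q$); there is no reason for these to coincide. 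For the ambient integral curve one only knows $\norm{\phi(s)-y}=s$, hence $\mathrm{d}(\phi(s),\whs_p)\le s$, and equality can fail for $s$ past $t$ because the curve may drift so that another point of $\whs_p$ becomes closer than $y$. Consequently $\tau(s)=\mathrm{d}(\phi(s),\whs_p)$ need never reach a target level $t'>t$, and the outward case collapses. Since sliding outward is genuinely needed for one of the directed Hausdorff bounds, this is not a removable case.

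The paper's resolution is to run the integral curve not in $\whs$ but in the fiber $\whs^y=r^{-1}(y)\cap\whs$ of the nearest-point retraction $r$ of a small tubular neighborhood: by the cited result this fiber is again Whitney stratified with singular stratum the single point $y$, and the fiberwise $\funpqbeta$ can be uniformly bounded by $\delta$ via \cref{con:hatbeta}. Since every point of $r^{-1}(y)$ has $y$ as its unique nearest point of $\whs_p$, the identity $\norm{\phi(s)-y}=s$ now literally gives $\mathrm{d}(\phi(s),\whs_p)=s$ for all $s$, so $x'=\phi(t')$ works for both $t'<t$ and $t'>t$, and \cref{prop:properties_of_integral_curves} closes the argument. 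Note also that your uniform bound on the ambient $\funpqbeta$ is not the hypothesis needed for this fiberwise construction: the relevant $\funpqbeta$ on $\whs^y$ measures the deviation of the secant from the tangent space of the fiber's top stratum, which is a smaller subspace than the tangent space of $\whs_q$, so it must be bounded separately; this is exactly what \cref{con:hatbeta} provides.
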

\begin{proof}
We omit the $\estrongStr$, to keep notation concise.
By \cref{lem:diag_trick}, it again suffices to consider the link part of the diagrams given by $\supbLevel{\whs}{\diagParamLow}{\diagParamUp}$.
Choose $\delta <1$ such that $\frac{1}{\sqrt{1-\delta^2}} < C$. Next, take $R$ small enough such that $ \neiEps[R]{\whs_p}$, with retraction $r\colon  \neiEps[R]{\whs_p} \to \whs_p$ is a standard tubular neighborhood of $\whs_p$. 
By \cite[Lemma 2.1]{nocera2021conically}, for $R$ small enough the spaces $\whs^y=r^{-1}(y) \cap \whs$ of $y$ are given by Whitney stratified spaces with singular stratum given by a point. 
Then, using \cref{con:hatbeta}, we may also choose $R$ so small, that
\[\beta(x,y) \leq \delta,\] for the respective $\beta$ on the fiber $\whs^y$. 
Now, let $\diagParam, \diagParam' \in \paramSpace \cap [0,R]$. Let $x \in \supbLevel{\whs}{\diagParamLow}{\diagParamUp}$ and assume that $\diagParamUp > \diagParamUp'$ (the other cases work similarly). Now, consider the integral curve $\phi$ from $y:=r(x) \in \whs_p$ to $x$ in $r^{-1}(y) \cap \whs$. 
By \cref{prop:properties_of_integral_curves} we have,
\[
\vert x - \phi( \diagParamUp[v'])\vert  = \vert \phi(\vert x\vert )- \phi( \diagParamUp[v'])\vert  \leq C \vert  \vert x\vert  - \diagParamUp[v']\vert  \leq C \vert \diagParamUp - \diagParamUp[v']\vert  \leq  C \vert \diagParam -\diagParam'\vert .
\]
Since $\phi(\diagParamUp[v']) \in \whs^{\diagParamLow[v']}_{\diagParamUp[v']}$, going through all the cases, we obtain
\[
\supbLevel{\whs}{\diagParamLow}{\diagParamUp} \subset  \thicken[C \vert \diagParam -\diagParam'\vert ]{(\supbLevel{\whs}{\diagParamLow[v']}{\diagParamUp[v']})}.
\]
Thus, the result follows by symmetry.
\end{proof}
We thus obtain, as a corollary of \cref{cor:continuity_of_pers_tame}, that for $\diagParam$ sufficiently small the persistent stratified homotopy type $\epersParam$ is even Lipschitz continuous at a Whitney stratified space.
\begin{theorem}\label{prop:pers_strat_htpy_type_C-lipschitz}
Let $\pos = \{ p < q\}$ and $\whs \in \SamP$ be Whitney stratified with $\whs_p$ compact. Then, for any $C >1$, there exists some $R>0$, such that the map
\[
\epersParam: \SamP \to \pStrat
\]
is $2(C+1)$-Lipschitz continuous at $\whs$, for all $\diagParam \in \paramSpace \cap (0, R)^2$. 
\end{theorem}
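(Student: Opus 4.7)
The plan is to obtain the statement as a direct consequence of the three stability results already in place: \cref{prop:strong_str_1}, \cref{cor:continuity_of_pers_tame} (in conjunction with \cref{cor:diag_forget_cont}), and \cref{lem:pers_diag_lip}. Specifically, I recall from \cref{pers_strat_htpy_type} that $\epersParam = \epersd \circ \ediagification \circ \estrongStr$, and I will show the three factors are $2$-Lipschitz at $\whs$, $(C+1)$-Lipschitz at $\strongStr{\whs}$, and $1$-Lipschitz respectively, so their composition is $2(C+1)$-Lipschitz at $\whs$.

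First, I note that since $\whs$ is Whitney stratified with $\whs_p$ compact, \cref{ex:cyl_nbhds} shows that, up to a harmless linear rescaling of the ambient metric, $\whs$ is cylindrically stratified in the sense of \cref{def:cyl_strat}. This rescaling is implicit throughout \cref{sec:pers_strat} and does not affect the Lipschitz constant since it just fixes the scale; in particular, $\strongStr{\whs} \in \SamNP$ is a compact, cylindrically stratified strong stratification, so \cref{cor:diag_forget_cont} is applicable at $\strongStr{\whs}$.

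Next, I apply \cref{prop:links_thicken_L_continuously} to the Whitney stratified space $\whs$: for the prescribed $C>1$, this furnishes an $R>0$ such that the assignment $\diagParam \mapsto \diagification{\strongStr{\whs}}$ is $C$-Lipschitz on $\paramSpace \cap (0,R)^2$. In particular, the map $\diagParam \mapsto (\strongStr{\whs})^{\diagParamLow}_{\diagParamUp}$ providing the link component of the diagram is $C$-Lipschitz in a neighborhood of each such $\diagParam$. By \cref{cor:diag_forget_cont} this implies that $\ediagification \colon \SamNP \to \DSam$ is $(C+1)$-Lipschitz at $\strongStr{\whs}$, for every $\diagParam \in \paramSpace \cap (0,R)^2$.

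Finally I assemble the composition: \cref{prop:strong_str_1} gives that $\estrongStr$ is globally $2$-Lipschitz, hence in particular $2$-Lipschitz at $\whs$; the previous paragraph supplies the $(C+1)$-Lipschitz bound for $\ediagification$ at $\strongStr{\whs}$; and \cref{lem:pers_diag_lip} gives that $\epersd$ is globally $1$-Lipschitz. Multiplying the three local Lipschitz constants yields that $\epersParam$ is $2(C+1)$-Lipschitz at $\whs$ for every $\diagParam \in \paramSpace \cap (0,R)^2$, which is the stated conclusion; in essence this is precisely the second clause of \cref{cor:continuity_of_pers_tame} applied with $K=C$, so no new argument beyond chaining the local Lipschitz constants is required. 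I do not expect a real obstacle here; the substantive work has already been carried out in \cref{prop:links_thicken_L_continuously}, whose Whitney-geometric input (integral curves and control of $\beta$) is what makes the link-thickening constant $C$ rather than merely a finite continuity constant.
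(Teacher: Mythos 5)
Your proposal is correct and follows exactly the route the paper intends: the paper presents this theorem as an immediate corollary of \cref{prop:links_thicken_L_continuously} (which supplies the $C$-Lipschitz dependence of $\diagification{\strongStr{\whs}}$ on $\diagParam$ for $\diagParam \in \paramSpace \cap (0,R)^2$) combined with the second clause of \cref{cor:continuity_of_pers_tame}, i.e. chaining the factors $\estrongStr$ ($2$-Lipschitz), $\ediagification$ ($(C+1)$-Lipschitz at $\strongStr{\whs}$ via \cref{cor:diag_forget_cont}) and $\epersd$ ($1$-Lipschitz) to get $2(C+1)$. Your additional remark on the implicit rescaling to ensure cylindrical stratifiedness matches the paper's standing convention.
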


\section{Learning stratifications}\label{sec:recover_strat}
In practice, we can generally not expect that sample data is already equipped with a stratification. This requires for notions of stratification which are intrinsic to the geometry of a space. One such example are homology stratifications, as used by Goresky and MacPherson in \cite{goresky1983intersection}. 
\begin{example}\label{ex:loc_homology}
For the sake of simplicity, we describe the case of two strata. Suppose $\stratSp=  (\phiStrat \pp \topSp \to \{p <q \})$ is stratified conically as follows: \\ $\str_q$ is locally Euclidean of dimension $q$, and $\str_p$ of dimension $p$, and
$x \in \str_p$ admits a neighborhood
\[
 U \cong_P \mathbb R^p \times C(L)
\]
for some $q - (p+1)$ dimensional compact manifold $L$, called the link of $x$.
Here $C(L)$ is the stratified cone on $L$, stratified over $\{ p<q\}$, by sending only the cone point to $p$.
This holds, for example, if $\stratSp$ is a Whitney stratified space.
Suppose further that $L$ is not a homology sphere, and that the strata are connected. Then, the stratification of $\stratSp$ can be recovered from the underlying space as follows.
For each $x \in \topSp$, we can compute the local homology of $X$ at $x$ 
\[
\locHlg{\topSp} := \hlg{\topSp, \topSp \setminus \{x\}} = \varinjlim \hlg{\topSp, \topSp \setminus U},
\]
where the colimit ranges over the open subsets of $\topSp$ containing $x$.
By the assumption on the local geometry of $\topSp$, for any $x \in \topSp$ there exists a small open neighborhood $U_x$ such that the natural map
\[ 
 \hlg{\topSp, \topSp \setminus U_x} \to \locHlg{\topSp} 
\]
is an isomorphism. In particular, for each $x \in \topSp$ one obtains natural maps
\[
\locHlg{\topSp} \cong \hlg{\topSp, \topSp \setminus U_x} \to \locHlgy{\topSp}
\]
for $y \in U_x$. If $x,y$ are contained in the same stratum, then all of these maps are given by isomorphisms. By the path connectedness assumption any two points in the same strata are connected by such a sequence of isomorphisms. Conversely, since we assumed that $L$ is not a homology sphere, we have 
\[
\locHlg{\topSp} \cong \locHlg{U_x} \cong \locHlg{\mathbb R^p \times C(L)} \cong \hlgred[\bullet-(p+1)]{L} \neq \locHlgy{\topSp},
\]
whenever $x \in \str_p$ and $y \in \str_q$. Thus, we can reobtain the stratification of $\str$, by assigning to points the same stratum, if and only if they are connected through such a sequence of isomorphisms. Stratifications with the property that all the induced maps of local homologies on a stratum are isomorphisms are called \define{homology stratifications}.
\end{example}
Local homology as a means to obtain stratifications of point clouds (or combinatorial objects) have recently been investigated in several works (\cite{bendich2012local,skraba2014approximating, fasy2016exploring, nanda2020local, stolz2020geometric,mileyko2021another}). Both \cite{bendich2012local} and \cite{nanda2020local} make use of the structure maps $\locHlg{\topSp} \to \locHlgy{\topSp}$ to determine the strata. 
Note, however, that in the case of two strata it suffices to study the isomorphism type at each point, and there is no need to study the maps themselves, as stated by the following lemma.
\begin{lemma}\label{lem:strat_by_localcohom}
Let $ \str = (\topSp, \phiStrat \pp \topSp \to \{p < q\})$ be a Whitney stratified space (more generally conically stratified space) with manifold strata of dimension $q$ and $p$ respectively. Then $s$ is a homology stratification. \\
\\Furthermore, if the local homology of $\topSp$ is different from $\textnormal{H}_{\bullet}(\mathbb R^q;0)$, at each $y \in \str_p$, then $s$ is the only homology stratification of $\topSp$ with two strata.\\
\\
Conversely,
one always obtains a homology stratification $\tilde{\phiStrat}:\topSp \to \{p< q\}$ defined by:
\[\tilde{\phiStrat}(x) = q \iff \locHlg{\topSp} \cong  \textnormal{H}_{\bullet}(\mathbb R^q;0),\]
for $x \in \topSp$. 
\end{lemma}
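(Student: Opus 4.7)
The plan is to dispatch the three assertions in order, using the conical local structure available for a Whitney (or more generally conically) stratified space.

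For the first claim, that $s$ is a homology stratification, I verify the condition pointwise. For $x \in \str_q$ (the open top-dimensional stratum) I choose a Euclidean ball $U_x \subset \str_q$; the manifold structure together with excision make the natural map $\hlg{X, X \setminus U_x} \to \locHlgy{X}$ the identity of $\textnormal{H}_{\bullet}(\mathbb R^q;0)$ for every $y \in U_x$. For $x \in \str_p$, the conical hypothesis supplies a stratified neighborhood $U_x \cong_P \mathbb R^p \times C(L_x)$ with compact link $L_x$; any $y$ on the apex axis $\mathbb R^p \times \{\textnormal{cone pt}\} = U_x \cap \str_p$ admits a parallel conical neighborhood, and the stratum-preserving deformation retraction of $U_x$ onto its apex axis provides a homotopy of pairs showing $\hlg{X, X\setminus U_x} \to \locHlgy{X}$ is an isomorphism, both sides identifying with a shifted copy of $\tilde{\textnormal{H}}_{\bullet}(L_x)$.

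For the third claim, that $\tilde{\phiStrat}$ is always a homology stratification, I first check Alexandrov continuity, i.e.\ that $\tilde{\phiStrat}^{-1}(q)$ is open. It contains the open stratum $\str_q$ by (1). For any $x \in \str_p$ with $\locHlg{X} \cong \textnormal{H}_{\bullet}(\mathbb R^q;0)$, the $s$-homology stratification property supplies a neighborhood $U_x$ such that $\hlg{X, X\setminus U_x} \to \locHlgy{X}$ is an isomorphism for all $y \in U_x \cap \str_p$, forcing local homology to remain $\textnormal{H}_{\bullet}(\mathbb R^q;0)$ on a relative neighborhood of $x$ within $\str_p$; together this yields $\tilde{\phiStrat}^{-1}(q)$ open. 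The homology stratification condition for $\tilde{\phiStrat}$ is then inherited from $s$ via the same neighborhoods: on $\tilde{\phiStrat}^{-1}(q)$ all local homologies are $\textnormal{H}_{\bullet}(\mathbb R^q;0)$ and the natural maps factor through identities thereof, while on $\tilde{\phiStrat}^{-1}(p) \subset \str_p$ the condition is precisely that for $s$.

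For the uniqueness claim, let $\tilde{s}$ be another two-strata homology stratification. I aim to establish $\tilde{s}^{-1}(q) = \str_q$. For the inclusion $\tilde{s}^{-1}(q) \subset \str_q$: $\tilde{s}^{-1}(q)$ is open (Alexandrov) and $\str_q$ is open and dense in $X$ by the frontier condition, so each $x \in \tilde{s}^{-1}(q)$ has a neighborhood $\tilde{U}_x \subset \tilde{s}^{-1}(q)$ meeting $\str_q$; chaining $\tilde{s}$-homology stratification isomorphisms yields $\locHlg{X} \cong \textnormal{H}_{\bullet}(\mathbb R^q;0)$, and the standing hypothesis forces $x \in \str_q$. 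The reverse $\str_q \subset \tilde{s}^{-1}(q)$ follows by a symmetric argument starting from an $x \in \str_p \subset \tilde s^{-1}(p)$, for which $\locHlg{X} \not\cong \textnormal{H}_{\bullet}(\mathbb R^q;0)$ by hypothesis. The main obstacle I anticipate is ruling out a rogue connected component of $\tilde{s}^{-1}(p)$ sitting entirely inside $\str_q$, where local homology is uniformly $\textnormal{H}_{\bullet}(\mathbb R^q;0)$ and thus does not per se contradict anything; handling this cleanly relies on exploiting the propagation of natural-map isomorphisms in the $\tilde{s}$-homology stratification axiom (strictly stronger than local constancy of local homology) together with the observation that the closed set $\tilde{s}^{-1}(p)$ must accumulate onto some point of $\str_p$ where the hypothesis applies, the conical structure of the Whitney setup supplying the topological regularity needed to make this accumulation argument rigorous.
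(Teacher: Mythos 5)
Your treatment of the first claim is fine and coincides with the paper's (one-line) argument via the conical neighborhoods. The genuine gap is in your proof of the third claim, at the step ``on $\tilde{\phiStrat}^{-1}(q)$ all local homologies are $\textnormal{H}_{\bullet}(\mathbb R^q;0)$ and the natural maps factor through identities thereof.'' The case that actually needs work is $x \in \str_p$ with $\locHlg{\topSp} \cong \textnormal{H}_{\bullet}(\mathbb R^q;0)$ and $y \in U_x \cap \str_q$: these two points now lie in the \emph{same} $\tilde{\phiStrat}$-stratum, so you must show that the natural map $\hlg{\topSp,\topSp\setminus U_x} \to \locHlgy{\topSp}$ is an isomorphism, and knowing that source and target are abstractly isomorphic does not give this --- a map between two copies of $\mathbb Z$ in degree $q$ can perfectly well fail to be an isomorphism, and there is no ``identity'' for it to factor through, since the source is computed from the conical neighborhood of the singular point $x$ while the target comes from a Euclidean chart at $y$. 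The paper's proof supplies exactly the missing step: the only nontrivial degree is $q=\dim \topSp$; a K\"unneth argument shows the link $L_x$ is a closed \emph{orientable} manifold; and one then checks that under $\hlg{CL_x, L_x}\cong \hlgred[\bullet-1]{L_x}$ the fundamental class of $L_x$ induces a fundamental class of $CL_x$, i.e.\ the generator of $\hlg{\topSp,\topSp\setminus U_x}$ restricts to the local orientation class at every regular $y\in U_x$. Without some such fundamental-class argument your part on $\tilde{\phiStrat}$ is incomplete.

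On uniqueness: your inclusion $\tilde{s}^{-1}(q)\subset \str_q$ (push to a nearby point of the dense top stratum lying in the same $\tilde s$-stratum and chain the isomorphisms) is correct. For the reverse inclusion you candidly flag the ``rogue component'' of $\tilde{s}^{-1}(p)$ sitting inside $\str_q$ as an unresolved obstacle, and you are right to worry: the homology-stratification axiom is nearly vacuous on an isolated marked point, so the symmetric argument does not run, and your accumulation sketch is not a proof. The paper dispatches this half in one sentence by appealing to $\topSp\setminus\str_p$ being a homology manifold; as written, this half remains a gap in your proposal.
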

\begin{proof}
See \cref{pf:lem:strat_by_locacohom}.
\end{proof}
Now, let us consider the scenario of working with a (potentially noisy) sample $\sampX$ instead of considering the whole space $\topSp$. Even when working persistently, to obtain non-trivial information, one can not pass all the way to the limit, when computing local homology. Indeed, for any thickening $\thicken{\sampX}$, $\alpha >0$, $\locHlg{\thicken{\sampX}} = \textnormal{H}_{\bullet}(\supSp; 0)$. Instead, one considers persistent local homology of the sample, with respect to a parameter $\frac{1}{\zoomParam}$, specifying the radius of the ball representing $U_x$ (see \cite{bendich2012local}, \cite{skraba2014approximating}). In other words, one computes persistent local homology using the spaces
\[
( \thicken[\alpha]{\sampX}, \thicken[\alpha]{\sampX} \setminus \ballEpso[\frac{1}{2\zoomParam}]{x}).
\]
For computational reasons, it is beneficial to use the intrinsically local notion of this structure. By the excision theorem, one may equivalently work with:
\[
( \thicken[\alpha]{(\sampX \cap \ballEps[\frac{1}{\zoomParam}]{x})},   \thicken[\alpha]{(\sampX \cap \ballEps[\frac{1}{\zoomParam}]{x})} \setminus \ballEpso[\frac{1}{2\zoomParam}]{x}).
\]
If one does not want the resulting barcodes to become shorter as $\zoomParam \to \infty$ and instead wants a measure of singularity that is comparable for different scales, then this needs to be normalized, and one may instead compute persistent homology via the stretched pair
\[
( \thicken[\alpha]{( \zoomParam \sampX \cap \ballEps[1]{\zoomParam x})},   \thicken[\alpha]{(\zoomParam \sampX \cap \ballEps[1]{ \zoomParam x})} \setminus \ballEpso[\frac{1}{2}]{ \zoomParam x}).
\]
Let us take a bit more of a conceptual look on this procedure in the following remark.
\begin{remark}\label{rem:analysis_of_loc_hom}
The procedure we just described may abstractly be rephrased as follows. We want to obtain a stratification of $\sampX$ using local data.  Hence, we only consider sets of the form
\[ \zoomParam \sampX \cap \ballEps[1]{\zeta x}. \]
By shifting into the origin, we may equivalently investigate the space
\[\magnification{\sampX}:= \zoomParam (\sampX-x) \cap \ballEps[1]{0} \subset \supSp, \]
with $\sampX -x = \{y-x \mid y \in \sampX \}$.
We can think of $\magnification{\sampX}$ as zooming into $\sampX$ at $x$ by a magnification parameter $\zoomParam$. We then want to determine how far from a $q$-dimensional euclidean unit disk $D^q \subset \mathbb R^q \hookrightarrow\supSp$ the space $\magnification{\sampX}$ is. In the particular case of persistent local homology, we apply the map
\[
\elocPersHlg: M \mapsto \{ \alpha \mapsto \hlg{ \thicken[\alpha]{M}, \thicken[\alpha]{M} \setminus \ballEpso[\frac{1}{2}]{0}  }\}
\]
to obtain a persistence module indexed over $[0,\frac{1}{2})$ and thus a quantitative invariant. The interleaving distance to $\elocPersHlg(D^q) \cong \elocPersHlg(\mathbb R^q)$ then gives a quantitative measure of singularity.
\end{remark}
\subsection{Magnifications and $\Phi$-stratifications}
Let us now put our observations on persistent local homology made in the beginning of this section and especially in \cref{rem:analysis_of_loc_hom} into a more abstract framework.
\begin{definition}\label{def:sam_centr}
Denote by $\SamB$ the (symmetric Lawvere) metric space
\[
\SamB:=\{\sampX \mid \sampX \subset \supSp \},
\]
equipped with the following truncated version of the Hausdorff distance: We pull back the metric on $\Sam$ along
\begin{align*}
    \SamB &\to \Sam \\
    \sampB &\mapsto \ballEps[1]{0}\cap \sampB .
\end{align*}
We call $\SamB$ the \define{space of local samples} (of $\supSp$), and denote its metric by $\dmet[\SamB]{-}{-}$.
\end{definition}
\begin{remark}
Note that the way the metric on $\SamB$ is defined, it automatically identifies a local sample with its intersection with a unit ball around the origin. 
Indeed, $\SamB$ is by definition isometric to the space of subspaces of $\ballEps[1]{0} \subset \supSp$. One may as well have used the latter, however, that involves a series of inconvenient truncations, so the above perspective is notationally preferable. In particular, in this context it makes sense to write $V \in \SamB$, for $V \subset \supSp$ a linear subspace.
\end{remark}
Next, we define the magnified spaces which showed up in our analysis of local homology in \cref{rem:analysis_of_loc_hom}.
\begin{definition}\label{def:magnification}
Let $\sampX \in \Sam$, $x \in \supSp$ and $\zoomParam>0$. We denote by 
\[
 \magnification{\sampX}:= \zoomParam{(\sampX -x )} \cap \ballEps[1]{0} \in \SamB,
\]
with $\sampX = \{ y -x \mid y \in \sampX \}$, 
the \define{$\zoomParam$-magnification of $\sampX$ at $x$}.
\end{definition}
Let us assume for a second that $\sampX = \topSp$ and the latter admits a locally conelike stratification (as in \cref{ex:loc_homology}), such that we need not worry about zooming in too far. Then, theoretically speaking, to make sure we identify every locally Euclidean region as such, we want the information obtained to be as local as possible, i.e. we want to consider the case $\zoomParam \to \infty$. Local homology, as described in \cref{rem:analysis_of_loc_hom}, defines a continuous map on $\SamB$. Hence, to understand the behavior of local persistent homology for $\zoomParam \to  \infty$ it suffices to understand the behavior of $\magnification{\topSp}$, for $\zoomParam \to \infty$. The following example illustrates when this limit can be used to determine local singularity.
\begin{example}\label{ex:cross_and_cusp}
Consider the two real algebraic varieties \[
X = \{(x_1,x_2) \in \mathbb{R}^2 \mid x_1^4 - x_1^2 + x_2^2 = 0 \}\]
and \[ Y = \{(x_1,x_2) \in \mathbb{R}^2 \mid ((x_1+0.5)^2 + x_2^2 - 0.25)((x_1-0.5)^2 + (x_2)^2 - 0.25) = 0 \}.\] These varieties are Whitney stratified spaces with the singular set containing only the origin. In \cref{fig:hippo_cross}, we show magnifications of $X$ at the origin $x = (0,0)$, i.e. $\magnification{X}$ for three different $\zeta \in \{1,3,45\}$. We can observe that the homeomorphism type of the magnifications stabilizes as we increase $\zeta$. In the limit the spaces $\magnification{X}$ converge (in Hausdorff distance for $\zoomParam \to \infty$), to a space of the same homeomorphism type. In contrast, $Y$ shows a different convergence behavior. Although the spaces $\magni[x]{Y}$ share the same homeomorphism type with the magnifications of $X$ at the origin, for $\zeta$ large enough, \cref{fig:hippo_tangent} illustrates that the homeomorphism type changes when passing to the limit (see also \cref{fig:tangentcones}).
\begin{figure}
    \centering
    \begin{minipage}{0.32\textwidth}
        \includegraphics[width=1\textwidth]{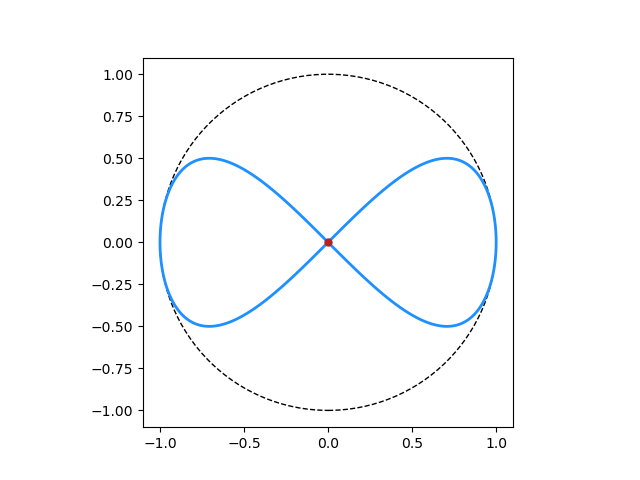}
    \end{minipage}\hfill
    \begin{minipage}{0.32\textwidth}
        \centering
        \includegraphics[width=1\textwidth]{Figures/lem_magni1_circ.png}
    \end{minipage}
    \begin{minipage}{0.32\textwidth}
        \centering
        \includegraphics[width=1\textwidth]{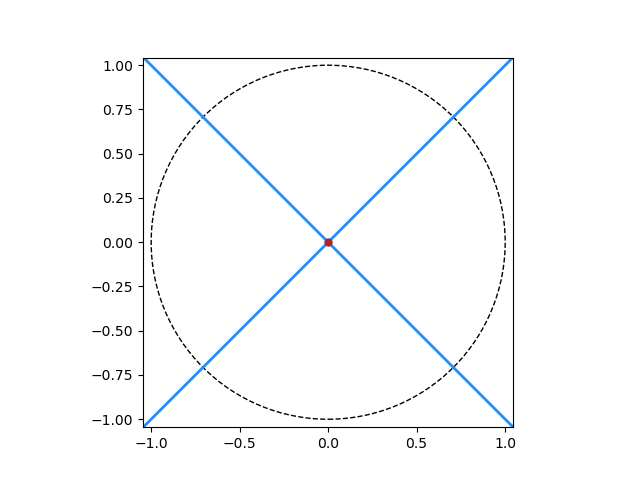}
    \end{minipage}
\caption{Three magnifications of $X$ at the origin}
\label{fig:hippo_cross}
\end{figure}

\begin{figure}
    \centering
    \begin{minipage}{0.32\textwidth}
        \centering
        \includegraphics[width=1\textwidth]{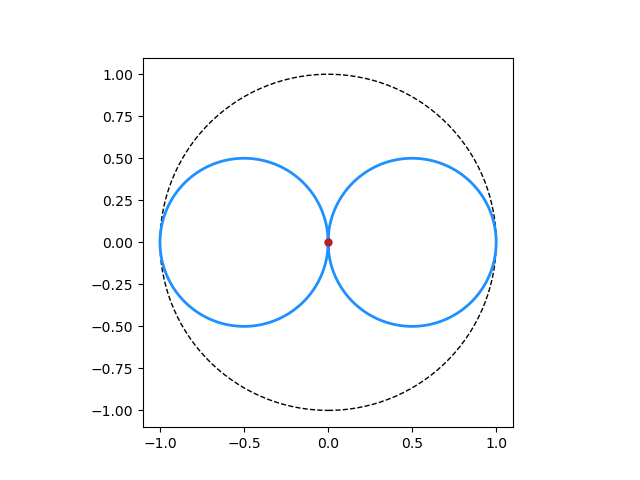}
    \end{minipage}\hfill
    \begin{minipage}{0.32\textwidth}
        \centering
        \includegraphics[width=1\textwidth]{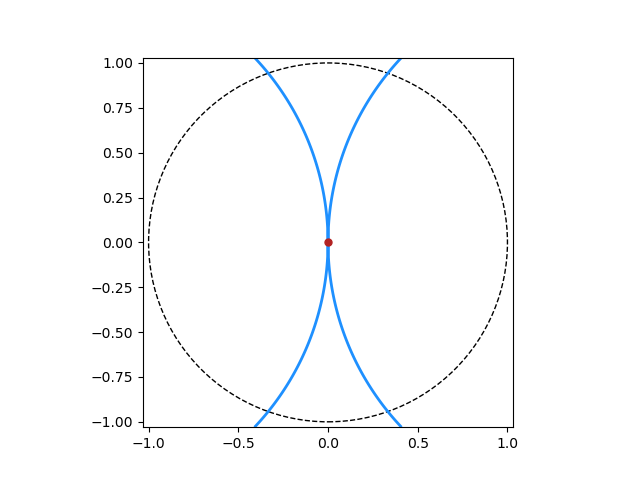}
    \end{minipage}
    \begin{minipage}{0.32\textwidth}
        \centering
        \includegraphics[width=1\textwidth]{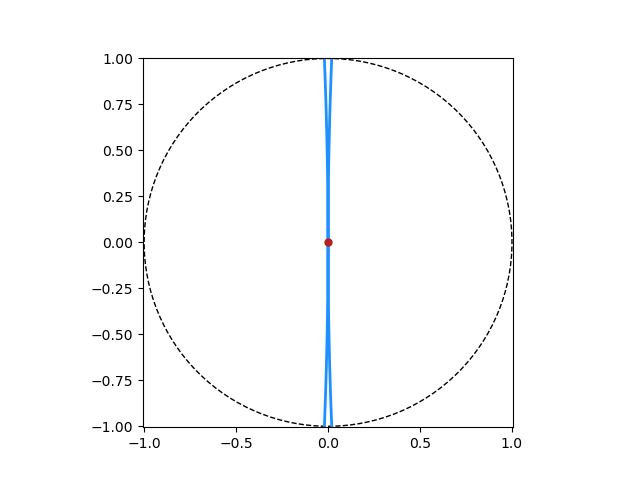}
    \end{minipage}      
\caption{Three magnifications of $Y$ at the origin.} \label{fig:hippo_tangent}
\end{figure}
\end{example}
If $\topSp$ admits a (subanalytic) Whitney stratification, then limit spaces of magnifications (in $\SamB$) exist and are known as the (extrinsic) tangent cones of $\topSp$ at $x$.
For a more detailed investigation of metric tangent cones see \cite{LytchakDiff,bernig2007tangent}. For our purpose, the following definition will suffice.
 \begin{definition}\label{def:tangent_cone}
    Let $\topSp \subset \supSp$. The \define{(extrinsic) tangent cone of $\topSp$ at $x \in \topSp$} is defined as
\[
\tangentCo[x]{\topSp} := \{v \in \supSp \mid \forall \varepsilon > 0 \exists y \in \ballEps{x}\cap \topSp:  v \in \thicken[\varepsilon]{(\mathbb R_{\geq 0} (y-x))}\}.
\]
The extrinsic tangent cones define a map
\begin{align*}
    \freetangentCo{\topSp}: \topSp &\to \SamB \\
    x &\mapsto \tangentCo[x]{\topSp}.
\end{align*}
\end{definition}
\begin{example}\label{ex:taylor}
By Taylor's expansion theorem one has 
\[
\tangentCo{\topSp} =  \tangentCo{U} = \tangentSp[x]{U}
\]
where $U \subset \topSp \subset \supSp$ is a neighborhood of $x$ in $X$ and furthermore $U$ is a smooth submanifold of $\supSp$.
\end{example}
\begin{example}\label{ex:cplx_var_tangent}
For an (affine) complex algebraic variety $\topSp$ the tangent cone at the origin coincides with the algebraic tangent cone, i.e. the
set of common zeroes of all polynomials in the ideal generated by the homogeneous elements of lowest degree of all polynomials that vanish identically on $\topSp$.
\end{example}
It is a classical result (see e.g. \cite{hironaka1969normal}, \cite{bernig2007tangent}) that when $\topSp$ admits a subanalytic Whitney stratification, then
\[
\magnification{\topSp} \xrightarrow{\zeta \to \infty} \tangentCo[x]{\topSp}
\]
in $\SamB$. Since we are mostly interested in the case of what happens when one replaces $\topSp$ by samples and needs uniform versions of this result, we will recover this result as a special case of \cref{cor:magni_conv}. However, it already points at what kind of information one may expect to obtain when one uses local features such as local persistent homology obtained from magnifications to stratify a data set. In the limit $\zoomParam \to \infty$ one can only expect to extract information that is contained in the extrinsic tangent cones. This leads to the following definition. 
\begin{definition}\label{def:tangentially_stratified} Let $\pos= \{p <q \}$.
Let $\whs = (\topSp, \phiStrat) \in \SamP$ be a $q$-dimensional Whitney stratified space. We say that $\whs$ is \define{tangentially stratified} if 
\[ \dmet[\SamB]{\tangentCo[x]{\whs}}{V} > 0, \]
for all $x \in \whs_p$ and for all $V \subset \supSp$ $q$-dimensional linear subspaces of $\supSp$.
\end{definition}
Tangentially stratified spaces are precisely the type of Whitney stratified spaces for which we may expect to recover stratifications by using magnifications with large $\zoomParam > 0$. That this holds true rigorously is essentially the content of \cref{subsec:restrat}.
\begin{example}\label{ex:whitney_not_tang_strat}
Not every Whitney stratified space is tangentially stratified. Consider again $ \topSp[Y] = \{(x_1,x_2) \in \mathbb{R}^2 \mid ((x_1+0.5)^2 + x_2^2 - 0.25)((x_1-0.5)^2 + (x_2)^2 - 0.25) = 0 \}$ from \cref{ex:cross_and_cusp}. In this case, the above condition specifies to $ \dmet[\SamB]{\tangentCo[(0,0)]{\topSp[Y]}}{V} > 0, $
for all $1$-dimensional linear subspaces $V \subset \supSp$. The tangent cone of $Y$ at the origin is a $1$-dimensional linear space given by \[\tangentCo[(0,0)]{\topSp[Y]} = \{(x_1,x_2) \in \mathbb{R}^2 \mid x_1  = 0 \},\] see \cref{fig:tangentcones} on the right, which already serves as a linear subspace $V\subset \mathbb{R}^2$ such that $ \dmet[\SamB]{\tangentCo[(0,0)]{\topSp[Y]}}{V} = 0$. For the space \[\topSp = \{(x_1,x_2) \in \mathbb{R}^2 \mid x_1^4 - x_1^2 + x_2^2 = 0 \} \] on the other hand we find that the tangent cone at the origin is given by \[\tangentCo[(0,0)]{\topSp}\{(x_1,x_2) \in \mathbb{R}^2 \mid (x_1 + x_2)(x_2-x_1) = 0 \},\] see \cref{fig:tangentcones} on the left. Clearly, there is no $1$-dimensional linear subspace $V \subset \mathbb{R}^2$ such that $\dmet[\SamB]{\tangentCo[(0,0)]{\topSp}}{V} = 0$.
\begin{figure}
    \centering
    \begin{minipage}{0.5\textwidth}
        \centering
        \includegraphics[width=1\textwidth]{Figures/lem_tangent.png}
    \end{minipage}\hfill
    \begin{minipage}{0.5\textwidth}
        \centering
        \includegraphics[width=1\textwidth]{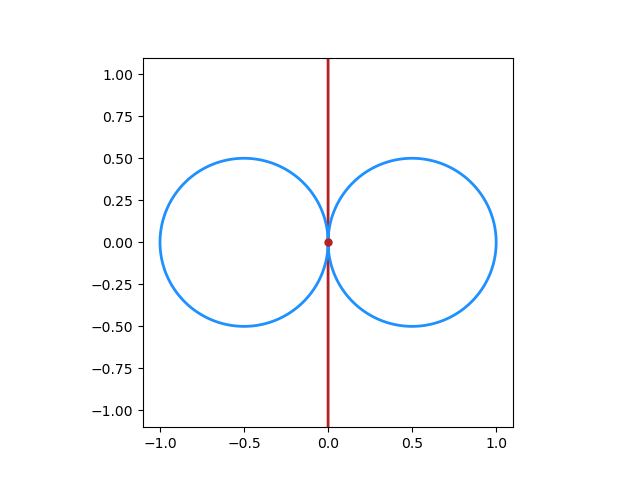}
    \end{minipage}     
\caption{Two curves with their respective tangent cones (red) at their singular stratum} \label{fig:tangentcones}
\end{figure}

\end{example}
In practice, we may want to use other local invariants, such as local persistent homology in \cref{rem:analysis_of_loc_hom}, to identify singular points as in \cref{lem:strat_by_localcohom}. This leads to the following definition. Again, for the remainder of this subsection, let $\pos= \{p <q \}$.
\begin{definition}\label{def:Phi_strat}
Let $\Phi: \SamB \to [0,1]$ be a continuous function, such that $\Phi(V,0)=1$, whenever $V$ is a $q$-dimensional linear subspace of $\supSp$. 
Let $\whs \in \SamP$ be $q$-dimensional Whitney stratified space. We say that $\whs$ is \define{(tangentially) $\Phi$-stratified} if \[ \Phi(\tangentCo[x]{\whs})   < 1, \]
for all $x \in \whs_p$.
\end{definition}
Let us begin with some examples of functions $\Phi: \SamB \to [0,1]$ which may be used to detect singularities.
\begin{example}\label{ex:phi-strat_hausdorff}
Consider the continuous map
\begin{align*}
    \Phi_q \pp \SamB &\to [0,1] \\
    \sampB &\mapsto 1-\inf\{ \dmet[\SamB]{\sampB}{V} \},
\end{align*}
where $V$ ranges over the $q$-dimensional linear subspaces of $\supSp$.
A $q$-dimensional Whitney stratified space $\whs \in \SamP$ is tangentially stratified if and only if it is $\Phi_q$- stratified. 
$\Phi_q$ is thus universal in the sense that if $\whs$ is $\Phi$-stratified for some $\Phi$ as in \cref{def:Phi_strat}, then it is $\Phi_q$-stratified. 
\end{example}
\begin{example}\label{ex:phi-strat_pers_cohom}
Persistent local homology can be used as a function $\Phi$, as was done similarly in \cite{bendich2012local, stolz2020geometric, nanda2020local, mileyko2021another}. Precisely, we use $\elocPersHlg[i]: \SamB \to \Vect^{[0, \frac{1}{2})} $ as defined in \cref{rem:analysis_of_loc_hom}.
Consider a linear embedding $\mathbb R^q \subset \supSp$, allowing us to write $\mathbb R^q \in \SamB$ and set
\begin{align*}
    \Phi\pp \SamB &\to [0,1] \\
    \sampB &\mapsto 1- 2\max_{i \leq q}\dmet[\In]{\elocPersHlg[i](\sampB)}{\elocPersHlg[i](\mathbb R^q)}.
\end{align*}
Indeed, as no bar in $\elocPersHlg[i]$ can be longer than $\frac{1}{2}$, this function is well defined.
Let $\whs \in \SamP$ be a (definable) $q$-dimensional Whitney stratified such that for all $x \in \whs_p$ we have $\elocPersHlg(\tangentCo[x]{\whs}) \neq \elocPersHlg(\mathbb R^q)$, i.e. the two persistence modules are not isomorphic. Then, $\whs$ is a $\Phi$-stratified space. 
\end{example}
One of the advantages of allowing for different $\Phi$ than just the universal one is that in practice one may use a series of rougher invariants which may be easier to compute.
\begin{example}\label{cref:phi_strat_one_sided}
Instead of using 
\[
\sampB \mapsto 1-\inf\{ \dmet{\sampB}{V} \},
\]
as in \cref{ex:phi-strat_hausdorff},
one can only use half of the numbers used to compute Hausdorff distance, i.e. only consider 
\[
\sampB \mapsto \inf\{\varepsilon \mid   \ballEps[1]{0} \cap \sampB \subset  \ballEps[1]{0} \cap \thicken[\varepsilon]{V}\}.
\]
Note that this definition of $\Phi$ will identify points in the boundary of a smooth manifold as regular. While this decreases the class of $\Phi$-stratified spaces, this $\Phi$ can generally be easier to compute when using optimization techniques to find an optimal $V$. 
Similarly, instead of computing $\elocPersHlg(\sampB)$ as in \cref{ex:phi-strat_pers_cohom} one may use a Vietoris-Rips version of the latter as described in \cite{skraba2014approximating} or only consider certain dimensions. 
\end{example}
\subsection{\LW stratified spaces}
The previous section illustrates that in order to reconstruct stratifications from sample data we have to obtain a better understanding of the convergence properties of the magnification spaces to tangent cones. Such results are the content of \cref{subsec:conv_tang}. Before we investigate these, we need a series of results on Whitney stratified spaces which are definable with respect to particularly well behaved o-minimal structures.
Our methods heavily rely on the work of \cite{hironaka1969normal} and \cite{bernig2007tangent}. However, note that the results there are local, while ours are more global in nature, and that we consider the case of magnifications of samples as well. We use the following result due to Hironaka, which gives us additional control over integral curves.
\begin{lemma}\label{lem:approximation_lemma}
Let $\whs \to \pos$ be a Whitney stratified space, $p \in \pos$ and $y \in \whs_p$.
Suppose there exists $d_0 > 0$ such 
that there exists $\alpha>0$, with
\[\funpqbeta(x,y) \leq \norm{y - x}^{\alpha}
\]
for all $x \in \whs_{\geq p} \cap \ballEps[d_0]{y}$.
Then, for any $C > 0$, there exists $d >0$ only depending on $d_0,\alpha$ (and the dimension of $\whs$), such that 
for any integral curve $\phi: [0,d] \to \whs$ starting in $y$ and ending in $\ballEps[d]{y}$ the inequality
\[\norm{\frac{1}{t}(\phi(t) - \phi(0)) - \frac{1}{s}(\phi(s) - \phi(0))} \leq C \vert t-s\vert ^{\alpha}
\]
holds
for all $t,s \in [0,d]$. In particular, all integral curves starting at $y$ are differentiable at $0$. 
\end{lemma}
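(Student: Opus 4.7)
Work at the level of the unit vector $u(t) := (\phi(t) - y)/t$, defined on $(0,d]$ because $\norm{\phi(t)-y} = t$ by \cref{prop:properties_of_integral_curves}. The stated inequality is exactly a Hölder-$\alpha$ estimate on $u$; once established, differentiability of $\phi$ at $0$ is immediate by letting $s \to 0^{+}$.

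First I would translate the ODE of \cref{prop:existent_of_integral_curves} into a pointwise bound on $\norm{u'(t)}$. Setting $v(t) := \phi(t) - y$ and $T_t := \tangentSp[\phi(t)]{\whs_{\phiStrat(\phi(t))}}$, the one-dimensionality of the secant $\secant{\phi(t)}{y} = \mathbb{R}\cdot v(t)$ together with the definition of $\funpqbeta$ forces $\norm{\pi_{T_t^{\perp}}(v)} = \beta(\phi(t),y)\cdot t$ and hence $\norm{\pi_{T_t}(v)} = t\sqrt{1-\beta(\phi(t),y)^{2}}$. Substituting these identities into the ODE and differentiating $u$ yields the identity
\[
u'(t) \;=\; \frac{1}{t^{2}(1-\beta^{2})}\Bigl[\,\beta^{2}\,v(t) - \pi_{T_t^{\perp}}(v(t))\,\Bigr],
\]
where $\beta := \funpqbeta(\phi(t),y)$, from which the triangle inequality produces the clean estimate
\[
\norm{u'(t)} \;\leq\; \frac{\beta(\phi(t),y)}{t\bigl(1-\beta(\phi(t),y)\bigr)}.
\]

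Next I would plug in the hypothesis, which along the curve reads $\beta(\phi(t),y) \leq t^{\alpha}$ (we may assume $\alpha \leq 1$, the essential case). Shrinking $d \leq d_{0}$ so that $d^{\alpha}$ is small produces $\norm{u'(t)} \leq c(d)\,t^{\alpha-1}$ with $c(d) \to 1$ as $d \to 0$. Integrating between $s$ and $t$ and applying the elementary inequality $t^{\alpha} - s^{\alpha} \leq (t-s)^{\alpha}$, valid for $0 \leq s \leq t$ and $0 < \alpha \leq 1$, gives
\[
\norm{u(t) - u(s)} \;\leq\; \frac{c(d)}{\alpha}\,(t^{\alpha}-s^{\alpha}) \;\leq\; \frac{c(d)}{\alpha}\,|t-s|^{\alpha}.
\]
The prefactor is then brought below the prescribed $C$ by shrinking $d$ further in terms of $C$, $\alpha$, and $d_{0}$, with $\dim \whs$ entering through a finite-cover argument that makes this shrinkage uniform over the various strata of $\whs_{\geq p}$ traversed by $\phi$. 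Passing to the limit $s \to 0^{+}$ in the Hölder estimate shows that $u$ extends continuously to $0$, and this limit value is precisely the derivative $\phi'(0)$.

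The main obstacle is the algebraic manipulation in the first step, where the ODE of \cref{prop:existent_of_integral_curves} must be rewritten into the bound $\norm{u'(t)} \leq \beta/[t(1-\beta)]$; this is the step in which the geometric content of Whitney's condition (b) — encoded by $\funpqbeta$ — actually enters, via the simultaneous interplay between $v$, its tangential projection, and the $\beta$-controlled perpendicular component. Everything afterwards is a routine Hölder interpolation, with the only subtlety being that the $\alpha > 1$ regime is essentially degenerate (the estimate then collapses $u$ to a constant, i.e. $\phi$ traces a straight line) and can be handled by a further shrinkage of $d$.
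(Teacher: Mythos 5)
The paper offers no proof of this lemma at all --- it defers entirely to Hironaka --- so there is nothing to compare step by step; your argument is in substance a reconstruction of Hironaka's proof, and its core is correct. Since $\norm{\phi(t)-y}=t$ (\cref{prop:properties_of_integral_curves}) and the secant is the line through $v(t)=\phi(t)-y$, the definition of $\funpqbeta$ does give $\norm{\pi_{T_t^{\perp}}(v)}=\beta t$ and $\norm{\pi_{T_t}(v)}=t\sqrt{1-\beta^2}$ with $\beta:=\funpqbeta(\phi(t),y)$; your formula for $u'(t)$ is an exact consequence of the ODE in \cref{prop:existent_of_integral_curves}, and orthogonality of the two components even yields the exact value $\norm{u'(t)}=\beta/(t\sqrt{1-\beta^2})$, of which your $\beta/(t(1-\beta))$ is a valid upper bound. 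Integrating (legitimate, as $\phi$ is Lipschitz and a.e.\ differentiable) and using $t^{\alpha}-s^{\alpha}\le(t-s)^{\alpha}$ for $0<\alpha\le 1$ gives the Hölder estimate with constant $c(d)/\alpha$.

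The one genuine gap is the final claim that this prefactor ``is then brought below the prescribed $C$ by shrinking $d$ further.'' It cannot be: your bound reads $\norm{u(t)-u(s)}\le\frac{c(d)}{\alpha}(t^{\alpha}-s^{\alpha})$ with $c(d)\to 1$, and at $s=0$ the right-hand side is exactly $\frac{c(d)}{\alpha}\,t^{\alpha}$; both sides scale as $t^{\alpha}$, so restricting to a smaller interval leaves the constant pinned near $1/\alpha$. In fact the ``for every $C>0$'' clause is not attainable from the hypothesis at all: for the cusp $\{x_2=x_1^{1+\alpha}/(1+\alpha),\ x_1\ge 0\}$ with bottom stratum the origin one checks $\funpqbeta(x,0)\le\norm{x}^{\alpha}$, yet $u(t)\approx(1,\,t^{\alpha}/(1+\alpha))$ has Hölder-$\alpha$ constant at $0$ bounded below by about $1/(1+\alpha)$ on every $[0,d]$. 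So the defect sits in the lemma's literal formulation (which even asserts $d$ is independent of $C$); the version actually invoked downstream --- \cref{prop:char_tangent_cone} and \cref{prop:loc_sampl_convergence_of_magnifications} only choose \emph{some} $C,\alpha,d$ --- is exactly what your argument establishes, with $C$ of order $1/\alpha$. Two smaller points: your aside on $\alpha>1$ is off (a Hölder exponent $>1$ would force $u$ to be constant, and the cusp with exponent $1+\alpha$ shows the conclusion then genuinely fails; one should simply replace $\alpha$ by $\min\{\alpha,1\}$ throughout), and the finite-cover/dimension remark is not actually needed, since the pointwise bound on $\norm{u'(t)}$ holds on every stratum the curve visits.
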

\begin{proof}
A complete proof of this statement can be found in \cite{hironaka1969normal}.
\end{proof}
Spaces fulfilling a local version of the above condition were investigated in \cite{hironaka1969normal}. It was called the strict Whitney condition there.
\begin{definition}\label{def:Loj_Whitney}
A Whitney stratified space fulfilling the requirements of \cref{lem:approximation_lemma} on any compactum $K$ contained in some pure stratum $\whs_p$ of $\whs$, is called a \define{Lojasiewicz-Whitney stratified space}. That is, $\whs$ is called \LW stratified, if the following condition holds. 
Let $K \subset \whs_p$ be a compact, definable subset of some stratum $\whs_p$ of $\whs$.
Then there exist $\alpha >0, d_0>0$ such that
\[
\funpqbeta(x,y) \leq \norm{y - x}^{\alpha},
\]
for all $y \in K$ and $x \in \whs\cap \ballEps[d_0]{y}$. 
\end{definition}
In other words, \LW stratified spaces are Whitney stratified spaces for which the speed at which secant lines diverge from the tangent spaces is bounded by some root.
It turns out that most of the definably stratified spaces one is interested in i.e. compact subanalytic or semialgebraic are \LW stratified (\cref{prop:beta_globally_bounded}). 
\begin{recollection}
    Recall that an o-minimal structure is called polynomially bounded, if for all $f\pp \mathbb R \to \mathbb R$ definable with respect to the structure, there exists an $n \in \mathbb N$
    such that
    \[
    \vert f(t)\vert  \leq \vert t\vert ^n,
    \]
    for $t$ sufficiently large. Polynomially bounded structures include the structure of semialgebraic sets and finitely subanalytic sets (see \cite{van1986generalization} and \cite{miller1994expansions}). In particular, any compact subanalytically definable stratified space is definable with respect to a polynomially bounded o-minimal structure.
\end{recollection}
A proof of the following statement can be found in \cref{proof:beta_globally_bounded}. 

\begin{proposition}\label{prop:beta_globally_bounded}
Let $\whs$ be a Whitney stratified space which is definable with respect to some polynomially bounded o-minimal structure. Then, $\whs$ is \LW stratified.
\end{proposition}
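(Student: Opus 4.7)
The plan is to reduce, for each compact definable $K \subset \whs_p$, to obtaining a power bound on the auxiliary function
\[
h_q(r) := \sup\{\funpqbeta(x,y) : y \in K,\, x \in \whs_q,\, \norm{x-y} \leq r\}
\]
separately for each stratum $\whs_q$ with $q \geq p$, and then combining them. By the local finiteness clause of \cref{recol:Whitney_strat} together with compactness of $K$, only finitely many strata $\whs_{q_1}, \dots, \whs_{q_k}$ meet a bounded neighborhood of $K$, so it will suffice to bound each $h_{q_i}$ individually and then pass to worst-case parameters $(\alpha, d_0)$. The case $q = p$ is immediate from smoothness of $\whs_p$, where Taylor expansion supplies $\funpqbeta(x,y) = O(\norm{x-y})$, beating any exponent $\alpha < 1$ after shrinking $d_0$.

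For $q > p$, I would first argue that $h_q$ is a bounded definable function $(0,\infty) \to [0,C]$: definability follows from definability of the data and the fact that the sup of a definable bounded family over a definable parameter set is again definable in an o-minimal structure, while boundedness is immediate since $\funpqbeta$ measures a distance in the Grassmannian. The geometric heart of the proof is to show $h_q(r) \to 0$ as $r \to 0^+$. If this failed, one could extract sequences $y_n \in K$, $x_n \in \whs_q$ with $\norm{x_n - y_n} \to 0$ but $\funpqbeta(x_n, y_n) \geq \varepsilon > 0$; compactness of $K$ yields, along a subsequence, $y_n \to y_\infty \in K \subset \whs_p$, and hence $x_n \to y_\infty$ as well. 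The continuity of $\funpqbeta$ on $(\whs_q \times \whs_p) \cup \Delta_{\whs_p}$ asserted by \cref{prop:equ_char_Whitney} then contradicts the lower bound, since $\funpqbeta(y_\infty,y_\infty) = 0$.

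Finally, the polynomial boundedness hypothesis enters through Miller's growth dichotomy: any definable real-valued function on a punctured neighborhood of $0$ in a polynomially bounded o-minimal structure is either ultimately zero or asymptotic to $c\, r^{\alpha}$ for some $c \neq 0$ and some $\alpha$ in the field of exponents. Applied to each $h_{q_i}$, this produces constants $c_i, \alpha_i, \delta_i > 0$ with $h_{q_i}(r) \leq 2c_i\, r^{\alpha_i}$ on $(0,\delta_i]$ (or $h_{q_i}$ vanishing near $0$, a trivially better bound). Setting $\alpha := \tfrac{1}{2}\min_i \alpha_i$ and shrinking $d_0 \leq \min_i \delta_i$ so that $2c_i\, d_0^{\alpha_i - \alpha} \leq 1$ for each $i$ absorbs the remaining constants and delivers the desired inequality $\funpqbeta(x,y) \leq \norm{x-y}^{\alpha}$ on $\{y \in K,\, x \in \whs \cap \ballEps[d_0]{y}\}$.

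The step I expect to be the main obstacle is the uniform limit $h_q(r) \to 0$ in the middle paragraph: it is what upgrades the pointwise continuity of \cref{prop:equ_char_Whitney} into a globally usable estimate, and it is the only place where compactness of $K$ is genuinely exploited. The polynomial boundedness hypothesis is essential at the very end, since in a general o-minimal structure the decay of a definable function near zero can be slower than any power (consider functions like $1/\log(1/r)$ in an exponentially enriched structure), and no uniform exponent $\alpha$ would then dominate it.
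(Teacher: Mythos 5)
Your proof is correct, but it takes a genuinely different route from the one in the paper, and the comparison is instructive. The paper keeps the base point as a parameter: it forms the two-variable function $\hat\beta(y,t)=\sup_{\norm{x-y}=t,\,x\in\whs}\funpqbeta(x,y)$ on $\whs_p\times\mathbb R_{\geq 0}$, proves that this supremum is jointly continuous near $\whs_p\times\{0\}$ (which requires Thom's first isotopy lemma to see that the spheres $\partial\ballEps[d]{y}\cap\whs$ vary continuously, plus a separate lemma on continuity of fiberwise suprema of upper semi-continuous functions), and then invokes the parametrized Lojasiewicz inequality for polynomially bounded o-minimal structures, extracting the exponent from the definition of polynomial boundedness applied to the resulting comparison function $\hat\phi$. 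You instead absorb the $y$-dependence into the supremum, so that $h_q$ is a one-variable definable function; the joint-continuity step then degenerates to the single limit $h_q(r)\to 0$, which you obtain by a direct sequential-compactness argument from the (upper semi-)continuity of $\funpqbeta$ on $(\whs_q\times\whs_p)\cup\Delta_{\whs_p}$ established in \cref{prop:equ_char_Whitney}, and the final power bound comes from Miller's one-variable growth dichotomy rather than the parametrized Lojasiewicz inequality. Your version is the more elementary of the two: it avoids Thom's isotopy lemma entirely and needs only the one-variable dichotomy, at the cost of discarding any information about how the exponent depends on $y$ (which the statement does not require anyway). Two small points to make explicit: the supremum defining $h_q$ ranges over a definable set only because $K$ is assumed definable in \cref{def:Loj_Whitney}, and the restriction to strata with $q\geq p$ is justified because Alexandrov-continuity of the stratification forces any stratum accumulating on $\whs_p$ to satisfy $q\geq p$, so that after shrinking $d_0$ no other strata meet $\bigcup_{y\in K}\ballEps[d_0]{y}$.
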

\begin{remark}\label{rem:results_hold_for_gen_P}
    In this section and in the \cref{subsec:conv_point,subsec:conv_tang}, there is no need to restrict to the two strata case. The results hold for general $\pos$.
\end{remark}
As an almost immediate consequence of \cref{lem:approximation_lemma,prop:beta_globally_bounded}, we obtain:
\begin{proposition}\label{prop:char_tangent_cone}
Let $\whs$ be a \LW stratified space. Then, for any $x \in \whs$, every integral curve starting at $x$ is differentiable in $0$. Furthermore, we have
\[
 \tangentCo[x]{\whs} \cap \partial \ballEps[1]{x}=  \overline{\{ \phi'(0) \mid \phi \textnormal{ is an integral curve starting at } x \}}.
\]
Hence, 
\[
\tangentCo[x]{\whs} = \overline{\{\alpha \phi'(0) \mid \phi \textnormal{ is an integral curve starting at } x, \alpha \geq 0\}}.
\]
\end{proposition}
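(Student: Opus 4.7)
The plan is to reduce everything to the approximation inequality of \cref{lem:approximation_lemma}, applied with $y = x$, after verifying that its hypothesis is in force thanks to the \LW assumption (via \cref{prop:beta_globally_bounded} if one wants a cleaner bound, but directly from \cref{def:Loj_Whitney} applied to the compact set $\{x\} \subset \whs_{p_0}$, where $p_0$ is the stratum containing $x$). This gives constants $d_0, \alpha > 0$ with $\funpqbeta(-,x) \leq \|{-} - x\|^{\alpha}$ on $\whs_{\geq p_0} \cap \ballEps[d_0]{x}$, and for each $C>1$ a $d = d(d_0,\alpha,C)>0$ such that for any integral curve $\phi \colon [0,d] \to \whs$ starting at $x$ one has
\[
\Bigl\|\tfrac{1}{t}(\phi(t)-x) - \tfrac{1}{s}(\phi(s)-x)\Bigr\| \leq C|t-s|^{\alpha}, \qquad t,s \in (0,d].
\]

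First, differentiability at $0$: the map $t\mapsto (\phi(t)-x)/t$ is uniformly Hölder on $(0,d]$ by the display above, hence Cauchy as $t\to 0^{+}$, so the limit $\phi'(0)$ exists. Letting $s\to 0$ yields the pointwise error estimate
\[
\bigl\|\tfrac{1}{t}(\phi(t)-x) - \phi'(0)\bigr\| \leq C t^{\alpha}, \qquad t\in(0,d].
\]
For curves not initially defined on $[0,d]$, restriction (and a priori uniqueness, \cref{prop:existent_of_integral_curves}) allow us to apply the same bound on whatever subinterval exists.

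Next, the inclusion $\supseteq$ in the spherical equality: for an integral curve $\phi$ starting at $x$ the first statement of \cref{prop:properties_of_integral_curves} gives $\|\phi(t)-x\|=t$, so $(\phi(t)-x)/t$ is a unit vector and $\phi'(0)$ has norm $1$. Since $\phi(t)\in\whs$ and $\phi(t)\to x$, the unit vector $(\phi(t)-x)/\|\phi(t)-x\|$ certifies, for any $\varepsilon>0$ and $t$ small, that $\phi'(0)\in\thicken[\varepsilon]{\mathbb{R}_{\geq 0}(\phi(t)-x)}$, placing $\phi'(0)\in\tangentCo[x]{\whs}$. For the converse inclusion $\subseteq$, pick a unit $v\in\tangentCo[x]{\whs}$; unfolding the definition produces a sequence $y_n\in\whs$, $y_n\neq x$, $y_n\to x$ with $(y_n-x)/\|y_n-x\|\to v$. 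Continuity of the stratification in the Alexandrov topology (the open stratum condition $\supLevel{\whs}{p_0}$) forces $y_n\in\whs_{\geq p_0}$ for $n$ large, so \cref{prop:existent_of_integral_curves} delivers an integral curve $\phi_n\colon [0,\|y_n-x\|]\to\whs$ from $x$ to $y_n$. Inserting $t = \|y_n-x\|$ into the pointwise error estimate gives
\[
\Bigl\|\tfrac{y_n-x}{\|y_n-x\|} - \phi_n'(0)\Bigr\| \leq C\|y_n-x\|^{\alpha} \xrightarrow{n\to\infty} 0,
\]
so $\phi_n'(0)\to v$, exhibiting $v$ as a limit of derivatives of integral curves starting at $x$.

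Finally, the displayed ``hence'' equality follows by scaling: $\tangentCo[x]{\whs}$ is closed under multiplication by nonnegative scalars (immediate from the definition, by replacing $\varepsilon$ with $\varepsilon/\alpha$) and is topologically closed (by a $\varepsilon/2$ argument on the definition), so it equals the closure of $\mathbb{R}_{\geq 0}$-scalings of its unit sphere slice, which by the previous step is the claimed set. The only real subtlety I anticipate is the uniformity of the estimate when the intervals $[0,\|y_n-x\|]$ shrink to $0$: here one has to confirm that Hironaka's bound holds on each such subinterval with the same constants $C,\alpha,d$; this is built into the form of the lemma (the constants depend on $d_0$ and $\alpha$, not on the curve), so no further work is needed beyond making sure $\|y_n - x\|< d$ eventually, which is automatic.
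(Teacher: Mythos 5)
Your proposal is correct and follows essentially the same route as the paper: differentiability at $0$ and the pointwise error bound come from \cref{lem:approximation_lemma}, the inclusion $\supseteq$ is immediate from the definition of the derivative together with closedness of the tangent cone, and the inclusion $\subseteq$ is obtained by unfolding the definition of $\tangentCo[x]{\whs}$, running integral curves to nearby points $y_n$, and using the H\"older estimate to conclude $\phi_n'(0)\to v$. The scaling argument for the final displayed equality is also the (implicit) argument in the paper.
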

\begin{proof}
First, note that $\tangentCo[x]{\whs}$ is closed by definition.
The containment of the right hand side in the left hand side is immediate by definition of the derivative (compare to \cref{prop:existent_of_integral_curves}). For the converse inclusion, let $v \in \tangentCo[x]{\whs} \cap \partial\ballEps[1]{x}$. For $\varepsilon >0$ small enough, we have $y \in \whs_{\geq p} \cap \ballEps[\varepsilon]{x}$, with $p = s(x)$, such that
\[
    \norm{v - \lambda(y-x)} < \varepsilon,
\]
for some $\lambda \geq 0$.
In particular, we also obtain
\[
 \big \vert 1-\lambda \norm{y-x} \big\vert  < \varepsilon.
\]
Now, $t= \norm{y-x}$ and let $\phi: [0,d] \to \whs$ be the integral curve starting at $x$ and passing through $y$.
We then have
\begin{align*}    \norm{v - \phi'(0)} 
    & \leq \norm{v - \lambda(y-x)} + \norm{\lambda(y-x) - \frac{y-x}{\norm{y-x}} } + \norm{\frac{y-x}{\norm{y-x}} - \phi'(0)}\\
    &= \norm{v - \lambda(y-x)} + \big \vert 1-\lambda\norm{y-x} \big\vert  + \norm{\frac{\phi(t)-x}{t} - \phi'(0)} \\
    &\leq \varepsilon + \varepsilon + C\varepsilon^{\alpha},
\end{align*}
for some $C, \alpha>0$ independent of the choices above.
In particular, we can choose $\phi$ such that $\phi'(0)$ is arbitrarily close to $v$.
\end{proof}
We can now obtain the following key technical result, to investigate the convergence behavior of magnifications of samples.
\begin{proposition}\label{prop:loc_sampl_convergence_of_magnifications}
Let $\whs$ be a \LW stratified space over $\pos$, with underlying space $X\subset \supSp$. Let $p \in \pos$ and $K \subset \whs_p$ be a compact subset.
Then, there exist $d, C, \alpha >0$ such that the following holds. \\
For all $\zoomParam$ such that $\frac{1}{\zoomParam} \in [0,d]$ there exists $\varepsilon_0 >0$, such that 
\[
\dmet[\SamB]{\tangentCo[x]{\whs}}{\magnification[w]{\sampX}} \leq C ( \zoomParam^{-\alpha} + \zoomParam \varepsilon), 
\]
for $\sampX \in \Sam$ with $\dmet[\HD]{\sampX}{X} = \varepsilon \leq \varepsilon_0$, $w \in \supSp$ and $x \in K$ with $\vert x-w\vert  \leq \varepsilon$.
\end{proposition}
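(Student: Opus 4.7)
The plan is to bound the truncated Hausdorff distance $\dmet[\SamB]{\tangentCo[x]{\whs}}{\magnification[w]{\sampX}}$ in both directions simultaneously, using \cref{prop:char_tangent_cone} to describe the tangent cone via integral curves starting at $x$ and \cref{lem:approximation_lemma} to quantify how well such an integral curve is approximated by its tangent line at the start. Before the main estimates, I extract uniform constants on $K$: the \LW hypothesis yields $\alpha, d_0 > 0$ with $\funpqbeta(z,y) \leq \norm{z-y}^{\alpha}$ for $y \in K$ and $z \in \whs \cap \ballEps[d_0]{y}$; feeding these (with any fixed $C_0$) into \cref{lem:approximation_lemma} gives a uniform $d_1 > 0$ controlling the integral-curve approximation for curves of length up to $d_1$ starting in $K$; the frontier and local finiteness conditions combined with compactness of $K$ produce a uniform $d_2 > 0$ with $\whs \cap \ballEps[d_2]{x} \subset \whs_{\geq p}$ for all $x \in K$. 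I then take $d := \min(d_1,d_2)/4$ and, for each $\zeta$ with $1/\zeta \leq d$, allow $\varepsilon_0 := 1/(4\zeta)$, so that all subsequent constructions take place in sufficiently small, well-behaved neighborhoods.

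For the inclusion $\tangentCo[x]{\whs} \cap \ballEps[1]{0} \subset \thicken[\delta]{\magnification[w]{\sampX}}$, let $v \in \tangentCo[x]{\whs} \cap \ballEps[1]{0}$. By \cref{prop:char_tangent_cone} and continuity of the distance in $v$, it suffices to treat $v = \lambda \phi'(0)$ for some $\lambda \in [0,1]$ and some integral curve $\phi$ of $\whs$ starting at $x$ (so $\norm{\phi'(0)} = 1$). I set $t := \min(\lambda, 1 - 2\zeta\varepsilon)/\zeta \in [0,1/\zeta] \subset [0,d]$ and pick $y \in \sampX$ with $\norm{y - \phi(t)} \leq \varepsilon$, using $\dmet[\HD]{\sampX}{X} \leq \varepsilon$; the proposed approximant is $a := \zeta(y-w)$. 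Combining the bound $\norm{\zeta(\phi(t) - x) - \zeta t \phi'(0)} \leq C_0 \zeta t^{1+\alpha} \leq C_0 \zeta^{-\alpha}$ (which is \cref{lem:approximation_lemma} in the limit $s \to 0$, multiplied by $t$) with the elementary estimates $\norm{w - x} \leq \varepsilon$, $\norm{y - \phi(t)} \leq \varepsilon$ and $\norm{\phi(t) - x} = t$ yields both $\norm{a} \leq 1$, so that $a \in \magnification[w]{\sampX}$, and $\norm{a - v} \leq C_1(\zeta^{-\alpha} + \zeta\varepsilon)$ for a uniform $C_1$. The capping $t \leq (1 - 2\zeta\varepsilon)/\zeta$ is precisely what keeps $a$ inside the unit ball, at the cost of replacing $\lambda \phi'(0)$ by $\zeta t \phi'(0)$, an error of order $\zeta\varepsilon$ that the final estimate absorbs.

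For the reverse inclusion, let $a = \zeta(y - w) \in \magnification[w]{\sampX}$, so $\norm{y - w} \leq 1/\zeta$. I choose $z \in X$ with $\norm{z - y} \leq \varepsilon$; by the choice of $d$ and $\varepsilon_0$ the point $z$ lies in $\whs_{\geq p} \cap \ballEps[d_1]{x}$, so \cref{prop:existent_of_integral_curves} supplies an integral curve $\phi$ from $x$ to $z$ with $\phi(t) = z$ for $t := \norm{z - x}$. Setting $v := \zeta t \phi'(0)$ places $v$ in $\tangentCo[x]{\whs}$ by \cref{prop:char_tangent_cone} (tangent cones being closed under nonnegative scaling), with $\norm{v} \leq \zeta(1/\zeta + 2\varepsilon) \leq 1 + 2\zeta\varepsilon$; the radial truncation $v' := v \cdot \min(1, 1/\norm{v})$ remains in the cone, lies in $\ballEps[1]{0}$, and satisfies $\norm{v - v'} \leq 2\zeta\varepsilon$. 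The triangle inequality together with \cref{lem:approximation_lemma} bounds $\norm{a - v} \leq 2\zeta\varepsilon + C_0 \zeta t^{1+\alpha}$; since $t \leq 1/\zeta + 2\varepsilon$ is of order $1/\zeta$, the second term is of order $\zeta^{-\alpha}$, and combining with $\norm{v - v'} \leq 2\zeta\varepsilon$ delivers $\norm{a - v'} \leq C_2(\zeta^{-\alpha} + \zeta\varepsilon)$ uniformly.

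Taking the maximum of the two constants gives the claimed inequality. The main obstacle, in both directions, is the truncation to $\ballEps[1]{0}$ built into the metric on $\SamB$: the natural candidate approximant can exceed unit norm by an amount of order $\zeta\varepsilon$, forcing the parameter-shrinking step in direction one and the radial truncation in direction two, and it is precisely this phenomenon that couples the two error scales $\zeta^{-\alpha}$ (stratum curvature, coming from \cref{lem:approximation_lemma}) and $\zeta\varepsilon$ (sample noise amplified by the magnification factor $\zeta$). A secondary bookkeeping point is ensuring that the constants $\alpha, d_0, d_1, d_2$ are uniform over $x \in K$, which follows from compactness of $K$, the \LW property, and the local finiteness and frontier conditions of $\whs$.
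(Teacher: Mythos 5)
Your proof is correct and follows essentially the same route as the paper's: both directions of the truncated Hausdorff bound are established via integral curves (\cref{prop:existent_of_integral_curves}, \cref{prop:char_tangent_cone}) together with Hironaka's approximation estimate (\cref{lem:approximation_lemma}), and the unit-ball truncation is handled by the same rescaling device (the paper shrinks $c$ to $\tilde c=\frac{r-2\varepsilon}{r}c$ resp.\ rescales by $\frac{(r-\varepsilon)\vert y\vert}{r+2\varepsilon}$, which is equivalent to your parameter cap and radial truncation). The only substantive difference is that you are more explicit about the uniformity of the constants over $K$ and about why nearby points of $X$ lie in $\whs_{\geq p}$, points the paper's proof leaves implicit.
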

\begin{proof}
Denote $r := \frac{1}{\zoomParam}$.
We work with the non-normalized spaces instead, that is instead of working in the unit ball of $\magnification{\sampX}$, we work in the ball of radius $r$ in $\sampX$. Furthermore, without loss of generality let $x = 0$.
Again, choose $d, C', \alpha$ as in \cref{lem:approximation_lemma}, possibly slightly decreasing $d$, such that the requirements on $r$ still hold for $r + 2\varepsilon$.
Let $c \in \tangentCo[x]{\whs}$ with $\vert c\vert  \leq r$. Let $\tilde c := \frac{r -2\varepsilon}{r} c$. 
We have \[
\vert c - \tilde{c}\vert  \leq 2 \varepsilon .
\]
Next, using \cref{prop:char_tangent_cone}, consider the integral curve starting in $0$ with initial direction $\frac{c}{\vert c\vert }$, $\phi: [0,d] \to \whs$ (or, by passing to the limit if necessary a curve with initial direction arbitrarily close to $\frac{c}{\norm{c}}$). We then have
\[
\norm{ \tilde{c} - \phi( \norm{ \tilde c })}  \leq C' r^{\alpha +1} 
\]
and 
\begin{equation}\label{equ:lots_of_equ_2}
    \vert \phi( \vert  \tilde{c}\vert  ) - w\vert  \leq \vert \tilde{c}\vert  + \varepsilon \leq r - \varepsilon.
\end{equation}
Choose $w' \in \sampX$ with $\vert w' - \phi( \norm{\tilde{c}}) \vert \leq \varepsilon$. Then, by \cref{equ:lots_of_equ_2} $w' \in \ballEps[r]{w} \cap \sampX$. Summarizing, we have 
\[
\vert c + w - w'\vert  \leq 2 \varepsilon + \varepsilon + C'r^{\alpha +1} + \varepsilon \leq C( r^{\alpha +1} + \varepsilon), 
\]
for appropriate $C>0$. \\
\\
Conversely, let $ w' \in \sampX$ with $\vert w - w'\vert  \leq r$. By assumption, we find $y \in \whs$ with $\vert y - w'\vert  \leq \varepsilon$ and have $\vert y\vert  \leq r + 2\varepsilon$. Thus, for $\phi_y$ the integral curve starting in $0$ through $y$ we have 
\[
\vert \vert y\vert  \phi_{y}'(0) - y\vert  \leq C'({r + 2 \varepsilon})^{\alpha + 1}.  
\]
Take $c = ({r-\varepsilon}) \frac{\vert y\vert }{r+2\varepsilon} \phi_{y}'(0) \in \tangentCo[x]{\whs} \cap \ballEps[r - \varepsilon]{x}$. 
Note, that $\vert c+w\vert  \leq \vert w\vert  + \vert c\vert  \leq r$ i.e. $c+w \in \ballEps[r]{w} \cap (\tangentCo[x]{\whs} +w )$.
We further have
\[
\vert c - \vert y\vert  \phi'_{y}(0)\vert  \leq \vert y\vert  (1 - \frac{r-\varepsilon}{r + 2 \varepsilon}) \leq  3\varepsilon.
\]
Summarizing, we have
\begin{align*}
\vert  c + w - w'\vert  \leq \varepsilon + \vert c  - w'\vert  &\leq \varepsilon + \vert c-\vert y\vert  \phi'_{y}(0)\vert  + \vert \vert y\vert  \phi'_{y}(0)-y\vert  + \vert y-w'\vert  \\
&\leq \varepsilon + 4 \varepsilon + C'( r + 2 \varepsilon)^{\alpha +1} \\
&\leq C( r^{\alpha +1} + \varepsilon)
\end{align*}
for appropriate $C >0$ and $\varepsilon < r/2$. We obtain the result by multiplying with $\zoomParam$ to pass to the magnification.
\end{proof}
As a first corollary of \cref{prop:loc_sampl_convergence_of_magnifications}, we obtain that the tangent cones of a \LW stratified space vary continuously on each stratum.
\begin{proposition}\label{cor:tangent_var_con}
Let $\whs $ be a \LW stratified space over $\pos$ and $p \in \pos$. Then, the map
    \begin{align*}
        \tangentCo[-]{\whs}:\whs_p &\to \SamB \\
        x &\mapsto \tangentCo[x]{\whs}
    \end{align*}
is continuous. 
\end{proposition}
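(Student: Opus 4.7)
The plan is to reduce the continuity of the tangent cone map at a point $x \in \whs_p$ to the global convergence estimate of \cref{prop:loc_sampl_convergence_of_magnifications}. Let $(x_n)_{n \in \mathbb N}$ be any sequence in $\whs_p$ converging to $x$; since $\SamB$ is a (symmetric Lawvere) metric space it suffices to show $\tangentCo[x_n]{\whs} \to \tangentCo[x]{\whs}$ in $\SamB$.

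First I would set $K := \{x\} \cup \{x_n \mid n \in \mathbb N\}$, which is compact and contained in $\whs_p$ (a convergent sequence together with its limit in a metric space). Applying \cref{prop:loc_sampl_convergence_of_magnifications} to $K$ produces constants $d, C, \alpha > 0$ and, for each admissible magnification parameter $\zeta$, a threshold $\varepsilon_0(\zeta) > 0$ controlling how perturbed the data may be.

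The heart of the argument is a two-step triangle chain routed through a common magnification. For $\zeta$ with $\zeta^{-1} \leq d$, applying the estimate with $\sampX = X$ (the underlying space of $\whs$), base point $x_n \in K$, and center $w = x_n$ yields
\[
\dmet[\SamB]{\tangentCo[x_n]{\whs}}{\magnification[x_n]{X}} \leq C \zeta^{-\alpha},
\]
since the relevant $\varepsilon$ vanishes. Applying it a second time with base point $x \in K$, center $w = x_n$, and $\varepsilon := |x - x_n|$ (as long as $|x - x_n| \leq \varepsilon_0(\zeta)$) yields
\[
\dmet[\SamB]{\tangentCo[x]{\whs}}{\magnification[x_n]{X}} \leq C(\zeta^{-\alpha} + \zeta\,|x - x_n|).
\]
Combining via the triangle inequality in $\SamB$ gives
\[
\dmet[\SamB]{\tangentCo[x_n]{\whs}}{\tangentCo[x]{\whs}} \leq 2 C \zeta^{-\alpha} + C \zeta\,|x - x_n|.
\]
Given $\delta > 0$, I would first choose $\zeta$ so large that $\zeta^{-1} \leq d$ and $2 C \zeta^{-\alpha} < \delta / 2$, and then $n$ large enough that $|x - x_n| < \min\{\varepsilon_0(\zeta),\,\delta/(2 C \zeta)\}$. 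This exhibits the required $\delta$-bound for all sufficiently large $n$.

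The one subtle point I expect to be the main obstacle is that the statement of \cref{prop:loc_sampl_convergence_of_magnifications} formally identifies $\varepsilon$ with $\dmet[\HD]{\sampX}{X}$, while my second application has $\sampX = X$ yet uses $\varepsilon = |x - x_n| > 0$. Inspection of the proof of \cref{prop:loc_sampl_convergence_of_magnifications} shows that $\varepsilon$ is used throughout only as a simultaneous upper bound for $\dmet[\HD]{\sampX}{X}$ and $|x - w|$, so the estimate is monotone in $\varepsilon$ in exactly the way required; alternatively one may enlarge $\sampX$ by adjoining a single auxiliary point at distance $|x - x_n|$ from $X$ so that the hypotheses are verified verbatim, while not affecting the magnification $\magnification[x_n]{X}$ up to a harmless controllable error. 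I would include a one-sentence remark making this generosity explicit, after which the argument above completes the proof.
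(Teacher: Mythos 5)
Your argument is correct, but it is genuinely different from the paper's. The paper factors the claim as ``uniform limit of continuous maps'': by \cref{prop:loc_sampl_convergence_of_magnifications}, $\tangentCo[-]{\whs}$ is the uniform limit on compacta of the maps $f_{\zoomParam}\colon x \mapsto \magnification{\whs}$, so it remains to prove each $f_{\zoomParam}$ continuous, which the paper does by bounding $\dmet[\SamB]{\magnification{\whs}}{\magnification[x']{\whs}}$ in terms of $\dmet[\HD]{\ballEps[r]{x}\cap \whs}{\ballEps[r]{x'}\cap \whs}$ and invoking Thom's first isotopy lemma to see that the balls $\ballEps[r]{y}\cap\whs$ vary continuously in $y$. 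You instead run a direct two-sided estimate: both $\tangentCo[x_n]{\whs}$ and $\tangentCo[x]{\whs}$ are compared to the single magnification $\magnification[x_n]{X}$, exploiting the slack already built into \cref{prop:loc_sampl_convergence_of_magnifications} that allows the magnification center $w$ to differ from the base point $x \in K$ by up to $\varepsilon$. This bypasses the isotopy-lemma step entirely and yields an explicit modulus $2C\zoomParam^{-\alpha} + C\zoomParam\,|x-x_n|$, which is arguably cleaner; what it costs is the mild abuse you correctly flag, namely that the proposition as stated ties $\varepsilon$ to $\dmet[\HD]{\sampX}{X}$ by an equality, whereas you need it only as a common upper bound for $\dmet[\HD]{\sampX}{X}$ and $|x-w|$. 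Your reading of the proof of \cref{prop:loc_sampl_convergence_of_magnifications} is accurate on this point ($\varepsilon$ enters only through such upper bounds, together with the smallness conditions $\varepsilon \leq \varepsilon_0$ and $\varepsilon < r/2$), so the monotonicity remark suffices and the argument is complete; the choice of $K = \{x\}\cup\{x_n \mid n\}$ as a compactum in $\whs_p$ and the order of quantifiers (first $\zoomParam$, then $n$ subject to $|x-x_n| \leq \min\{\varepsilon_0(\zoomParam), \delta/(2C\zoomParam)\}$) are both handled correctly.
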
\label{cor:magni_conv}
\begin{proof}
To see this, note that by \cref{prop:loc_sampl_convergence_of_magnifications}, restricted to any compactum, $\tangentCo[-]{X}$ is the uniform limit of the family of maps given by $f_{\zoomParam}\pp x \mapsto \magnification{\whs}$. By exhausting $\whs_p$ by compacta it suffices to see that the $f_{\zoomParam}$ are continuous for $\zoomParam$ large enough.  Again, set $r= \frac{1}{\zoomParam}$.
So, let $K \subset \whs_p$ be a compactum and let $r$ be small enough, such that $\neiEps[r]{K} \cap \whs \subset \whs_{\geq p}$. In other words, we may assume without loss of generality that $\whs_p$ is the minimal stratum of $\whs$.
 Next, note that 
 \begin{equation}\label{equ:bound_on_microhood}
      \dmet[\SamB]{\magnification{\whs}}{\magnification[x']{\whs}}  \leq  r \dmet[\HD]{\ballEps[r]{x} \cap \whs}{\ballEps[r]{x'} \cap \whs} + \norm{x-x'},
 \end{equation}
for $x,x' \in \whs_p$.
By an application of Thom's isotopy lemma, the map
 \begin{align*}
     \hat g: \whs \times \whs_p &\to [0,\infty) \times \whs_p\\
     (x,y) &\mapsto (\norm{x-y},y)
 \end{align*}
restricts to a fiber bundle over $(0, r] \times \whs_p$ for $r$ small enough. In particular, it follows that if we set $X =\{ (x,y) \in \whs \times \whs_p  \mid \norm{x-y}\leq r \}$, we obtain an induced fiber bundle
\begin{align*}
    g: X &\to \whs_p \\
    (x,y) &\mapsto y
\end{align*}
with fiber $\ballEps[r]{y} \cap \whs$ over $y$. Again, locally using the independence of the Hausdorff-distance topology of the choice of metric, we obtain that $\ballEps[r]{y} \cap \whs$ varies continuously in $y$. Hence, by \cref{equ:bound_on_microhood} so does $\magnification[y]{\whs}$.
\end{proof}
\subsection{Pointwise convergence of magnifications of a sample}\label{subsec:conv_point}
As an immediate consequence of \cref{prop:loc_sampl_convergence_of_magnifications}, we obtain that for a \LW stratified space $\whs$ we have 
\[
\magnification{\whs} \xrightarrow{\zoomParam \to \infty} \tangentCo{\whs},
\]
for all $x \in \whs$. This result can already be found in similar form in \cite{hironaka1969normal}.
What we want to do, however, is to describe the case occurring in application. That is, we aim to analyse the convergence behavior of magnifications for samples of $\topSp$, as $\zoomParam \to \infty$. At first glance, this is a nonsensical question. For a fixed sample $\sampX$, $\magnification{\sampX}$ has distance $0$ to a one point (or empty) space, when $\zoomParam$ is large enough. Instead, the correct notion of convergence is already suggested by the inequality in \cref{prop:loc_sampl_convergence_of_magnifications}. What needs to be described is a convergence behavior where the quality of the sample is allowed to improve at the same time as $\zoomParam \to \infty$.
\begin{notation}\label{not:convergence_sampling_mult}
    Given a function $f \colon M \times (0, \infty) \to T$, where $M$ is a metric space and $T$ a topological space, we write 
    \[
    \sampconvmo{f(\sampX, \zoomParam)}{Y}{\topSp}
    \]
    for $\topSp \in M$ and $Y \in T$ to state that for any pair of sequences $\zoomParam_n \in (0, \infty)$ converging to $\infty$, and $\sampX_n \in M$, such that $\zoomParam_n\dmet{\sampX_n}{\topSp}$ converges to $0$, the sequence $f(\sampX_n,\zoomParam_n)$ converges to $Y$.
\end{notation}
\begin{remark}
    We may think of the type of convergence in \cref{not:convergence_sampling_mult}, as convergence of $f(\sampX,\zoomParam)$ to $Y$, for $\sampX \to X$ and $\zoomParam \to \infty$, under the additional condition that the convergence in the $\sampX$ variable is faster than the convergence in the $\zoomParam$ variable. This corresponds to the idea that if we want to zoom in further by a magnitude of $k$, and investigate some point locally, the quality of the sample also needs to improve by more than this magnitude $k$, so that we do not zoom in too far and end up only considering a single point. We can think of this as a notion of convergence in $\zoomParam$, while improving the quality of the sample. Hence, we will also speak of \define{convergence while sampling}.
\end{remark}
Now, we can interpret \cref{prop:loc_sampl_convergence_of_magnifications} with $\sampX = X$, $x=w$ and $K= \{x\}$ as the following convergence while sampling result.
\begin{corollary}\label{cor:samp_conv_magnifications}
Let $\topSp \in \Sam$ be a \LW stratifiable space. Let $x \in \topSp$. Then,
\[
 \sampconvm{\magnification{\sampX}}{\tangentCo[x]{\topSp}}{\topSp}.
\]
Furthermore, this convergence is uniform on any compactum $K$ contained in a stratum.
\end{corollary}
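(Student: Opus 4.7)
The plan is to derive the corollary as a direct consequence of \cref{prop:loc_sampl_convergence_of_magnifications}. Since $X$ is \LW stratifiable, we fix a \LW stratification $\whs$ with underlying space $X$, and let $p$ denote the stratum containing $x$. For the pointwise statement, I take $K = \{x\} \subset \whs_p$, which is compact, and apply \cref{prop:loc_sampl_convergence_of_magnifications} with the choice $w = x$ (so the condition $|x - w| \leq \varepsilon$ is automatic). This yields constants $d, C, \alpha > 0$ such that for every $\zoomParam$ with $\tfrac{1}{\zoomParam} \leq d$ there exists $\varepsilon_0 > 0$ with
\[
\dmet[\SamB]{\tangentCo[x]{\topSp}}{\magnification{\sampX}} \leq C\bigl(\zoomParam^{-\alpha} + \zoomParam\, \varepsilon\bigr)
\]
whenever $\varepsilon := \dmet[\HD]{\sampX}{\topSp} \leq \varepsilon_0$.

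Now I unwind \cref{not:convergence_sampling_mult}. Given sequences $\zoomParam_n \to \infty$ and $\sampX_n$ with $\zoomParam_n \dmet[\HD]{\sampX_n}{\topSp} \to 0$, set $\varepsilon_n := \dmet[\HD]{\sampX_n}{\topSp}$. The hypothesis $\zoomParam_n \varepsilon_n \to 0$ implies $\varepsilon_n \to 0$ (using that $\zoomParam_n$ is eventually bounded below by any positive constant), so for $n$ large enough both $\tfrac{1}{\zoomParam_n} \leq d$ and $\varepsilon_n \leq \varepsilon_0$ hold. Applying the displayed bound gives
\[
\dmet[\SamB]{\tangentCo[x]{\topSp}}{\magnification{\sampX_n}} \leq C\bigl(\zoomParam_n^{-\alpha} + \zoomParam_n \varepsilon_n\bigr) \xrightarrow{n \to \infty} 0,
\]
which is precisely the convergence $\sampconvm{\magnification{\sampX}}{\tangentCo[x]{\topSp}}{\topSp}$.

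For the uniform statement on a compactum $K$ contained in a stratum $\whs_p$, I invoke \cref{prop:loc_sampl_convergence_of_magnifications} for this same $K$ and again specialize to $w = x$ for each $x \in K$. The constants $d, C, \alpha$ and the threshold $\varepsilon_0$ are, by the proposition, independent of $x \in K$, so
\[
\sup_{x \in K}\dmet[\SamB]{\tangentCo[x]{\topSp}}{\magnification{\sampX}} \leq C\bigl(\zoomParam^{-\alpha} + \zoomParam\, \varepsilon\bigr)
\]
whenever $\tfrac{1}{\zoomParam} \leq d$ and $\varepsilon \leq \varepsilon_0$. The same argument as above then promotes this to uniform convergence in the sense of \cref{not:convergence_sampling_mult}. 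Since the essential technical content sits in \cref{prop:loc_sampl_convergence_of_magnifications}, there is no real obstacle here; the only care needed is to check that the pointwise hypothesis does indeed force $\varepsilon_n$ to fall below the threshold $\varepsilon_0$ supplied by the proposition, which as noted follows from $\zoomParam_n \to \infty$.
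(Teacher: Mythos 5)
Your proposal is correct and is exactly the paper's (one-line) argument: the corollary is obtained by specializing \cref{prop:loc_sampl_convergence_of_magnifications} to $K=\{x\}$, $w=x$ and unwinding \cref{not:convergence_sampling_mult}; the uniform statement on a compactum $K$ in a stratum is the same specialization without shrinking $K$. One point of care that you gloss over: in \cref{prop:loc_sampl_convergence_of_magnifications} the threshold $\varepsilon_0$ is chosen \emph{after} $\zoomParam$, i.e.\ it depends on $\zoomParam$, so the claim that ``$\varepsilon_n \to 0$ hence eventually $\varepsilon_n \leq \varepsilon_0$'' does not suffice as stated — you need $\varepsilon_n \leq \varepsilon_0(\zoomParam_n)$ for each large $n$. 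This is where the product condition $\zoomParam_n\,\varepsilon_n \to 0$ (rather than mere $\varepsilon_n\to 0$) is actually used: the proof of the proposition shows $\varepsilon_0(\zoomParam)$ may be taken of order $\tfrac{1}{2\zoomParam}$, and $\zoomParam_n\varepsilon_n\to 0$ gives precisely $\varepsilon_n < \tfrac{1}{2\zoomParam_n}$ eventually. Your closing sentence attributes this to $\zoomParam_n\to\infty$ alone, which is not the right reason; with that correction the argument is complete.
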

\subsection{Convergence of tangent bundles}\label{subsec:conv_tang}
To prove a global recovery of stratifications result, we need to obtain a more global version of \cref{cor:samp_conv_magnifications}. For this we need to treat tangent cones not as separate spaces but as a (stratified) bundle of cones. To describe the resulting convergence result, we need a space of samples of bundles.
\begin{definition}\label{def:Bsam}
Denote by $\SamBun$ the set \[ \{(\sampX,  F:\sampX \to \SamB) \mid\sampX \in \Sam \},\] equipped with the (extended pseudo) metric given by regarding $F$ as a subset of $\supSp \times \SamB$, equipping the latter with the product metric, and then using the resulting Hausdorff distance.\\
That is, for $(\sampX,F), (\sampX',F') \in \SamBun$, we define
\begin{align*}
    \dmet[\SamBun]{(\sampX, F)}{(\sampX', F')} :=
\max_{(\sampX_0, \sampX_1) \in \{\sampX, \sampX'\}^2} 
\inf \{ \varepsilon>0 \mid & \forall x \in \sampX_0 \exists y \in \sampX_1: \norm{x-y}, \\ &\dmet[\sampB]{F_0(x)}{F_1(y)} \leq \varepsilon \}.
\end{align*}
We also refer to $\SamBun$ as the \define{space of bundle samples} (of $\supSp$).
\end{definition}
\begin{definition}
The \define{$\zoomParam$-magnification bundle of $\sampX \in \Sam$} is defined as the image of $\sampX$ under the map
\begin{align*}
    \emagnification: \Sam &\to \SamBun \\
    \sampX \mapsto & (\sampX, \{ x \mapsto \magnification{\sampX} \} ).
\end{align*}
The \define{tangent cone bundle of $\topSp \in \Sam$}
is defined as the image of $\topSp$ under the map
\begin{align*}
    \efreetangentCo: \Sam &\to \SamBun \\
    \topSp \mapsto & (\topSp, \{ x \mapsto \tangentCo{\topSp} \} ).
\end{align*}
\end{definition}
\begin{remark}
Note that the nomenclature warrants some care, as for an arbitrary space $\sampX$, neither $\emagnification$ nor $\freetangentCo{\sampX}$ are anything close to a fiber bundle and even for a \LW stratified space they are stratified fiber bundles at best. 
\end{remark}
Note that \cref{prop:loc_sampl_convergence_of_magnifications} does not imply convergence of magnification bundles in the metric on $\SamBun$, as the convergence is only uniform on compacta contained in pure strata. However, we may equip the spaces $\SamBun$ with alternative topologies, allowing us to formulate notions of convergence on a compactum.  Again, for the remainder of this subsection let $\pos = \{ p < q\}$.
\begin{construction}\label{con:convergence_on_compacta}
Let $K \in \Sam$ and let $T$ be any of the spaces $\SamNP$, $\SamBun$. Let $\varepsilon: \Sam \to \mathbb R_+$ be some continuous map.
Define a map 
\begin{align*}
    g^{K}_{\varepsilon}: T &\to T \\
     (\sampX,f) &\mapsto (\sampX \cap K_{\varepsilon({\sampX})}, f\mid_{K_{\varepsilon({\sampX})}}).
\end{align*}
If $\mathcal{K}=(E,\varepsilon)$ is a pair consisting of a set $E \subset \Sam$, together with a continuous map $\varepsilon: \Sam \to \mathbb{R}_+$, we denote by $T^{\mathcal{K}}$, the space with the same underlying set as $T$, but equipped with the initial topology with respect to the maps $g^{K}_{\varepsilon}$ and $T \to \Sam \xrightarrow{\varepsilon} \mathbb{R}_{+}$. In particular, with respect to this topology, a sequence $\sampB_n = (\sampX_n,F_n)$ in $T$ converges to $\sampB = (\sampX, F) \in T$, if and only if \[g^{K}_{\varepsilon}{(\sampB_n)} \xrightarrow{n \to \infty} g_{\varepsilon}^K{(\sampB)},\] for all $K \in E$ and \[\varepsilon(\sampX_n) \xrightarrow{n \to \infty} \varepsilon(\sampX)
 .\]
\end{construction}
\begin{remark}\label{rem:countability}
In the case where $E$ is a countable set, the topology on $T^{\mathcal K}$ is still first countable. All cases we consider here can be reduce to this scenario. Alternatively, all of the proofs using sequences below work identically when using nets instead of sequences.
\end{remark}
We can now rephrase \cref{prop:loc_sampl_convergence_of_magnifications} as a global convergence result, which is essential for the stratification learning theorems of \cref{subsec:restrat}.
\begin{proposition}\label{prop:tangent_co_conv_compacta}
Let $\topSp \in \Sam$ be equipped with a \LW stratification $\whs= (\topSp, \topSp \to \pos)$. Denote $\varepsilon:= \dmet[\HD]{\topSp}{-}$ .
Let $E \subset \Sam$ be such that for all $K \in E$ there exist a decomposition into compacta $K = K_{p} \sqcup K_{q}$ such that $K_p \subset \whs_p$, $K_q \subset \whs_q$. Denote $\mathcal{K}= (E, \varepsilon)$.
Then, 
\[
\sampconvm{\freemagnification{\sampX}}{\freetangentCo{\topSp}}{\topSp} \textnormal{ in $\SamBun^{\mathcal K}$.}
\]
\end{proposition}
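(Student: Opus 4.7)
The plan is to verify directly the two defining conditions of the topology on $\SamBun^{\mathcal K}$ from \cref{con:convergence_on_compacta}. Fix sequences $\sampX_n \in \Sam$ and $\zeta_n > 0$ with $\zeta_n \to \infty$ and $\zeta_n \varepsilon_n \to 0$, where $\varepsilon_n := \varepsilon(\sampX_n) = \dmet[\HD]{\topSp}{\sampX_n}$. The continuity condition $\varepsilon(\sampX_n) \to \varepsilon(\topSp) = 0$ is automatic, since $\varepsilon_n \le \zeta_n^{-1}(\zeta_n \varepsilon_n) \to 0$. The real content is to establish $g^K_\varepsilon(\freemagnification[\zeta_n]{\sampX_n}) \to g^K_\varepsilon(\freetangentCo{\topSp})$ in $\SamBun$ for every $K \in E$.

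Fix such a $K = K_p \sqcup K_q$. Since $K \subset \topSp$, we have $g^K_\varepsilon(\freetangentCo{\topSp}) = (K, \freetangentCo{\topSp}\!\mid_K)$, while $g^K_\varepsilon(\freemagnification[\zeta_n]{\sampX_n}) = (\sampB_n, \freemagnification[\zeta_n]{\sampX_n}\!\mid_{\sampB_n})$ where $\sampB_n := \sampX_n \cap K_{\varepsilon_n}$. First I would dispose of the base-set convergence: for any $x \in K \subset \topSp$, the Hausdorff assumption produces $y \in \sampX_n$ with $\norm{x-y} \le \varepsilon_n$, which forces $y \in \sampB_n$; conversely, every element of $\sampB_n$ lies within $\varepsilon_n$ of $K$ by definition of $K_{\varepsilon_n}$. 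This shows $\dmet[\HD]{\sampB_n}{K} \le \varepsilon_n \to 0$, and in particular any $y \in \sampB_n$ can be paired with some $x \in K$ with $\norm{x-y} \leq \varepsilon_n$, and vice versa.

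Next, invoke \cref{prop:loc_sampl_convergence_of_magnifications} separately on the compacta $K_p \subset \whs_p$ and $K_q \subset \whs_q$, obtaining two triples of constants; taking the minimum of the $d$'s and $\alpha$'s and the maximum of the $C$'s produces uniform constants $d, C, \alpha > 0$ valid over all of $K$. Then, for $n$ sufficiently large that $1/\zeta_n \le d$ and $\varepsilon_n$ falls below the corresponding threshold, the proposition gives
\[
\dmet[\SamB]{\tangentCo[x]{\whs}}{\magnification[y]{\sampX_n}} \le C(\zeta_n^{-\alpha} + \zeta_n \varepsilon_n)
\]
for every pair $(x,y) \in K \times \supSp$ with $\norm{x-y} \le \varepsilon_n$. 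Using the witnesses from the previous paragraph, both the base-space distance (bounded by $\varepsilon_n$) and the fiber distance (bounded by the displayed expression) tend to zero, which is exactly the requirement for convergence in $\SamBun$.

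The only real obstacle is organising the invocation of \cref{prop:loc_sampl_convergence_of_magnifications}, whose constants depend on a chosen compactum inside a single stratum; the hypothesis on $E$ that each $K$ splits as $K_p \sqcup K_q$ with $K_p \subset \whs_p$ and $K_q \subset \whs_q$ is precisely tailored to make this split-and-combine argument work. All remaining steps are bookkeeping with the Hausdorff-type metric on $\SamBun$.
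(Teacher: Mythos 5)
Your argument is correct and follows the same route as the paper's own (much terser) proof: identify $g^K_{\varepsilon}(\freetangentCo{\topSp})$ with the plain restriction $\freetangentCo{\topSp}\vert_K$, and then derive both the base-set and fiberwise estimates from \cref{prop:loc_sampl_convergence_of_magnifications}, applied separately to $K_p$ and $K_q$. The extra bookkeeping you supply (matching witnesses in the $\SamBun$ metric, combining the constants over the two strata, and noting that $\zeta_n\varepsilon_n\to 0$ eventually puts $\varepsilon_n$ below the $\zeta_n$-dependent threshold) is exactly what the paper leaves implicit.
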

\begin{proof}
Let $K \in E$. We need to show
\[
\sampconvm{g^{K}_{\varepsilon}(\freemagnification{\sampX})}{g^{K}_{\varepsilon}({\freetangentCo{\topSp}})}{\topSp}.
\]
Note that since $K  \subset \whs$, $g^{K}_{\varepsilon}({\freetangentCo{\topSp}}) = \freetangentCo{\topSp}\vert_{K}$. The result is now an immediate consequence of \cref{prop:loc_sampl_convergence_of_magnifications}.
\end{proof}
\subsection{The stratification learning theorem}\label{subsec:restrat}
We now have all the tools in place to recover stratifications from samples. We have seen in \cref{prop:pers_strat_htpy_type_C-lipschitz} that the persistent stratified homotopy type is (Lipschitz) continuous in compact Whitney stratified spaces $\whs$ over $\pos= \{ p < q \}$. In particular, we can approximate the persistent stratified homotopy type of $\whs$ from a stratified sample $\sampW$ close to $\whs$ in the metric on $\SamP$. In practice, we can generally only expect to be given non-stratified samples. Even naively, if one had a means to decide when a point has ended up precisely in the singular stratum, one should expect the latter to be a $0$-set with respect to the used density, and hence usually end up with non-stratified sets.
Nevertheless, our investigations of magnifications and $\Phi$-stratifications already suggest that local tangent cones may be used to recover stratifications which approximate the original one. Let us first illustrate how the procedure works in case one is given a perfect sample, i.e. one can work with the whole of $\whs$. Again, for the remainder of this section let $\pos = \{ p <q \}$.
\begin{construction}\label{con:recover_strat_A}
Let $\whs \in \SamP$ be a compact \LW $\Phi$-stratified space, with respect to a function $\Phi$ as in \cref{def:Phi_strat}. Suppose we forget the stratification of $\whs = ( \topSp, \phiStrat)$, and only have the data given by $\topSp$. We can then associate to $\topSp$ its tangent cone bundle $ \efreetangentCo{\topSp} \in \SamBun$. Next, we use the function $\Phi$ to decide which regions should be considered singular. We can do so by applying $\Phi$ to $\efreetangentCo{\topSp}$ fiberwise. As a result we obtain a strong stratification $\tilde s$ of $\topSp$, given by
\[
x \mapsto \tangentCo[x]{\topSp} \mapsto \Phi( \tangentCo[x]{\topSp} ).
\]
By \cref{cor:tangent_var_con}, this map is continuous on all strata. In particular, by assumption, it takes a maximum value $m<1$ on $\whs_p$. Since $\whs_q$ is a manifold, we have 
\[
\tangentCo[x]{\topSp} =  \tangentCo[x]{\whs_q} =  \tangentSp{\whs_q} = \mathbb R^q
\]
for $x \in \whs_q$, and thus the strong stratification has constant value $1$ on $\whs_q$.
Therefore, we may recover the stratification of $s$ by choosing $u > m$ and applying $\eforgetStr$:
\[
\eforgetStr( \topSp, \tilde{s}) = \whs.
\]
\end{construction}
We now replicate the procedure described in \cref{con:recover_strat_A} in case of working with samples and investigate its convergence behavior.
\begin{lemma}\label{lem:Phi_cont_A}
Let $\bigPhi: \SamB \to [0,1]$ be a continuous map. Then, the induced map
\begin{align*}
\bigPhi_*: \SamBun \to \SamNP \\
(\sampX, F) \mapsto (\sampX, \Phi \circ F)
\end{align*}
is continuous. Even more, if $\bigPhi$ is $C$-Lipschitz, then so is $\bigPhi_*$.
\end{lemma}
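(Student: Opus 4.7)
The proof will be essentially a direct bookkeeping of distances, so I will focus on the two observations that make it work.

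My plan is as follows. First, I will unwind the two metrics. An element $(\sampX, F) \in \SamBun$ is mapped to $(\sampX, \Phi \circ F) \in \SamNP$, so to compare distances I need to relate, for $x \in \sampX_0$ and $y \in \sampX_1$, the pair $(\|x-y\|, \dmet[\SamB]{F_0(x)}{F_1(y)})$ controlling the $\SamBun$-distance with the pair $(\|x-y\|, |\Phi(F_0(x)) - \Phi(F_1(y))|)$ controlling the $\SamNP$-distance. The first coordinate is the same in both, so everything reduces to controlling the second coordinate via $\Phi$.

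For the Lipschitz statement, this is immediate: if $\Phi$ is $C$-Lipschitz and $\varepsilon > \dmet[\SamBun]{(\sampX, F)}{(\sampX', F')}$, then for each $x \in \sampX_0$ I pick the witness $y \in \sampX_1$ with $\|x-y\| \leq \varepsilon$ and $\dmet[\SamB]{F_0(x)}{F_1(y)} \leq \varepsilon$, and obtain $|\Phi(F_0(x)) - \Phi(F_1(y))| \leq C \varepsilon$. Taking the max of both coordinates yields $\dmet[\SamNP]{\Phi_*(\sampX,F)}{\Phi_*(\sampX',F')} \leq \max(1,C)\,\varepsilon$, which gives the claimed $C$-Lipschitz bound (noting that any $C$-Lipschitz map with $C<1$ is in particular $1$-Lipschitz, so nothing is lost).

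For the continuity statement (without a Lipschitz hypothesis) the key observation is that the pseudometric on $\SamB$ identifies a subset $\sampB \subset \supSp$ with the closure of $\sampB \cap \ballEps[1]{0}$; the induced metric space is therefore isometric to the space of closed subsets of the compact ball $\ballEps[1]{0}$ equipped with Hausdorff distance, which is itself compact. Consequently, every continuous $\Phi \colon \SamB \to [0,1]$ is \emph{uniformly} continuous. Given $\delta>0$, I fix a modulus $\gamma>0$ with $\dmet[\SamB]{B}{B'} \leq \gamma \implies |\Phi(B) - \Phi(B')| \leq \delta$; then any $\varepsilon \leq \min(\delta,\gamma)$ with $\varepsilon > \dmet[\SamBun]{(\sampX,F)}{(\sampX',F')}$ forces $\dmet[\SamNP]{\Phi_*(\sampX,F)}{\Phi_*(\sampX',F')} \leq \delta$ by the same coordinatewise argument as above. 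The only mildly delicate point is the compactness observation for $\SamB$; once that is in place everything else is an unwinding of definitions.
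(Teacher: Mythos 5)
Your proof is correct and takes essentially the same route as the paper's: both hinge on the observation that $\SamB$ is isometric to the compact space of closed subsets of $\ballEps[1]{0}$ with the Hausdorff metric, so that $\Phi$ is uniformly continuous, after which the claim follows by unwinding the definitions of the metrics on $\SamBun$ and $\SamNP$. Your write-up just makes the coordinatewise bookkeeping explicit where the paper declares it immediate.
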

\begin{proof}
 Since $\SamB$ is isometric to the space of compact subspaces of $\ballEps[1]{0} \subset \supSp$ and thus compact, $\bigPhi$ is a uniformly continuous map. Hence, the result follows immediately by definition of the metrics on $\SamBun$ and $\SamNP$.
\end{proof}
It turns out $\bigPhi_*$ also descends to a continuous map on the alternative topologies of \cref{con:convergence_on_compacta}.
\begin{lemma}\label{lem:Phi_cont_B}
Let $\bigPhi: \SamB \to [0,1]$ be a continuous map. Let $E \subset \Sam$, $\varepsilon: \Sam \to \mathbb{R_+}$ be some continuous function and $\mathcal{K} = (E, \varepsilon)$. Then, the map
\begin{align*}
\bigPhi_*: \SamBun^{\mathcal{K}} \to \SamNP^{\mathcal K} \\
(\sampX, F) \mapsto (\sampX, \Phi \circ F)
\end{align*}
is continuous. \end{lemma}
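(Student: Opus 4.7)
The plan is to reduce this to the already-established continuity in \cref{lem:Phi_cont_A} by exploiting the universal property of the initial topology defining $\SamBun^{\mathcal K}$ and $\SamNP^{\mathcal K}$. Recall from \cref{con:convergence_on_compacta} that the topology on $T^{\mathcal K}$ is the initial topology with respect to the family of maps $\{g^K_{\varepsilon}\colon T \to T\}_{K \in E}$ together with the composition $T \to \Sam \xrightarrow{\varepsilon} \mathbb R_+$. Thus to show that $\Phi_* \colon \SamBun^{\mathcal K} \to \SamNP^{\mathcal K}$ is continuous, it suffices to check that each of the following compositions is continuous:
\begin{enumerate}
    \item $g^K_{\varepsilon} \circ \Phi_* \colon \SamBun^{\mathcal K} \to \SamNP$ for every $K \in E$;
    \item $\SamBun^{\mathcal K} \xrightarrow{\Phi_*} \SamNP \to \Sam \xrightarrow{\varepsilon} \mathbb R_+$.
\end{enumerate}

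The second condition is immediate: since $\Phi_*$ acts as the identity on the underlying subset of $\supSp$, this composition factors as $\SamBun^{\mathcal K} \to \Sam \xrightarrow{\varepsilon} \mathbb R_+$, which is continuous by the very definition of the topology on $\SamBun^{\mathcal K}$.

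For the first condition, the key observation is that $g^K_\varepsilon$ and $\Phi_*$ commute. Unwinding definitions, one computes
\[
g^K_\varepsilon \bigl(\Phi_*(\sampX, F)\bigr) = \bigl(\sampX \cap K_{\varepsilon(\sampX)},\, (\Phi \circ F)\vert_{K_{\varepsilon(\sampX)}}\bigr) = \Phi_* \bigl(g^K_\varepsilon(\sampX, F)\bigr),
\]
since restriction of a fibered map and postcomposition with $\Phi$ trivially commute. Hence $g^K_\varepsilon \circ \Phi_*$ equals the composition $\SamBun^{\mathcal K} \xrightarrow{g^K_\varepsilon} \SamBun \xrightarrow{\Phi_*} \SamNP$, in which the first map is continuous by definition of $\SamBun^{\mathcal K}$ and the second is continuous by \cref{lem:Phi_cont_A}.

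I do not expect any genuine obstacle here; the whole statement is essentially the observation that the $\mathcal K$-topologies are built from $g^K_\varepsilon$ and $\varepsilon$, and that $\Phi_*$ respects both. The only mild care required is in verifying the commutation $g^K_\varepsilon \circ \Phi_* = \Phi_* \circ g^K_\varepsilon$ from the explicit formulas, which is straightforward.
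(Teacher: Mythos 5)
Your proof is correct and uses the same two ingredients as the paper's: the commutation $g^K_\varepsilon \circ \Phi_* = \Phi_* \circ g^K_\varepsilon$ and the continuity of $\Phi_*\colon \SamBun \to \SamNP$ from \cref{lem:Phi_cont_A}. The only difference is one of packaging: the paper argues via convergent sequences (which is why it needs \cref{rem:countability} about countability of $E$ or passing to nets), whereas you invoke the universal property of the initial topology directly, which is marginally cleaner and avoids that caveat entirely.
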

\begin{proof}
By definition of the topologies on $\SamBun^{\mathcal{K}} \to \SamNP^{\mathcal K}$, it suffices to show the result for the case where $E=\{K\}$ is a singleton. 
Continuity of $\SamBun^{\mathcal{K}} \to \SamNP^{\mathcal K} \to \Sam \xrightarrow{\varepsilon} \mathbb R_{+}$ holds trivially. Next, note that the diagram
\[
\begin{tikzcd}
\SamBun^{\mathcal{K}} \arrow[d, "g^{K}_{\varepsilon}"] \arrow[r, "\Phi_*"]& \SamNP^{\mathcal K} \arrow[d, "g^{K}_{\varepsilon}"]\\
\SamBun \arrow[r, "\Phi_*"]& \SamNP
\end{tikzcd}
\]
trivially commutes, since the $g$ are given by restricting, i.e. precomposition and $\Phi_*$ by postcomposition.

Then, for a sequence $\sampB_n \in \SamBun^{\mathcal K}$ and $\sampB \in \SamNP^{\mathcal{K}}$ we have:
\begin{align*}
    \sampB_n \xrightarrow{ n \to \infty} \sampB \textnormal{ in $\SamBun^{\mathcal K}$} &\iff g^{K}_{\varepsilon}( \sampB_n) \xrightarrow{n \to \infty} g^{K}_{\varepsilon} (\sampB) \textnormal{ in $\SamBun$} \\&\implies \Phi_*(g^{K}_{\varepsilon}  (\sampB_n)) \xrightarrow{n \to \infty} \Phi_* (g^{K}_{\varepsilon} (\sampB)) \textnormal{ in $\SamNP$}
    \\&\iff g^{K}_{\varepsilon} (\Phi_* (\sampB_n)) \xrightarrow{n \to \infty} g^{K}_{\varepsilon} (\Phi_* (\sampB)) \textnormal{ in $\SamNP$} \\
&\iff \Phi_* (\sampB_n) \xrightarrow{n \to \infty} \Phi_* (\sampB) \textnormal{ in $\SamNP^{\mathcal{K}}$}, 
\end{align*}
where the implication in the second line follows by \cref{lem:Phi_cont_A}.
\end{proof}
We have already seen, that with respect to the alternative topologies the magnification bundles do indeed converge uniformly to the tangent cone bundle. 
This is however not the case with the usual topologies. 
Hence, to approximate stratifications using a magnification version of \cref{con:recover_strat_A}, we need to show that $\eforgetStr$ is continuous in the respective tangent cone bundles with respect to the alternative topology.
\begin{proposition}\label{prop:forgetstr_cont_wrt_new_topology}
Let $\stratSp = ( \topSp, \phiStrat) \in \SamNP$, $\topSp$ compact. Let $\spaceParam \in [0,1]$ be such that $\subLevel{\stratSp}{\spaceParam}$ is closed and such that $\subLevel{\stratSp}{\spaceParam \pm \delta} = \subLevel{\stratSp}{\spaceParam}$ for $\delta$ sufficiently small. Let $\varepsilon = \dmet[\HD]{\topSp}{-}$. Finally, let 
\[
\mathcal{K}= (\{ K \in \Sam \mid K =  K_p \sqcup K_q, K_p, K_q \textnormal{ compact}, K_p \subset \subLevel{\stratSp}{\spaceParam}, K_q \subset s^{-1}{(u,1]}\} , \varepsilon).
\]
Then 
\[
\eforgetStr\colon \SamNP^{\mathcal K} \to \SamP
\]
is continuous at $\stratSp$.
\end{proposition}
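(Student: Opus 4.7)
My plan is to leverage \cref{prop:cont_if_const_u}, which already gives $1$-Lipschitz continuity of $\eforgetStr$ at $\stratSp$ in the standard metric on $\SamNP$. The task is thus to show that the $\mathcal{K}$-topology, although a priori weaker than the metric topology on $\SamNP$, coincides with it at $\stratSp$ --- the key observation being that the hypotheses on $u$ force $\topSp$ itself to be (the underlying space of) a compactum in $E$.

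The first step is to exploit $\subLevel{\stratSp}{u\pm\delta}=\subLevel{\stratSp}{u}$ for some small $\delta_0>0$. By definition of the sublevel sets, this is equivalent to $\phiStrat^{-1}((u-\delta_0,u+\delta_0])=\emptyset$, so every $x\in\topSp$ satisfies either $\phiStrat(x)\leq u-\delta_0$ or $\phiStrat(x)>u+\delta_0$. Consequently, with
\[
K_p := \subLevel{\stratSp}{u},\qquad K_q := s^{-1}[u+\delta_0,1],
\]
both sets are closed in compact $\topSp$, hence compact, they are disjoint, $K_p\subset\subLevel{\stratSp}{u}$, $K_q\subset s^{-1}(u,1]$, and $K := K_p \sqcup K_q = \topSp$. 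Thus $K\in E$, and $g^{K}_{\varepsilon}(\stratSp) = \stratSp$ because $\varepsilon(\topSp)=0$ and $K_0=K\supseteq\topSp$.

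Next, given $\rho>0$, choose $\eta\in(0,\min\{\rho,\delta_0\})$ with $u\pm\delta_0\in(0,1)$, let $V\subset\SamNP$ be the open $\eta$-ball around $\stratSp$, and set
\[
U := (g^K_\varepsilon)^{-1}(V)\cap (\varepsilon\circ\pi)^{-1}([0,\eta)),
\]
which is open in $\SamNP^{\mathcal{K}}$ by the definition of the initial topology. For $\stratSamp=(\sampX,\phiStrat[\sampX])\in U$ one has $\varepsilon(\sampX)=\dmet[\HD]{\topSp}{\sampX}<\eta$, so $\sampX\subset\thicken[\varepsilon(\sampX)]{\topSp}=K_{\varepsilon(\sampX)}$ since $K=\topSp$; consequently $g^{K}_{\varepsilon}(\stratSamp)=\stratSamp$, and therefore $\dmet[\SamNP]{\stratSamp}{\stratSp}<\eta\leq\delta_0$.

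The hypothesis $\subLevel{\stratSp}{u\pm\delta_0}=\subLevel{\stratSp}{u}$ is precisely what is required to apply \cref{prop:cont_if_const_u}, which gives that $\eforgetStr$ is $1$-Lipschitz at $\stratSp$ on the open $\delta_0$-ball in $\SamNP$. Hence
\[
\dmet[\SamP]{\forgetStr{\stratSamp}}{\forgetStr{\stratSp}}\leq\dmet[\SamNP]{\stratSamp}{\stratSp}<\eta\leq\rho,
\]
proving continuity at $\stratSp$. The only genuinely delicate point is verifying that the specific $K$ one writes down lies in $E$ and covers $\topSp$; once this combinatorial observation about the vanishing of $\phiStrat$ on $(u-\delta_0,u+\delta_0]$ is in place, the remainder is an application of \cref{prop:cont_if_const_u}.
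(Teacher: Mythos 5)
Your reduction fails at its first, load-bearing step: the claim that $\topSp$ itself lies in $E$. For that you need $K_q = \phiStrat^{-1}(\spaceParam+\delta_0,1] = \topSp \setminus \subLevel{\stratSp}{\spaceParam}$ to be compact, and you justify this by asserting it is closed in $\topSp$. But elements of $\SamNP$ carry arbitrary, not necessarily continuous, functions to $[0,1]$ — this is why the proposition must \emph{assume} that $\subLevel{\stratSp}{\spaceParam}$ is closed — and the hypothesis $\subLevel{\stratSp}{\spaceParam\pm\delta}=\subLevel{\stratSp}{\spaceParam}$ is a statement about the \emph{values} of $\phiStrat$ (none lie in $(\spaceParam-\delta_0,\spaceParam+\delta_0]$), not a topological separation of the two pieces. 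In the situation the proposition is built for, $\stratSp=\Phi_*(\efreetangentCo(\topSp))$ for a Whitney stratified space, $\phiStrat$ equals $1$ on the regular stratum and is $<1$ on the singular one, so $\phiStrat$ is discontinuous exactly along $\subLevel{\stratSp}{\spaceParam}$ and $\phiStrat^{-1}(\spaceParam,1]$ is open but not closed (for the lemniscate it is $V\setminus\{0\}$). Hence $\topSp\notin E$, your set $U$ is not open in $\SamNP^{\mathcal K}$ (the map $g^{K}_{\varepsilon}$ for $K=\topSp$ is not among the maps defining the initial topology), and the argument collapses. Note also that if $\topSp$ did lie in $E$ the proposition would be nearly contentless: the whole reason for passing to the coarser topology $\SamNP^{\mathcal K}$ is that the convergence feeding into it (\cref{prop:tangent_co_conv_compacta}) is uniform only on compacta contained in single strata, and a $\stratSp$ whose two pieces are both compact, hence topologically separated, presents no difficulty.

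The paper's proof has to do real work precisely because no single $K\in E$ captures all of $\topSp$. It fixes $\gamma>0$ and uses the two compacta $K=\overline{\topSp\setminus \thicken[\gamma/2]{(\subLevel{\stratSp}{\spaceParam})}}\sqcup\subLevel{\stratSp}{\spaceParam}$ and $K'=\subLevel{\stratSp}{\spaceParam}$, which do lie in $E$; it then splits $\stratSamp_{\leq\spaceParam}$ into the part caught by $K_{\varepsilon(\sampX)}$ and the part inside the $\gamma$-neighborhood of $\subLevel{\stratSp}{\spaceParam}$, controls the resulting directed Hausdorff distances by applying \cref{prop:cont_if_const_u} to the restricted samples $g^{K}_{\varepsilon}(\stratSamp)$ and $g^{K'}_{\varepsilon}(\stratSamp)$, and only at the end lets $\gamma\to 0$. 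Your instinct to make \cref{prop:cont_if_const_u} the engine of the proof is the right one, but it must be applied after restricting to such proper compacta, not to $\stratSamp$ itself.
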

\begin{proof}
Let $\stratSamp = (\sampX, s') \in  \SamNP^{\mathcal K}$.
Note that convergence in $\SamP$ may be verified componentwise. Since convergence in $\SamNP^{\mathcal K}$ also implies $\varepsilon(\sampX) = \dmet[\HD]{\topSp}{\sampX} \to 0$, we only need to verify convergence in the component $\subLevel{\stratSamp}{\spaceParam}$.
We have 
\[
\dmet[\HD]{\stratSp_{\leq\spaceParam}}{ \stratSamp_{\leq\spaceParam}}\leq \dmet[\HD]{\stratSp_{\leq\spaceParam}}{ \stratSamp_{\leq\spaceParam} \cap K_{\varepsilon{(\sampX)}}} + \dmet[\HD]{\stratSp_{\leq\spaceParam}} { \stratSamp_{\leq\spaceParam}  \cap ({\stratSp_{\leq\spaceParam}})_{\gamma}},
\]
whenever $K = \overline{(\topSp - ({\stratSp_{\leq\spaceParam}}) _{\frac{\gamma}{2}} )} \sqcup \stratSp_{\leq\spaceParam}$
and $\gamma >0$ such that, $\sampX \subset K_{\varepsilon{(\sampX)}} \cup ({\stratSp_{\leq\spaceParam}})_{\gamma} $. Note, that for this to hold, it suffices that $\varepsilon(\sampX) \leq \frac{\gamma}{2}$. 
For the left summand we obtain,
\begin{align*}
    \dmet[\HD]{\stratSp_{\leq\spaceParam}}{ \stratSamp_{\leq\spaceParam} \cap K_{\varepsilon{(\sampX)}}} &= \dmet[\HD]{\forgetStr{g^K_{\varepsilon}(\stratSp)}}{\forgetStr{g^K_{\varepsilon}(\stratSamp)}} 
\leq \dmet[\SamNP]{{g^K_{\varepsilon}(\stratSp)}}{{g^K_{\varepsilon}(\stratSamp)}}, 
\end{align*}
by \cref{prop:cont_if_const_u}, for $\varepsilon({\sampX})$ sufficiently small and $g^K_{\varepsilon}(\stratSamp)$ close to $g^K_{\varepsilon}(\stratSp)$. For the other summand we first split the Hausdorff distance into the directed distances
\[
\dmet[\HD]{\stratSp_{\leq\spaceParam}} { \stratSamp_{\leq\spaceParam}  \cap ({\stratSp_{\leq\spaceParam}})_{\gamma}} \leq d_L(\stratSp_{\leq\spaceParam},\stratSamp_{\leq\spaceParam}  \cap ({\stratSp_{\leq\spaceParam}})_{\gamma}) + d_L(\stratSamp_{\leq\spaceParam}  \cap ({\stratSp_{\leq\spaceParam}})_{\gamma},\stratSp_{\leq\spaceParam})
\]
where $d_L(A,B) = \inf \{\delta \geq 0 \mid A \subset B_{\delta} \}$. Then, the second summand is bounded by $\gamma$ and for the first summand we observe that 
\[
d_L(\stratSp_{\leq\spaceParam},\stratSamp_{\leq\spaceParam}  \cap ({\stratSp_{\leq\spaceParam}})_{\gamma}) \leq d_L(\stratSp_{\leq\spaceParam},\stratSamp_{\leq\spaceParam}  \cap ({\stratSp_{\leq\spaceParam}})_{\varepsilon(\sampX)}).
\]
This is due to the fact that $\varepsilon(\sampX) < \gamma$ and $\stratSamp_{\leq\spaceParam}  \cap ({\stratSp_{\leq\spaceParam}})_{\varepsilon(\sampX)} \subset \stratSamp_{\leq\spaceParam}  \cap ({\stratSp_{\leq\spaceParam}})_{\gamma}$. If we set $K' = \subLevel{\stratSp}{\spaceParam}$ and invoke \cref{prop:cont_if_const_u} again we obtain
\begin{align*}
    d_L(\stratSp_{\leq\spaceParam},\stratSamp_{\leq\spaceParam}  \cap (\stratSp_{\leq\spaceParam})_{\varepsilon(\sampX)}) &= d_L(\forgetStr{g^{K'}_{\varepsilon}(\stratSp)},\forgetStr{g^{K'}_{\varepsilon}(\stratSamp)}) \\
&\leq \dmet[\SamNP]{g^{K'}_{\varepsilon}(\stratSp)}{g^{K'}_{\varepsilon}(\stratSamp)} ,
\end{align*}
for $g^{K'}_{\varepsilon}(\stratSamp)$ close to $g^{K'}_{\varepsilon}(\stratSp)$.
Summarizing, we have:
\[
\dmet[\HD]{\stratSp_{\leq\spaceParam}}{\stratSamp_{\leq\spaceParam}} \leq \dmet[\SamNP]{g^K_{\varepsilon}(\stratSp)}{g^K_{\varepsilon}(\stratSamp)} + \dmet[\SamNP]{g^{K'}_{\varepsilon}(\stratSp)}{g^{K'}_{\varepsilon}(\stratSamp)} + \gamma.
\]
In particular, we may first fix some $\gamma$ while the other terms converge to $0$ for $\stratSamp\to \stratSp$ in $\SamNP^{\mathcal K}$ by assumption. Since $\gamma$ can be taken arbitrarily small, the result follows.
\end{proof}
We are now finally in shape to define a map which equips samples with stratifications, depending on their approximate tangential structure.
\begin{definition}\label{def:zeta_phi_strat}
Let $\Phi:\SamB \to [0,1]$ be a continuous map and $\spaceParam \in [0,1)$, $\zoomParam \in \mathbb{R}_{+}$.
Let $\sampX \in \SamB$.
We call the image of $\sampX$ under the composition
\[
  \ePhiStr:  \Sam \xrightarrow{\efreemagnification} \SamBun \xrightarrow{\Phi_*} \SamNP \xrightarrow{\eforgetStr} \SamP
\]
the \define{ $\zoomParam$-th $\Phi$-stratification of $\sampX$ (with respect to $\spaceParam$).} In the case where $\zoomParam= \infty$, replace $\efreemagnification$ by $\efreetangentCo$. 
\end{definition}

\begin{example}\label{ex:zeta_phi_strat}
To illustrate the concepts in \cref{def:zeta_phi_strat} let us walk through every component of the composition defining $\ePhiStr$ for a specific sample. Let $\topSp$ denote the algebraic variety given by
\begin{equation}\label{eqn:cyclide}
\{ (x,y,z) \in \mathbb{R}^3 \mid (x^2 + y^2 + z^2 + 1.44)^2 - 7.84x^2 + 1.44y^2 = 0 \}.
\end{equation}
In the bottom left of \cref{fig:zeta_phi_strat}, a visual representation of $\topSp$ can be found. A finite sample from this variety, denoted $\sampX$, was obtained by randomly picking points from an enclosing rectangular cuboid and only keeping points that satisfy \eqref{eqn:cyclide} up to a small error. Choosing a magnification parameter $\zoomParam = 5$ we obtain the magnification bundle $\efreemagnification(\sampX)$ for $\sampX$, depicted in the top middle of \cref{fig:zeta_phi_strat}. $\Phi$ was chosen as in \cref{ex:phi-strat_pers_cohom}. Evaluating the fibers of $\efreemagnification(\sampX)$ we obtain a strongly stratified sample $\Phi_* (\efreemagnification(\sampX))$, shown on the left of \cref{fig:zeta_phi_strat}. Next, picking the threshold value $\spaceParam \in [0,1)$ to be $0.83$ induces a stratified sample via $\eforgetStr$. A visual comparison indicates that the resulting stratified sample is close to the Whitney stratified space given by $\topSp$ with two isolated singularities. This already points at the convergence behavior predicted by \cref{thrm:recovery_thrm}.
\begin{figure}[H]
\centering

\includegraphics[width=1\textwidth,trim={3cm 15pt 2cm 0pt},clip]{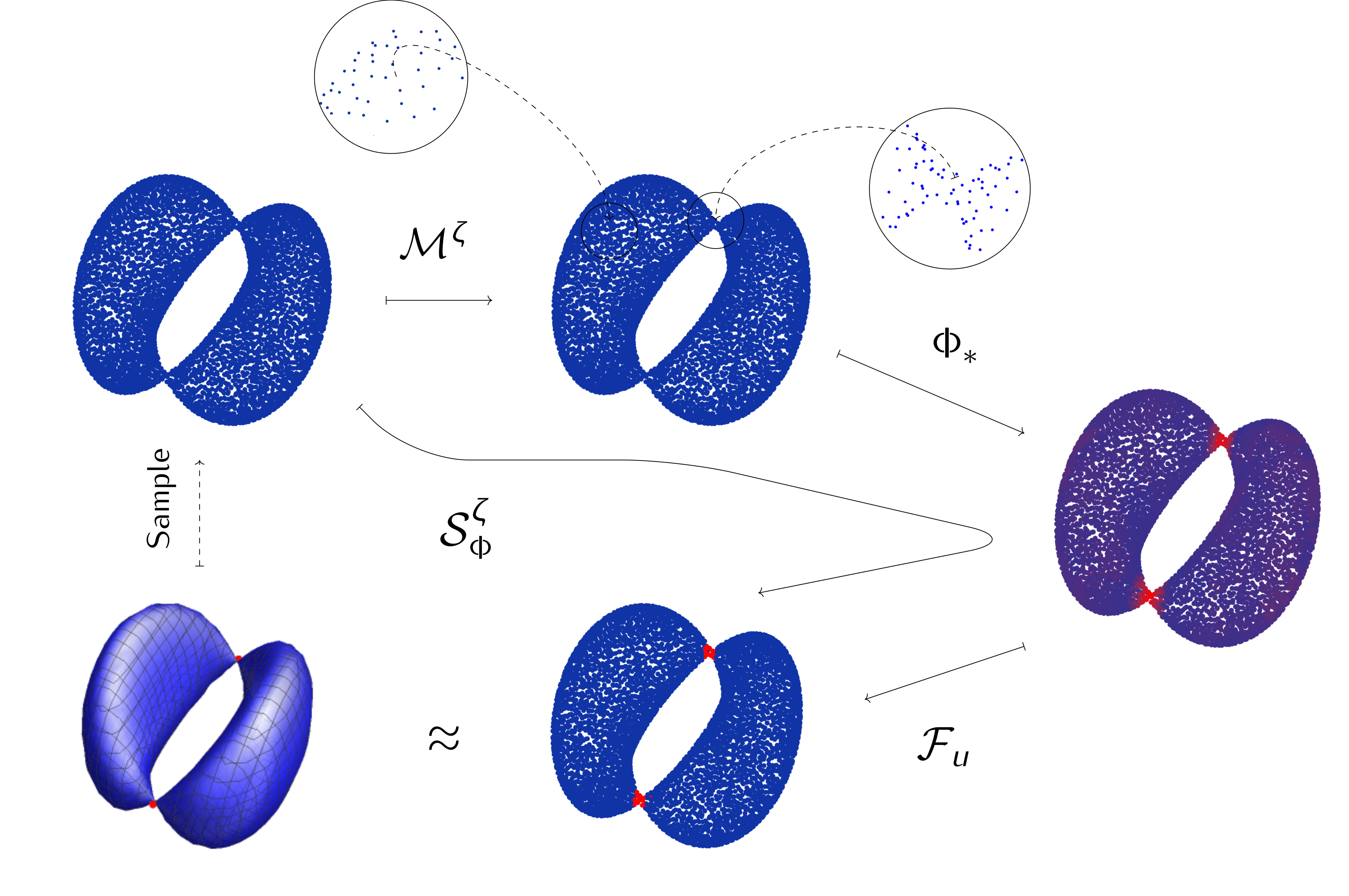}
\caption{Illustration of $\ePhiStr$ for a sample from a 2-dimensional real algebraic variety}
\label{fig:zeta_phi_strat}

\end{figure}

\end{example}
Using \cref{def:zeta_phi_strat}, we can restate the content of \cref{con:recover_strat_A} as follows.
\begin{proposition}\label{prop:recovering_strat_wo_convergence}
Let $\whs \in \SamP$ be a \LW stratified space, $\Phi$-stratified with respect to $\Phi: \SamB \to [0,1]$ as in \cref{def:Phi_strat}.
Then,  \[\sup\{\Phi(\tangentCo[x]{\topSp}) \mid x \in \whs_p \} < 1.\] In particular, 
\[
\PhiStr[\infty]{\topSp} = \whs,
\]
for $\sup\{\Phi(\tangentCo[x]{\topSp}) \mid x \in \whs_p \} < \spaceParam <  1$.
\end{proposition}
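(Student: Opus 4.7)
The plan is to decompose the statement into two parts: (i) upgrading the pointwise bound $\Phi(\tangentCo[x]{\whs}) < 1$ guaranteed by the $\Phi$-stratification hypothesis to a strict uniform bound on $\whs_p$, giving $m := \sup\{\Phi(\tangentCo[x]{\whs}) \mid x \in \whs_p\} < 1$; and (ii) unfolding the definition of $\ePhiStr[\infty]$ to verify that any threshold $\spaceParam \in (m, 1)$ recovers the original stratification.

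For (i), I would apply \cref{cor:tangent_var_con} to see that the tangent-cone map $\whs_p \to \SamB$, $x \mapsto \tangentCo[x]{\whs}$, is continuous, and then compose with the continuous $\Phi$ to obtain a continuous $[0,1]$-valued function on $\whs_p$. Under the (implicit) compactness of $\whs_p$ --- which holds in the setting of \cref{con:recover_strat_A} --- the supremum is attained at some $x_0 \in \whs_p$, and the $\Phi$-stratification hypothesis forces $\Phi(\tangentCo[x_0]{\whs}) < 1$, giving $m < 1$.

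For (ii), I would directly unpack $\PhiStr[\infty]{\topSp} = (\eforgetStr \circ \Phi_* \circ \efreetangentCo)(\topSp)$. By \cref{con:connecting_maps}, this is the pair $(\topSp, A)$ with $A := \{x \in \topSp \mid \Phi(\tangentCo[x]{\topSp}) \leq \spaceParam\}$, so the task reduces to showing $A = \whs_p$. The inclusion $\whs_p \subseteq A$ is immediate from $m < \spaceParam$. For the reverse, take $x \in \whs_q$; since $\whs_q = \phiStrat^{-1}\{q\}$ is open in $\topSp$ (the singleton $\{q\}$ being open in the Alexandrov topology on $\{p<q\}$), $\topSp$ coincides locally near $x$ with the smooth $q$-manifold $\whs_q$, and by \cref{ex:taylor} the tangent cone $\tangentCo[x]{\topSp}$ equals the tangent space $\tangentSp{\whs_q}$ --- a $q$-dimensional linear subspace of $\supSp$. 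The normalization condition from \cref{def:Phi_strat} then yields $\Phi(\tangentCo[x]{\topSp}) = 1 > \spaceParam$, so $x \notin A$.

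The only genuine subtlety is the passage from pointwise to uniform strict inequality, which relies essentially on compactness of $\whs_p$ --- without it, the supremum could equal $1$ even if $\Phi(\tangentCo[x]{\whs}) < 1$ at every $x$. Everything else is a direct definition chase combining continuity of tangent cones on strata (\cref{cor:tangent_var_con}), the identification of tangent cones at regular points with honest tangent spaces (\cref{ex:taylor}), and the normalization of $\Phi$ on $q$-dimensional linear subspaces.
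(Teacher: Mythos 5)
Your proposal is correct and follows essentially the same route as the paper, whose proof simply defers to \cref{con:recover_strat_A}: continuity of $x \mapsto \Phi(\tangentCo[x]{\whs})$ on the compact singular stratum (via \cref{cor:tangent_var_con}) gives the uniform bound $m<1$, and the identification of tangent cones at regular points with $q$-dimensional tangent spaces forces the value $1$ on $\whs_q$, so thresholding at any $u \in (m,1)$ recovers $\whs$. You correctly flag the one point the paper leaves implicit, namely that compactness of $\whs_p$ is needed to pass from the pointwise to the uniform strict inequality.
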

\begin{proof}
This was already covered in \cref{con:recover_strat_A}.
\end{proof}
We can now finally state the main theorem about approximating the stratification of a \LW $\Phi$-stratified space $\whs$. In practice, it guarantees that for $\zoomParam$ large enough and given a sufficiently good sample one can use the $\zoomParam$-th $\Phi$-stratification to approximate the stratified space $\whs$. In particular, this result can be applied to all compact, subanalytically Whitney stratified spaces.
\begin{theorem}\label{thrm:recovery_thrm} Let $\pos = \{ p < q\}$ and let $\whs=(\topSp, \topSp \to \pos) \in \SamP$ be a compact \LW stratified space,  $\Phi$-stratified with respect to $\Phi: \SamB \to [0,1]$. Then there exists $u_0 \in (0,1)$ such that 
\[
\sampconvm{\PhiStr{\sampX}}{\whs}{\topSp},
\]
for $\spaceParam \in [u_0, 1)$.
\end{theorem}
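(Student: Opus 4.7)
The plan is to decompose $\ePhiStr$ as
\[ \ePhiStr = \eforgetStr \circ \Phi_* \circ \efreemagnification \]
and to propagate the convergence $\sampX \to \topSp$, $\zeta \to \infty$ (with $\zeta \, \dmet[\HD]{\sampX}{\topSp} \to 0$) through each of the three stages. The three necessary continuity and convergence statements are already in place: \cref{prop:tangent_co_conv_compacta} handles $\efreemagnification$, \cref{lem:Phi_cont_B} handles $\Phi_*$, and \cref{prop:forgetstr_cont_wrt_new_topology} handles $\eforgetStr$. The first two are stated with respect to the auxiliary topologies $\SamBun^{\mathcal K}$ and $\SamNP^{\mathcal K}$ from \cref{con:convergence_on_compacta}, so the only real task is to select a single compactum datum $\mathcal K$ compatible with all three results, and then identify the limit.

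First I would pick $u_0$. Since $\whs$ has two strata and $\topSp$ is compact, the closed stratum $\whs_p$ is compact. By \cref{cor:tangent_var_con} the map $x \mapsto \Phi(\tangentCo[x]{\topSp})$ is continuous on $\whs_p$, and $\Phi$-stratification forces its values to lie in $[0,1)$, so compactness gives $m := \max_{x \in \whs_p} \Phi(\tangentCo[x]{\topSp}) < 1$. I set $u_0 \in (m, 1)$ and fix any $u \in [u_0, 1)$. Let $E$ be the collection of compacta $K \subset \supSp$ admitting a decomposition $K = K_p \sqcup K_q$ with compact $K_p \subset \whs_p$ and $K_q \subset \whs_q$, and put $\mathcal K := (E, \dmet[\HD]{\topSp}{-})$.

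Then \cref{prop:tangent_co_conv_compacta} delivers $\sampconvm{\freemagnification{\sampX}}{\freetangentCo{\topSp}}{\topSp}$ in $\SamBun^{\mathcal K}$. Postcomposing with the continuous $\Phi_* \colon \SamBun^{\mathcal K} \to \SamNP^{\mathcal K}$ from \cref{lem:Phi_cont_B} transports this convergence onto $\str := \Phi_* \efreetangentCo(\topSp)$ in $\SamNP^{\mathcal K}$. To invoke \cref{prop:forgetstr_cont_wrt_new_topology} at $\str$ I verify its hypotheses: the strong stratification underlying $\str$ is $x \mapsto \Phi(\tangentCo[x]{\topSp})$, which by \cref{ex:taylor} equals $1$ on $\whs_q$ (tangent cones of manifold strata are $q$-dimensional linear subspaces) and is bounded by $m < u$ on $\whs_p$; hence $\subLevel{\str}{u} = \whs_p$ is closed and unchanged under small perturbations of $u$. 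Moreover the compactum set required in that proposition coincides with our $E$. Applying \cref{prop:forgetstr_cont_wrt_new_topology} and then \cref{prop:recovering_strat_wo_convergence} to identify $\eforgetStr \str = \PhiStr[\infty]{\topSp} = \whs$ completes the argument.

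The only genuine obstacle is the bookkeeping of $\mathcal K$ across the three propositions, but this is forced by the key observation that $\subLevel{\str}{u}$ collapses to $\whs_p$ throughout the interval $(m,1)$. Once this identification is recorded, the three results fit together tautologically and no further analytic input is needed beyond what is already available in \cref{subsec:conv_tang,subsec:restrat}.
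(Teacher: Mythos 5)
Your proposal is correct and follows essentially the same route as the paper's proof: decompose $\ePhiStr$ into the three maps, apply \cref{prop:tangent_co_conv_compacta}, \cref{lem:Phi_cont_B} and \cref{prop:forgetstr_cont_wrt_new_topology} in sequence with a common choice of $\mathcal{K}$, and identify the limit via \cref{prop:recovering_strat_wo_convergence}. Your explicit verification that $\subLevel{\str}{u} = \whs_p$ for all $u \in (m,1)$ is a welcome extra detail the paper leaves implicit, but it is not a different argument.
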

\begin{proof}
Let $\mathcal{K}$ be as in \cref{prop:tangent_co_conv_compacta}. It is the content of the latter proposition that 
\[
\sampconvm{\freemagnification{\sampX}}{\freetangentCo{\topSp}}{\topSp} \textnormal{ in $\SamBun^{\mathcal K}$}.
\]
Applying $\Phi_*$ to this and using \cref{lem:Phi_cont_B}, we obtain
\[
\sampconvm{ \Phi_* \circ \freemagnification{\sampX}}{\Phi_* \circ\freetangentCo{\whs}}{\topSp} \textnormal{ in $\SamNP^{\mathcal K}$}.
\]
Now, note that $\Phi_* \circ \freetangentCo{\topSp}$ fulfills the requirements of \cref{prop:forgetstr_cont_wrt_new_topology}, if we take $1 > u > \max\{ \Phi_* \circ \freetangentCo{\topSp}(x) \mid x \in \topSp \}$. Hence,
\[
\sampconvm{\PhiStr{\sampX} = \eforgetStr \circ \Phi_* \circ \freemagnification{\sampX}}{\eforgetStr \circ \Phi_* \circ \freetangentCo{\topSp} = \whs,}{\topSp}
\]
where the equality follows by \cref{prop:recovering_strat_wo_convergence}.
\end{proof}
Finally, we can now combine this result with \cref{cor:flexible_pers_strat_Lipsch} and \cref{cor:continuity_of_pers_tame} which guarantees that $\ePhiStr$ may be used to infer stratified homotopy types from non-stratified samples. Note that in the following we again assume $\whs$ to be linearly rescaled in such a way that it is cylindrically stratified. Equivalently, this does not need to be assumed if $\paramSpace$ is reparametrized by the scaling factor.
\begin{corollary}\label{cor:composed_convergence}
Let $\pos= \{p <q\}$ and let $\whs=(\topSp, \topSp \to \pos) \in \SamP$ be a compact \LW stratified space,  $\Phi$-stratified with respect to $\Phi: \SamB \to [0,1]$. Then there exists $u_0 \in (0,1)$ such that
\[
\sampconvm{\epers \circ \PhiStr{\sampX}}{\pers{\whs}}{\topSp},
\]
for $\spaceParam \in [u_0, 1)$. Furthermore,
\[
\sampconvm{\epersParam \circ \PhiStr{\sampX}}{\persParam{\whs}}{\topSp},
\]
for $\spaceParam \in [u_0, 1)$ and $\diagParam \in \paramSpace$.
\end{corollary}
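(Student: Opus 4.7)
The plan is to obtain both convergence statements by composing the convergence guaranteed by \cref{thrm:recovery_thrm} with an appropriate continuity statement for the persistent stratified homotopy type. Unravelling \cref{not:convergence_sampling_mult}, what must be shown is that for every pair of sequences $\zoomParam_n \to \infty$ and $\sampX_n \in \Sam$ with $\zoomParam_n \dmet[\HD]{\sampX_n}{\topSp} \to 0$, the image sequences $\epers \circ \PhiStr[\zoomParam_n](\sampX_n)$ and $\epersParam \circ \PhiStr[\zoomParam_n](\sampX_n)$ converge to $\pers{\whs}$ and $\persParam{\whs}$ respectively. In both cases \cref{thrm:recovery_thrm} already delivers the convergence
\[
\PhiStr[\zoomParam_n](\sampX_n) \to \whs \qquad \text{in } \SamP,
\]
for a suitable threshold $u_0$ depending only on $\whs$ and $\Phi$. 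It therefore remains to verify continuity of the relevant persistent homotopy type functor at $\whs$.

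For the parameter-free version, \cref{cor:flexible_pers_strat_Lipsch} states that $\epers \colon \SamP \to \opStrat$ is globally $2$-Lipschitz, so no additional regularity hypothesis on the limit is required: applying $\epers$ to the sequence in $\SamP$ yields the asserted convergence in $\opStrat$, and one invokes \cref{prop:ALT_holinks_are_links} to identify $\epers(\whs)$ with $\pers{\whs}$ (treating $\whs$ via $\estrongStr$ throughout, as is the convention of the pipeline in \cref{subsec:def_pers_strat}).

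For the parametrized version, the functor $\epersParam$ is not globally Lipschitz, but \cref{cor:continuity_of_pers_tame} ensures continuity at every compact cylindrically stratified metric space. Since $\whs$ is a compact Whitney stratified space, \cref{ex:cyl_nbhds} guarantees (after the usual linear rescaling assumed throughout) that it is cylindrically stratified, so $\epersParam$ is continuous at $\whs$. Sequential continuity then converts the convergence $\PhiStr[\zoomParam_n](\sampX_n) \to \whs$ in $\SamP$ into the convergence $\epersParam \circ \PhiStr[\zoomParam_n](\sampX_n) \to \epersParam(\whs) \simeq \persParam{\whs}$ in $\pStrat$, again using \cref{prop:ALT_holinks_are_links} to identify the limit.

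No step is genuinely hard; the whole argument is an almost purely formal composition of \cref{thrm:recovery_thrm} with stability of $\epers$ and $\epersParam$. The only point deserving care is the verification that the hypotheses of \cref{cor:continuity_of_pers_tame} are actually met at $\whs$ (compactness plus cylindrical stratification), and that the rescaling convention needed to make $\whs$ cylindrically stratified is consistent with the parameter range $\paramSpace$ used in the statement. Once these bookkeeping issues are addressed, the conclusion follows immediately from sequential continuity of $\epers$ and $\epersParam$.
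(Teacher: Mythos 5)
Your proposal is correct and matches the paper's own (implicit) argument exactly: the paper derives \cref{cor:composed_convergence} precisely by composing \cref{thrm:recovery_thrm} with the global $2$-Lipschitz property of $\epers$ from \cref{cor:flexible_pers_strat_Lipsch} and the continuity of $\epersParam$ at compact cylindrically stratified spaces from \cref{cor:continuity_of_pers_tame}, under the stated rescaling convention. The only superfluous step is invoking \cref{prop:ALT_holinks_are_links} to identify $\epers(\whs)$ with $\pers{\whs}$, since the latter is by definition the image of $\whs$ under $\epers$.
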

\section{Conclusion}\label{sec:conclusion}
The central advantage of the approach to stratified TDA we have described in this work is that it is highly modular. In summary, it can be decomposed into three steps. 
\begin{enumerate}
    \item From non-stratified data obtain stratified data (\cref{subsec:restrat}).
    \item From stratified data obtain a persistent stratified homotopy type (\cref{subsec:def_pers_strat}).
    \item From a stratified homotopy type compute algebraic invariants.
\end{enumerate}
The goal of this work was to show the feasibility of the first two steps in the restricted case of two strata. Our results in \cref{sec:pers_strat,sec:recover_strat} show that the resulting notion of persistent stratified homotopy type fulfills many of the properties required in application (\ref{enum:properties_of_PH1}, \ref{enum:properties_of_PH2} and \ref{enum:properties_of_PH3}), which are fulfilled by the classical persistent homotopy type, such as stability (\cref{prop:pers_strat_htpy_type_C-lipschitz}), computability (\cref{rem:computability_of_strat_pers}) and the availability of inference results (\cref{prop:thicken_stab_glob,thrm:recovery_thrm,cor:composed_convergence}).
There are a series of promising avenues arising from this first step in persistent stratified homotopy theory.
\begin{enumerate}
    \item So far, our constructions are mostly developed for the case of two strata. In the introduction, we have already described in some detail why we decided to restrict to this scenario. Nevertheless, for possible applications, the case of multiple strata seems of great interest. We are aware that there is currently ongoing research concerning how to recover stratifications in the case of arbitrary posets, which could greatly increase the possible realm of application. At the same time, such an approach would also require a generalization of the inference and stability results of  \cref{subsec:def_pers_strat,subsec:stab_pers_type,subsec:stab_pers_type_whit} to persistent stratified homotopy types with more than two strata. Proofs of such are expected to be inductive in nature, which suggests an inductive approach to stratified homotopy theory on the theoretical side. This has yet to be established in detail. 
    \item While our results in this work are mostly theoretical, we are currently working on implementing the stratification learning method and persistent stratified homotopy types on a computer. One possible next step is then to apply these methods to inherently singular data sets such as retinal artery photos, and investigate the stability and expressiveness of our approach in practice. This also requires a more detailed study and evaluation of choices of functions $\Phi$, for the construction of $\Phi$-stratifications (see \cref{def:Phi_strat}) in an applied scenario.
    \item The application of persistent stratified homotopy types to real-world data also requires a further investigation of the last step - i.e. passing to algebraic invariants such as persistent homology. While there are some expressive and well-understood algebraic invariants at hand - for example the persistent homology of the links and strata - there is a series of more intricate invariants to consider. These include a persistent version of intersection homology, as well as an interpretation of the persistent stratified homotopy type as a multi-parameter persistence module. Studying the properties of such invariants, ranging from computability to expressiveness, leaves much room for future research projects both in theory as well as in application.
\end{enumerate}

\printbibliography

\appendix
\section{Some details on abstract homotopy theory}
\begin{remark}\label{rem:order_ho_fun}
There are some subtleties to be considered, which come down to the order in which one passes to the persistent and homotopical perspective. 
We emphasize that by $\ho \textbf{T}^{I}$, for some indexing category $I$, we mean the localization of the functor category at pointwise weak equivalences, and not the functor category $(\ho \textbf{T})^{I}$, obtained by localizing at weak equivalences first. The universal property of the localization induces a canonical functor 
\begin{align*}
    \ho \textbf{T}^{I} \to (\ho \textbf{T})^{I}.
\end{align*}
This functor is essentially never an equivalence of categories. For example, for $\textbf{T} = \Top$ with the usual class of weak equivalences, the notion of isomorphism on the left-hand side is fine enough to compute homotopy limits and colimits. This is not the case on the right-hand side (see for example \cite{hirschhorn2003modelcat}, for an introduction to the theory). Generally, the functor will be neither essentially surjective nor fully faithful. Essential surjectivity, for example, comes down to whether or not a homotopy commutative diagram is equivalent to an actual commutative diagram (see \cite{dwyer1984kandiag} for a detailed discussion.) \\
To see that faithfulness is generally not the case, consider replacing $\mathbb R_+$ by $I = \{ 0 < 1\}$, and taking $\textbf{T} = \Top$, $D = \{* \to S^1 \}$ and $D' = \{* \to X\}$, for some pointed space $X$.
Both objects may be considered as pointed spaces.
Then, the hom-objects from $D$ to $D'$ in  $\ho \textbf{T}^{I}$ are the homotopy groups of $X$. In $(\ho \textbf{T})^{I}$, however, the hom-object is given by free homotopy classes from $S^1$ to $X$, i.e. by the abelianization of the homotopy group of $X$.\\
In the special case where $I = \mathbb R_+$ and $\textbf{T} = \Top$ this leaves, a priori, an ambivalence by what one means by a persistent homotopy type. Given a persistent space, i.e. an object in $\Top ^{\mathbb R_+}$, one can either consider its isomorphism class in $\ho( \Top^{\mathbb R_+})$ or in $(\ho \Top)^{\mathbb R_+}$. We argue that the former is the conceptually better notion since properties \ref{enum:properties_of_PH1} to \cref{enum:properties_of_PH3} may already be stated on this level. At the same time, due to the comparison functor between the two categories, results obtained in $\ho( \Top^{\mathbb R_+})$ are generally stronger than results in $(\ho \Top)^{\mathbb R_+}$. \\
However, one should note that when passing to the algebraic world by applying homology index-wise, both perspectives agree. Finally, we may add that for most applications the difference is negligible. This is a consequence of \cref{prop:almost_comm_of_ho_and_diag} which, among other things, implies, as long as one restricts to persistent objects which are tame in the sense that their homotopy type only changes at finitely many points, then the functor 
\begin{diagram}
    \ho \textbf{T}^{\mathbb{R}_+} \to (\ho \textbf{T})^{\mathbb R_+}
\end{diagram}
induces a bijection on isomorphism classes. In particular, there is no difference in the resulting notion of persistent homotopy type.
\end{remark}
\begin{lemma}
\label{prop:almost_comm_of_ho_and_diag}
Let $\mathcal M$ be a (simplicial) model category and $I$ be any small indexing category. Then,
\begin{align*}
  F:  \ho \mathcal M^{I} \to (\ho \mathcal M)^{I}
\end{align*}
reflects isomorphisms. Furthermore, let $I$ be a finite, totally ordered poset. Then $F$
is essentially surjective and full. In particular, two objects in $\ho \mathcal M^{I}$ are isomorphic, if and only if their images under $F$ are isomorphic.
\end{lemma}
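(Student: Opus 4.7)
My plan is to equip the functor category $\mathcal M^I$ with the projective model structure, which for a finite totally ordered poset $I$ is a finite direct category and hence coincides with the Reedy model structure; in the simplicial case it is itself a simplicial model structure. After this, every morphism of $\ho \mathcal M^I$ can be represented by a genuine morphism in $\mathcal M^I$ between (Reedy) cofibrant-fibrant objects, modulo homotopy, and analogously componentwise for $\ho \mathcal M$.

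For the reflection of isomorphisms, take $f \colon X \to Y$ in $\ho \mathcal M^I$ with $F(f)$ an isomorphism. Replacing $X,Y$ by cofibrant-fibrant models I may assume $f$ is represented by a morphism of $\mathcal M^I$ between objects whose components $X_i, Y_i$ are cofibrant-fibrant in $\mathcal M$. The hypothesis means each $f_i$ descends to an isomorphism in $\ho \mathcal M$ between cofibrant-fibrant objects, hence must be a weak equivalence in $\mathcal M$. So $f$ is a pointwise weak equivalence and thus an isomorphism in $\ho \mathcal M^I$.

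For essential surjectivity with $I = \{0 < \dots < n\}$, I would strictify $G \colon I \to \ho \mathcal M$ inductively: choose a cofibrant-fibrant representative $X_i$ of each $G(i)$, and lift each structure arrow $G(i) \to G(i+1)$ to a map $f_i \colon X_i \to X_{i+1}$ in $\mathcal M$, which exists because $\ho \mathcal M(X_i, X_{i+1})$ is a quotient of $\mathcal M(X_i, X_{i+1})$ for cofibrant-fibrant endpoints. Since the only non-trivial composites in $I$ are telescopes of consecutive arrows and composition in $\ho \mathcal M$ on cofibrant-fibrant objects is inherited from $\mathcal M$, the datum $(X_\bullet, f_\bullet)$ is an honest functor $I \to \mathcal M$, and the chosen levelwise isomorphisms $X_i \simeq G(i)$ assemble by construction to an isomorphism $F(X) \simeq G$ in $(\ho \mathcal M)^I$.

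The main obstacle is fullness, which I expect to be the delicate step. Given Reedy cofibrant-fibrant $X,Y \in \mathcal M^I$ (so each $X_{i-1} \to X_i$ is a cofibration and each $Y_i$ is fibrant) and $\phi \colon F(X) \to F(Y)$ in $(\ho \mathcal M)^I$, I would construct a strict lift by induction. Assume a partial lift $(f_0,\dots,f_{i-1})$ commuting strictly with the structure maps and pick any representative $\tilde f_i \colon X_i \to Y_i$ of $\phi_i$; naturality of $\phi$ at level $i$ yields only a homotopy $H \colon X_{i-1} \otimes I \to Y_i$ with $H|_0 = (Y_{i-1}\to Y_i) \circ f_{i-1}$ and $H|_1 = \tilde f_i \circ (X_{i-1} \to X_i)$. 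The key input is the pushout-product axiom: the pushout-product of the cofibration $X_{i-1} \hookrightarrow X_i$ with the trivial cofibration $\{1\} \hookrightarrow I$ gives a trivial cofibration
\[
(X_i \otimes \{1\}) \cup_{X_{i-1} \otimes \{1\}} (X_{i-1} \otimes I) \hookrightarrow X_i \otimes I,
\]
so against the fibrant $Y_i$ I can extend the pair $(\tilde f_i, H)$ to $\bar K \colon X_i \otimes I \to Y_i$. Setting $f_i := \bar K|_{X_i \otimes \{0\}}$ yields a map homotopic to $\tilde f_i$ (hence still representing $\phi_i$) and satisfying $f_i \circ (X_{i-1} \to X_i) = H|_0 = (Y_{i-1} \to Y_i) \circ f_{i-1}$ strictly. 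Iterating produces $f \in \mathcal M^I$ with $F([f]) = \phi$. The ``in particular'' clause is then immediate: any isomorphism $F(X) \simeq F(Y)$ in $(\ho \mathcal M)^I$ lifts by fullness to some $g \in \ho\mathcal M^I(X,Y)$ whose image is an isomorphism, so by the reflection property already shown $g$ itself is an isomorphism.
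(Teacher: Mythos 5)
Your proof is correct and follows essentially the same route as the paper's: isomorphism reflection via pointwise detection of weak equivalences, essential surjectivity by lifting the consecutive structure maps of a functor out of $[n]$, and fullness by induction with a lifting argument that rectifies each homotopy-commutative square one level at a time. The only cosmetic difference is that you phrase the rectification step via the cylinder $X_{i-1}\otimes \Delta^1$ and the pushout-product axiom, whereas the paper uses the adjoint formulation with the path object $D_{n+1}'^{\Delta^1}$ and the fibration $ev_1$.
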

\begin{proof}
Ultimately, this comes down to the fact that homotopy coherent diagrams of the particularly simple shapes involved are easy to understand.
The proof requires a series of standard arguments in the theory of model categories. See \cite{hirschhorn2003modelcat} for a comprehensive overview.
To see that the functor characterizes isomorphisms, note that a morphism in a functor category is an isomorphism, if and only if is so pointwise. Since a morphism descends to an isomorphism in the homotopy category, if and only if it is a weak equivalence, and weak equivalences in $\mathcal M^{I}$ (equipped with any of the usual model structures) are defined pointwise, this shows that a morphism in $\mathcal M^{I}$ is an isomorphism in $\ho(\mathcal M^{I})$ if and only if its image under $F$ is an isomorphism.\\
The next statement holds in general. However, we show only the case of a simplicial model category, since all model categories relevant in this paper fulfill this property and the availability of a canonical cylinder object makes the proof somewhat more digestible.
Essential surjectivity is immediate, as functors $D$ defined on a totally ordered set $I \cong [n]=\{ 0, ..., n\}$ are entirely determined by their values valued on $D(i \leq i+1)$ and conversely any sequence of morphisms $X_i \to X_{i+1}$, uniquely determines a functor. Thus, (up to an isomorphism in the right-hand side category) being a functor with values in the homotopy category $\ho \mathcal M ^{I}$ is equivalent to being a functor with values in $\mathcal M^{I}$.\\ Now, to see fullness, consider objects $D$ and $D'$ on the left-hand side. Without loss of generality, we may assume that $D$ is a cofibrant and $D'$ a fibrant object with respect to the injective model structure (which exists since $I$ is a Reedy category in the obvious fashion. See \cite{hirschhorn2003modelcat} for an introduction to Reedy model structures.) 
In particular, morphisms in the homotopy categories between $D$ and $D'$ and between $D_i, D_i'$ are given by (simplicial) homotopy classes.
We now proceed to show fullness by induction over $n$. The case $n = 0$ is trivial. Now let $f: D \to D'$ be a morphism in  $(\ho \mathcal M)^{I}$, i.e. we are given a homotopy commutative diagram
\begin{diagram}
    D_0 \arrow[d, "f_0"] \arrow[r] & \dots \arrow[r] & D_n \arrow[d, "f_n"] \arrow[r, "i_n"] & D_{n+1} \arrow[d, "f_{n+1}"] \\
    D'_0 \arrow[r] & \dots \arrow[r] & D'_n \arrow[r, "i'_n"] & D'_{n+1} 
\end{diagram}
by inductive assumption, we can assume that up to $D_n$ the diagram is actually commutative. It remains to show, that there exists a morphism $g_{n+1}$, simplicially homotopic to $f_{n+1}$, such that the right-hand square commutes on the nose. Let $H: D_n \to D'^{\Delta^1}_{n+1}$ be the adjoint to the simplicial homotopy from the right down to the down right composition.
We may instead solve the induced lifting problem
\begin{diagram}
D_n \arrow[r, "H"] \arrow[d, "i_n"] & {D_{n+1}'}^{\Delta^{1}} \arrow[d, "ev_1"]\\
D_{n+1} \arrow[r, "f_{n+1}"'] \arrow[ru, dashed]& D'_{n+1}
\end{diagram}
such a lift exists, since, by assumption, the left-hand side map is a cofibration and the right-hand side map is a fibration. Thus, we have shown fullness.
\end{proof}
\begin{lemma}\label{lem:appendix_constant_diagram}
Let $\mathcal{M}$ be a relative category (i.e. a category equipped with a notion of weak equivalence). Let $U \subset \paramSpace \times \mathbb R_+$ be a subset containing $\paramSpace \times \{0 \}$. Let $D \in \mathcal{M}^{U}$, be such that \[D(\diagParam, 0) \to D(\diagParam, \alpha)\] is a weak equivalence,
for all $(\diagParam, \alpha) \in U$. Let $D\mid_{\paramSpace \times \{0\}}$ be homotopically constant of value $M \in \mathcal M$. Then $D$ is also homotopically constant of value $M$.
\end{lemma}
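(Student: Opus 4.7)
The plan is to exhibit a natural zigzag of pointwise weak equivalences in $\mathcal{M}^U$ between $D$ and the constant functor with value $M$, which by definition will show that $D$ is homotopically constant of value $M$.

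First I would introduce the two structural maps relating $U$ and $\paramSpace \times \{0\}$. Let $i\colon \paramSpace \times \{0\} \hookrightarrow U$ denote the inclusion, and define the retraction
\[
\pi\colon U \to \paramSpace \times \{0\}, \qquad (\diagParam,\alpha) \mapsto (\diagParam,0).
\]
Both are order-preserving with respect to the product order on $\paramSpace \times \mathbb R_+$ restricted to $U$, so they induce functors between the associated poset categories; moreover $\pi \circ i = \id$. Pullback along these functors gives restriction/precomposition functors $i^*$ and $\pi^*$ between $\mathcal{M}^U$ and $\mathcal{M}^{\paramSpace \times \{0\}}$. Since weak equivalences in these functor categories are by convention pointwise, $i^*$ and $\pi^*$ both preserve weak equivalences, and hence descend to the respective homotopy categories.

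Next I would use the two hypotheses separately. On the one hand, the restriction $i^* D = D\mid_{\paramSpace \times \{0\}}$ is homotopically constant with value $M$ by assumption, i.e.\ there exists a zigzag of pointwise weak equivalences in $\mathcal{M}^{\paramSpace \times \{0\}}$ connecting $i^* D$ to $\const_M$. Applying $\pi^*$ to this zigzag yields a zigzag of pointwise weak equivalences in $\mathcal{M}^U$ connecting $\pi^* i^* D$ to $\pi^* \const_M = \const_M$. On the other hand, I would construct a natural transformation $\eta\colon \pi^* i^* D \to D$ whose component at $(\diagParam,\alpha)$ is the structure map $D((\diagParam,0) \leq (\diagParam,\alpha))\colon D(\diagParam,0) \to D(\diagParam,\alpha)$ of $D$; naturality amounts to the commutativity, for every relation $(\diagParam,\alpha) \leq (\diagParam',\alpha')$ in $U$, of the two paths around the square built from the structure maps of $D$, which is immediate from the functoriality of $D$ (both compositions equal $D((\diagParam,0) \leq (\diagParam',\alpha'))$). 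By the hypothesis that $D(\diagParam,0) \to D(\diagParam,\alpha)$ is a weak equivalence for every $(\diagParam,\alpha) \in U$, the transformation $\eta$ is a pointwise weak equivalence.

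Concatenating the two zigzags yields
\[
\const_M \;\xleftarrow{\simeq}\; \cdots \;\xrightarrow{\simeq}\; \pi^* i^* D \;\xrightarrow{\;\eta\;}\; D
\]
in $\mathcal{M}^U$ with all arrows pointwise weak equivalences, so $D \simeq \const_M$ in $\ho \mathcal{M}^U$. The only point that requires care is the verification that $\pi$ is indeed order-preserving on $U$ and that $\eta$ is natural; both reduce to elementary bookkeeping in the product poset, and no model-categorical input beyond preservation of pointwise weak equivalences by precomposition is needed.
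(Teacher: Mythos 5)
Your argument is correct and is essentially the paper's own proof: the paper realizes the functor $(\diagParam,\alpha)\mapsto D(\diagParam,0)$ as a Kan extension along the inclusion of $\paramSpace\times\{0\}$ (whose slices have terminal objects of the form $(\diagParam,0)$, so the extension reduces to precomposition with your projection $\pi$), and then uses the same pointwise-weak-equivalence natural transformation to $D$ built from the structure maps, together with the transported zigzag from the constant functor. There are no gaps.
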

\begin{proof}
This follows from the fact that $\paramSpace$ is initial in $U$. 
Let $i\pp \paramSpace \times \{ 0\} \hookrightarrow U$ be the inclusion. Note that, in this specific scenario
\[
i_*(D')(\diagParam, \alpha) = D'_{(\diagParam, 0)}
\]
since any slice involved in the right Kan-extension have a terminal object of the form $(\diagParam,0)$. In particular, this means that $i_*$ preserves weak equivalences between all objects. Furthermore, by assumption, this equality implies that the natural transformation
\[
i_* D\mid_{(\paramSpace, 0)} \to D
\]
is a weak equivalence. Now, by assumption, $D\mid_{(\paramSpace, 0)}$ is weakly equivalent to some constant functor $C$ in $\mathcal {M}^{\paramSpace \times \{0 \}}$. In particular, this implies that there is a zigzag of weak equivalences 
\begin{diagram}
C \arrow[r] & D' \arrow[r] \arrow[l] & \dots \arrow[l] \arrow[r]&  D\mid_{\paramSpace \times 0} \arrow[l]
\end{diagram}
applying $i_*$ and using the fact that it preserves weak equivalences between all objects, we thus obtain a zigzag of weak equivalences
\begin{diagram}
i_*C \arrow[r] & i_*D' \arrow[r] \arrow[l] & \dots \arrow[l] \arrow[r] &  i_*D\mid_{(\paramSpace, 0)} \arrow[l] \arrow[r] & D 
\end{diagram}
which induces an isomorphism in $\ho \mathcal M^{U}$. Finally, note that if $C$ is constant of value $M \in \mathcal M$, then so is $i_*C$.
\end{proof}
\section{Results on definable and Whitney Stratified spaces}
\subsection{Definable sets can be thickened}
The following lemma seems folklore knowledge to some degree. We provide it here for the sake of completeness. It seems to us that, with some extra technical effort, methods used in \cite{chazal2005weak} may even be used to obtain strongly stratified mapping cylinder neighborhoods. However, the following result suffices for our purposes.
\begin{lemma}\label{lem:appendix_definably_thickenable}
Let $\topSp \subset Y \subset \supSp$ be definable with respect to some o-minimal structure and $X$ compact. Then, there exists a $\varepsilon>0$ such for $0 < \alpha < \varepsilon$ the following holds: 
\begin{enumerate}
\item $\topSp \hookrightarrow \thicken[\alpha]{\topSp} \cap Y $ is a strong deformation retract.
\item \label{item:second_def_thick} There is a homeomorphism $(\thicken[\alpha]{\topSp} \cap Y) \setminus X \cong \mathrm{d}_{\topSp}^{-1}{(\frac{\alpha}{2})} \times (0, \alpha]$, such that the diagram
\[\begin{tikzcd}
(\thicken[\alpha]{\topSp} \cap Y) \setminus X \arrow[rd, "\mathrm{d}_{X}"'] \arrow[rr, "\sim"] && \mathrm{d}_{\topSp}^{-1}{(\frac{\alpha}{2})} \times (0, \alpha] \arrow[ld, "\pi_{(0,\alpha]}"]\\
& (0, \alpha]&
\end{tikzcd}
\]
commutes.
\end{enumerate}
 Furthermore, if $Y= \supSp$, then $\varepsilon$ may be taken to be the weak feature size of $\topSp$ as in \cite[Definition 3.1]{chazal2005weak}.
\end{lemma}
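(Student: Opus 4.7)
The plan is to extract the conclusion from two classical tools: Hardt's trivialization theorem for definable maps, which delivers the fiber bundle description in item (2), and the triangulability of compact definable sets relative to a closed definable subset, which upgrades this trivialization into a genuine mapping cylinder structure and hence into the strong deformation retract asserted in item (1). The starting observation is that $X$ and $Y$ being definable forces the distance function $\mathrm{d}_{X} \colon Y \to [0,\infty)$ to be definable as well, since it can be written as an infimum over a definable relation.

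First I would apply Hardt's theorem to the restricted definable map $\mathrm{d}_{X}\colon Y\setminus X \to (0,\infty)$. This yields some $\varepsilon_0>0$ and a definable homeomorphism
\[
\phi\colon \mathrm{d}_{X}^{-1}(0,\varepsilon_0)\cap Y \xrightarrow{\sim} \mathrm{d}_{X}^{-1}\bigl(\tfrac{\alpha}{2}\bigr)\times (0,\varepsilon_0)
\]
commuting with the projection to $(0,\varepsilon_0)$. For any $0<\alpha<\varepsilon_0$, restricting $\phi$ to $\mathrm{d}_{X}^{-1}(0,\alpha]\cap Y = (\thicken[\alpha]{X}\cap Y)\setminus X$ gives the description in \cref{item:second_def_thick}, with $L=\mathrm{d}_{X}^{-1}(\alpha/2)$.

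Next I would promote this to a strong deformation retract. Since $X$ is compact and definable in $Y$, the triangulability of definable sets compatible with a closed subcomplex (cf.\ the references cited after \cref{recol:Whitney_strat}) provides, after possibly shrinking $\varepsilon_0$, a closed neighborhood of $X$ in $Y$ that is homeomorphic to a mapping cylinder $\stratCyl$ of some proper map $r\colon L\to X$, via an extension of $\phi^{-1}$. Concretely, the compactness of $X$ together with the definable triviality forces $\phi^{-1}(\ell,t)$ to converge, as $t\to 0$, to a well-defined point $r(\ell)\in X$ depending continuously on $\ell$. Granted this cylinder structure, the homotopy
\[
H_s([(\ell,t)]) = [(\ell,(1-s)t)],\qquad H_s\vert_{X}=\mathrm{id}_{X},
\]
is a strong deformation retract of $\thicken[\alpha]{X}\cap Y$ onto $X$ for every $0<\alpha<\varepsilon_0$. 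This continuous extension at $t=0$ is the main technical point, and it is exactly where the o-minimality hypothesis is used crucially; without it, Hardt's trivialization need not close up over the zero level.

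Finally, for the sharper claim that $\varepsilon$ may be taken to be the weak feature size when $Y=\supSp$, I would invoke directly the results of \cite{chazal2005weak}. There, below the weak feature size $\mathrm{wfs}(X)$, the generalized gradient flow of $-\mathrm{d}_{X}$ is well defined and has no critical points in $\thicken[\alpha]{X}\setminus X$, and integration along this flow produces both the strong deformation retract in item (1) and the product structure in item (2). The hardest step in the whole argument is establishing the continuous extension of $\phi^{-1}$ through $t=0$ in the abstract definable case; in the Euclidean case this obstacle is bypassed by the Chazal--Lieutier flow, which directly yields both conclusions simultaneously.
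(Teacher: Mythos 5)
Your treatment of item (2) via Hardt's theorem, and of the weak-feature-size refinement via the Chazal--Lieutier flow, coincides with the paper's. The route to item (1), however, diverges, and the step you yourself flag as ``the main technical point'' is a genuine gap. You assert that compactness of $\topSp$ together with definability of the Hardt trivialization forces $\phi^{-1}(\ell,t)$ to converge, as $t\to 0$, to a point $r(\ell)$ \emph{depending continuously on $\ell$}, so that $\thicken[\alpha]{\topSp}\cap Y$ itself acquires a mapping cylinder structure. Pointwise convergence does follow from o-minimality (each $t\mapsto\phi^{-1}(\ell,t)$ is a bounded definable curve), but continuity of the limit map $r$ and of the extension through $t=0$ does not: the only uniform control available is $\mathrm{d}_{\topSp}(\phi^{-1}(\ell,t))=t$, which bounds the distance to $\topSp$ but not the distance to $r(\ell)$, and a Hardt trivialization may a priori shear arbitrarily as $t\to 0$. (Compare the semialgebraic family $(\ell,t)\mapsto \ell t/(\ell^2+t^2)$ on $[0,1]\times(0,1)$: all pointwise limits at $t=0$ exist, yet the extension is discontinuous.) The PL regular neighborhood supplied by triangulability is indeed a mapping cylinder neighborhood, but it is a simplicial neighborhood, not the metric thickening $\thicken[\alpha]{\topSp}\cap Y$; one cannot transport its cylinder structure to the thickening merely by citing $\phi$.

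The paper circumvents exactly this point by never claiming a cylinder structure on the thickening itself. It uses the triangulation only to (a) obtain arbitrarily small PL regular neighborhoods $N$ of $\topSp$ in $Y$, which give the cofibration property of $\topSp\hookrightarrow\thicken[\alpha]{\topSp}\cap Y$, and (b) sandwich $\topSp\hookrightarrow\thicken[\alpha]{\topSp}\cap Y\hookrightarrow N\hookrightarrow\thicken[\alpha']{\topSp}\cap Y$ for $\alpha<\alpha'<\varepsilon$; the open-cylinder structure of $(\thicken[\alpha']{\topSp}\cap Y)\setminus\topSp$ coming from item (2) makes the composite inclusions homotopy equivalences, the two-out-of-six property makes every map in the chain a homotopy equivalence, and a cofibration that is a homotopy equivalence is a strong deformation retract. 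If you wish to keep your more direct route, you must actually construct a trivialization extending continuously through $t=0$ (for instance from a definable triangulation compatible with the level sets of $\mathrm{d}_{\topSp}$, or from a conic-structure theorem), which is essentially as much work as the sandwich argument.
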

\begin{proof}
The statement on the homemomorphism type of the complements is an immediate application of Hardt's theorem for definable sets together with the fact that $\mathrm{d}_{X}$ is definable (see e.g. \cite{van1998tame}). One may then use the isotopies induced by flows used for example in \cite{chazal2005weak} to extend this homeomorphism to the case where $Y = \supSp$ and $\varepsilon$ is the weak feature size. To see that the latter is positive, note that the argument for positivity of weak feature sizes of semialgebraic sets in \cite[Remark 5.3]{fu1985tubular} also applies to the definable case. Finally, we need to see that the inclusion is a strong deformation retraction. Note that by the triangulability of definable sets (see for example \cite[Theorem 2.9]{van1998tame}), $\supSp$ may be equipped with a triangulation compatible with $X$ and $Y$. In particular, by subdividing if necessary, $X$ has arbitrarily small mapping cylinder neighborhoods in $Y$, given by piecewise linear regular neighborhoods. Furthermore, this means that $X \hookrightarrow X_{\alpha} \cap Y$ is a cofibration. Thus, it suffices to show that $X \hookrightarrow X_{\alpha} \cap Y$ is a homotopy equivalence.
Now, for $\alpha < \alpha' < \varepsilon$, with $\varepsilon$ such that \ref{item:second_def_thick} holds. Then, we have inclusions
\[
X \hookrightarrow X_{\alpha} \cap Y \hookrightarrow N \hookrightarrow X_{\alpha'} \cap Y ,
\]
where $N$ and $N'$ are regular neighborhoods with respect to the piecewise linear structure induced by the triangulation. By the open cylinder structure (assumption \labelcref{item:second_def_thick}) of the set $(X_{\alpha'} \cap Y) \setminus X$, the inclusion $X_{\alpha} \cap Y \hookrightarrow X_{\alpha'} \cap Y$ is a homotopy equivalence. The same holds for the inclusion $X \hookrightarrow N$. It follows by the two-out-of-six property of homotopy equivalences, that all maps are homotopy equivalences.
\end{proof}
\subsection{Proof of \cref{prop:equ_char_Whitney}}
\begin{proof}[Proof of \cref{prop:equ_char_Whitney}]\label{pf:equ_char_Whitney} The map
$\beta$ is clearly continuous on $\str_q \times \str_p$. The condition on $\funpqbeta$ is thus equivalent to the extension by $0$ to $\Delta_{\str_p}$ being continuous.
Indeed, by continuity of $\distVec{-}{-}$, this extension condition immediately implies condition (b). For the converse, as $\funpqbeta \geq 0$, it suffices to show upper semi-continuity. This is the content of \cref{prop:semicon_beta}.
\end{proof}
\begin{proposition}\label{prop:semicon_beta}
Let $\whs = (X, s: X \to \pos)$ be a Whitney stratified space. Then, the restriction of $\funpqbeta$ to $\whs_{\geq p} \times \whs_{p} \to \mathbb R$ is upper semi-continuous. 
\end{proposition}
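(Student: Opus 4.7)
The plan is to verify upper semi-continuity of $\funpqbeta\mid_{\whs_{\geq p} \times \whs_p}$ by a sequential argument: fix $(x_0, y_0)$ in the domain, set $r_0 := \phiStrat(x_0) \geq p$, and show that $\limsup_n \funpqbeta(x_n, y_n) \leq \funpqbeta(x_0, y_0)$ for every convergent sequence $(x_n, y_n) \to (x_0, y_0)$. The indices with $x_n = y_n$ contribute $0$ and may be discarded. By local finiteness of the stratification together with the frontier condition I pass to a subsequence on which all $x_n$ lie in a single stratum $\whs_r$ with $r \geq r_0$, and by compactness of the relevant Grassmannian I arrange in addition that $\tangentSp[x_n]{\whs_r} \to \tau$ and $\secant{x_n}{y_n} \to \ell$ in the appropriate Grassmannians.

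The core of the argument is a comparison of $\tau$ with $\tangentSp[x_0]{\whs_{r_0}}$. When $r = r_0$, smoothness of the stratum forces $\tau = \tangentSp[x_0]{\whs_{r_0}}$. When $r > r_0$, the classical fact that Whitney's condition (b) implies Whitney's condition (a) (for the pair $(r, r_0)$) yields $\tangentSp[x_0]{\whs_{r_0}} \subset \tau$. In either case, monotonicity of the directed distance under inclusion of subspaces gives
\[
\distVec{\ell}{\tau} \;\leq\; \distVec{\ell}{\tangentSp[x_0]{\whs_{r_0}}}.
\]

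When $x_0 \neq y_0$, continuity of the secant-line map off the diagonal forces $\ell = \secant{x_0}{y_0}$, so $\lim_n \funpqbeta(x_n, y_n) = \distVec{\ell}{\tau} \leq \funpqbeta(x_0, y_0)$, which is what is wanted. The remaining case is $x_0 = y_0 \in \whs_p$, where $\funpqbeta(x_0, y_0) = 0$ and I must show $\ell \subset \tau$: for $r > p$ this is precisely the conclusion of Whitney's condition (b) applied to the chosen sequences, while for $r = p$ it follows from a Taylor-expansion argument on the smooth submanifold $\whs_p$, parametrizing $\whs_p$ locally at $x_0$ and observing that any limit of secant directions between points converging to $x_0$ lies in $\tangentSp[x_0]{\whs_p} = \tau$.

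The only non-elementary input is the implication (b) $\Rightarrow$ (a) in the mixed-stratum case $r > r_0$, which is classical (see Mather's notes) and will be cited rather than reproved; the main conceptual obstacle is the degenerate diagonal case $x_0 = y_0$, which is exactly the situation Whitney's condition (b) is designed to control. Once those two ingredients are in place, the remaining work is purely bookkeeping with local finiteness, Grassmannian compactness, and monotonicity of $\distVec{-}{-}$ under inclusions.
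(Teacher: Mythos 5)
Your argument is correct and follows essentially the same route as the paper's proof: pass to a subsequence with the $x_n$ in a fixed stratum, use Grassmannian compactness to extract limits $\tau$ and $\ell$, invoke (b)\,$\Rightarrow$\,(a) together with monotonicity of $\distVec{-}{-}$ under inclusion off the diagonal, and use condition (b) itself at the diagonal. Your explicit treatment of the subcase $r=p$ at the diagonal (via the smooth secant/Taylor argument, which Whitney's condition (b) as stated for $p<q$ does not literally cover) is a point the paper's proof passes over silently, so that extra care is welcome rather than a deviation.
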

\begin{proof}
$\funpqbeta$ is clearly continuous on the strata of $\whs \times \whs$. Now, suppose $(x_n, y_n) \in \whs_{\geq p} \times \whs_{p}$ is a sequence converging to a point $(x,y) \in \whs_{p'} \times \whs_{p}$, for some $p' \geq p$. Then, for sufficiently large $n \in \mathbb N$, we have $s(x_n) \geq p'$. To show upper semi-continuity, we may thus without loss of generality assume that $x_n$ lies in the same stratum $\whs_{q}$. We show that any subsequence of $(x_n,y_n)$ has a further subsequence (all named the same by abuse of notation), for which $\funpqbeta(x_n,y_n)$ converges to a value lesser or equal then $\funpqbeta(x,y)$. By compactness of Grassmannians, we may first restrict to a subsequence such that $\tangentSp[x_n]{\whs_q}$ and $\secant{x_n}{y_n}$ converge to $\tau$ and $l$ respectively.
By Whitney's condition (a) (\cite{whitney1965local}, \cite{whitney1965tangent}) -  which by \cite{mather1970notes} follows from condition (b) - we have  $\tangentSp[x]{\whs_{p'}} \subset \tau$.
Summarizing, this gives:
\begin{align*}
    \lim \funpqbeta(x_n,y_n) = \distVec{l}{\tau} &\leq \distVec{l}{\tangentSp[x]{\whs_{p'}}}.
\end{align*}
Now, in case when $x \neq y$, the last expression equals $\funpqbeta(x,y)$ by definition. In the case when $x=y$ then, by condition $(b)$, $l \subset \tau$. Thus, again, we have
\begin{align*}
\lim \funpqbeta(x_n,y_n) = \distVec{l}{\tau} = 0 = \funpqbeta(y,y)
\end{align*}
finishing the proof.
\end{proof}
\subsection{A normal bundle version of $\funpqbeta$}
Furthermore, we are going to make use of the following fiberwise version of $\funpqbeta$.
\begin{construction}\label{con:hatbeta}
Again, in the framework of \cref{con:pqbeta}, assume that $\whs = \stratPair$ is a Whitney stratified space, with $\whs_p$ compact. 
Take $N$ to be a standard tubular neighborhood of $\whs_p$ in $\supSp$ with retraction $r: N \to \whs_p$. 
Note that by Whitney's condition (a), for $N$ sufficiently small, $r_{\mid \whs_q}$ is a submersion for $q \geq p$. In particular, by \cite[Lemma 2.1]{nocera2021conically} the fiber of 
\[
 \whs^{y}:=(r)_{\mid N \cap \supLevel{\whs}{p}}^{-1}(y)
\]
is a Whitney stratified space over $\{q \in \pos \mid q \geq p \}$ with the $p$-stratum given by $\{y\}$. Furthermore, we have
\[ \tangentSp{\whs_q} \cap \normalSp{\whs_p} = \tangentSp{\whs^{r(x)}_q},\]
where $\normalSp{\whs_p}$ denotes the normal space of $\whs_p$ at $r(x)$.
In particular, the dimension of these spaces is constant, and they vary continuously in $x$.
Then, consider the following function:
\begin{align*}
\norpqbeta(-):  N \cap \supLevel{\whs}{p} \to  \mathbb R &&
    \begin{cases}
    x &\mapsto  \distVec{\secant{x}{r(x)}}  {\tangentSp{\whs^{r(x)}_{\phiStrat(x)}}} \textnormal{, for } s(x) >p\\
    x & \mapsto 0 \textnormal{, for } s(x) = p.
    \end{cases}
\end{align*}
Noting that $\secant{x}{r(x)} \in \normalSp{W_{p}}$, by an analogous argument to the proof of \cref{prop:equ_char_Whitney}, one obtains that $\norpqbeta(-)$ is continuous on $\whs_{q} \cup \whs_{p}$.
Note that if we restrict $\norpqbeta(-)$ to $\whs^{y}$, then we obtain the function $\funpqbeta(-,y)$ associated to $\whs^{y}$. Let us denote this $\funpqbeta_{y}$. In particular, by compactness of $\whs_{p}$, we obtain that the functions $\funpqbeta_{y}$ can be globally bounded by any $\delta >0$, for $N$ sufficiently small.
\end{construction}

\subsection{Definability of $\funpqbeta$}
\begin{proposition}
Let $\str = \stratPair$ be as in \cref{con:pqbeta}. Then, if $\topSp \subset \supSp$ is definable, then so is $\funpqbeta$.
\end{proposition}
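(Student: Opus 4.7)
The plan is to decompose $\funpqbeta$ as a composition (and case distinction) of functions that are each manifestly definable in the fixed o-minimal structure, and then appeal to the closure of the class of definable sets and functions under such operations. The key ingredients are: (i) the set-theoretic definability of the strata and their pointwise tangent spaces, (ii) the semi-algebraic (hence definable) nature of the Grassmannian(s) of $\supSp$ and of the distance $\distVec{-}{-}$, and (iii) the obvious definability of the secant line assignment $(x,y) \mapsto \secant{x}{y}$.

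First, I would fix, for each $p \in P$, the dimension $d_p$ of the stratum $\topSp_p$ and view the Gauss map $\topSp_p \to \mathrm{Gr}(d_p,N)$, $x \mapsto \tangentSp[x]{\topSp_p}$, which I claim is definable. This is a standard consequence of the o-minimal cell decomposition theorem: $\topSp_p$ admits a finite partition into definable $C^1$ cells, each of which carries a definable $C^1$ parametrization, so the Jacobian (and hence the tangent space, read off as the column span of this Jacobian) depends definably on the base point. Gluing over the finitely many cells and over the finitely many strata produces a definable map $T \colon \topSp \to \bigsqcup_p \mathrm{Gr}(d_p,N)$, where the codomain is identified with a definable subset of a single Grassmannian (or of a product $\mathrm{Gr}(1,N) \times \dots$) in the standard way. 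Since the graph of $T$ is a finite union of definable graphs, $T$ is definable.

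Second, the map $(x,y) \mapsto \secant{x}{y}$ for $x \neq y$ is the assignment sending $(x,y)$ to the line $\mathbb{R}(x-y) \in \mathrm{Gr}(1,N)$, which is semi-algebraic in the Plücker or matrix coordinates on $\mathrm{Gr}(1,N)$, hence definable. The map
\[
\mathrm{Gr}(1,N) \times \bigsqcup_{d} \mathrm{Gr}(d,N) \to \mathbb{R}, \qquad (L,V) \mapsto \distVec{L}{V},
\]
is semi-algebraic (it is a supremum over the unit sphere of an orthogonal projection, all of which is expressible by polynomial equations and inequalities) and in particular definable.

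Finally, I would assemble the pieces: on the open definable set $\{(x,y) \in \topSp \times \topSp \mid x \neq y\}$ the function $\funpqbeta$ is the composite
\[
(x,y) \mapsto \bigl(\secant{x}{y},\,T(x)\bigr) \mapsto \distVec{\secant{x}{y}}{T(x)},
\]
which is definable as a composition of definable maps. On the diagonal $\Delta_{\topSp}$ (a definable set) $\funpqbeta$ is identically zero, hence definable there. Since the graph of $\funpqbeta$ is a finite union of definable sets, $\funpqbeta$ itself is definable. The only step with non-trivial content is the definability of the Gauss map on the strata, which is really where the o-minimal cell decomposition is used; the rest is algebraic book-keeping.
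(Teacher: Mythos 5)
Your proposal is correct and takes essentially the same route as the paper: both reduce to the complement of the diagonal (where $\funpqbeta$ vanishes by definition), observe that the secant assignment and the distance between linear subspaces are semialgebraic, and thereby reduce everything to the definability of the tangent-space assignment on each stratum. The paper packages this by writing $\funpqbeta(x,y)$ as $\inf_{v \in \tangentSp[x]{\topSp_{s(x)}}} \norm{\tfrac{x-y}{\norm{x-y}} - v}$, citing the definability of the tangent bundle $\tangentSp[]{\topSp_q} \subset \supSp \times \supSp$ and applying \cref{lem:definability} on fiberwise suprema/infima of definable functions; your composition of a Grassmannian-valued Gauss map with the semialgebraic $\distVec{-}{-}$ is an equivalent formulation of the same idea.

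One step needs tightening, and it is exactly the one you flag as carrying the content. In the cell-decomposition argument, the column span of the Jacobian of a cell parametrization is the tangent space of the \emph{cell}, which agrees with $\tangentSp[x]{\topSp_p}$ only on cells of full dimension $d_p$; on the lower-dimensional cells it is a proper subspace, so gluing over all cells as written produces the wrong map there. This is repaired either by characterizing $\tangentSp[x]{\topSp_p}$ directly through a first-order definable formula (for a $C^1$ submanifold the tangent space coincides with the tangent cone, i.e.\ the set of limit directions of normalized secants $\tfrac{y-x}{\norm{y-x}}$ with $y \in \topSp_p$, and this description is manifestly definable), or by noting that the Gauss map is continuous on $\topSp_p$ and definable on the dense definable union of the open cells, so that its full graph is recovered as the intersection of the closure of the restricted graph with $\topSp_p \times \mathrm{Gr}(d_p,N)$. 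With that repair the remainder of your argument is routine and correct.
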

\begin{proof}
As all the strata of $\topSp \times \topSp$ are again definable, it suffices to show that $\funpqbeta$ is definable on the strata of $\topSp \times \topSp$. Furthermore, as $\funpqbeta$ is $0$ along $\Delta_{\topSp}$, it suffices to show definability away from the diagonal. Here $\funpqbeta$ is equivalently given by
\[
\funpqbeta(x,y) = \inf_{v \in \tangentSp[x]{\topSp_{s(x)}}}{ \norm{\frac{x-y}{\norm{x-y}} -v }}.
\]
It follows from the fact that for $q \in P$, $\tangentSp[]{\topSp_q} \subset \supSp \times \supSp$ is definable (see \cite{coste2000introduction} and \cref{lem:definability}) that this defines a definable function $\topSp_q \times \topSp_p \to \mathbb R$.
\end{proof}

\subsection{Proof of \cref{prop:beta_globally_bounded}}
We begin by proving a series of technical lemmas.
\begin{lemma}\label{lem:definability}
Consider two definable maps $f: X \to \mathbb R$, $\pi: X \to Y$ such that $f$ is bounded from above on every fiber of $\pi$. Then the map
\begin{align*}
    g: Y &\to \mathbb R \\
     y &\mapsto \sup_{x \in \pi^{-1}(y)}f(x)
\end{align*}
is again definable. 
\end{lemma}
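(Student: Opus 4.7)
The plan is to exhibit the graph of $g$ as a definable subset of $Y \times \mathbb R$ by writing it down using a first-order formula in the definable predicates $f$ and $\pi$, and then appealing to the fact that any o-minimal structure is closed under Boolean combinations, Cartesian products, and projections of definable sets.

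First I would consider the set
\[
Z = \{(y,t,x) \in Y \times \mathbb R \times X \mid \pi(x) = y \text{ and } f(x) > t\}.
\]
This is a definable subset of $Y \times \mathbb R \times X$: it is cut out from the products of the definable sets $Y$, $\mathbb R$, $X$ by the graph of $\pi$ (pulled back along the projection $Y \times \mathbb R \times X \to Y \times X$) and by the preimage of the open halfplane $\{(t,s) : s > t\}$ under $(y,t,x) \mapsto (t,f(x))$. Its projection $B \subset Y \times \mathbb R$ onto the first two factors is therefore definable, and hence so is the complement $A := (Y \times \mathbb R)\setminus B$. By construction,
\[
A = \{(y,t) \mid f(x) \leq t \text{ for all } x \in \pi^{-1}(y)\},
\]
and by the hypothesis that $f$ is bounded above on each fibre, the horizontal slice $A_y = \{t \in \mathbb R \mid (y,t) \in A\}$ equals $[g(y), \infty)$ (or $(g(y), \infty)$ if the supremum is not attained).

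Consequently $g(y) = \inf A_y$, and the graph of $g$ can be described as
\[
\mathrm{graph}(g) = \{(y,t) \in Y \times \mathbb R \mid \forall t' < t,\ (y,t') \notin A \ \text{ and }\ \forall t' > t,\ (y,t') \in A\}.
\]
Every quantifier here ranges over $\mathbb R$, and the matrix formula involves only the definable set $A$, so the graph of $g$ is definable by the standard projection/complement closure of o-minimal structures. Since a function is definable exactly when its graph is, this shows that $g$ is definable.

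There is no real obstacle: the argument is a textbook application of the first-order closure properties of definable sets in o-minimal structures, and the boundedness hypothesis is used solely to ensure that $g(y)$ is a finite real number so that the infimum-description of the graph makes sense.
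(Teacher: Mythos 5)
Your proposal is correct and is exactly the argument the paper intends: the paper's proof is the one-line remark that the graph of $g$ is cut out by a first-order formula over the o-minimal structure, and you have simply written that formula out explicitly (via the definable set $A$ of fiberwise upper bounds and the characterization of $g(y)$ as the boundary point of the up-set $A_y$). The only cosmetic slip is the parenthetical ``or $(g(y),\infty)$ if the supremum is not attained'': $A_y$ is the set of upper bounds of $f$ on the fiber, so it always equals $[g(y),\infty)$; this does not affect your graph formula, which is insensitive to whether the endpoint lies in $A_y$.
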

\begin{proof}
This is immediate, if one interprets the graph of $g$ in terms of a formula being expressible with respect to the o-minimal structure.
\end{proof}
\begin{lemma}\label{lem:continuity_of_sup}
Let $X \to \{ p < q\}$ be a stratified metric space and $Y$ a first countable, locally compact Hausdorff space. Let $\pi : X \to Y$ be a proper map, such that both the fibers of $\pi$, as well as the fibers of $\pi_{\mid X_{p}}$ vary continuously in the Hausdorff distance. Let $f:X \to \mathbb R$ be upper semi-continuous and continuous on the strata. Then,
\begin{align*}
    g: Y &\to \mathbb R \\
    y &\mapsto \sup_{x \in \pi^{-1}(y)}f(x)
\end{align*}
is continuous.
\end{lemma}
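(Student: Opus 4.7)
The plan is to establish continuity by proving upper and lower semi-continuity separately, with the two strata playing distinct roles. Throughout I would use that, by properness of $\pi$ and upper semi-continuity of $f$ restricted to the compact fiber $\pi^{-1}(y)$, the supremum defining $g(y)$ is actually attained at some $x \in \pi^{-1}(y)$.

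For upper semi-continuity, I would take $y_n \to y$ in $Y$ and pick $x_n \in \pi^{-1}(y_n)$ realizing $g(y_n)$. Using local compactness of $Y$ and the properness of $\pi$, the $x_n$ eventually lie in a compact subset of $X$, so after passing to a subsequence $x_n \to x$. Continuity of $\pi$ forces $x \in \pi^{-1}(y)$, and upper semi-continuity of $f$ yields $\limsup g(y_n) = \limsup f(x_n) \leq f(x) \leq g(y)$.

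For lower semi-continuity, I would fix $y_n \to y$ and pick $x \in \pi^{-1}(y)$ achieving $g(y)$. The argument splits on the stratum of $x$. If $x \in X_q$, then continuity of $\pi^{-1}(y_n)$ in the Hausdorff distance produces $x_n \in \pi^{-1}(y_n)$ with $x_n \to x$; since $X_q$ is open in a stratified metric space over $\{p<q\}$, eventually $x_n \in X_q$, and continuity of $f|_{X_q}$ gives $f(x_n) \to f(x)$, hence $\liminf g(y_n) \geq \liminf f(x_n) = f(x) = g(y)$. If $x \in X_p$, the naive approximation could land in $X_q$, where $f$ may only satisfy $\limsup f(x_n) \leq f(x)$ — useless for a lower bound. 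This is the place where the second hypothesis on continuity of the fibers of $\pi|_{X_p}$ enters: it produces approximants $x_n \in \pi^{-1}(y_n) \cap X_p$ with $x_n \to x$, and then continuity of $f|_{X_p}$ again yields $\liminf g(y_n) \geq f(x) = g(y)$.

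The main obstacle — and the reason the hypothesis on the restricted fibration is present — is precisely the $X_p$ case of lower semi-continuity: upper semi-continuity of $f$ gives the wrong inequality for approximation by points of $X_q$, so one genuinely needs a stratum-preserving way to approximate $x \in X_p$. Once both semi-continuities are established, continuity of $g$ follows, completing the proof.
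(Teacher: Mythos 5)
Your proposal is correct and follows essentially the same route as the paper's proof: the supremum is attained on each compact fiber, the upper bound comes from properness plus upper semi-continuity of $f$ along a convergent subsequence of maximizers, and the lower bound splits on the stratum of the maximizer, using openness of $X_q$ in one case and the Hausdorff-continuity of the fibers of $\pi\mid_{X_p}$ in the other. The only cosmetic difference is that you phrase the argument as separate upper and lower semi-continuity claims, whereas the paper packages both inequalities into a subsequence-of-subsequences argument.
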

\begin{proof}
Note first that as the fibers of $\pi$ are compact and $f$ is upper semi continuous, it takes its maximum on every fiber. Now, let $y_n \to y$ be a convergent sequence in $Y$. We show that any of its subsequences $y'_n$, has a further subsequence $ \tilde  y_n \to y$, with \[ \sup_{x \in \pi^{-1}(\tilde y_n)}f(x) \to \sup_{x \in \pi^{-1}(y)}f(x).\]
Let $x'_n \in \pi^{-1}(y_n)$ for all $n$ such that $f(x'_n) = \sup_{x \in \pi^{-1}(y'_n)}f(x)$. As $Y$ is locally compact and $\pi$ is proper, $x'_n$ has a convergent subsequence $\tilde x_n \to \tilde x$. Define $\tilde y_n:=\pi(\tilde x_n)$. Since the fibers of $\pi$ vary continuously and $\tilde y_n \to y$, we also have $\tilde x \in \pi^{-1}(y)$. Thus, we have
\[
 \limsup \sup_{x \in \pi^{-1}(\tilde y_n)}f(x) =  \limsup f( \tilde x_n) \leq f(\tilde x) \leq g(y).
\]
It remains to see the converse inequality for a subsequence of $\tilde y_n$. Let $\hat x \in \pi^{-1}(y)$ be such that $f(\hat x) = \sup_{x \in \pi^{-1}(y)}f(x)$. By assumption we can find a sequence $x''_n$ with $x''_n \in \pi^{-1}(\tilde y_n)$ converging to $\hat x$.
If $\hat x \in X_p$, then $x''_n$ can be taken to be in $X_p$, as $\pi^{-1}(\tilde y_n) \cap X_p$ converges to $\pi^{-1}(y) \cap X_p$. If $\hat x \in X_q$, then, as the latter is open, $x''_n$ ultimately lies in $X_q$. Hence, by continuity of $f$ on the strata, we have
\[
g(y) = f(\hat x) = \lim f(x''_n) = \liminf{f(x''_n)} \leq \liminf \sup_{x \in \pi^{-1}(\tilde y_n)}f(x).
\]
\end{proof}
As a consequence of the prior two lemmas we obtain:
\begin{lemma}\label{lem:hatbeta_def}
If $\whs$ is a definably Whitney stratified over $\pos =\{ p <q\}$. Then the map 
 \begin{align*}
    \hat \beta: \whs_p \times \mathbb R_{\geq 0} &\to \mathbb R \\
     (y,d) &\mapsto \sup_{\norm{x-y}=d, x\in \whs}\funpqbeta(x,y) 
 \end{align*}
 is continuous in a neighborhood of $\whs_p \times \{0\}$, definable and vanishes on $\whs_p \times \{0\}.$
\end{lemma}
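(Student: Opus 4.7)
The plan is to recognize $\hat\beta$ as a parametrized supremum and apply the two auxiliary lemmas already established above. Concretely, I would introduce
\[
Z := \{(x, y, d) \in \whs \times \whs_p \times \mathbb R_{\geq 0} \mid \norm{x-y} = d\},
\]
stratified over $\pos = \{p < q\}$ by $(x, y, d) \mapsto \phiStrat(x)$, together with the projection $\pi \colon Z \to \whs_p \times \mathbb R_{\geq 0}$, $(x, y, d) \mapsto (y, d)$, and the evaluation $f(x, y, d) := \funpqbeta(x, y)$. Then $\hat\beta(y, d) = \sup_{z \in \pi^{-1}(y, d)} f(z)$, and the vanishing claim is immediate since $\pi^{-1}(y, 0) = \{(y, y, 0)\}$ and $\funpqbeta(y, y) = 0$ by construction.

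Definability of $\hat\beta$ then follows from \cref{lem:definability}: the space $Z$ is cut out by the polynomial condition $\norm{x-y}^2 = d^2$ from the definable sets $\whs$ and $\whs_p$, the projection $\pi$ is definable, and $f$ is definable by the earlier proposition on definability of $\funpqbeta$. The uniform bound $f \leq 1$, arising from $\funpqbeta$ being a distance between unit vectors in a Grassmannian, ensures boundedness on every fiber, so \cref{lem:definability} applies directly.

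For the continuity claim, I would apply \cref{lem:continuity_of_sup} locally. Fix a compact $K \subset \whs_p$ and a small $\delta > 0$, then restrict $\pi$ to $\pi^{-1}(K \times [0, \delta))$. Properness of this restriction is ensured by local compactness of $\whs$ together with the cylindrical neighborhood structure near $K$ furnished by \cref{ex:cyl_nbhds}, while continuity of $f$ on each stratum and its global upper semi-continuity come from \cref{prop:equ_char_Whitney} and \cref{prop:semicon_beta}. The main obstacle is verifying that the fibers of $\pi$ and of $\pi\mid_{Z_p}$ vary Hausdorff-continuously: over $(y, d)$ these are the definable sets $\{x \in \whs \mid \norm{x-y} = d\}$ and $\{x \in \whs_p \mid \norm{x-y} = d\}$ respectively. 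They collapse to $\{y\}$ as $d \to 0$ in a uniformly controlled way, which directly yields continuity along $\whs_p \times \{0\}$; for $d > 0$ their continuous variation in $(y, d)$ follows from Hardt's trivialization theorem applied to the definable distance maps $\whs \times \whs_p \to \mathbb R$ and $\whs_p \times \whs_p \to \mathbb R$, $(x, y) \mapsto \norm{x-y}$, combined with the cylindrical structure of $\whs$ near $\whs_p$. Once fiber continuity is in place, \cref{lem:continuity_of_sup} yields continuity of $\hat\beta$ on $K \times [0, \delta)$, and exhausting $\whs_p$ by compact sets assembles this into continuity on an open neighborhood of $\whs_p \times \{0\}$.
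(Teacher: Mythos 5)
Your proposal is correct and follows essentially the same route as the paper: recognize $\hat\beta$ as a fiberwise supremum over the map $(x,y)\mapsto (y,\norm{x-y})$, get definability from \cref{lem:definability}, vanishing from $\funpqbeta(y,y)=0$, and continuity from \cref{lem:continuity_of_sup} once upper semi-continuity of $\funpqbeta$ and Hausdorff-continuity of the fibers are checked. The one divergence is the justification of fiber continuity: the paper applies Thom's first isotopy lemma to this map, viewed as a stratified submersion near $\whs_p\times\{0\}$, which is the cleaner tool here, since Hardt's theorem only trivializes over the pieces of a finite definable partition of the base and so does not by itself yield continuity of the fibers in Hausdorff distance across all of $\whs_p\times(0,\delta)$.
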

\begin{proof}
Definability follows immediately from \cref{lem:definability}.
 consider the map 
\begin{align*}
  B: \whs \times \whs_p \to \whs_p \times \mathbb R_{\geq 0} \\
    (x,y) \mapsto (y, \norm{x-y}).
\end{align*}
Over $\whs_p \times \mathbb R_{>0}$ it is given by submersion on each stratum of $\whs \times \whs_p$. In particular, by Thom's first isotopy lemma \cite[Proposition 11.1]{mather1970notes} it is a fiber bundle with fibers $\partial \ballEps[d]{y}$ at $(y,d)$ over $\mathbb R_{>0}$. 
In particular, the fibers of $B$ vary continuously over $\whs_p \times \mathbb R_{>0}$. Additionally, for $(y_n, d_n) \to (y,0)$ the fiber converges to the point $y$. Hence, $B$ fulfills the requirements of \cref{lem:continuity_of_sup}. Furthermore, $\funpqbeta: \whs \times \whs_p \to \mathbb R$ also fulfills the requirements of \cref{lem:continuity_of_sup}, showing the continuity of $\hat \beta$. Lastly, $\hat \beta$ vanishes on $\whs_p \times \mathbb R_{\leq 0}$ by definition of $\beta$.
\end{proof}
We now have all the tools available to obtain a proof of \cref{prop:beta_globally_bounded},
\begin{proof}[Proof of \cref{prop:beta_globally_bounded}]\label{proof:beta_globally_bounded}
We conduct this proof for the case of $\pos = \{ p < q \}$ and $K = \whs_p$ (with notation as in \cref{def:Loj_Whitney}). The general case follows analogously by working strata-wise and then passing to maxima. 
By \cref{lem:hatbeta_def} for $d$ small enough, the function $\hat \beta: \whs_p \times \mathbb R_{\geq 0} \to \mathbb R$ fulfills the requirements of Lojasiewicz' theorem for (polynomially bounded) o-minimal structures \cite{LojInequ}. 
Hence, we find $\hat \phi: \mathbb R_{\geq 0} \to \mathbb R_{\geq 0}$ to be a definable and monotonous bijection such that on $\whs_p \times [0,d]$ we have  
\[\hat \phi( \hat \beta(y,t) ) \leq t. 
\]
If the relevant o-minimal structure is polynomially bounded, then there exist $ n >0$, such that \[ t^{n} \leq \hat \phi(t) \] for $t \in [0,d']$. 
Hence, we obtain
\begin{align*}
   \hat \beta(y,t)^n  &\leq \hat \phi( \hat \beta(y,t) ) \leq t. \\
   \implies  \hat \beta(y,t) &\leq t^{\alpha}
\end{align*}
for $t \in [0,d]$, $\alpha= \frac{1}{n}$  and $d:=\phi^{-1}(d')$.
\end{proof}
\subsection{Proof of \cref{lem:strat_by_localcohom}}
\begin{proof}[Proof of \cref{lem:strat_by_localcohom}]\label{pf:lem:strat_by_locacohom}
The first result is immediate from the local conical structure of $\topSp$. The second is immediate from the definition of a homology stratification, as clearly $\topSp - \str_p$ is a homology manifold. 
For the final result, note first that by the local conical structure, having local homology isomorphic to $\textnormal{H}_{\bullet}(\mathbb R^q;0)$ is an open condition on $\str_p$. 
In particular, since this condition holds on all of $\topSp - \str_p$ it is an open condition on all of $\topSp$. 
Thus, $s: \topSp \to \{ p < q \}$ as defined in the statement is actually a stratification of $\topSp$.
To see that this is indeed a homology stratification we need to see that the local isomorphism condition is fulfilled.
By construction, we have $\topSp- \str_p \subset \tilde{\phiStrat}^{-1}\{q\}$. Within $\str_p \cap \tilde{\phiStrat}^{-1}\{p\}$ the local isomorphism condition again holds by the local conical structure of $\topSp$. Thus, it remains to consider the case where $x \in \str_p$, and $\locHlg{\topSp} \cong \textnormal{H}_{\bullet}(\mathbb R^q;0)$. 
We need to show that, for $U_x \cong \mathbb R^{q-p-1} \times \mathring C(L_{x})$, an open neighborhood of $x$,
the natural map \[ \locHlg{\topSp} \cong \hlg{\whs, \whs -U_x} \to \locHlgy{\topSp} \] is an isomorphism, for all $y \in U_x$. 
The only nontrivial degree in this case is $q=\dim{\whs}$. By an application of the K\"unneth formula $L_x$ is again an orientable manifold. 
Hence, up to suspension, from this perspective, the claim reduces to the fact that if $L_x$ is an orientable, closed manifold. Then, under the natural isomorphism \[\hlg{CL_x, L_x} \cong \hlgred[\bullet-1]{L_x}\]
the fundamental class of $L_x$ induces a fundamental class of $CL_x$.
\end{proof}

\end{document}